\documentclass{amsart}

\usepackage[utf8]{inputenc}
\usepackage[normalem]{ulem} 	
\usepackage{xparse}         	
\usepackage{comment}

\usepackage{mathtools,amssymb,amsthm,empheq}
\usepackage{bm} 	   	
\usepackage{relsize}   	
\usepackage{mathrsfs}  	
\usepackage{cancel}   	
\usepackage{esint}    	
\usepackage{xcolor} 	
\usepackage{cases}

\usepackage{pgf,tikz}
\usetikzlibrary{cd,arrows,babel,shapes,positioning}

\usepackage{hyperref}
\hypersetup{
	colorlinks,
	citecolor=blue,
	filecolor=blue,
	linkcolor=blue,
	urlcolor=blue
}



\DeclareMathOperator{\Dist}{dist}
\DeclareMathOperator{\Div}{div}

\DeclareMathOperator{\Diam}{diam}
\DeclareMathOperator{\BMO}{BMO}
\DeclareMathOperator{\Loc}{loc}
\DeclareMathOperator*{\Osc}{osc}


\newcommand{\R}{\mathbb{R}}

\newcommand{\CO}{\mathcal{C}}
\newcommand{\EPS}{\varepsilon}
\newcommand{\EMP}{\emptyset}

\newcommand{\Ann}{\mathfrak{A}}
\newcommand{\TT}[1]{\tilde{\tilde{#1}}}
\newcommand{\WORD}[1]{\quad \text{#1} \quad}
\newcommand{\ColorWord}[2]{\color{#1} #2 \color{black} }



\numberwithin{equation}{section}

\theoremstyle{plain}

\newtheorem{thm}[equation]{Theorem}
\newcommand{\refthm}[1]{\emph{\ColorWord{blue}{Theorem} \ref{#1}}}

\newtheorem{lemma}[equation]{Lemma}
\newcommand{\reflemma}[1]{\emph{\ColorWord{blue}{Lemma} \ref{#1}}}

\newtheorem{prop}[equation]{Proposition}
\newcommand{\refprop}[1]{\emph{\ColorWord{blue}{Proposition} \ref{#1}}}

\newtheorem{cor}[equation]{Corollary}
\newcommand{\refcor}[1]{\emph{\ColorWord{blue}{Corollary} \ref{#1}}}

\theoremstyle{definition}

\newtheorem{defin}[equation]{Definition}
\newcommand{\refdef}[1]{\emph{Definition \ref{#1}}}

\theoremstyle{remark}

\newtheorem{rem}[equation]{Remark}
\newcommand{\refrem}[1]{\textit{Remark \ref{#1}}}

\newtheoremstyle{named}{}{}{\itshape}{}{\bfseries}{}{.5em}{#1 #3}
\theoremstyle{named}
\setlength{\parindent}{0pt}
\setlength{\parindent}{0pt}




\title[Perturbation Theory with BMO Antisymmetric Part]{Perturbation Theory for Second Order Elliptic Operators with BMO Antisymmetric Part}
\author{Martin Dindoš, Erika Nystr\"om, Martin Ulmer}
\address{School of Mathematics, \\
The University of Edinburgh and Maxwell Institute of Mathematical Sciences, Edinburgh UK}
\email{M.Dindos@ed.ac.uk, esatterq@exseed.ed.ac.uk, M.Ulmer@sms.ed.ac.uk}

\date{}

\begin{document}

\maketitle

\begin{abstract}
\noindent
In the present paper we study perturbation theory for the $L^p$ Dirichlet problem on bounded chord arc domains for elliptic operators in divergence form with potentially unbounded antisymmetric part in BMO. Specifically, given elliptic operators $L_0 = \mbox{div}(A_0\nabla)$ and $L_1 = \mbox{div}(A_1\nabla)$ such that the $L^p$ Dirichlet problem for  $L_0$ is solvable for some $p>1$; we show that if $A_0 - A_1$ satisfies certain Carleson condition, then the $ L^q$ Dirichlet problem for $L_1$ is solvable for some $q \geq p$. Moreover if the Carleson norm is small then we may take $q=p$. We use the approach first introduced in Fefferman-Kenig-Pipher '91 on the unit ball, and build on Milakis-Pipher-Toro '11 where the large norm case was shown for symmetric matrices on bounded chord arc domains. We then apply this to solve the $L^p$ Dirichlet problem on a bounded Lipschitz domain for an operator $L = \mbox{div}(A\nabla)$, where $A$ satisfies a Carleson condition similar to the one assumed in Kenig-Pipher '01 and Dindo\v{s}-Petermichl-Pipher '07 but with unbounded antisymmetric part.
\end{abstract}

\tableofcontents 
\bigskip

\section{Introduction}

The study of perturbations of elliptic operators in divergence form \(L:=\mathrm{div}(A\nabla\cdot)\) 
    goes back to a result of Dahlberg \cite{dahlberg_absolute_1986}. 
Specifically, given elliptic operators \( L_0 = \Div(A_0\nabla) \) and \( L_1 = \Div(A_1\nabla) \),
    where we know that the \( L^p \) Dirichlet problem for \( L_0 \) is solvable,
    he considered the discrepancy function \(\EPS(X):=|A_0(X)-A_1(X)|\), 
    and showed that if the measure
\begin{equation}\label{eqq1}
d\mu(Z)=\sup_{X\in B(Z,\delta(Z)/2))}\frac{\EPS(X)^2}{\delta(X)}dZ,
    \qquad\textrm{with}\quad \delta(Z):=\mathrm{dist}(Z,\partial\Omega),
\end{equation}    
    is a Carleson measure with vanishing Carleson norm, 
    then the solvability of the \( L^p \) Dirichlet problem 
    is transferred to \(L_1:=\mathrm{div}(A_1\nabla\cdot)\) with the same exponent \( p \). 
\\

Actually this was formulated in terms of properties 
    of the corresponding elliptic measures \( \omega_0 \) and \( \omega_1 \) since 
we know that the \( L^p \) Dirichlet problem for \( L = \Div(A\nabla\cdot) \) 
    is solvable iff the elliptic measure \( \omega \) associated with \( L \) 
    belongs to the reverse H\"older space \( B_p(d\sigma) \),
    where \( d\sigma \) is surface measure on \( \partial\Omega \).
In this language Dahlberg has shows that if the Carleson norm of \( \mu \) is small, 
    then \(\omega_0\in B_p(d\sigma)\) implies \(\omega_1\in B_p(d\sigma)\). 
    
A natural question that arose was whether the condition on \( \mu \) could be relaxed 
    to draw the weaker conclusion that \(\omega_0\in A_\infty(d\sigma)\) 
        implies \( \omega_1\in A_{\infty}(d\sigma) \), 
        where \( A_{\infty}(d\sigma) = \bigcup_{q>1} B_q(d\sigma) \);
    i.e. transferring solvability to \( L_1 \) but not necessarily with the same exponent.
After some progress was made in \cite{fefferman_criterion_1989} 
    this important results was finally established in \cite{fefferman_theory_1991}. 

To summarize two different types of results were established
\begin{itemize}
    \item[(L)] If the Carleson norm of \( \mu \) is bounded 
        then \(\omega_0\in A_\infty(d\sigma)\) implies \( \omega_1\in A_{\infty}(d\sigma) \).
    \item[(S)] If the Carleson norm of \( \mu \) is small
        then \(\omega_0\in B_p(d\sigma)\) implies \(d\omega_1\in B_p(d\sigma)\).
\end{itemize}
In \cite{dahlberg_absolute_1986} and \cite{fefferman_theory_1991} 
    the results were only proved for symmetric matrices in 
    the case \( \Omega = \mathbb{B}^n \subset \R^n \). The symmetry assumption in these papers is not really necessary and the new techniques established there works with some modifications also in the non-symmetric case. Regarding the assumptions on the domain, there has been some work to extend this result to more general domains.
In \cite{milakis_harmonic_2011} the authors extend (L) to the case 
    where \( \Omega \) is a bounded chord-arc domain (see \refdef{def:CAD}).

These results were recently generalized to 1-sided chord-arc domains in \cite{cavero_perturbations_2019} 
    where the authors show both type (L) and (S) results.
In the second part \cite{cavero_perturbations_2020} 
    they also prove a type (L) results for non-symmetric bounded matrices.
Finally an (S) type result for bounded matrices was obtained in \cite{akman_perturbation_2021}.
\\

In this paper we relax the boundedness hypothesis on the coefficients and assume that an elliptic matrix $A$
has (potentially unbounded) antisymmetric part. These operators where first studied in the elliptic case by Li and Pipher in \cite{li_lp_2019},   where they have shown that under the assumption that the antisymmetric part of the matrix $A$ belongs to BMO space (and the symmetric part is bounded)
then the usual elliptic theory holds for such operators and in particular we have the usual Harnack's inequality, interior and boundary H\"older continuity, etc.

We note that in our approach we need to assume that \( \Omega \) is a chord-arc domain as we currently require the exterior cone condition (see \refrem{rem:ExteriorConeCondition}) to hold. There is an opportunity that new techniques as in \cite{cavero_perturbations_2020,akman_perturbation_2021} will remove this assumption in the future.
 \\
Recall that a matrix \( A \) is \emph{elliptic} if there exists \(\lambda_0\) such that
\begin{equation}\label{eq:elliptic}
    \lambda_0 |\xi|^2 \leq \xi^T A(X) \xi \leq  \lambda_0^{-1} |\xi|^2, 
    \quad \forall \xi \in \R^n, \; a.e. \; X \in \Omega.
\end{equation}
Note that even in the case where the matrix \( A \) is not symmetric,
    ellipticity is only a condition on the symmetric part of the matrix \( A^s \).
    
 For the antisymmetric part \(A^a\) we ask that \(\| A^a \|_{\BMO(\Omega)} \leq \Lambda_0\), i.e.
 \begin{align}\sup_{\substack{Q\subset \Omega\\ Q \textrm{ cubes}}}\fint_Q |A^a(Y)-(A^a)_Q|dY\leq \Lambda_0.\label{eq:A^ainBMO}\end{align}

Our main results are as follows. We generalize  (L) and (S) type results to operators as above in  \refthm{thm:NormBig} and \refthm{thm:NormSmall}.
Instead of using the Carleson measure \( \mu \) defined as in \eqref{eqq1} which uses the $L^\infty$ norm,
we introduce a more generalized version 
    that allows \(A\) to be unbounded, namely that
\[ d\mu'(Z):=\frac{\beta_r(Z)^2}{\delta(Z)}dZ\]
    where  
\begin{align}\beta_r(Z) \coloneqq \left( \fint_{B(Z,\delta(Z)/2)} |A_1 - A_0|^r \right)^{1/r},\label{def:beta_r} \end{align}
	for some large fixed \( 1\leq r<\infty \) which only depends on \(n,\lambda_0\) and \(\Lambda_0\). Recall that by the John-Nirenberg inequality a function in BMO belongs to all $L^r$ spaces $r<\infty$. 

Observe that even if we restrict ourselves to  bounded matrices \(A\) this new Carleson measure \(\mu'\) 
    has smaller Carleson norm than the original measure \(\mu\). In particular, it follows that perturbation result \cite[Theorem 8.1]{milakis_harmonic_2011} is a special case of \refthm{thm:NormBig}. 
\\
We are ready to state  two perturbation results:

\begin{thm}\label{thm:NormBig}
	Let \(\Omega\subset\mathbb{R}^n\) be a bounded chord arc domain and \(L_0=\mathrm{div}(A_0\nabla\cdot)\) 
	    and \(L_1=\mathrm{div}(A_1\nabla\cdot)\) two elliptic operators 
	    that satisfy \eqref{eq:elliptic} and \eqref{eq:A^ainBMO}. 
    Let \(\omega_0\) and \(\omega_1\) be the corresponding elliptic measures. 
    Then there exists \(1\leq r=r(n,\lambda_0,\Lambda_0)<\infty\) such that 
        if \(d\mu'(Z):=\frac{\beta_r(Z)^2}{\delta(Z)}dZ \) is a Carleson measure 
        then \( \omega_0 \in A_\infty(d\sigma) \) implies \( \omega_1 \in A_\infty(d\sigma)\).
    Thus if  the \(L^p\) Dirichlet problem for \(L_0\) is solvable this implies 
        solvability of the \(L^q\) Dirichlet problem for \(L_1\), for some \( q \geq p \).
\end{thm}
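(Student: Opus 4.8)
The plan is to follow the Fefferman–Kenig–Pipher strategy as adapted to chord arc domains in \cite{milakis_harmonic_2011}, reducing the $A_\infty$ statement to a differential inequality for the elliptic measure that is driven by a Carleson measure. Concretely, fix a boundary ball $\Delta = \Delta(Q_0, r_0)$ and the associated Carleson region $T(\Delta)$, and fix a nonnegative function $f$ supported in $\Delta$ with $\int f\, d\omega_0^{X_0} = 1$, where $X_0$ is a corkscrew point. Let $u$ be the $L_0$-solution with boundary data $f$, i.e. $u(X) = \int f\, d\omega_0^X$, and let $v$ be the $L_1$-solution with the same data $f$. The heart of the matter is to estimate $v$ from above in terms of $u$: one writes $L_1 v = L_1 v - L_1 u + L_1 u = \Div((A_1 - A_0)\nabla u) + (L_1 - L_0) u$, and since $L_0 u = 0$ this becomes $L_1(v - u) = \Div((A_0 - A_1)\nabla u)$. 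Applying the Green representation for $L_1$ in $\Omega$ and estimating the resulting integral against the $L_1$-Green function $G_1$, one arrives at a bound of the shape
\begin{equation*}
 v(X_0) \lesssim u(X_0) + \int_\Omega |A_0 - A_1||\nabla u|\,|\nabla_Y G_1(X_0,Y)|\,dY,
\end{equation*}
and the goal is to dominate the last integral by $C u(X_0)$ up to a term controlled by the Carleson norm of $\mu'$, so that ultimately $v(X_0) \le C u(X_0)$ with $C$ independent of $f$; by a standard argument (testing against $f = \mathbf{1}_E/\omega_0^{X_0}(E)$ and comparing corkscrew values) this yields $\omega_1^{X_0}(\Delta) \lesssim \omega_0^{X_0}(\Delta)$-type control and, iterating scales, the $A_\infty$ conclusion.

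The key steps, in order, are: (i) establish the interior and boundary regularity theory for $L_1$ (and $L_0$) with BMO antisymmetric part — Harnack, boundary Hölder continuity, the comparison principle, existence and pointwise bounds for the Green function and its relation to elliptic measure — which is exactly what \cite{li_lp_2019} provides and which I would expect to have been recalled in the preceding sections; (ii) prove the pointwise Green function / elliptic measure comparisons $G_i(X_0, Y) \approx \delta(Y)^{2-n}\omega_i^{X_0}(\Delta_Y)/\sigma(\Delta_Y)$ on chord arc domains, where $\Delta_Y$ is the boundary ball of radius $\delta(Y)$ centered at a boundary projection of $Y$; (iii) use Cauchy–Schwarz in $Y$ to split $|A_0 - A_1||\nabla u||\nabla G_1|$ and absorb the local average of $|A_0 - A_1|^r$ — this is where the exponent $r = r(n,\lambda_0,\Lambda_0)$ from \eqref{def:beta_r} enters, together with reverse Hölder / Caccioppoli estimates for $\nabla u$ and $\nabla G_1$ that upgrade their $L^2$ averages to the dual exponent $r'$; (iv) convert the resulting integral into a Carleson measure estimate, i.e. bound $\int_{T(\Delta)} \beta_r(Y)^2 \delta(Y)^{-1} (\text{square functions of } u \text{ and } G_1)\, dY$ using the Carleson measure property of $\mu'$ together with the fact that the square function of $u$ is controlled by the nontangential maximal function of $u$ in an appropriate (good-$\lambda$ / stopping time) sense; (v) run the standard stopping time decomposition of $\Delta$ to handle the fact that the Carleson norm of $\mu'$ need only be finite (not small) — one subdivides into sawtooth regions on which the local Carleson norm is small, applies the small-norm estimate there, and sums, losing only a multiplicative constant, which is precisely why the conclusion is $A_\infty$ rather than $B_p$ with the same $p$.

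The main obstacle I anticipate is step (iii)–(iv): controlling $|A_0 - A_1||\nabla u||\nabla_Y G_1|$ when $A_0 - A_1$ is merely in an $L^r$-averaged sense and not bounded. In the classical bounded-coefficient proof one pulls $\|A_0-A_1\|_\infty$ out pointwise; here one must instead absorb it via Hölder's inequality into the Carleson integral, which forces one to bound $\nabla u$ and $\nabla G_1$ not in $L^2$ but in $L^{r'}$ on Whitney balls. That step relies on interior reverse Hölder inequalities for gradients of solutions, which for operators with BMO antisymmetric part hold only for exponents $r'$ sufficiently close to $2$ — equivalently $r$ sufficiently large — and the self-consistency of this (the admissible range of $r'$ must still leave room for the Carleson absorption) is exactly what pins down the quantitative dependence $r = r(n,\lambda_0,\Lambda_0)$. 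A secondary technical point is that the antisymmetric part contributes boundary terms in the Green/integration-by-parts identities (since $\Div(A^a \nabla u)$ is nontrivial even though $A^a$ is antisymmetric, once $A^a$ is only BMO); these must be shown to vanish or to be controlled, using that $A^a \nabla u \cdot \nabla u = 0$ pointwise and a careful approximation argument as in \cite{li_lp_2019}. Once these ingredients are in place, the passage from the pointwise bound $v(X_0)\lesssim u(X_0)$ to $\omega_1 \in A_\infty(d\sigma)$, and thence to solvability of the $L^q$ Dirichlet problem for some $q \ge p$, is routine and identical to \cite{milakis_harmonic_2011}.
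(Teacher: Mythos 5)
Your proposal has the right general flavor (the FKP--MPT scheme, $L^r$ averages plus reverse H\"older/Caccioppoli for gradients to handle the unbounded discrepancy, a large-norm reduction at the end), but two of its central choices would make the argument fail. First, the representation formula is oriented the wrong way. You pair the Green function $G_1$ of the \emph{unknown} operator with $\nabla u_0$; the proof actually runs with the opposite pairing, $F(X)=\int_\Omega \EPS\,\nabla u_1(Y)\cdot\nabla_Y G_0(X,Y)\,dY$, established carefully in \refprop{prop:DefF} and \refprop{prop:DefF_(u_0-u_1)} (see also Remark \ref{rem:GreenFunctionProperty}). The orientation matters because every subsequent step must be run against the measure one already controls: $G_0$ is compared with $\omega_0$ via \refprop{prop:GreenToOmega}, the Carleson hypothesis on $\mu'$ (taken with respect to $\sigma$) is converted into the $\omega_0$-weighted smallness condition of \refthm{thm:theorem2.3} using $\omega_0\in A_\infty(d\sigma)$, and the maximal function that appears is $M_{\omega_0}$. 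With your pairing the analogous quantities are weighted by $G_1/\delta^2\approx \omega_1(\Delta_Y)\,\delta^{-n}$ and involve $M_{\omega_1}$, and turning the hypothesis into such $\omega_1$-adapted conditions requires $\omega_1\in A_\infty(d\sigma)$ --- precisely what is to be proved, so the argument is circular.

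Second, the intermediate target $v(X_0)\le C\,u(X_0)$ for all nonnegative data, i.e. $d\omega_1/d\omega_0\le C$, is strictly stronger than the $A_\infty$ conclusion and is false under a merely finite (indeed even small) Carleson norm: perturbations satisfying the hypotheses can produce $\omega_1$ whose density with respect to $\omega_0$ is unbounded (this is exactly why the theorems are stated as $A_\infty$, respectively $B_p$, preservation). Consequently the closing step ``test against $f=\mathbf{1}_E/\omega_0^{X_0}(E)$'' cannot work. The paper never proves a pointwise domination; it proves two integrated estimates --- \reflemma{lem:2.9}, bounding $\|\tilde N_\alpha[F]\|_{L^p(d\mu)}$ by $\EPS_0\|M_{\omega_0}[S_{\bar M}u_1]\|_{L^p(d\mu)}$, and \reflemma{lem:2.10}, the good-$\lambda$ square-function bound $\|S_{\bar M}[F]\|_{L^2(d\omega_0)}^2\lesssim\|\tilde N_\alpha[F]\|_{L^2(d\omega_0)}^2+\|f\|_{L^2(d\omega_0)}^2$ --- and then absorbs to get $\|N_\alpha[u_1]\|_{L^2(d\omega_0)}\lesssim\|f\|_{L^2(d\omega_0)}$, i.e. $\omega_1\in B_2(\omega_0)$ under the weighted smallness of \refthm{thm:theorem2.3}; the passage from finite Carleson norm of $\mu'$ to $A_\infty(d\sigma)$ is then the separate MPT reduction through Theorem \ref{Thm:8.2}. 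Unless you reorient the identity and replace the pointwise goal by this $L^2(d\omega_0)$ nontangential/square-function scheme, the proof does not go through.
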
\smallskip

\begin{thm}\label{thm:NormSmall}
	Let \(\Omega\subset\mathbb{R}^n\) be a bounded chord arc domain and \(L_0=\mathrm{div}(A_0\nabla\cdot)\) 
	    and \(L_1=\mathrm{div}(A_1\nabla\cdot)\) two elliptic operators 
	    that satisfy \eqref{eq:elliptic} and \eqref{eq:A^ainBMO}. 
    Let \(\omega_0\) and \(\omega_1\) be the corresponding elliptic measures. Let $1<p<\infty$ and assume that 
    \( \omega_0 \in B_p(d\sigma) \). Then there exists \(1\leq r=r(n,\lambda_0,\Lambda_0)<\infty\) and \(\gamma=\gamma(n,p,[\omega_0]_{B_p},\lambda_0,\Lambda_0) > 0 \) 
    such that if \( \|\mu'\|_{\CO} \leq \gamma \)
	then \( \omega_1 \in B_p(d\sigma) \).
	Thus if  the \(L^p\) Dirichlet problem for \(L_0\) is solvable 
	    this implies solvability of the \(L^p\) Dirichlet problem for \(L_1\),
        for the same exponent \( p \).
\end{thm}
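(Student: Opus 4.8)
The plan is to follow the Fefferman–Kenig–Pipher scheme as adapted by Milakis–Pipher–Toro, but carefully tracking the effect of the unbounded antisymmetric part through the Li–Pipher theory. First I would fix the exponent $r=r(n,\lambda_0,\Lambda_0)$ large enough that all the De Giorgi–Nash–Moser–type estimates (interior and boundary Hölder continuity, Harnack, comparison principle, CFMS-type estimates relating elliptic measure to the Green function) from the Li–Pipher framework hold uniformly; by John–Nirenberg the $\BMO$ bound on $A^a$ controls its $L^r$ norms on all balls, so this is the exponent appearing in $\beta_r$. The goal is to show that the density $k_1 = d\omega_1/d\sigma$ satisfies a reverse Hölder inequality on surface balls, which by the self-improvement of $B_p$ weights is equivalent to solvability of the $L^p$ Dirichlet problem for $L_1$.

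The heart of the argument is a Carleson-measure / good-$\lambda$ estimate comparing $\omega_0$ and $\omega_1$ on a fixed surface ball $\Delta_0 = \Delta(Q_0, R)$. Writing $u$ for the $L_1$-solution with boundary data $f$ supported in a surface ball, and $u_0$ for the corresponding $L_0$-harmonic function, one represents the difference $u - u_0$ via the Green function $G_0$ for $L_0$ and the divergence-form source $\Div((A_0 - A_1)\nabla u)$. Integrating by parts and using the ellipticity plus the pointwise control $|A_0 - A_1| \le$ (averaged quantity controlled by $\beta_r$) one arrives at an estimate of the form
\begin{equation*}
\int_{T(\Delta_0)} |A_0 - A_1|\,|\nabla u|\,|\nabla_Y G_0(X,Y)|\,dY
\lesssim \left( \int_{T(\Delta_0)} \frac{\beta_r(Y)^2}{\delta(Y)}\,|\nabla u|^2\,\delta(Y)\,dY \right)^{1/2} \left( \int \frac{|\nabla_Y G_0|^2}{\delta(Y)}\,dY\cdots \right)^{1/2},
\end{equation*}
where the second factor is handled by known square-function/nontangential-maximal-function bounds for $L_0$-solutions and the Carleson embedding theorem (here the $A_\infty$, or in the small-norm case $B_p$, property of $\omega_0$ enters), while the first factor is where $\|\mu'\|_{\CO} \le \gamma$ is consumed: the Carleson norm of $\mu'$ times the $L^2$ bound on the square function of $u$, which in turn is controlled by $\|N(u)\|_{L^2(\omega_0)}$ using that $\omega_0 \in B_p$ lets us pass between $\sigma$ and $\omega_0$ at the level of $p$. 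One must deal with the unboundedness of $|A_0-A_1|$ honestly: the pointwise bound on the integrand is replaced by a Hölder-in-$Y$ splitting, pairing the $L^r$ average $\beta_r$ against $L^{r'}$ averages of $|\nabla u|$ and $|\nabla G_0|$, the latter being controlled by Caccioppoli and the reverse-Hölder self-improvement of gradients of solutions — which again holds in the Li–Pipher setting precisely because $A^a \in \BMO$.

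Assembling these, one obtains, for the truncated nontangential maximal functions, an estimate
\begin{equation*}
\|N_{\Delta_0}(u - u_0)\|_{L^2(\Delta_0, \omega_0)} \le C\,\gamma^{1/2}\, \|N(u)\|_{L^2(2\Delta_0,\omega_0)} + (\text{controllable error}),
\end{equation*}
which, combined with the known estimate for $u_0$ coming from $\omega_0 \in B_p$, and a standard absorption/bootstrap argument (choosing $\gamma$ small relative to the constants $n,p,[\omega_0]_{B_p},\lambda_0,\Lambda_0$), yields the reverse Hölder inequality for $k_1$ with exponent $p$ on $\Delta_0$; since $\Delta_0$ is arbitrary this gives $\omega_1 \in B_p(d\sigma)$. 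I expect the main obstacle to be making the integration-by-parts and the resulting Carleson estimate rigorous when $A_0 - A_1$ is only $L^r$ rather than $L^\infty$: one must justify the use of the Green function representation and control all the boundary/cutoff terms, and one must verify that every auxiliary estimate invoked (square function bounds, CFMS, boundary Hölder continuity, gradient reverse-Hölder) genuinely survives with the unbounded antisymmetric coefficient, citing or reproving the Li–Pipher analogues with the $L^r$-quantified dependence; the choice of $r$ must be made once and for all so that all of these hold simultaneously. The small-norm bookkeeping — ensuring $\gamma$ depends only on the allowed parameters and not circularly on quantities we are trying to bound — is the other delicate point, handled as usual by first proving the estimate with the a priori assumption that the relevant norms are finite (via a limiting/approximation argument replacing $A_1$ by mollified coefficients) and then removing it.
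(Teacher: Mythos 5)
Your sketch reproduces the overall Fefferman--Kenig--Pipher/Milakis--Pipher--Toro machinery that the paper also uses (the representation of \(F=u_1-u_0\) through \(G_0\) and \(\operatorname{div}(\varepsilon\nabla u_1)\), the \(L^r\)--\(L^{r'}\) H\"older splitting to handle the unbounded antisymmetric part, Caccioppoli plus gradient reverse H\"older, mollification to justify the integration by parts, and a Carleson/stopping-time estimate feeding an absorption argument). Up to that point you are on the paper's track, including the correct identification of the delicate points.

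The genuine gap is in the final assembly, and it is exactly the point of this theorem: from an estimate of the form \(\|N(u_1-u_0)\|_{L^2(\omega_0)}\lesssim \gamma^{1/2}\|N(u_1)\|_{L^2(\omega_0)}+\dots\) and absorption you obtain \(\|N(u_1)\|_{L^2(d\omega_0)}\lesssim\|f\|_{L^2(d\omega_0)}\), i.e. \(\omega_1\in B_2(d\omega_0)\). Combined with \(\omega_0\in B_p(d\sigma)\) this only yields \(\omega_1\in A_\infty(d\sigma)\), i.e. a reverse H\"older inequality with \emph{some} exponent depending on both \([\omega_0]_{B_p}\) and the \(B_2(\omega_0)\) constant -- this is precisely the large-norm conclusion (Theorem \ref{thm:NormBig}), not \(\omega_1\in B_p(d\sigma)\) with the same \(p\). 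Your claim that the \(L^2(\omega_0)\) bound "yields the reverse H\"older inequality for \(k_1\) with exponent \(p\)" is unjustified: no mechanism in your argument produces the exponent \(p\). The paper preserves the exponent by running the whole scheme in \(L^q(d\sigma)\) with \(1/p+1/q=1\): Lemma \ref{lem:2.9} is proved for an arbitrary doubling measure \(\mu\) precisely so it can be applied with \(\mu=\sigma\); the hypothesis \(\omega_0\in B_p(d\sigma)\), equivalently \(\sigma\in A_q(d\omega_0)\), gives boundedness of \(M_{\omega_0}\) on \(L^q(d\sigma)\); and the square-function bound is transferred from \(\omega_0\) to \(\sigma\) through the good-\(\lambda\) inequality and \(A_\infty\) (Corollary \ref{cor:GoodLambdaForomega_0}, estimate \eqref{eq:Lemma2.16}). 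Only then does absorption give \(\|\tilde N_\alpha[u_1]\|_{L^q(d\sigma)}\lesssim\|f\|_{L^q(d\sigma)}\), i.e. \(\omega_1\in B_p(d\sigma)\). This duality/weighted-maximal-function step is missing from your proposal and cannot be bypassed by working in \(L^2(d\omega_0)\).

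A secondary point in the same spirit: the smallness you are given is \(\|\beta_r^2\delta^{-1}dZ\|_{\mathcal{C}}\le\gamma\), normalized by \(\sigma\), whereas the quantity that actually produces the gain \(\varepsilon_0\) in the key lemma is \(\sup_\Delta \omega_0(\Delta)^{-1}\int_{T(\Delta)}\beta_r^2\,G_0\,\delta^{-2}\,dZ\) (the hypothesis of Theorem \ref{thm:theorem2.3}); converting one into the other uses the \(A_\infty/B_p\) property of \(\omega_0\) together with the Green function--elliptic measure comparison, and should be stated explicitly rather than folded into a surface-measure Carleson embedding against \(|\nabla u_1|^2\delta\).
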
\smallskip
    
(The Carleson norm \(\Vert\cdot\Vert_{\CO}\) is defined below in \refdef{def:CarlesonNorm}.) \\

When we started to develop the above perturbation theory for unbounded operators we had in mind one particular application in the spirit of papers by Kenig and Pipher \cite{kenig_dirichlet_2001} and Dindo\v{s}, Petermichl and Pipher \cite{dindos_lp_2007} and extend such results to unbounded matrices.

From  \cite{kenig_dirichlet_2001,dindos_lp_2007}, it follows that if  $\Omega$ is a Lipschitz domain and $A:\Omega\to\mathbb R^{n\times n}$ is a bounded elliptic matrix such that 
\[ d\tilde{\mu}(X):= \delta(X)^{-1} \Osc_{B(X, \delta(X)/2)} \sup_{ij} |a_{ij}(Z)|^2\]
is a Carleson measure then the \(L^p\) Dirichlet problem is solvable for some large $p<\infty$. Additionally, if $p\in(1,\infty)$ is given and both the Lipschitz character of our domain and the Carleson norm of $\tilde\mu$ is sufficiently small then we can conclude solvability of the \(L^p\) Dirichlet problem for this given value of $p$. So again we have one large-Carleson and one small-Carleson type result.

To obtain this one needs perturbation results since in the paper \cite{dindos_lp_2007} mollification procedure is used to replace above Carleson condition with 
\[d\hat{\mu}(X):= \sup_{B(X, \delta(X)/2)}|\nabla A(Z)|^2\delta(Z).\]
This gives the authors better matrix to work with and get the conclusions. To deduce the same for the original matrix we apply our Theorems \refthm{thm:NormBig} and \refthm{thm:NormSmall} and improve conclusions of \cite{kenig_dirichlet_2001,dindos_lp_2007} to unbounded matrices.\\

We note that under a different assumption of so-called \( t \)-independence of the coefficients on $A$ the solvability of the $L^p$  Dirichlet problem for matrices with BMO antisymmetric part was shown in \cite{hofmann_dirichlet_2022}.

\begin{thm}\label{thm:ApplicationSmallNorm}
	Let \(\Omega\) be a bounded Lipschitz domain with Lipschitz character \(K>0\) (that is the Lipschitz constant of graphs describing $\partial\Omega$ is bounded by $K$).
	    Let \(L_0=\mathrm{div}(A\nabla\cdot)\) be an elliptic operator satisfying \eqref{eq:elliptic} and \eqref{eq:A^ainBMO} 
	    let \(\alpha_r\) be
\begin{align}
    \alpha_r(Z):=\left(\fint_{B(Z,\delta(Z)/2)}|A-(A)_{B(Z,\delta(Z)/2)}|^rdY\right)^{1/r}
    \label{eq:defofalpha_r}.
\end{align}
Then for every \(1<p<\infty\) there exists \(r=r(n,\lambda_0,\Lambda_0)>1\) and \( \EPS = \EPS(p) > 0 \) 
	    such that if
	\[ \| \alpha_r(Z)^2 \delta(Z)^{-1} dZ \|_{\CO} < \EPS \WORD{and} K<\EPS, \] 
	    then \(\omega\in B_p(\sigma)\), i.e. the \( L^p \) Dirichlet problem is solvable for the operator $L_0$ in $\Omega$.
\end{thm}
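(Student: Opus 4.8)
The plan is to deduce \refthm{thm:ApplicationSmallNorm} from \refthm{thm:NormSmall} by constructing a suitable ``good'' comparison operator $L_1 = \Div(A_1\nabla\cdot)$ whose coefficient matrix is a mollification of $A$, exactly as in \cite{dindos_lp_2007}, and then checking that (a) the discrepancy $A - A_1$ satisfies the small-Carleson hypothesis of \refthm{thm:NormSmall}, and (b) $\omega_1 \in B_p(\sigma)$, so that the conclusion transfers back to $L_0$. First I would flatten the boundary: since $\Omega$ is Lipschitz with small character $K < \EPS$, locally straighten $\partial\Omega$ and work in a half-space chart, so that $\delta(Z)$ is comparable to the distance to a hyperplane. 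Next, define $A_1(X)$ by averaging $A$ over a Whitney-type ball $B(X,\delta(X)/2)$ against a smooth bump (a Littlewood--Paley/mollification at scale $\delta(X)$), which produces a matrix that is still elliptic with the same $\lambda_0$ (averaging preserves the pointwise quadratic-form bounds) and whose antisymmetric part is still in $\BMO$ with a comparable norm, so \eqref{eq:elliptic} and \eqref{eq:A^ainBMO} hold for $A_1$; moreover $A_1$ is smooth and, crucially, the smoothed matrix satisfies the \emph{gradient} Carleson condition $\sup_{B(X,\delta/2)} |\nabla A_1|^2 \delta \, dX$ with small Carleson norm, controlled by $\EPS$ and $K$ via the oscillation bound $\alpha_r$.

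The first key step is the pointwise comparison $|A(X) - A_1(X)| \lesssim \big(\fint_{B(X,\delta(X)/2)} |A - (A)_{B(X,\delta(X)/2)}|^r\big)^{1/r} = \alpha_r(X)$, which is immediate from Jensen's inequality and the definition of $A_1$ as a weighted average (the weight is bounded by a constant times $|B(X,\delta/2)|^{-1}$). Hence $\beta_r'(X) := \big(\fint_{B(X,\delta/2)} |A_1 - A|^r\big)^{1/r} \lesssim \alpha_r(X)$ after a routine doubling/covering argument to move from the ball centered at $X$ to nearby balls, and therefore
\[
\Big\| \frac{\beta_r'(Z)^2}{\delta(Z)}\, dZ \Big\|_{\CO} \lesssim \Big\| \frac{\alpha_r(Z)^2}{\delta(Z)}\, dZ \Big\|_{\CO} < C\,\EPS.
\]
The second key step is to establish $\omega_1 \in B_p(\sigma)$ for the smoothed operator $L_1$. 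Here I would invoke the Kenig--Pipher / Dindo\v{s}--Petermichl--Pipher machinery: for an elliptic operator whose coefficients satisfy $\sup_{B(X,\delta/2)}|\nabla A_1|^2\,\delta\,dX \in \CO$ with small norm on a Lipschitz domain with small character, the $L^p$ Dirichlet problem is solvable for the given $p$, i.e. $\omega_1 \in B_p(\sigma)$ --- and the BMO-antisymmetric-part version of this input is precisely what the companion results (and the theory of \cite{li_lp_2019}) provide, since the symmetric part of $A_1$ is bounded and smooth while the antisymmetric part, though possibly unbounded, has its gradient controlled in the same Carleson sense. One must check that the relevant constants depend only on $n, p, \lambda_0, \Lambda_0, K$ and the Carleson norm, so that smallness of $K$ and $\EPS$ suffices.

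Finally, with $\omega_1 \in B_p(\sigma)$ in hand and $\|\mu'\|_{\CO} < \gamma(n,p,[\omega_1]_{B_p},\lambda_0,\Lambda_0)$ guaranteed by choosing $\EPS$ small enough (relative to the $\gamma$ produced by \refthm{thm:NormSmall} applied with $L_0 \leftrightarrow L_1$ and $L_1 \leftrightarrow L_0$ in the roles of that theorem), \refthm{thm:NormSmall} yields $\omega \in B_p(\sigma)$, which is the claim. The main obstacle I anticipate is step two: verifying that the mollified operator $L_1$ genuinely falls within the scope of the known $B_p$-solvability results \emph{in the presence of the unbounded antisymmetric part} --- one needs the a priori estimates (boundary H\"older continuity, Harnack, comparison principle, existence of a well-defined elliptic measure) from \cite{li_lp_2019} to hold uniformly, and one must confirm that the gradient Carleson condition on $A_1^a$ is the correct substitute for boundedness in those arguments. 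A secondary technical point is the interplay between the mollification scale and the Lipschitz flattening near $\partial\Omega$, where $\delta(X)$ degenerates and the Whitney balls must be chosen to stay inside $\Omega$; this is routine but must be done carefully so that the constants remain scale-invariant.
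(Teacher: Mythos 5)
Your proposal follows essentially the same route as the paper's proof: mollify $A$ at scale $\delta(X)$ to get a smooth matrix that inherits $\lambda_0$-ellipticity and the BMO bound on the antisymmetric part, verify that it satisfies the Kenig--Pipher/Dindo\v{s}--Petermichl--Pipher gradient Carleson condition with norm controlled by $\|\alpha_r(Z)^2\delta(Z)^{-1}dZ\|_{\CO}$ (so its elliptic measure lies in $B_p(\sigma)$ by the small-constant result of \cite{dindos_lp_2007}, extended to BMO antisymmetric part), show the discrepancy satisfies $\beta_r\lesssim\alpha_r$, and conclude by \refthm{thm:NormSmall}. One minor caution: the pointwise inequality $|A(X)-A_1(X)|\lesssim\alpha_r(X)$ you assert is false in general (a spike of $A$ near $X$ defeats it), but it is also not needed --- the averaged bound $\beta_r(X)\lesssim\alpha_r(X)$, which is what \refthm{thm:NormSmall} actually requires, follows from the triangle inequality together with Jensen's inequality applied inside the mollification, exactly as in the paper.
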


Similarly,  \cite{kenig_dirichlet_2001} can be improved as follows:

\begin{thm}\label{thm:ApplicationBigNorm}
	Let \(\Omega\) be a bounded Lipschitz domain and \(L_0=\mathrm{div}(A\nabla\cdot)\) 
	    an elliptic operator satisfying \eqref{eq:elliptic} and \eqref{eq:A^ainBMO}. Consider \(\alpha_r\) defined as above in \eqref{eq:defofalpha_r}. 
	Then there exists \(r=r(n,\lambda_0,\Lambda_0)>1\) such that if
	\[  \| \alpha_r(Z)^2 \delta(Z)^{-1} dZ \|_{\CO} < \infty \] 
	     then the corresponding elliptic measure of $L_0$ belongs to \(\omega\in A_\infty(d\sigma)\), 
	     i.e. the \( L^p \) Dirichlet problem for $L_0$ is solvable for all $p\in (p_0,\infty)$ where \emph{some} \(  p_0 >1 \) is sufficiently large.
\end{thm}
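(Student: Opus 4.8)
The plan is to deduce \refthm{thm:ApplicationBigNorm} from \refthm{thm:NormBig} by replacing $A$ with a Whitney--scale mollification $\tilde A$. I would construct a smooth elliptic matrix $\tilde A$ on $\Omega$ with three properties: (i) $\tilde A$ satisfies \eqref{eq:elliptic} with the same $\lambda_0$ and \eqref{eq:A^ainBMO} with $\Lambda_0$ replaced by $C(n)\Lambda_0$; (ii) $\tilde A$ satisfies the gradient Carleson condition of \cite{kenig_dirichlet_2001,dindos_lp_2007}, i.e. $|\nabla\tilde A(X)|^2\delta(X)\,dX$ is a Carleson measure on $\Omega$; and (iii) the discrepancy measure $d\mu'$ of \eqref{def:beta_r} built from the pair $(A_0,A_1)=(\tilde A,A)$ is a Carleson measure with $\|\mu'\|_{\CO}\lesssim\|\alpha_r(Z)^2\delta(Z)^{-1}\,dZ\|_{\CO}$. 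Granting (i)--(iii): property (ii) places $\tilde L:=\Div(\tilde A\nabla)$ in the scope of the Dindo\v{s}--Petermichl--Pipher solvability result for operators with $\BMO$ antisymmetric part (the mollification recalled in the introduction is designed precisely to produce such a matrix), so $\tilde\omega\in A_\infty(d\sigma)$; since a bounded Lipschitz domain is chord arc (\refdef{def:CAD}) and both $L$ and $\tilde L$ satisfy \eqref{eq:elliptic}--\eqref{eq:A^ainBMO}, \refthm{thm:NormBig} applied with $(L_0,L_1)=(\tilde L,L)$ together with (iii) transfers this to $\omega\in A_\infty(d\sigma)$, which is the assertion.

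For the construction I would fix $\phi\in C_c^\infty(B(0,1))$ with $\phi\ge0$ and $\int\phi=1$, set $\Phi_X(W):=(4/\delta(X))^n\phi\big(4(X-W)/\delta(X)\big)$ so that $\operatorname{supp}\Phi_X\subset B(X,\delta(X)/4)\subset\Omega$ and $\int\Phi_X\equiv1$, and let $\tilde A(X):=\int_\Omega A(W)\Phi_X(W)\,dW$ entry--wise. Since averaging is linear, $\tilde A^s=\widetilde{A^s}$ and $\tilde A^a=\widetilde{A^a}$; convexity of $M\mapsto\xi^TM\xi$ on symmetric matrices gives \eqref{eq:elliptic} for $\tilde A$ with the same $\lambda_0$, and boundedness of this Whitney--localized mollification on $\BMO$ gives \eqref{eq:A^ainBMO} up to a dimensional constant, so (i) holds. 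A short computation shows $|\nabla_X\Phi_X(W)|\lesssim\delta(X)^{-n-1}$ with $\operatorname{supp}\nabla_X\Phi_X(\cdot)\subset B(X,\delta(X)/2)$ and $\int\nabla_X\Phi_X=\nabla_X\!\int\Phi_X=0$; writing $B:=B(X,\delta(X)/2)$, this yields
\[
|\nabla\tilde A(X)|=\Big|\int_\Omega\big(A(W)-(A)_{B}\big)\nabla_X\Phi_X(W)\,dW\Big|\lesssim\frac{1}{\delta(X)}\fint_{B}\big|A-(A)_{B}\big|=\frac{\alpha_1(X)}{\delta(X)}\le\frac{\alpha_r(X)}{\delta(X)},
\]
which is (ii). For (iii) one writes $\tilde A(X)-A(X)=\int(A(W)-A(X))\Phi_X(W)\,dW$, so $|\tilde A-A|$ is controlled pointwise by the Whitney oscillation of $A$; raising to the $r$th power, averaging over $B(Z,\delta(Z)/2)$ and using Jensen bounds $\beta_r(Z)$ by a Whitney supremum of $\alpha_r$, and a standard comparison of Whitney--averaged Carleson measures (permitting changes of the Whitney dilation factor and passage to such local suprema at the cost of a dimensional constant) gives (iii).

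To finish, take $r=r(n,\lambda_0,\Lambda_0)$ to be the exponent produced by \refthm{thm:NormBig} applied with $\BMO$ bound $C(n)\Lambda_0$; since $\alpha_s(Z)\le\alpha_r(Z)$ for $s\le r$ by Jensen and $|\nabla\tilde A|\lesssim\alpha_1/\delta\le\alpha_r/\delta$, the hypothesis $\|\alpha_r(Z)^2\delta(Z)^{-1}\,dZ\|_{\CO}<\infty$ validates every Carleson estimate above, and the chain $\tilde\omega\in A_\infty(d\sigma)\Rightarrow\omega\in A_\infty(d\sigma)$ gives the conclusion: the $L^p$ Dirichlet problem for $L$ is solvable for all $p$ in some interval $(p_0,\infty)$. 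The main obstacle is neither the mollification (routine Whitney bookkeeping) nor the perturbation step (exactly \refthm{thm:NormBig}), but the solvability of the mollified operator $\tilde L$: one must run the square--function and nontangential--maximal--function arguments of \cite{dindos_lp_2007} with the \emph{unbounded} antisymmetric part present, absorbing the extra commutator--type terms it produces by combining the Carleson bound on $|\nabla\tilde A^a|^2\delta$ with the interior regularity theory for such operators from \cite{li_lp_2019}.
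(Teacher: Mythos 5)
Your construction and bookkeeping match the paper's almost exactly: the Whitney-scale mollification $\hat A(X)=(\phi_{\delta(X)}*A)(X)$, the preservation of ellipticity and of the BMO bound on the antisymmetric part, the pointwise estimate $\delta(Z)\sup_{\hat B(Z)}|\nabla\hat A|\lesssim\alpha_r(Z)$, the bound $\|\beta_r^2\delta^{-1}dZ\|_{\CO}\lesssim\|\alpha_r^2\delta^{-1}dZ\|_{\CO}$ for the discrepancy between $\hat A$ and $A$, and the final transfer via \refthm{thm:NormBig} with $(A_0,A_1)=(\hat A,A)$ are all exactly the steps carried out in Section \ref{S:Application}. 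The problem is the step you yourself flag as ``the main obstacle'': solvability (in the $A_\infty$ sense) for the mollified operator $\tilde L=\Div(\tilde A\nabla)$. You invoke ``the Dindo\v{s}--Petermichl--Pipher solvability result'' for matrices with BMO antisymmetric part, but \cite{kenig_dirichlet_2001} and \cite{dindos_lp_2007} are proved for \emph{bounded} matrices, and extending their large-Carleson square-function/nontangential-maximal-function machinery to an unbounded antisymmetric part is precisely the kind of substantive work this theorem is about; saying one must ``absorb the extra commutator-type terms'' is a plan, not a proof. As written, your argument reduces \refthm{thm:ApplicationBigNorm} to an unproved statement of essentially the same nature (solvability for a smooth, gradient-Carleson matrix with unbounded antisymmetric part), so there is a genuine gap at the heart of the proof.

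The paper closes this gap by a different and much shorter route, which you could adopt: for the large-norm case one does not need the full Kenig--Pipher argument for $\hat A$ at all. Instead, one uses the criterion of \cite{dindos_bmo_2017,kenig_square_2014} (\refthm{thm:BMODirichletProblem}): if $\||\nabla u|^2\delta(X)dX\|_{\CO}\lesssim\|f\|^2_{L^\infty(\partial\Omega)}$ for all continuous data $f$, then $\omega\in A_\infty(d\sigma)$. The required Carleson estimate for solutions of $\hat L$ follows from the local square-function bound of \cite[Cor 5.2]{dindos_regularity_2018},
\[
\int_{T(\Delta)}|\nabla u|^2\delta\,dX\lesssim\int_{2\Delta}\bigl(|f|^2+N_\alpha(u)^2\bigr)d\sigma,
\]
whose proof uses only ellipticity and boundedness of the symmetric part (so it survives the BMO antisymmetric part), together with the maximum principle $N_\alpha(u)\le\|f\|_{L^\infty}$ available from \cite{li_lp_2019}. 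This yields $\hat\omega\in A_\infty(d\sigma)$ for the mollified operator under the hypothesis $\|\hat\alpha^2\delta^{-1}dZ\|_{\CO}<\infty$, after which your perturbation step via \refthm{thm:NormBig} goes through verbatim. (The paper only leans on the DPP small-Carleson theorem, with the claim that it extends to BMO antisymmetric part, for the small-norm statement \refthm{thm:ApplicationSmallNorm}, not for the theorem you are proving.)
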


It follows the we now have a larger class of elliptic operators 
    that solve the \( L^p \) Dirichlet problem on bounded Lipschitz domains than was previously known, since we are
    replacing the oscillation of \( A \) measured in $L^\infty$ norm by an an \( L^p \) mean oscillation for some large \( p > 2 \).
    
It is worth noting that the study of boundary value problems for scalar elliptic operators has a long history. The reader might be interested to read more in the survey paper \cite{DP22}. \\
    
The paper is organized as follows: 
We start with Section \ref{S:Preliminaries} containing definitions and other preliminaries. 
In Section \ref{S:LargeNormProof}, we outline the proof of \refthm{thm:NormBig},
    which closely follows that of \cite{milakis_harmonic_2011} and \cite{fefferman_theory_1991};
    this contains a subsection with results needed to prove the key identity
\begin{equation}\label{eqF}
 F(X) \coloneqq u_1(X) - u_0(X) = \int_{\Omega} \EPS \nabla u_1(Y) \nabla_Y G_0(X,Y) dY.
 \end{equation}
 We note that the formula  \eqref{eqF} for $F$ comes from the paper  \cite{fefferman_theory_1991} and was also carried over to \cite{milakis_harmonic_2011}. We have realized that the proof that \eqref{eqF} holds in our case does not follow immediately from the properties of the Green's function since the difference $u_1-u_0$ does not have sufficient regularity (c.f. Proposition \ref{prop:GreenExist}). This observation also applies to \cite{fefferman_theory_1991} and \cite{milakis_harmonic_2011} but a rather simple approximation argument can close this gap in the case of bounded coefficients. In our case we had to work a bit harder to  show that \eqref{eqF} is indeed true.
\\

The meat of the proof of \refthm{thm:NormBig} consists of proving \reflemma{lem:2.9} and \reflemma{lemma:2.10/2.16}, 
    which is done in Sections \ref{S:Lemma2.9Proof} and \ref{S:Lemma2.10Proof} respectively.
With these results established, \refthm{thm:NormSmall} follows (Section \ref{S:SmallNormProof}).
Finally, in Section \ref{S:Application} we prove \refthm{thm:ApplicationBigNorm} and \refthm{thm:ApplicationSmallNorm}.

\section{Preliminaries} \label{S:Preliminaries}
Here and in the following sections 
    we implicitly allow all constants to depend on \( n,\lambda_0 \) and \( \Lambda_0 \).

\begin{defin}\label{def:CorcscrewCondition}
	\( \Omega\subset\mathbb R^n \) satisfies the \emph{corkscrew condition} with parameters \( M > 1, r_0 > 0 \) if, 
	    for each boundary ball \( \Delta \coloneqq \Delta(Q,r) \) with \( Q \in \partial \Omega \) 
	    and \( 0 < r < r_0 \),
		there exists a point \( A(Q,r) \in \Omega  \), called a \emph{corkscrew point relative to} \( \Delta \),
		    such that \( B(A(Q,r),M^{-1}r) \subset T(Q,r) \). 
\end{defin}

\begin{defin}
	\( \Omega \) is said to satisfy the \emph{Harnack chain condition} if there is a constant \( c_0 \)
	such that for each \( \rho > 0, \; \Lambda \geq 1, \; X_1,X_2 \in \Omega \) 
	    with \( \delta(X_j) \geq \rho \) and \( |X_1 - X_2| \leq \Lambda \rho \),
	    there exists a chain of open balls \( B_1,\dots,B_N \subset \Omega \) 
	    with \( N \lesssim_{\Lambda} 1, \;  X_1 \in B_1, \; X_2 \in B_N, \; B_i \cap B_{i+1} \neq \EMP \)
	    and \( c_0^{-1} r(B_i) \leq \Dist(B_i,\partial \Omega) \leq c_0 r(B_i) \).
	The chain of balls is called a \emph{Harnack chain}.
\end{defin}

\begin{defin}[NTA]
	\( \Omega \) is an \emph{Non-Tangentially Accessible domain}  
		if it satisfies the Harnack chain condition and 
		\( \Omega, \; \R^n \setminus \Bar{\Omega} \) both satisfy the corkscrew condition.
	If only \( \Omega \) satisfied the corkscrew condition 
	then it is called a \emph{1-sided NTA domain} or \emph{uniform domain}.
\end{defin}

\begin{defin}[CAD]\label{def:CAD}
Let \(\Omega\subset\R^n \). \(\Omega\) is called \emph{chord arc domain} (CAD) 
    if \(\Omega\) is a NTA set of locally finite perimeter and Ahlfors regular boundary, 
    i.e. there exists  \(C\geq 1\) so that for \(r\in (0,\mathrm{diam}(\Omega))\) and \(Q\in \partial \Omega\)
\[C^{-1}r^{n-1}\leq \sigma(B(Q,r))\leq Cr^{n-1}.\]
Here \(B(Q,r)\) denotes the $n$-dimensional ball with radius r 
    and center Q and \(\sigma\) denotes the surface measure. 
The best constant C in the condition above is called the Ahlfors regularity constant.  \\

If we replace NTA domain with 1-sided NTA domain in the above definition 
    then \( \Omega \) is called a \emph{1-sided chord arc domain} (1-sided CAD).

\end{defin}\smallskip

Throughout this paper \( \Omega \) will denote a bounded CAD.

\begin{defin}\label{def:CarlesonNorm} For a measure \(\mu\) on \(\Omega\) if the quantity
\[\Vert \mu \Vert_{\CO}:= \sup_{\Delta\subset\partial\Omega}
    \frac{1}{\sigma(\Delta)}\int_{T(\Delta)}d\mu,\]
is finite then $\mu$ is said to be the Carleson measure and $\|\mu\|_{\CO}$  its Carleson norm. Here
 the Carleson region $T(\Delta)$ of a boundary ball \(\Delta=\Delta(Q,r) \coloneqq B(Q,r) \cap \partial\Omega \) is defined as \(T(\Delta(Q,r))=\overline{B(Q,r)}\cap\Omega\).
\end{defin}

\begin{prop}\label{Prop:ContantAntiSymmMatrix}
    Let \( b \) be a constant anti-symmetric matrix
        and let \( u \in W^{1,2}(E) \) and \( v \in W_0^{1,2}(E) \), with \( E \subset \Omega \) measurable.
    Then
    \[ \int_{E} b\nabla u \cdot \nabla v = 0. \]
\end{prop}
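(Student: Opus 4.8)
The plan is to reduce the identity to the purely algebraic antisymmetry of $b$. First I would observe that it suffices to prove the claim when $v \in C_c^\infty(E)$ (extended by zero), since such functions are dense in $W_0^{1,2}(E)$ and both sides of the identity are continuous in $v$ with respect to the $W^{1,2}$ norm: indeed $\left| \int_E b\nabla u \cdot \nabla v \right| \le \|b\| \, \|\nabla u\|_{L^2(E)} \, \|\nabla v\|_{L^2(E)}$. Similarly, by approximating $u$ in $W^{1,2}(E)$ by smooth functions (this is local, so no boundary regularity of $E$ is needed — one mollifies on compactly contained subsets, or one simply notes the bilinear form is continuous in $u$ as well and approximates $u$ by $C^\infty(\overline{E'})$ functions on sets $E' \Subset E$ exhausting the support of $v$), it suffices to treat $u, v$ both smooth with $v$ compactly supported in $E$.

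Next, with $u, v$ smooth and $v$ compactly supported, I would integrate by parts. Writing $b = (b_{ij})$ with $b_{ij} = -b_{ji}$ and using the summation convention,
\[
\int_E b_{ij} \, \partial_j u \, \partial_i v \, dX
= - \int_E b_{ij} \, \partial_i \partial_j u \, v \, dX,
\]
since $b$ is constant so no derivative falls on it and the boundary term vanishes as $v$ has compact support in $E$. Now the matrix $\partial_i\partial_j u$ is symmetric in $i,j$ (equality of mixed partials for smooth $u$), while $b_{ij}$ is antisymmetric, so $b_{ij}\,\partial_i\partial_j u = 0$ pointwise; hence the integral vanishes.

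The only real subtlety — and the step I would be most careful about — is the density/approximation argument: one must make sure that mollifying $u$ is legitimate even though $E$ is merely measurable with no regularity, and that the integration by parts is justified once everything is smooth. Both are handled cleanly by localizing to a neighborhood of $\operatorname{supp} v$ which is compactly contained in $E$ (or in $\Omega$), where standard mollification applies; outside that neighborhood $\nabla v = 0$ and the integrand is zero, so the behavior of $u$ near $\partial E$ is irrelevant. Alternatively, one can avoid approximating $u$ altogether: integrate by parts directly in the weak formulation, using that $\int_E b_{ij}\,\partial_j u\,\partial_i v = -\int_E b_{ij}\, u\, \partial_i\partial_j v$ (valid for $u \in W^{1,2}$, $v \in C_c^\infty$ after moving one derivative, then the other, off $u$ — each step is a legitimate integration by parts since $b$ is constant and $v$ has compact support), and then symmetry of $\partial_i\partial_j v$ against antisymmetry of $b_{ij}$ kills the right-hand side. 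This second route only requires smoothing $v$, not $u$, and is perhaps the most economical.
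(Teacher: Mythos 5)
Your proposal is correct and, in its ``most economical'' second route, is essentially the paper's own proof: test against $\phi\in C_c^\infty(E)$, move the derivative off $u$ (which is just the definition of the weak derivative, since $b_{ij}\partial_i\phi$ summed over $i$ is a smooth compactly supported test function), and use the symmetry of $\partial_{ij}\phi$ against the antisymmetry of $b_{ij}$. The density step extending from $C_c^\infty(E)$ to $v\in W_0^{1,2}(E)$, which you spell out via continuity of the bilinear form, is left implicit in the paper.
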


\begin{proof}
Note that if \( b \) is a constant anti-symmetric matrix and \( E \subset \Omega \),
    then for \( u \in W^{1,2}(E) \) and \( \phi \in C_c^\infty(E) \) we have
\begin{align*}
    \int_{E} b\nabla u \cdot \nabla \phi 
    &= \int_{E} b_{ij} \partial_i u  \partial_j \phi
    = \int_{E} u \partial_{i}(b_{ij} \partial_j \phi)
    = \int_{E} u  b_{ij} \partial_{ij} \phi
    = \int_{E} u b_{ji}^T \partial_{ij} \phi
    \\
    &= - \int_{E} u  b_{ji} \partial_{ji} \phi
    = - \int_{E} u  b_{ij} \partial_{ij} \phi
    = - \int_{E} \partial_i (b_{ij} u) \partial_{j} \phi
    \\
    &= - \int_{E}  b_{ij} \partial_i u \partial_{i}  \partial_{j} \phi
    = - \int_{E} b\nabla u \cdot \nabla \phi. 
\end{align*}
\end{proof}

Denoting by \((A_i^a)_E\) the constant matrix of 
component-wise means of $A$ on $E$.  It follows that for $u,v$ as above
\begin{align}\label{prop:TheAvarageIs0}
    \int_{E} A_i \nabla u \cdot \nabla v = \int_{E} (A_i-(A_i^a)_E)\nabla u \cdot \nabla v. 
\end{align}

\subsection{Muckenhoupt and Reverse H\"older spaces}

Let \( \mu \) be a doubling measure on \( \partial \Omega \)
    and let \( w:\partial\Omega\to [0,\infty) \).
Furthermore, let \(1<p<\infty\) and let \( p' \) denote its H\"older conjugate 
    i.e. \( \frac{1}{p} + \frac{1}{p'} = 1 \).

\begin{defin}[Muckenhoupt spaces]
We define the \emph{Muckenhoupt spaces} $A_1(\mu)$, $A_p(\mu)$, $A_\infty(\mu)$ by:
\begin{itemize}
\item \(w \in A_p(\mu) \) iff there exists \(C>0\) such that for all balls \(B\subset \mathbb{R}^n\)
\[\left(\frac{1}{\mu(B)}\int_B wd\mu \right)\left(\frac{1}{\mu(B)}\int_B w^{1-p'}d\mu \right)^{p-1}\leq C<\infty.\]

\item \(w \in A_1(\mu) \) iff there exists \(C>0\) such that 
    for \( \mu \)-a.e. \(x\in \mathbb{R}^n\) and balls \(B=B(x)\subset \mathbb{R}^n\) centered at \(x\)
\[\frac{1}{\mu(B)}\int_{B(x)} wd\mu \leq Cw(x).\]

\item Finally, we set \(A_\infty(\mu) :=\bigcup_{1\leq p<\infty}A_p(\mu) \).
\end{itemize}
\end{defin}

\begin{defin}\label{DefRH}[Reverse H\"older spaces]
We define the \emph{Reverse H\"older spaces} \(B_p(\mu), B_\infty(\mu) \) by:
\begin{itemize}
    \item \(w \in B_p(\mu) \) iff there exists \(C>0\) 
        such that for all balls \(B\subset \mathbb{R}^n\)
        \[\left(\frac{1}{\mu(B)}\int_B w^p d\mu \right)^{1/p}\leq\frac{C}{\mu(B)}\int_B wd\mu.\]
        The best constant in the above estimate we shall denote by $[w]_{B_p}$.
     \item \(w \in B_\infty(\mu) \) iff there exists \(c>0\) such that 
    for a.e. \(x\in \mathbb{R}^n\) and balls \(B=B(x)\subset \mathbb{R}^n\) centered at \(x\)
\[ cw(x)\leq \frac{1}{\mu(B)}\int_{B(x)}wd\mu.\]
\end{itemize}
\end{defin}

It is easy to see that the following hold:
\begin{itemize}
    \item \(A_1(\mu)\subset A_p(\mu)\subset A_q(\mu)\subset A_\infty(\mu)\) for \(1\leq p<q<\infty\),
    \item \(B_q(\mu)\subset B_p(\mu)\) for \(1<p<q \leq \infty\), and
    \item \(A_\infty(\mu)=\bigcup_{p>1}B_p(\mu)\).
\end{itemize}
For more properties of these spaces we refer the reader to \cite{grafakos_modern_2009}.
\\

Suppose now that \( \nu \) is another doubling measure on \( \partial \Omega \).
We say that \( \nu \in A_p(\mu) \; [B_q(\mu)] \) if  \( \nu \ll \mu \) 
    and the Radon-Nikodym \( w \coloneqq \frac{d\nu}{d\mu} \in A_p(\mu) \; [B_q(\mu)] \).

\subsection{The \(L^p\) Dirichlet boundary value problem}

\begin{defin}
    Let \(u:\Omega\to \mathbb{R}\). The
    \emph{nontangential maximal function} \( N[u] : \partial\Omega \to \R \) is defined as
    \[N_\alpha[u](Q)\coloneqq\sup_{X\in\Gamma_\alpha(Q)}|u(X)|, \]
    where
    \[\Gamma_\alpha(Q) \coloneqq \{Y\in \Omega; |Y-Q|<\alpha\delta(Y)\},  \] 
    is the cone of aperture $\alpha$ (for $\alpha>1$). Here \( \delta \) denotes the distance function to the boundary \( \partial\Omega \).
\end{defin}

Let \( L = \Div(A\nabla \cdot) \), where \(A(X)\in \mathbb{R}^{n\times n}\) is a matrix,
    satisfying \eqref{eq:elliptic} and \eqref{eq:A^ainBMO}. 
We say that \(u\in W_{\Loc}^{1,2}(\Omega)\) is weak solution to the equation \(Lu=0\) in \(\Omega\) if
$$
    \int_\Omega A \nabla u \nabla \varphi dx = 0, \quad \forall \varphi \in C^\infty_0(\Omega),
$$
    where \(C^\infty_0(\Omega) \) denotes the space of all smooth functions with compact support.
We know (see e.g. \cite{li_lp_2019}) that if \( f \in C^0(\partial\Omega) \) 
    then there exists a \( u \in W^{1,2}(\Omega) \cap C^0(\bar{\Omega}) \) such that
\[
\begin{cases}
Lu=0, & \quad \text{in } \Omega,
\\
u=f, & \quad \text{on } \partial \Omega.
\end{cases}
\]

\begin{defin}
Let \( \alpha > 0 \).
We say the \(L^p\) Dirichlet problem for the operator \(L\) is \emph{solvable}, 
    if for all boundary data \( f\in L^p(\partial\Omega)\cap C(\partial\Omega)\) 
    the solution  \(u \) as defined above satisfies the estimate
\[
    \Vert N_\alpha(u)\Vert_{L^p(\partial\Omega)}\lesssim_{\alpha} \Vert f\Vert_{L^p(\partial\Omega)}.     
\]

\end{defin}

\subsection{Elliptic measure}
Recall that by Riesz theorem there exists a measure \( \omega^X \) such that that for \( u \) as above 
    \[ u(X) = \int_{\Omega} f d\omega^X. \]
This is called the \emph{elliptic measure with pole at} \( X \).
As noted in the introduction  the \( L^p \) Dirichlet problem is solvable 
    iff \( \omega \in B_{p'}(d\sigma) \).
    For a proof see e.g. \cite{kenig_harmonic_1994} and the references therein.

\subsection{Properties of solutions}
In this sections we include some important results from Li's thesis \cite{li_lp_2019}
    that will be used later.
These results hold for solutions on NTA domains.
First we have reverse Holder's and Caccioppoli's inequalities for the gradient:
\begin{prop}[Lemma 3.1.2]\label{prop:GradRevHol}
    Let \( u \in W_{\Loc}^{1,2}(\Omega) \) be a weak solution.
    Let \( X \in \Omega \) and let \( B_R = B_R(X) \) be such that \( \overline{B}_R \subset \Omega \)
        and let \( 0 < \sigma < 1 \).
    Then there exists  \( p>2 \) such that
    \[ \left( \fint_{B_{\sigma R}} |\nabla u|^p \right)^{1/p} 
        \lesssim_{\sigma} \left( \fint_{B_{R}} |\nabla u|^2 \right)^{1/2}. \]
\end{prop}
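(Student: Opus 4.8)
The goal is to establish a self-improving (weak) reverse H\"older inequality for the gradient. Concretely, I will show that there is an exponent $s=s(n)<2$ such that
\[
	\fint_{B_r} |\nabla u|^2 \;\le\; C(n,\lambda_0,\Lambda_0)\left(\fint_{B_{2r}} |\nabla u|^{s}\right)^{2/s}
	\qquad\text{for every ball }B_{2r}\text{ with }\overline{B_{2r}}\subset\Omega,
\]
and then feed this into Gehring's lemma (in the Giaquinta--Modica form) to obtain $\nabla u\in L^p_{\Loc}(\Omega)$ for some $p=p(n,\lambda_0,\Lambda_0)>2$ together with the stated estimate; the passage from concentric balls to $B_{\sigma R}\subset B_R$ and the $\sigma$-dependence of the constant are then routine consequences of the iterative nature of Gehring's lemma.

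\emph{Step 1 (Caccioppoli with a BMO error term).} Fix $B_{2r}$, pick $\eta\in C_c^\infty(B_{2r})$ with $\eta\equiv 1$ on $B_r$ and $|\nabla\eta|\lesssim 1/r$, and set $\bar u\coloneqq\fint_{B_{2r}}u$. Test \eqref{eq:DefWeakSol} with $\varphi\coloneqq\eta^2(u-\bar u)\in W_0^{1,2}(B_{2r})$, and use \eqref{prop:TheAvarageIs0} to subtract the constant antisymmetric matrix $(A^a)_{B_{2r}}$: writing $\widetilde A\coloneqq A-(A^a)_{B_{2r}}$ we have $\int_{B_{2r}}\widetilde A\nabla u\cdot\nabla\varphi=0$. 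Expand $\nabla\varphi=\eta^2\nabla u+2\eta(u-\bar u)\nabla\eta$. The symmetric part of $\widetilde A$ is $A^s$, so $\int\eta^2\widetilde A\nabla u\cdot\nabla u=\int\eta^2 A^s\nabla u\cdot\nabla u\ge\lambda_0\int\eta^2|\nabla u|^2$, while the term $\int\eta^2\widetilde A^a\nabla u\cdot\nabla u$ vanishes pointwise since $\widetilde A^a$ is antisymmetric. Bounding the (bounded) $A^s$-cross term by Young's inequality and absorbing $\tfrac{\lambda_0}{2}\int\eta^2|\nabla u|^2$, one obtains
\[
	\lambda_0\int_{B_{2r}}\eta^2|\nabla u|^2 \;\lesssim\; \frac{1}{r^2}\int_{B_{2r}}|u-\bar u|^2 \;+\; \frac{1}{r}\int_{B_{2r}}|\widetilde A^a|\,|\nabla u|\,|u-\bar u|,
\]
and only the last term, with $\widetilde A^a=A^a-(A^a)_{B_{2r}}$, is non-standard.

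\emph{Step 2 (the key: routing the BMO error into a sub-quadratic power of $\nabla u$).} Apply H\"older with exponents $(r_0,a,a')$ to the last integral. Since $\|A^a\|_{\BMO(\Omega)}\le\Lambda_0$, the John--Nirenberg inequality gives $\|\widetilde A^a\|_{L^{r_0}(B_{2r})}\lesssim\Lambda_0\,|B_{2r}|^{1/r_0}$ for every $r_0<\infty$; bound $\|u-\bar u\|_{L^{a'}(B_{2r})}$ by the Sobolev--Poincar\'e inequality at exponent $a$, which is licit provided $a'\le\tfrac{na}{n-a}$. All powers of $r$ then cancel and one is left with
\[
	\frac{1}{r}\int_{B_{2r}}|\widetilde A^a|\,|\nabla u|\,|u-\bar u| \;\lesssim\; \Lambda_0\,|B_{2r}|\left(\fint_{B_{2r}}|\nabla u|^{a}\right)^{2/a}.
\]
The decisive observation is that the admissibility relations $\tfrac1{r_0}+\tfrac1a+\tfrac1{a'}=1$ and $a'\le\tfrac{na}{n-a}$ force $a\ge\tfrac{2}{1+1/n-1/r_0}$, which is \emph{strictly less than $2$} as soon as $r_0>n$. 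Fixing $r_0=2n$ we may therefore take some $a=a(n)\in(\tfrac{4n}{2n+1},2)$: the exponent of $|\nabla u|$ on the right is genuinely below $2$, while the constant — proportional to $\Lambda_0$ and hence not small — is nonetheless finite and fixed. This is the main obstacle: because $A^a$ is merely BMO, the error term cannot be absorbed into $\int\eta^2|\nabla u|^2$ (there is no smallness, $\Lambda_0$ being arbitrary), so one must instead push it, via John--Nirenberg and a careful choice of H\"older exponents, into a sub-quadratic power of $|\nabla u|$, which Gehring's lemma tolerates with no smallness hypothesis.

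\emph{Step 3 (closing the estimate and Gehring).} Divide the inequality of Step 1 by $|B_r|\simeq|B_{2r}|$, use $\eta\equiv 1$ on $B_r$, insert the bound of Step 2, and estimate $\tfrac1{r^2}\fint_{B_{2r}}|u-\bar u|^2\lesssim\big(\fint_{B_{2r}}|\nabla u|^{2_\ast}\big)^{2/2_\ast}$ by the Sobolev--Poincar\'e inequality with $2_\ast=\tfrac{2n}{n+2}$. With $s\coloneqq\max\{a,2_\ast\}<2$ this yields the target reverse H\"older inequality displayed at the outset, valid on all balls $\overline{B_{2r}}\subset\Omega$ with constant depending only on $n,\lambda_0,\Lambda_0$. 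Gehring's lemma then upgrades $\nabla u$ to $L^p_{\Loc}(\Omega)$ for some $p=p(n,\lambda_0,\Lambda_0)>2$ with the reverse H\"older bound on concentric balls; restricting to $B_{\sigma R}\subset B_R$ and tracking the $\sigma$-dependence completes the proof.
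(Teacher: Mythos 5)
The paper does not actually prove this proposition: it is imported verbatim from Li's thesis \cite{li_lp_2019} (Lemma 3.1.2), so there is no in-paper argument to compare against. Your proposal reconstructs what is essentially the standard proof in that source and in the antisymmetric-BMO literature: interior Caccioppoli after subtracting the constant matrix \((A^a)_{B_{2r}}\), John--Nirenberg to place the oscillation of \(A^a\) in \(L^{r_0}\) with \(r_0>n\), Sobolev--Poincar\'e to convert the error into \(\big(\fint_{B_{2r}}|\nabla u|^{a}\big)^{2/a}\) with \(a<2\), and then Gehring/Giaquinta--Modica, which tolerates such a term without any smallness in \(\Lambda_0\). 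The exponent bookkeeping in your Step 2 is right (the admissibility constraint \(a\geq 2/(1+1/n-1/r_0)\) is indeed strictly below \(2\) precisely when \(r_0>n\)), and Step 3 closes correctly.

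One point deserves more care than your Step 1 gives it: testing with \(\varphi=\eta^2(u-\bar u)\in W^{1,2}_0(B_{2r})\) is not automatic here, because the weak formulation \eqref{eq:DefWeakSol} is stated only for \(C^\infty_0\) test functions and, with \(A^a\) merely in BMO, \(A\nabla u\) need not lie in \(L^2_{\Loc}\), so the usual density argument does not pass to the limit termwise. In the framework of \cite{li_lp_2019} this is handled by the Coifman--Lions--Meyer--Semmes div--curl (Hardy--BMO) estimate, which shows the antisymmetric part of the bilinear form is bounded on \(W^{1,2}_0\times W^{1,2}_0\) and hence extends by continuity; alternatively one mollifies the coefficients. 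Once that is in place, your pointwise cancellation \(\widetilde A^a\nabla u\cdot\nabla u=0\) and the cross-term estimate (which is absolutely convergent for \(r_0>n\)) go through as written. A second, purely cosmetic point: the BMO hypothesis \eqref{eq:A^ainBMO} is phrased over cubes contained in \(\Omega\), so applying John--Nirenberg on a ball \(B_{2r}\) near \(\partial B_R\) needs the routine remark that mean oscillation over such balls is controlled by the cube-based norm (e.g.\ by chaining over interior cubes). Neither issue changes the architecture of your argument, which matches the cited proof.
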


\begin{prop}\label{prop:Caccioppoli}
    For a \(C=C(n,\lambda,\Lambda_0)<\infty\) we have for a solution \(u\) and \(B(X,2R)\subset \Omega\)
    \[\fint_{B(X,R)} |\nabla u(Z)|^2dZ\leq \frac{C}{R^2}\fint_{B(X,2R)} |u(Z)|^2 dZ.\]
\end{prop}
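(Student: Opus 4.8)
The plan is to run the classical Caccioppoli argument --- test the equation with \(\eta^2(u-\bar u)\) for a suitable cutoff \(\eta\) --- the one genuinely new ingredient being the control of the unbounded antisymmetric part. Write \(B_t:=B(X,t)\); by monotone convergence it suffices to treat \(\overline{B_{2R}}\subset\Omega\) and to assume \(\int_{B_{2R}}|u|^2<\infty\). The heart of the matter is the local Caccioppoli-with-iteration estimate: for all radii \(R\le\rho<s\le 2R\),
\[
\int_{B_\rho}|\nabla u|^2\;\le\;\theta_0\int_{B_s}|\nabla u|^2\;+\;\frac{C\,R^{\alpha-2}}{(s-\rho)^{\alpha}}\int_{B_{2R}}|u|^2,
\]
with \(\theta_0=\theta_0(n,\lambda_0,\Lambda_0)\in(0,1)\), \(\alpha=\alpha(n,\lambda_0,\Lambda_0)>2\) and \(C=C(n,\lambda_0,\Lambda_0)\). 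A standard iteration lemma (Giaquinta's hole-filling lemma, applied on \([R,2R]\)) then upgrades this to \(\int_{B_R}|\nabla u|^2\le C R^{-\alpha}\bigl(R^{\alpha-2}\int_{B_{2R}}|u|^2\bigr)=C R^{-2}\int_{B_{2R}}|u|^2\), and dividing by \(|B_R|\simeq|B_{2R}|\) gives the assertion.

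To prove the displayed estimate, fix \(R\le\rho<s\le2R\), choose \(\eta\in C_c^\infty(B_s)\) with \(0\le\eta\le1\), \(\eta\equiv1\) on \(B_\rho\), \(|\nabla\eta|\lesssim(s-\rho)^{-1}\), and set \(v:=u-\bar u\), \(\bar u:=\fint_{B_s}u\); \(v\) is again a weak solution. I would insert \(\varphi:=\eta^2 v\) into the equation. Since \(A^a\) is unbounded this needs a short justification: one approximates \(\varphi\) by \(C_c^\infty(B_s)\) functions in \(W^{1,2}\) and uses that \(A^a-(A^a)_{B_s}\in L^r(B_s)\) for every \(r<\infty\) (John-Nirenberg, \eqref{eq:A^ainBMO}) together with \(\nabla u\in L^p_{\Loc}\) for some \(p>2\) (\refprop{prop:GradRevHol}), so that \(A\nabla v\in L^q_{\Loc}\) for some \(q>2\) and the passage to the limit is legitimate. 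By \eqref{prop:TheAvarageIs0} (with \(E=B_s\)) we may replace \(A\) by \(A^s+B\), where \(B:=A^a-(A^a)_{B_s}\) is antisymmetric; since \(B(x)\) is antisymmetric, \(B\nabla v\cdot\nabla v=0\) pointwise, so expanding \(\nabla(\eta^2v)=\eta^2\nabla v+2\eta v\nabla\eta\) yields
\[
\int\eta^2 A^s\nabla v\cdot\nabla v\;=\;-2\int\eta v\,A^s\nabla v\cdot\nabla\eta\;-\;2\int\eta v\,B\nabla v\cdot\nabla\eta\;=:\;-\mathrm{I}-\mathrm{II}.
\]
The left-hand side is \(\ge\lambda_0\int_{B_\rho}|\nabla u|^2\) by ellipticity, and \(\mathrm{I}\) is estimated exactly as in the bounded case: \(|\mathrm{I}|\le\tfrac{\lambda_0}{4}\int\eta^2|\nabla v|^2+C(s-\rho)^{-2}\int_{B_s}v^2\) via Cauchy-Schwarz and Young.

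The main obstacle is \(\mathrm{II}\), and this is where the BMO hypothesis is used. After Cauchy-Schwarz,
\[
|\mathrm{II}|\le2\Bigl(\int\eta^2|\nabla v|^2\Bigr)^{1/2}\Bigl(\int_{B_s}|B|^2v^2|\nabla\eta|^2\Bigr)^{1/2},
\]
and I would bound the second factor by combining three facts: (i) John-Nirenberg gives \(\|B\|_{L^r(B_s)}\lesssim\Lambda_0\,s^{n/r}\) for a large fixed \(r=r(n,\lambda_0,\Lambda_0)\ge n\); (ii) the scale-invariant Sobolev-Poincar\'e inequality gives \(\|v\|_{L^{2^*}(B_s)}\lesssim_n\|\nabla u\|_{L^2(B_s)}\) with \(2^*=\tfrac{2n}{n-2}\) (for \(n=2\) one uses a large \(L^q\) in place of \(L^{2^*}\)); (iii) interpolation \(\|v\|_{L^q}\le\|v\|_{L^2}^{1-\theta}\|v\|_{L^{2^*}}^{\theta}\), where the choice \(\tfrac1q=\tfrac12-\tfrac1r\) forces \(\theta=\tfrac nr\). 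H\"older (\(\int|B|^2v^2\le\|B\|_{L^r}^2\|v\|_{L^{2r/(r-2)}}^2\)) then gives
\[
\Bigl(\int_{B_s}|B|^2v^2|\nabla\eta|^2\Bigr)^{1/2}\lesssim\frac{\Lambda_0\,s^{n/r}}{s-\rho}\,\|\nabla u\|_{L^2(B_s)}^{\theta}\,\|v\|_{L^2(B_s)}^{1-\theta},
\]
and Young's inequality with exponents \((\tfrac2{1+\theta},\tfrac2{1-\theta})\) (bounding \(\int\eta^2|\nabla v|^2\le\int_{B_s}|\nabla u|^2\)) yields, for every \(\delta>0\),
\[
|\mathrm{II}|\le\delta\int_{B_s}|\nabla u|^2+C_\delta\Bigl(\frac{\Lambda_0\,s^{n/r}}{s-\rho}\Bigr)^{2/(1-\theta)}\int_{B_s}v^2.
\]
The whole point of taking \(r\) large is the bookkeeping: \(\tfrac2{1-\theta}=\tfrac{2r}{r-n}=:\alpha>2\) and \(\tfrac{2n/r}{1-\theta}=\alpha-2\), so the last coefficient is \(\lesssim\Lambda_0^{\alpha}s^{\alpha-2}(s-\rho)^{-\alpha}\lesssim\Lambda_0^{\alpha}R^{\alpha-2}(s-\rho)^{-\alpha}\) --- only a harmless power of \(\int_{B_{2R}}|u|^2\) --- while the genuinely dangerous \(\|\nabla u\|_{L^2(B_s)}^2\) term carries the arbitrarily small coefficient \(\delta\). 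Collecting \(\mathrm{I},\mathrm{II}\), absorbing \(\tfrac{\lambda_0}4\int\eta^2|\nabla v|^2\) on the left, using \(\int_{B_s}v^2\le\int_{B_{2R}}|u|^2\) and \((s-\rho)^{-2}\le R^{\alpha-2}(s-\rho)^{-\alpha}\) (since \(s-\rho\le R\)), and choosing \(\delta\) small, gives the local estimate with \(\theta_0=\tfrac14+\delta/\lambda_0<1\). Thus the only non-routine points are the two flagged above: the admissibility of the test function, and --- above all --- the \(L^r\)/Sobolev/interpolation bookkeeping that makes the unbounded antisymmetric term absorbable despite \(\Lambda_0\) being arbitrarily large.
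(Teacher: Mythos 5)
Your argument is correct, and the exponent bookkeeping at its heart checks out: with $B:=A^a-(A^a)_{B_s}$, John--Nirenberg gives $\|B\|_{L^r(B_s)}\lesssim \Lambda_0 s^{n/r}$, H\"older with $(r/2, r/(r-2))$, the scale-invariant Sobolev--Poincar\'e inequality and Lyapunov interpolation with $\theta=n/r$ do produce $|\mathrm{II}|\le\delta\int_{B_s}|\nabla u|^2+C_\delta\Lambda_0^{\alpha}s^{\alpha-2}(s-\rho)^{-\alpha}\int_{B_s}v^2$ with $\alpha=2r/(r-n)>2$, and since the small term lives on the larger ball the hole-filling iteration on $[R,2R]$ is genuinely needed and correctly invoked. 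This is, however, a different route from the one behind the quoted result: the paper takes \refprop{prop:Caccioppoli} from \cite{li_lp_2019}, where (following the Seregin--Silvestre--\v{S}ver\'ak--Zlato\v{s} scheme) the antisymmetric term is rewritten, using pointwise antisymmetry, as a pairing of $B$ against div--curl (Jacobian-type) quantities and controlled by $\mathcal{H}^1$--$\BMO$ duality; that gives $|\int B\nabla u\cdot\nabla\psi|\lesssim\|A^a\|_{\BMO}\|\nabla u\|_{L^2}\|\nabla\psi\|_{L^2}$ and hence Caccioppoli directly, with no iteration in radii and no recourse to higher integrability of $\nabla u$. Your approach buys elementarity (no Hardy spaces, only John--Nirenberg, Sobolev and Giaquinta's lemma) at the cost of the iteration and of needing $\nabla u\in L^{p}_{\Loc}$ for some $p>2$ at one point.

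That last point is the one caveat worth fixing. You justify testing with $\eta^2 v\in W_0^{1,2}(B_s)$ by invoking \refprop{prop:GradRevHol} to get $A\nabla v\in L^q_{\Loc}$ for some $q>2$. Inside this paper's framework, where both propositions are imported as black boxes, that is admissible; but as a self-contained proof it is circular, since the reverse H\"older estimate for $\nabla u$ is itself standardly derived from Caccioppoli (this is how it is obtained in \cite{li_lp_2019}). Note also that without either higher integrability or the $\mathcal{H}^1$--$\BMO$ bound, the form $\int B\nabla v\cdot\nabla\varphi$ is not absolutely convergent for general $\varphi\in W^{1,2}_0$, so a bare density argument cannot close this step; the non-circular remedy is exactly the div--curl/Hardy-space estimate above (which, once available, also shortcuts the rest of your proof). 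Apart from this, the remaining ingredients you use --- pointwise cancellation $B\nabla v\cdot\nabla v=0$ after removing the mean via \eqref{prop:TheAvarageIs0}, John--Nirenberg on interior balls using \eqref{eq:A^ainBMO}, $\int_{B_s}v^2\le\int_{B_{2R}}u^2$, and the standard iteration lemma --- are all used correctly.
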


\begin{prop}[Lemma 3.1.4]\label{prop:DiGNMestimate}
    Let \(u\in W^{1,2}_{loc}(\Omega)\) be a weak solution of $Lu=0$ and \(\overline{B(X,2R)}\subset \Omega\). Then
    \[\sup_{B(X,R)} |u| \leq C(n,\lambda,\Lambda_0) \left(\fint_{B(X,2R)} |u|^2\right)^{\frac{1}{2}}.\]
\end{prop}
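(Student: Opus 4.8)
This is the De Giorgi--Nash--Moser local boundedness estimate, and the plan is to run a one-sided Moser iteration while taming the unbounded antisymmetric part through two cancellations. Fix $B(X,2R)\subset\Omega$ and write $A=A^s+A^a$, with $A^s$ bounded and satisfying \eqref{eq:elliptic} and $A^a$ antisymmetric with $\|A^a\|_{\BMO}\le\Lambda_0$. For an exponent parameter $\beta\ge 0$, a smooth cutoff $\eta$ supported in a concentric ball $B_\rho\subset B(X,2R)$, and a truncation $\bar u:=\min(|u|,m)$ (introduced to keep all powers in $W^{1,2}$ and later sent to $+\infty$), I would test \eqref{eq:DefWeakSol} with $\varphi=\eta^2 u\,\bar u^{2\beta}$. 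The symmetric part contributes, via the lower bound in \eqref{eq:elliptic}, the good energy term $\gtrsim\int\eta^2\bar u^{2\beta}|\nabla u|^2$ together with error terms of the shape $\int\eta|\nabla\eta|\,|u|\,\bar u^{2\beta}|\nabla u|$, which are absorbed by Young's inequality exactly as in the classical scalar case.

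The only genuinely new point is the antisymmetric part. First, since $\xi^{T}A^a(Y)\xi=0$ for every $\xi$, the diagonal contribution $\int\eta^2\bar u^{2\beta}\,A^a\nabla u\cdot\nabla u$ vanishes pointwise, so that only the cross term $\int A^a\nabla u\cdot u\,\bar u^{2\beta}\nabla(\eta^2)$ survives; rewriting this as $\int A^a\nabla\Psi(u)\cdot\nabla(\eta^2)$ for the (truncated) antiderivative $\Psi$ with $\Psi'(s)=s\min(|s|,m)^{2\beta}$, and subtracting the constant matrix $b:=(A^a)_{B_\rho}$, the constant part $\int b\,\nabla\Psi(u)\cdot\nabla(\eta^2)$ vanishes by the identity $\int_E b\,\nabla w\cdot\nabla v=0$ for constant antisymmetric $b$, $w\in W^{1,2}(E)$, $v\in W^{1,2}_0(E)$ (equivalently, by \eqref{prop:TheAvarageIs0}), applied with $w=\Psi(u)$ and $v=\eta^2$. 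Hence the entire antisymmetric contribution reduces to $\int(A^a-b)\nabla\Psi(u)\cdot\eta\nabla\eta$. Now $A^a-b$ lies in $L^r(B_\rho)$ for every $r<\infty$ with $\big(\fint_{B_\rho}|A^a-b|^r\big)^{1/r}\lesssim\Lambda_0$ by the John--Nirenberg inequality; after rewriting $\nabla\Psi(u)$ in terms of $\nabla w$ with $w:=|u|^{\beta+1}$ (so $\nabla\Psi(u)=\tfrac{1}{\beta+1}w\nabla w$), H\"older with a large but fixed $r=r(n)$ followed by Sobolev--Poincar\'e on $w$ bounds this term by $\EPS\int\eta^2\bar u^{2\beta}|\nabla u|^2+C_\EPS(\beta+1)^2\int(|\nabla\eta|^2+\eta^2)\,\bar u^{2\beta+2}$, which is again absorbed. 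Taking $r$ large makes the loss in the Sobolev exponent as small as we please and keeps all constants dependent only on $n,\lambda_0,\Lambda_0$.

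Assembling these estimates yields a Caccioppoli-type inequality, and Sobolev embedding upgrades it to the self-improving estimate
\[
\Big(\fint_{B_{\rho'}}|u|^{2\chi(\beta+1)}\Big)^{\frac{1}{2\chi(\beta+1)}}\le\Big(\frac{C(\beta+1)^2}{(\rho-\rho')^{2}}\Big)^{\frac{1}{2(\beta+1)}}\Big(\fint_{B_\rho}|u|^{2(\beta+1)}\Big)^{\frac{1}{2(\beta+1)}},\qquad\chi=\tfrac{n}{n-2}>1,
\]
valid for all concentric balls $R\le\rho'<\rho\le 2R$ (when $n=2$ one replaces $\chi$ by any fixed exponent $>1$, using the higher integrability of $\nabla u$ from \refprop{prop:GradRevHol}). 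Iterating over radii $\rho_k\downarrow R$ with $\beta_k+1=\chi^{k}$, summing the geometric series both in the exponents and in $\sum_k k\,\chi^{-k}\log\chi$, and finally letting $m\to\infty$ by monotone convergence, gives $\sup_{B(X,R)}|u|\lesssim\big(\fint_{B(X,2R)}|u|^2\big)^{1/2}$ with constant depending only on $n,\lambda_0,\Lambda_0$. (Alternatively one could run De Giorgi's truncation method on the level sets $\{|u|>k\}$; the treatment of the antisymmetric term would be identical.)

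The main obstacle is exactly the last step of the second paragraph: one must verify that the antisymmetric contribution is genuinely absorbable at \emph{every} stage of the iteration, i.e.\ that splitting off the (vanishing) constant average and estimating the remaining $L^r$ oscillation with a Sobolev gain produces a constant that depends on $\beta$ only through the harmless polynomial factor $(\beta+1)^2$ and stays summable through the iteration. This is where the hypothesis $A^a\in\BMO$ is essential, rather than merely $A^a\in L^r$ for one fixed $r$: the BMO norm is invariant under the rescalings attached to the shrinking balls $B_{\rho_k}$, so the constant $\Lambda_0$ in $\big(\fint_{B_{\rho_k}}|A^a-(A^a)_{B_{\rho_k}}|^r\big)^{1/r}\lesssim\Lambda_0$ does not degrade as $k\to\infty$.
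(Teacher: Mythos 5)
You should note that the paper does not prove this proposition at all—it is imported verbatim from Li's thesis (Lemma 3.1.4 of \cite{li_lp_2019}), where it is established by precisely the Moser-type iteration you describe: the diagonal antisymmetric contribution dies pointwise since \( \xi^{T}A^{a}\xi=0 \), the constant mean \( (A^{a})_{B} \) is removed using the integration-by-parts identity for constant antisymmetric matrices, and the remaining oscillation is controlled via John--Nirenberg in \( L^{r} \) for large \( r \) together with a small Sobolev gain that is absorbed at each step. Your sketch is therefore correct and follows essentially the same route as the cited source; the only caveats are routine bookkeeping, namely that with the truncation one has \( \nabla\Psi(u)=c(x)\,\tilde w\nabla\tilde w \) with \( \tilde w=|u|\bar u^{\beta} \) and \( c(x)\in[\tfrac{1}{\beta+1},1] \) rather than the exact identity you wrote, and in the H\"older step the cutoff \( \eta \) must be kept attached to \( \nabla\tilde w \) (pairing \( |A^{a}-b|\,(\tilde w|\nabla\eta|)(\eta|\nabla\tilde w|) \)) so that the gradient factor can actually be absorbed by the ellipticity term.
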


Harnack's inequality also does hold:
\begin{prop}[Lemma 3.1.8]\label{prop:Harnack}
Let \(u\in W^{1,2}_{loc}(\Omega)\) be a nonnegative weak solution and \(\overline{B(X,2R)}\subset \Omega\). Then
    \[\sup_{B(X,R)} |u| \leq C(n,\lambda,\Lambda_0) \inf_{B(X,R)} |u|.\]
\end{prop}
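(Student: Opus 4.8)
The plan is to run Moser's iteration scheme, dealing with the antisymmetric part $A^a$ by systematically replacing it with its local average. Two reductions first. By the invariance of \eqref{eq:elliptic}--\eqref{eq:A^ainBMO} under the rescaling $x\mapsto X+Rx$ we may assume $R=1$; and since constants solve $Lu=0$, we may replace $u$ by $u+\EPS$, so that $u>0$ on $B(X,2)$, and let $\EPS\downarrow 0$ at the very end (using \refprop{prop:DiGNMestimate} to pass to the limit). After these reductions, a standard covering/Harnack-chain argument shows that it suffices to prove the two one-sided bounds
\[
	\sup_{B(X,1)}u\ \le\ C\Big(\fint_{B(X,3/2)}u^{p}\Big)^{1/p}\quad(p>0),
	\qquad
	\Big(\fint_{B(X,3/2)}u^{p_0}\Big)^{1/p_0}\ \le\ C\inf_{B(X,1)}u,
\]
for \emph{some} small $p_0=p_0(n,\lambda_0,\Lambda_0)>0$, with all constants depending only on $n,\lambda_0,\Lambda_0$ (and $p$).

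First I would prove the left inequality. Since $u^\beta$ is a subsolution for $\beta\ge 1$, testing \eqref{eq:DefWeakSol} with $\varphi=\eta^2u^{2\beta-1}$ and iterating the resulting Caccioppoli and Sobolev estimates over a geometric sequence of balls gives $\sup_{B(X,1)}u\lesssim(\fint_{B(X,3/2)}u^q)^{1/q}$ for $q\ge 1$, and the reverse H\"older inequality \refprop{prop:GradRevHol} for the gradient lets one lower $q$ below $1$; this is \refprop{prop:DiGNMestimate}, suitably iterated. For the right (weak Harnack) inequality I would follow the classical Moser--Bombieri--Giusti scheme: using that $u>0$ is a supersolution, test with $\varphi=\eta^2u^{2\beta-1}$ for suitable $\beta<\tfrac12$ and, separately, for $\beta<0$, to obtain a chain of $L^{q}$--$L^{q\chi}$ estimates (with $\chi>1$ the Sobolev exponent ratio) for the powers $u^{\beta}$, which can be iterated down to any fixed small positive exponent and up to any fixed negative exponent. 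The two halves are glued by showing $\log u\in\BMO(B(X,3/2))$ --- testing with $\varphi=\eta^2/u$ to get $\fint_{B(X,3/2)}|\nabla\log u|^2\lesssim 1$ and applying Poincar\'e --- and invoking the John--Nirenberg inequality to produce a $p_0>0$ with $\big(\fint u^{p_0}\big)\big(\fint u^{-p_0}\big)\le C$. Chaining the positive-exponent iteration, this crossover, and the negative-exponent iteration yields the weak Harnack bound, and composing it with the left inequality at exponent $p_0$ finishes the proof.

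In every test-function computation above the only non-classical term is the contribution of $A^a$, and this is where the real work lies. On a ball $B$ one uses \eqref{prop:TheAvarageIs0} to replace $A^a$ by $A^a-(A^a)_B$; the ``diagonal'' antisymmetric term $\int\eta^2u^{2\beta-2}(A^a-(A^a)_B)\nabla u\cdot\nabla u$ vanishes pointwise by antisymmetry of $A^a$, so coercivity of the symmetric part, $\int A^s\nabla u\cdot\nabla u\ge\lambda_0\int|\nabla u|^2$, is untouched, and only the cross term $\int\eta\,u^{2\beta-1}(A^a-(A^a)_B)\nabla u\cdot\nabla\eta$ survives. After an integration by parts that moves no derivative onto $A^a$, one writes this term as the pairing of $A^a-(A^a)_B$ against a suitably localized Jacobian-type quantity built from $u^{\beta}$ and $\eta$, which lies in the Hardy space $\Hardy^1$ by the div--curl lemma; by $\Hardy^1$--$\BMO$ duality together with John--Nirenberg (which gives $\|A^a-(A^a)_B\|_{L^r(B)}\lesssim\Lambda_0|B|^{1/r}$ for every $r<\infty$) this is bounded by $C\Lambda_0$ times the product of an energy factor and a lower-order factor, after which Young's inequality absorbs the energy into the left-hand side.

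The hard part will be precisely this last step: checking that the absorption closes and that the iteration produces constants depending only on $n,\lambda_0,\Lambda_0$ --- in particular that smallness of $\Lambda_0$ is \emph{not} needed. This requires tracking the dependence of all constants on the exponent $\beta$, running the iteration over balls of comparable radius so that the Sobolev, Poincar\'e, and $\Hardy^1$--$\BMO$ constants and the factors $|B|^{1/r}$ stay uniformly controlled, and, at the negative-exponent end, handling the slow (geometric) variation of the averages $(A^a)_B$ as the balls shrink. The details are carried out in \cite{li_lp_2019}; granting them, the scheme above goes through.
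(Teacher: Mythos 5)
The paper offers no proof of this proposition: it is imported verbatim as Lemma 3.1.8 of Li's thesis \cite{li_lp_2019}, so there is no internal argument to compare against. Your outline --- Moser iteration, with the antisymmetric part handled by replacing $A^a$ with $A^a-(A^a)_B$ as in \eqref{prop:TheAvarageIs0} and controlling the surviving cross term through the div--curl lemma and $\mathcal{H}^1$--$\BMO$ duality before absorbing --- is precisely the strategy carried out in that reference, and, like the paper, you ultimately (and legitimately) defer the technical details to it.
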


We also have the comparison principle.
\begin{prop}[Proposition 4.3.6]\label{prop:CompPrinc}
    Let \( u,v \in W^{1,2}(T_{2r}(Q)) \cap C^0(\overline{(T_{2r}(Q))}) \) be non-negative such that \( Lu=Lv=0 \) in \( T_{2r}(Q) \) and \( u,v \equiv 0 \) on \( \Delta(Q,2r) \).
    Then
    \[ \frac{u(X)}{v(X)} \approx \frac{u(A_r(Q))}{v(A_r(Q))}, \quad X \in T_r(Q). \]
\end{prop}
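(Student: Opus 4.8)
The plan is to deduce the two-sided comparison from the classical machinery behind the boundary Harnack principle (in the spirit of Caffarelli--Fabes--Mortola--Salsa and Jerison--Kenig): I would show that each of $u$ and $v$ is comparable, up to its value at the corkscrew point $A_r(Q)$, to one and the same quantity, namely the elliptic measure $\omega^X(\Delta(Q,2r))$, so that the ratio $u(X)/v(X)$ is forced into bounded multiples of $u(A_r(Q))/v(A_r(Q))$. Concretely, it suffices to prove that for every nonnegative $w\in W^{1,2}(T_{2r}(Q))\cap C^0(\overline{T_{2r}(Q)})$ with $Lw=0$ in $T_{2r}(Q)$ and $w\equiv 0$ on $\Delta(Q,2r)$ one has (i) the Carleson estimate $\sup_{T_r(Q)} w\lesssim w(A_r(Q))$ and (ii) the two-sided bound $w(X)\approx w(A_r(Q))\,\omega^X(\Delta(Q,2r))$ for $X\in T_r(Q)$; applying (ii) to $u$ and to $v$ and dividing then gives the proposition. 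All the interior inputs needed for this --- Harnack's inequality (\refprop{prop:Harnack}), the De Giorgi--Nash--Moser sup bound (\refprop{prop:DiGNMestimate}), Caccioppoli's inequality (\refprop{prop:Caccioppoli}), and the gradient reverse H\"older inequality (\refprop{prop:GradRevHol}) that keeps the relevant energy integrals finite for matrices whose antisymmetric part is only in BMO --- are available from \cite{li_lp_2019}, as are boundary H\"older continuity of solutions vanishing on a boundary piece, the maximum principle, and the solvability of the continuous Dirichlet problem; the CAD hypothesis supplies corkscrew points and Harnack chains (\refdef{def:CAD}).

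For the Carleson estimate (i) I would run the standard iteration on scales. Suppose $w(X_0)\ge N\,w(A_r(Q))$ at some $X_0\in T_r(Q)$. Boundary H\"older continuity forces $X_0$ to lie comparably far from $\partial\Omega$ once $N$ is large, and Harnack's inequality along a Harnack chain joining $X_0$ to a corkscrew point $A_s(Q')$ at the scale $s\approx\delta(X_0)$ (with $Q'\in\partial\Omega$ a nearest point to $X_0$) produces a new point, strictly closer to $\partial\Omega$, at which $w$ exceeds $cN\,w(A_r(Q))$ with $c>1$; iterating contradicts the continuity of $w$ up to $\Delta(Q,2r)$, where it vanishes. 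Combining the resulting pointwise bound with \refprop{prop:DiGNMestimate} upgrades it to the sup estimate $\sup_{T_r(Q)} w\lesssim w(A_r(Q))$.

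Estimate (ii) is then obtained by the classical Caffarelli--Fabes--Mortola--Salsa combination. First, the elliptic measure $\omega$ is doubling on $\partial\Omega$: this follows from Harnack's inequality, the corkscrew condition, and (i) in the usual way. Given doubling, one compares $w$ with the nonnegative $L$-harmonic function $X\mapsto\omega^X(\Delta(Q,2r))$ on $T_{2r}(Q)$ using the maximum principle: the upper bound $w(X)\lesssim w(A_r(Q))\,\omega^X(\Delta(Q,2r))$ uses (i) and Harnack to control $w$ on the interior portion $\Omega\cap\partial B(Q,2r)$ of $\partial T_{2r}(Q)$ together with the doubling and non-degeneracy of $\omega$ there, while the lower bound $w(X)\gtrsim w(A_r(Q))\,\omega^X(\Delta(Q,2r))$ is built from Harnack at corkscrew points along a chain from $A_r(Q)$ together with the doubling property. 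This establishes (ii), and hence the proposition.

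The main obstacle is not any single geometric step --- those are classical once the inputs are in hand --- but ensuring that this entire toolkit (maximum principle, boundary H\"older continuity, existence and uniqueness of continuous Dirichlet solutions, doubling of the elliptic measure) is genuinely available for divergence-form operators whose antisymmetric part is merely in BMO rather than bounded, because then the bilinear form $\int A\nabla w\cdot\nabla\varphi$ is not obviously well defined on $W^{1,2}\times W^{1,2}$ and one must lean on the higher integrability \refprop{prop:GradRevHol}. This is exactly what the theory of \cite{li_lp_2019} provides; with those facts cited, the arguments above proceed using only the CAD structure and the interior estimates collected above. Alternatively, one may simply invoke \cite[Proposition 4.3.6]{li_lp_2019} directly.
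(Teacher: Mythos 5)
The paper offers no proof of this proposition at all: it is imported verbatim from Li's thesis (hence the tag ``Proposition 4.3.6'' in the statement), which is precisely the fallback you give in your final sentence, so your proposal coincides with the paper's approach. Your CFMS/Jerison--Kenig sketch (Carleson estimate plus two-sided comparison of each solution with $\omega^X(\Delta(Q,2r))$, with doubling and the Green's function/elliptic measure comparison also available as quoted results, cf. \refprop{prop:GreenToOmega} and \refprop{prop:DoublingPropertyOfomega}) is a faithful outline of how the result is actually proved in that source for operators with BMO antisymmetric part.
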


And the boundary H\"older estimate also holds.
\begin{prop}[Lemma 3.2.5]\label{prop:BoundaryHolder}
    Let \( u \in W^{1,2}(\Omega) \) be a solution in \( \Omega \) and \( P \in \partial \Omega \). 
    Suppose that \( u \) vanishes on \( \Delta(P,R) \).
    Then for \( 0 < r \leq R \) we have
    \[ \Osc_{T(P,r)} u \lesssim_{\Omega} \left( \frac{r}{R} \right)^{\alpha} 
        \sup_{T(P,R)} |u|. \]
\end{prop}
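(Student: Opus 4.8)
\emph{Proof strategy.} The estimate is a boundary De Giorgi--Nash--Moser statement, and the plan is to prove a single reduction-of-oscillation step at $P$ and then iterate it. Write $\omega(t):=\Osc_{T(P,t)}u$ for $0<t\le R$. Since $u$ vanishes on $\Delta(P,t)\subset\Delta(P,R)$, the value $0$ lies between the infimum and the supremum of $u$ over $T(P,t)$, so that $\omega(t)=\sup_{T(P,t)}u^{+}+\sup_{T(P,t)}u^{-}$. The core of the argument is the claim that there is $\gamma=\gamma(\Omega)\in(0,1)$ with $\omega(t/4)\le\gamma\,\omega(t)$ for all $0<t\le R$ (we may assume $R\lesssim\Diam\Omega$, the only scale range that matters). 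Granting this, iterating gives $\omega(4^{-k}R)\le\gamma^{k}\omega(R)$; then for $0<r\le R$ I would choose $k\ge 0$ with $4^{-(k+1)}R<r\le 4^{-k}R$ and set $\alpha:=\log(1/\gamma)/\log 4>0$, obtaining $\omega(r)\le\omega(4^{-k}R)\le\gamma^{-1}(r/R)^{\alpha}\,\omega(R)\le 2\gamma^{-1}(r/R)^{\alpha}\sup_{T(P,R)}|u|$, which is the asserted bound, the implicit constant and the exponent depending on $\gamma$ and hence, through the exterior corkscrew and Ahlfors data, on $\Omega$.

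To prove the claim I would fix $t\le R$, set $\rho:=t/2$, and note that $-u$ is again an $L$-solution, so both $u^{+}$ and $u^{-}$ are nonnegative subsolutions of $L$ in $T(P,t)$; as they have zero trace on $\Delta(P,t)$, their extensions by $0$ across $\partial\Omega$ are nonnegative subsolutions of $L$ in the ball $B(P,t)$ (extend $A$ arbitrarily off $\Omega$; this is harmless since the extended function has vanishing gradient there). Call $\bar u$ the extension of $u^{+}$. Because $\Omega$ is a CAD it satisfies the exterior corkscrew condition (\refdef{def:CAD} together with \refdef{def:CorcscrewCondition} applied to $\R^{n}\setminus\overline{\Omega}$), so there is a ball $B(A^{-},M^{-1}\rho)\subset B(P,\rho)\setminus\overline{\Omega}$ on which $\bar u\equiv 0$; hence $|\{\bar u=0\}\cap B(P,\rho)|\ge c(n)M^{-n}|B(P,\rho)|$, a density bound uniform in $t$. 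The De Giorgi reduction-of-oscillation lemma for nonnegative subsolutions of $L$ then yields $\gamma_{0}=\gamma_{0}(n,\lambda_{0},\Lambda_{0},M)\in(0,1)$ with $\sup_{B(P,\rho/2)}\bar u\le\gamma_{0}\sup_{B(P,t)}\bar u$, i.e. $\sup_{T(P,t/4)}u^{+}\le\gamma_{0}\sup_{T(P,t)}u^{+}$; the same estimate applied to $u^{-}$ and addition give $\omega(t/4)\le\gamma_{0}\,\omega(t)$, proving the claim with $\gamma=\gamma_{0}$.

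The one nonclassical point — and what I expect to be the main obstacle — is that the De Giorgi machinery used above (local boundedness of subsolutions and the reduction-of-oscillation lemma; cf. \refprop{prop:DiGNMestimate}, \refprop{prop:Harnack}), as well as the zero-extension facts, must tolerate the unbounded antisymmetric part $A^{a}\in\BMO(\Omega)$. This is precisely what \cite{li_lp_2019} carries out: the truncated Caccioppoli inequality for $(u-k)^{+}$, obtained by testing with $\eta^{2}(u-k)^{+}$, picks up the term $\int A^{a}\nabla(u-k)^{+}\cdot\nabla(\eta^{2}(u-k)^{+})$, which by \eqref{prop:TheAvarageIs0} equals the same integral with $A^{a}$ replaced by $A^{a}-(A^{a})_{B}$ on the ball $B$ at hand; one then controls it by H\"older's inequality together with the John--Nirenberg inequality ($\|A^{a}-(A^{a})_{B}\|_{L^{s}(B)}\lesssim_{s}\Lambda_{0}$ for every $s<\infty$) and the higher integrability of $\nabla u$ (\refprop{prop:GradRevHol}), and absorbs it into the good term via Sobolev's inequality. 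Once this absorption is secured, the Moser/De Giorgi iteration and the scheme above go through verbatim. Equivalently, the claim can be read as the nondegeneracy $\omega_{L}^{T(P,t)}\bigl(\Delta(P,t/2)\bigr)\gtrsim 1$ for poles in $T(P,t/4)$, again a consequence of the exterior corkscrew condition and the same interior estimates, in line with \refprop{prop:CompPrinc}.
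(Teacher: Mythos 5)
The paper never proves this proposition: it is imported from Li's thesis \cite{li_lp_2019} (Lemma 3.2.5 there), with \refrem{rem:ExteriorConeCondition} recording only that the proof uses the exterior corkscrew condition. Your reconstruction follows exactly the route of that cited proof --- zero extension of $u^{\pm}$ across $\Delta(P,t)$, a measure-density bound for the zero set coming from the exterior corkscrew ball, and a De Giorgi reduction-of-oscillation step iterated dyadically --- so as a strategy it matches the source rather than offering a genuinely different argument, and the dyadic iteration and the bookkeeping $\omega(t)=\sup u^{+}+\sup u^{-}$ are fine.

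One step is too casual to stand as written: ``extend $A$ arbitrarily off $\Omega$; this is harmless since the extended function has vanishing gradient there.'' The vanishing of $\nabla\bar u$ outside $\Omega$ does make the weak subsolution identity independent of the extension, but it does not make the De Giorgi machinery insensitive to it. The absorption you describe replaces $A^{a}$ by $A^{a}-(A^{a})_{B}$ on a ball $B$ and invokes John--Nirenberg, and in the boundary step the relevant balls cross $\partial\Omega$; the hypothesis \eqref{eq:A^ainBMO} controls oscillation only over cubes contained in $\Omega$, so for an arbitrary extension you have no bound on $\|A^{a}-(A^{a})_{B}\|_{L^{s}(B)}$ (equivalently, no John--Nirenberg estimate on $B\cap\Omega$). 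The repair is standard but must be stated: a chord arc domain is uniform, so Jones' extension theorem \cite{jones_extension_1980} (already used by the paper in Section \ref{S:Application}) extends $A^{a}$ to $\BMO(\R^{n})$ with comparable norm, and the symmetric part can be extended by the identity to preserve ellipticity; with that extension the truncated Caccioppoli estimate, the density lemma and the iteration go through as in \cite{li_lp_2019}. Note also that your final paragraph in effect defers precisely this BMO-tolerant boundary Caccioppoli/De Giorgi input to \cite{li_lp_2019}, which is the substance of the cited Lemma 3.2.5 itself, so as a self-contained proof the proposal is incomplete at exactly that point, though correct in outline once the extension issue is fixed.
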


\begin{rem}\label{rem:ExteriorConeCondition}
    The proof of this result uses the exterior corkscrew condition, 
        i.e., that \( \R^n \setminus  \Bar{\Omega} \) satisfies \refdef{def:CorcscrewCondition} and it is the reason why in the paper we assume that \( \Omega \) is a CAD rather than a 1-sided CAD domain.
 \end{rem}

An important corollary of the result above is the following lemma:
\begin{prop}\label{prop:BoundaryHarnack}
    Let \( u \geq 0 \) be a solution in \( \Omega \) 
        that vanishes on \( \Delta(Q,2r) \).
    Then
    \[ \sup_{T(\Delta(Q,r))} u \lesssim u(A(Q,r)). \]
Here $A(Q,r)$ is a corkscrew point inside $\Omega$ w.r.t $Q$ and $r$ as defined by Definition \ref{def:CorcscrewCondition}.
\end{prop}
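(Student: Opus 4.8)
The plan is to derive \refprop{prop:BoundaryHarnack} as an immediate consequence of the boundary Hölder estimate \refprop{prop:BoundaryHolder} together with the corkscrew condition and interior Harnack's inequality \refprop{prop:Harnack}. The key point is that Hölder vanishing at the boundary forces a solution which is large at \emph{some} point near the boundary to be large at a \emph{definite} interior point, and the only canonical interior point at scale $r$ is the corkscrew point $A(Q,r)$.

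\emph{Step 1: reduce to a sup over a Carleson region at a comparable scale.} Since $u \geq 0$ vanishes on $\Delta(Q,2r)$, \refprop{prop:BoundaryHolder} applies with $P = Q$ and $R = 2r$. Let $M \coloneqq \sup_{T(\Delta(Q,r))} u$ and pick $X_0 \in T(\Delta(Q,r))$ with $u(X_0) \geq M/2$. The estimate gives
\[
\Osc_{T(Q,\rho)} u \lesssim (\rho/r)^\alpha \sup_{T(Q,2r)} u
\]
for $0 < \rho \leq 2r$; since $u \equiv 0$ on $\Delta(Q,2r)$, for $X$ with $\delta(X)$ small relative to $r$ we get a pointwise bound $u(X) \lesssim (\delta(X)/r)^\alpha \sup_{T(Q,2r)} u$. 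I still need to control $\sup_{T(Q,2r)} u$ by $u(A(Q,r))$, which is the content of the next two steps; so the argument is really a dichotomy on whether the extremizing point $X_0$ is deep in the domain or close to $\partial\Omega$.

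\emph{Step 2: handle points far from the boundary via a Harnack chain.} If $\delta(X_0) \geq c\, r$ for a small dimensional constant $c$ to be fixed, then $X_0$ and the corkscrew point $A(Q,r)$ both lie at distance $\gtrsim r$ from $\partial\Omega$ and at mutual distance $\lesssim r$, so by the Harnack chain condition (NTA) and \refprop{prop:Harnack} we have $u(X_0) \lesssim u(A(Q,r))$, hence $M \lesssim u(A(Q,r))$ as desired.

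\emph{Step 3: handle points close to the boundary and close the argument by iteration.} If instead $\delta(X_0) < c\, r$, apply \refprop{prop:BoundaryHolder} on the ball $\Delta(Q', 2\delta(X_0)/c)$ for a suitable $Q' \in \partial\Omega$ near $X_0$ — or, more cleanly, argue by contradiction/iteration: suppose for contradiction that $M \geq N\, u(A(Q,r))$ for a large constant $N$. Combining the pointwise Hölder decay near $\partial\Omega$ with Step 2 (which controls the sup over the region where $\delta \geq cr$), one sees that the maximum over $T(\Delta(Q,r))$ must be attained in the thin collar $\{\delta(X) \lesssim \theta r\}$ for some small $\theta$; but there the Hölder estimate bounds $u$ by $\theta^\alpha \sup_{T(Q,2r)} u$, and covering $T(Q,2r)$ by finitely many overlapping Carleson regions of radius $r$ and re-running Steps 1–2 on each yields $\sup_{T(Q,2r)} u \lesssim u(A(Q,r)) + \theta^\alpha \sup_{T(Q,2r)} u$ (using interior Harnack to pass between the various corkscrew points, all of which are comparable). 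Choosing $\theta$ small enough to absorb the last term gives $\sup_{T(Q,2r)} u \lesssim u(A(Q,r))$, which in particular yields the claim.

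The main obstacle is Step 3: the bootstrapping that upgrades ``$u$ is small on a thin boundary collar'' to ``$u$ is controlled everywhere on $T(\Delta(Q,r))$'' must be organized so that all the corkscrew points appearing at the various scales and base points are linked by Harnack chains of \emph{uniformly} bounded length — this is exactly where the NTA (not merely corkscrew) hypothesis and the Ahlfors regularity of $\partial\Omega$ enter, and care is needed to ensure the absorption constant $\theta$ depends only on $n,\lambda_0,\Lambda_0$ and the NTA/Ahlfors constants of $\Omega$.
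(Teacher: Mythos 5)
Your Steps 1--2 are fine, but Step 3 --- which you correctly identify as the heart of the matter --- does not work as written, for two reasons. First, the intermediate statement you aim for, \( \sup_{T(\Delta(Q,2r))} u \lesssim u(A(Q,r)) \), is simply false under the hypothesis that \( u \) vanishes only on \( \Delta(Q,2r) \): a point \( X \in \overline{B(Q,2r)}\cap\Omega \) near the ``edge'' \( \partial B(Q,2r)\cap\partial\Omega \) may have its nearest boundary point just outside \( \Delta(Q,2r) \), where \( u \) is unconstrained and can be huge (think of the Poisson extension in a half-space of a large bump supported just outside \( \Delta(Q,2r) \)). For the same reason, ``re-running Steps 1--2'' on Carleson regions \( T(\Delta(Q_i,r)) \) with \( Q_i \) near the edge is illegitimate: it requires \( u \) to vanish on \( \Delta(Q_i,2r)\not\subset\Delta(Q,2r) \), and when you apply \refprop{prop:BoundaryHolder} at such points the supremum on the right-hand side lives on a region strictly larger than \( T(\Delta(Q,2r)) \), so your absorption inequality does not compare like with like. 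This is precisely why the correct statement bounds \( u \) only on \( T(\Delta(Q,r)) \), at half the radius of the vanishing ball.

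Second, even after restricting to \( T(\Delta(Q,r)) \) so that all surface balls stay inside \( \Delta(Q,2r) \), the dichotomy-plus-absorption scheme has a quantitative obstruction you have not resolved: the Harnack-chain constant controlling points with \( \delta \geq \theta r \) blows up like a power \( \theta^{-\lambda} \) as \( \theta\to 0 \) (chains of length \( \sim\log(1/\theta) \)), while the H\"older gain is only \( \theta^{\alpha} \) and multiplies a supremum over a slightly enlarged region; iterating over scales, the accumulated constants need not converge, so ``choose \( \theta \) small and absorb'' does not close. The standard way around this --- and what the paper relies on by citing Lemma 4.4 of \cite{jerison_boundary_1982}, with JK's estimate (4.5) replaced by \refprop{prop:BoundaryHolder} --- is a contradiction/iteration argument: assume \( u(X_1)\geq A^N u(A(Q,r)) \) for a huge constant; the (polynomial) Harnack bound forces \( \delta(X_1) \) to be polynomially small in \( A^{-N} \); \refprop{prop:BoundaryHolder} then produces a point \( X_2 \) at distance \( \lesssim M\delta(X_1) \) with \( u(X_2)\geq 2u(X_1) \); since \( u \) doubles at each step while the Harnack constant grows only polynomially, the displacements decay geometrically, the points \( X_k \) remain in \( B(Q,2r) \) and converge to a point of \( \Delta(Q,2r) \) where \( u(X_k)\to\infty \), contradicting the continuous vanishing of \( u \). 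Your sketch does not contain this mechanism, and without it (or an equivalent careful multiscale bookkeeping) the argument has a genuine gap.
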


This is \emph{Lemma 4.4} of \cite{jerison_boundary_1982},
    the only difference in our setting is that equation (4.5) in \cite{jerison_boundary_1982} follows from \refprop{prop:BoundaryHolder}.
After combining \refprop{prop:BoundaryHarnack} with \refprop{prop:BoundaryHolder} we have:
\begin{prop}\label{prop:BoundaryHoelderContinuity}
    Let \(u\geq 0\) be a solution that vanishes on \(\Delta(Q,R)\). Then there are \(C>0,1>\alpha>0\) such that
    \[\sup_{T(\Delta(Q,r))} u \leq C\left(\frac{r}{R}\right)^\alpha u(A(Q,R)).\]
\end{prop}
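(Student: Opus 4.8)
The plan is to combine the two preceding propositions in the obvious way, carefully tracking how the scales interact through a Harnack chain. First I would invoke \refprop{prop:BoundaryHarnack} at scale $R$: since $u \geq 0$ vanishes on $\Delta(Q,2R) \supset \Delta(Q,R)$ — wait, we are only given vanishing on $\Delta(Q,R)$, so I would instead apply \refprop{prop:BoundaryHarnack} on the slightly smaller ball $\Delta(Q,R/2)$ (using that $u$ vanishes on $\Delta(Q,R) \supset \Delta(Q, 2\cdot(R/2))$) to get
\[ \sup_{T(\Delta(Q,R/2))} u \lesssim u(A(Q,R/2)). \]
By the Harnack chain condition (the domain is NTA), the corkscrew points $A(Q,R/2)$ and $A(Q,R)$ are joined by a Harnack chain of bounded length, so $u(A(Q,R/2)) \approx u(A(Q,R))$ by \refprop{prop:Harnack}; thus $\sup_{T(\Delta(Q,R/2))} u \lesssim u(A(Q,R))$.

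Next I would handle the two ranges of $r$ separately. If $R/2 \leq r \leq R$, then $T(\Delta(Q,r)) \subset T(\Delta(Q,R))$ and the desired bound with the factor $(r/R)^\alpha \approx 1$ is just the previous display up to a constant, after possibly first passing from $R/2$ to $R$ again by a Harnack chain argument on the relevant corkscrew points (or simply absorbing the constant). If $0 < r < R/2$, then I would apply the boundary oscillation estimate \refprop{prop:BoundaryHolder} with the outer scale $R/2$ in place of $R$: since $u$ vanishes on $\Delta(P,R) \supset \Delta(Q, R/2)$ for $P=Q$, we get for $0 < r \leq R/2$
\[ \Osc_{T(\Delta(Q,r))} u \lesssim \left( \frac{2r}{R} \right)^{\alpha} \sup_{T(\Delta(Q,R/2))} |u|. \]
Because $u$ vanishes somewhere on $\Delta(Q,r) \subset \overline{T(\Delta(Q,r))}$ and $u \geq 0$, the oscillation over $T(\Delta(Q,r))$ dominates $\sup_{T(\Delta(Q,r))} u$ (up to using continuity of $u$ up to the boundary); combining with the bound $\sup_{T(\Delta(Q,R/2))} u \lesssim u(A(Q,R))$ from the first paragraph yields
\[ \sup_{T(\Delta(Q,r))} u \lesssim \left( \frac{r}{R} \right)^{\alpha} u(A(Q,R)), \]
which is the claim (with $C$ absorbing the factor $2^\alpha$ and all implicit constants, which depend only on $\Omega$, $n$, $\lambda_0$, $\Lambda_0$).

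The main obstacle I anticipate is purely bookkeeping rather than conceptual: one has to be careful that the scales at which \refprop{prop:BoundaryHarnack} and \refprop{prop:BoundaryHolder} are applied are compatible with the hypothesis that $u$ vanishes only on $\Delta(Q,R)$ and not on a larger boundary ball, which is why I shrink to $R/2$ and pay for it with Harnack chains between the corkscrew points $A(Q,R/2)$ and $A(Q,R)$. One should also double-check that $\sup_{T(\Delta(Q,r))} u$ is genuinely controlled by $\Osc_{T(\Delta(Q,r))} u$; this uses $u \geq 0$ together with the fact that $u$ attains the value $0$ on the portion of the boundary $\Delta(Q,r)$ contained in $\overline{T(\Delta(Q,r))}$, so that $\inf_{\overline{T(\Delta(Q,r))}} u = 0$ and hence $\sup = \Osc$. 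Everything else is a routine concatenation of the cited interior and boundary estimates, and the exterior corkscrew condition enters only through \refprop{prop:BoundaryHolder} as already noted in \refrem{rem:ExteriorConeCondition}.
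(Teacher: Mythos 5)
Your core argument (the range $0<r\leq R/2$) is exactly the combination the paper intends and leaves unwritten: apply \refprop{prop:BoundaryHarnack} at scale $R/2$ (which only needs vanishing on $\Delta(Q,R)$), pass from $A(Q,R/2)$ to $A(Q,R)$ by a bounded Harnack chain via \refprop{prop:Harnack}, then use \refprop{prop:BoundaryHolder} at outer scale $R/2$ and the observation that $u\geq 0$ together with $u=0$ on $\Delta(Q,r)\subset\overline{T(\Delta(Q,r))}$ gives $\sup_{T(\Delta(Q,r))}u=\Osc_{T(\Delta(Q,r))}u$. That part is correct and complete.

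The case $R/2\leq r\leq R$, however, is not justified as written, and cannot be fixed with a constant uniform up to $r=R$. Your ``previous display'' controls $\sup_{T(\Delta(Q,R/2))}u$ only; the inclusion $T(\Delta(Q,r))\subset T(\Delta(Q,R))$ gives nothing, because \refprop{prop:BoundaryHarnack} at scale $r$ would require $u$ to vanish on $\Delta(Q,2r)\supsetneq\Delta(Q,R)$, and a Harnack chain between the corkscrew points $A(Q,R/2)$ and $A(Q,R)$ does not bound the supremum over the larger Carleson region. Indeed the claimed bound degenerates as $r\to R$: for a nonnegative harmonic function in a half-space vanishing on $\Delta(Q,R)$ with boundary data concentrated just outside $\Delta(Q,R)$, the values of $u$ at points of $T(\Delta(Q,r))$ near the edge of $\Delta(Q,R)$ can be arbitrarily large compared with $u(A(Q,R))$. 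The proposition should therefore be read (as it is in fact applied in the paper, with small ratios $r/R$) for $r\leq R/2$, or for $r\leq\theta R$ with $\theta<1$ fixed and constants depending on $\theta$; with that reading your first two paragraphs already constitute the proof, and the third case should simply be dropped rather than ``absorbed into the constant.''
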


\subsection{Properties of the Green's function}

The paper \cite{li_lp_2019} also gives us information on some properties of the Green's function.
\begin{prop}[Theorem 4.1.1]\label{prop:GreenExist}
    There exists a unique function (the Green's function) 
    \( G : \Omega \times \Omega \to \R \cup \{ \infty \} \),
    such that
    \[ G(\cdot,Z) \in W^{1,2}(\Omega \setminus B(Z,r)) \cap W_0^{1,1}(\Omega), 
        \quad Z \in \Omega, \; r > 0, \]
    and
    \begin{align} \int_{\Omega} A(Y) \nabla_{Y} G(Y,Z) \nabla \phi(Y) dY = \phi(Z), 
        \quad \phi \in W_0^{1,p}(\Omega) \cap C^0(\Omega), \quad p>n. \label{eq:DefiningPropertyofGreensfct}\end{align}
\end{prop}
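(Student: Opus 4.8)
The plan is to construct \(G(\cdot,Z)\) by the classical Gr\"uter--Widman approximation scheme --- this is the argument behind \cite[Theorem~4.1.1]{li_lp_2019} --- now powered by the De Giorgi--Nash--Moser estimates for \(L\) recorded above; the one place where the unbounded antisymmetric part needs care is the finiteness of the various pairings, which is settled a posteriori using the self-improvement of gradient integrability (\refprop{prop:GradRevHol}), the fact that \(A^a\in L^r\) for every \(r<\infty\) (John--Nirenberg), and the cancellation \(\int_E b\nabla u\cdot\nabla u=0\) for constant antisymmetric \(b\). Fix \(Z\in\Omega\); for small \(\rho>0\) put \(f_\rho\coloneqq |B(Z,\rho)|^{-1}\mathbf 1_{B(Z,\rho)}\) and let \(g_\rho\in W_0^{1,2}(\Omega)\) solve \(Lg_\rho=f_\rho\) with vanishing boundary values, which is possible by the solvability theory for \(L\) in \cite{li_lp_2019}.

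I would then derive three families of a priori bounds on \(g_\rho\), uniform in \(\rho\). (i) Moser iteration, via \refprop{prop:DiGNMestimate} and \refprop{prop:Harnack}, gives \(0\le g_\rho(Y)\lesssim |Y-Z|^{2-n}\) for \(|Y-Z|\ge 2\rho\) (with the evident logarithmic modification when \(n=2\)). (ii) Hence \(g_\rho\) is bounded in \(L^\infty(\Omega\setminus B(Z,r))\), and since it solves \(Lg_\rho=0\) there once \(\rho<r\), \refprop{prop:Caccioppoli} gives \(\|g_\rho\|_{W^{1,2}(\Omega\setminus B(Z,2r))}\lesssim_{r}1\). (iii) The decay in (i) yields \(|\{g_\rho>k\}|\lesssim k^{-n/(n-2)}\) (again with the obvious change when \(n=2\)); testing \(Lg_\rho=f_\rho\) with the truncations \(T_k(g_\rho)\coloneqq\max(-k,\min(k,g_\rho))\), using \(\|f_\rho\|_{L^1}=1\) and the ellipticity of \(L\) (the antisymmetric part being absorbed as in \cite{li_lp_2019}) to obtain \(\int_{\{|g_\rho|<k\}}|\nabla g_\rho|^2\lesssim k\), and summing over dyadic \(k\) produces \(\|\nabla g_\rho\|_{L^q(\Omega)}\lesssim 1\) for every \(q<\tfrac{n}{n-1}\); in particular \(\{g_\rho\}\) is bounded in \(W_0^{1,1}(\Omega)\).

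Passing to a subsequence, \(g_\rho\rightharpoonup G(\cdot,Z)\) weakly in \(W_0^{1,q}(\Omega)\) for some \(q\in(1,\tfrac n{n-1})\) and weakly in \(W^{1,2}(\Omega\setminus B(Z,r))\) for every \(r>0\), giving the stated regularity. To get \eqref{eq:DefiningPropertyofGreensfct} I would pass to the limit in \(\int_\Omega A\nabla g_\rho\cdot\nabla\phi=\fint_{B(Z,\rho)}\phi\) for \(\phi\in W_0^{1,p}(\Omega)\cap C^0(\Omega)\), \(p>n\): the right side tends to \(\phi(Z)\), and on the left the integrand lies in \(L^1(\Omega)\) by a H\"older count --- \(\nabla_Y G\in L^q\) with \(q<\tfrac n{n-1}\), \(\nabla\phi\in L^p\) with \(p>n\), \(A^a\in L^b\) for all \(b<\infty\), and since \(\tfrac{n-1}{n}<1-\tfrac1p\) one may take \(\tfrac1q\in(\tfrac{n-1}{n},1-\tfrac1p)\) and then finite \(b\) with \(\tfrac1q+\tfrac1p+\tfrac1b=1\) (the bounded symmetric part causes no difficulty) --- so the weak convergence passes. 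For uniqueness, if \(\tilde G\) is another such function then \(h\coloneqq G(\cdot,Z)-\tilde G(\cdot,Z)\in W_0^{1,1}(\Omega)\) satisfies \(\int_\Omega A\nabla h\cdot\nabla\phi=0\) for all admissible \(\phi\); regularising \(h\) (using interior estimates and the boundary H\"older bound \refprop{prop:BoundaryHolder} to upgrade its integrability to \(W_0^{1,2}(\Omega)\)) and then testing with \(h\) itself forces \(h\equiv 0\) by ellipticity.

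The main obstacle I anticipate is exactly the interplay between the low integrability of \(\nabla G\) near the pole and the unbounded antisymmetric coefficient: every integral in \eqref{eq:DefiningPropertyofGreensfct}, in the truncation argument, and in the uniqueness step must be justified by the H\"older bookkeeping above, and there the natural test function \(h\) belongs only to \(W_0^{1,1}(\Omega)\) and must first be regularised before one can invoke ellipticity --- an argument that itself rests on the cancellation \(\int b\nabla u\cdot\nabla u=0\) and the reverse H\"older inequality \refprop{prop:GradRevHol}.
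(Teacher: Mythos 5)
You should first note that the paper itself contains no proof of this proposition: it is imported verbatim from Li's thesis (\cite[Theorem 4.1.1]{li_lp_2019}), so there is no in-paper argument to compare against. Your Gr\"uter--Widman scheme (approximate poles \(f_\rho\), uniform decay and truncation estimates, weak limits in \(W_0^{1,q}\) for \(q<\tfrac{n}{n-1}\) and in \(W^{1,2}\) away from the pole, then passage to the limit in the weak formulation) is exactly the standard route taken in that source, and your H\"older bookkeeping for \(\int A\nabla G\cdot\nabla\phi\) with \(\nabla\phi\in L^p\), \(p>n\), and \(A^a\in L^b\) for every \(b<\infty\) is sound. One caveat on the construction: when you test with \(T_k(g_\rho)\), the antisymmetric pairing cannot be dismissed by the pointwise identity \(\xi^{T}A^a\xi=0\) alone, since for unbounded \(A^a\) the bilinear form is defined through the \(H^1\)--BMO (div--curl) duality of Li--Pipher rather than as an absolutely convergent pointwise integral; your phrase ``absorbed as in \cite{li_lp_2019}'' is pointing at the right place, but that is where the actual work sits.

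The genuine gap is the uniqueness step. If \(\tilde G\) is a second candidate, then \(h=G(\cdot,Z)-\tilde G(\cdot,Z)\) is only known to lie in \(W_0^{1,1}(\Omega)\cap W^{1,2}(\Omega\setminus B(Z,r))\); near the pole it is merely \(W^{1,1}\), and every interior tool you invoke to ``upgrade'' it (Caccioppoli, \refprop{prop:GradRevHol}, the De Giorgi--Nash--Moser bounds, the boundary H\"older estimate \refprop{prop:BoundaryHolder}) is stated for \(W^{1,2}_{\Loc}\) solutions, so invoking them presupposes exactly the regularity near \(Z\) that you are trying to establish; as written the argument is circular and does not force \(h\equiv0\). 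The standard resolution is not regularisation of \(h\) but duality: compare the two candidates by testing their defining properties against approximate \emph{adjoint} Green functions with pole at an arbitrary point \(X\) (equivalently, exploit \(G^*(X,Y)=G(Y,X)\), \refprop{prop:GreenAdj}), and let the approximation parameter tend to zero to identify \(\fint_{B(X,\sigma)}G(\cdot,Z)\) and \(\fint_{B(X,\sigma)}\tilde G(\cdot,Z)\) for a.e.\ \(X\). Even there one must check admissibility, since the adjoint approximants lie a priori only in \(W_0^{1,2}\cap C^0\) while the defining property \eqref{eq:DefiningPropertyofGreensfct} asks for test functions in \(W_0^{1,p}\cap C^0\) with \(p>n\); this is handled by running the comparison at the level of the approximations \(g_\rho\), \(g_\sigma^*\) before passing to the limit. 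Your existence argument stands, but the uniqueness paragraph needs to be replaced by an argument of this duality type.
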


\begin{prop}\label{prop:GreenBounds}
    \[ G(X,Y) \lesssim |X-Y|^{2-n}, \quad X,Y \in \Omega, \]
        and for any \( 0<\theta <1 \) we have
    \[ G(X,Y) \gtrsim_{n,\lambda_0,\Lambda_0,\theta} |X-Y|^{2-n}, \quad X,Y \in \Omega : |X-Y| < \theta\delta(Y). \]
\end{prop}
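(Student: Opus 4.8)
The plan is to adapt the classical Gr\"uter--Widman / Littman--Stampacchia--Weinberger argument, paying attention to the two places where the unbounded antisymmetric part enters. Throughout write $A = A^s + A^a$, fix $Y_0 \in \Omega$, and set $G = G(\cdot,Y_0)$; recall that $G \geq 0$, that $G$ is continuous on $\Omega \setminus \{Y_0\}$ (interior De Giorgi--Nash), blows up at $Y_0$, and vanishes continuously on $\partial\Omega$ (by \refprop{prop:BoundaryHolder}). The identity \eqref{eq:DefiningPropertyofGreensfct} extends in the usual way, by density, to all bounded $\phi \in W_0^{1,2}(\Omega)$.

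\textbf{Upper bound.} For $0 < t < s$ let $\eta = \eta_{t,s}$ be the Lipschitz function equal to $0$ on $(-\infty,t]$, to $\tau-t$ on $[t,s]$, and to $s-t$ on $[s,\infty)$, and test \eqref{eq:DefiningPropertyofGreensfct} with $\phi = \eta(G) \in W_0^{1,2}(\Omega)\cap L^\infty$, which is supported in $\{G>t\}$. Since $\nabla\phi = \eta'(G)\nabla G$ with $\eta'\in\{0,1\}$, the antisymmetric part contributes $\int \eta'(G)\,(A^a\nabla G\cdot\nabla G) = 0$ \emph{pointwise} (as $A^a$ is antisymmetric), so ellipticity gives $\lambda_0\int_{\{t<G<s\}}|\nabla G|^2 \leq \phi(Y_0) = s-t$. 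Applying the Sobolev inequality on the bounded set $\Omega$ to $\min\{(G-t)_+,\,s-t\}\in W_0^{1,2}(\Omega)$ and taking $s=2t$ yields the distributional bound $|\{G>t\}| \lesssim t^{-n/(n-2)}$. Now for $X_0$ with $d := |X_0-Y_0|$ small enough that $\overline{B(X_0,d/2)}\subset\Omega$, the function $G$ is a nonnegative solution on $B(X_0,d/2)$, so Harnack's inequality (\refprop{prop:Harnack}) gives $B(X_0,d/4)\subset\{G>c\,G(X_0,Y_0)\}$; comparing Lebesgue measure with the distributional bound forces $G(X_0,Y_0)\lesssim d^{2-n}$. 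The complementary range $|X_0-Y_0|\gtrsim\delta(Y_0)$ reduces to global boundedness of $G$ away from its pole, which follows from the distributional estimate together with \refprop{prop:DiGNMestimate} and the maximum principle.

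\textbf{Lower bound.} Fix $X_0,Y_0$ with $d := |X_0-Y_0| < \theta\delta(Y_0)$, and first treat $\theta\leq\tfrac14$. Pick $\phi\in C_c^\infty(B(Y_0,2d))$ with $\phi\equiv 1$ on $B(Y_0,d)$ and $|\nabla\phi|\lesssim d^{-1}$, so that $\nabla\phi$ is supported in the annulus $E := B(Y_0,2d)\setminus B(Y_0,d)$, which avoids $Y_0$; testing \eqref{eq:DefiningPropertyofGreensfct} gives $1 = \int_E A\nabla_X G\cdot\nabla\phi$. The symmetric part is bounded, hence controlled by $C\,d^{n/2-1}\|\nabla_X G\|_{L^2(E)}$ via Cauchy--Schwarz. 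For the antisymmetric part, approximating $G$ by truncations $\min(G,\kappa)$ near the pole and integrating by parts exactly as in the proposition preceding \eqref{prop:TheAvarageIs0} shows $\int_E (A^a)_E\nabla_X G\cdot\nabla\phi = 0$, so only the oscillation $A^a-(A^a)_E$ survives; by John--Nirenberg $\|A^a-(A^a)_E\|_{L^p(E)}\lesssim_p \Lambda_0|E|^{1/p}$ for every $p<\infty$, and H\"older's inequality bounds the antisymmetric contribution by $C\,d^{n/2-1}\|\nabla_X G\|_{L^2(E)}$ as well. Hence $\int_E|\nabla_X G|^2 \gtrsim d^{2-n}$. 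In the other direction, $G$ is a nonnegative solution on $B(Y_0,4d)\setminus B(Y_0,d/2)$, so covering $E$ by balls of radius $\sim d$ and applying Caccioppoli (\refprop{prop:Caccioppoli}) gives $\int_E|\nabla_X G|^2 \lesssim d^{n-2}\big(\sup_{B(Y_0,4d)\setminus B(Y_0,d/2)} G\big)^2$, and Harnack's inequality along a chain of bounded length inside this annulus bounds the supremum by $C\,G(X_0,Y_0)$. Combining the two estimates yields $G(X_0,Y_0)\gtrsim d^{2-n}$. For general $\theta<1$ one joins $X_0$ to a point at distance $\tfrac18\delta(Y_0)$ from $Y_0$ by a Harnack chain inside the convex ball $B(Y_0,\delta(Y_0))\subset\Omega$, whose length is controlled by $\theta$, obtaining the bound with a constant depending additionally on $\theta$.

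\textbf{Main obstacle.} The upper bound is essentially classical, the only new (and convenient) point being that the antisymmetric part drops out pointwise in the energy identity. The real difficulty is the lower bound: because $A^a$ is unbounded, the annular identity $1 = \int_E A\nabla_X G\cdot\nabla\phi$ cannot be estimated through $\|A^a\|_\infty$, and naively replacing $A^a$ by $A^a-(A^a)_E$ would seem to cost a factor $\log(\operatorname{diam}\Omega/d)$ via $|(A^a)_E|$. The key is therefore to verify that $\int_E (A^a)_E\nabla_X G\cdot\nabla\phi$ vanishes \emph{exactly} --- using the truncation-at-the-pole argument above together with the integration by parts underlying \eqref{prop:TheAvarageIs0} --- so that only the BMO oscillation of $A^a$, controlled in every $L^p$ by John--Nirenberg, actually enters. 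That verification, along with the routine but slightly delicate justification that $\eta(G)$ and the truncations $\min(G,\kappa)$ are admissible test functions in \eqref{eq:DefiningPropertyofGreensfct}, is where the care is needed.
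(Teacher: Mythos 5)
The paper does not actually prove this proposition: it is imported from Li's thesis \cite{li_lp_2019} together with the other Green's function facts in that subsection, so there is no internal argument to compare yours against. Your Gr\"uter--Widman-style plan is the natural one, and your treatment of the antisymmetric part is exactly the right mechanism at both ends: $A^a\nabla G\cdot\nabla G=0$ pointwise removes it from the level-set energy identity, and in the annulus identity subtracting the constant matrix $(A^a)_E$ costs nothing (by the truncation-at-the-pole version of the lemma behind \eqref{prop:TheAvarageIs0}), so that only the BMO oscillation enters, controlled in every $L^p$ by John--Nirenberg. This is the same device the paper uses for \eqref{prop:TheAvarageIs0}, and your quantitative bookkeeping in the lower bound ($d^{2-n}\lesssim\int_E|\nabla G|^2\lesssim d^{n-2}G(X_0,Y_0)^2$, then Harnack chains for general $\theta$) is correct up to harmless adjustments of radii (with $\theta\le\tfrac14$ the region $B(Y_0,4d)\setminus B(Y_0,d/2)$ can graze $\partial\Omega$; shrink it to $\sim 2d$ or start with $\theta\le\tfrac18$).

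There are, however, two genuine gaps. First, the assertion that \eqref{eq:DefiningPropertyofGreensfct} ``extends by density to all bounded $\phi\in W_0^{1,2}(\Omega)$'' is unjustified and false at that level of generality: near the pole $\nabla_Y G$ lies only in $L^q$, $q<n/(n-1)$, and $A$ is unbounded, so the pairing need not even be absolutely convergent for such $\phi$; this is precisely the point the paper itself makes in Remark \ref{rem:GreenFunctionProperty} when criticizing \cite{fefferman_theory_1991,milakis_harmonic_2011}. For your specific test functions the needed identities are true, but they must be obtained by running the argument on the approximations $G^\rho$ of \cite{gruter_green_1982}/\cite{li_lp_2019} (whose defining identity genuinely admits $W_0^{1,2}$ test functions) and passing to the limit; the same approximation is what shows $\min\{(G-t)_+,s-t\}$ and $\min(G,\kappa)$ lie in $W^{1,2}$ across the pole, which you currently use circularly. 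So the step you defer as ``routine but slightly delicate'' is the actual content, not a density argument. Second, the near-boundary case of the upper bound is not handled by what you wrote: the complementary regime is $\delta(X_0)\lesssim|X_0-Y_0|$ (not $|X_0-Y_0|\gtrsim\delta(Y_0)$), and ``global boundedness away from the pole'' via the maximum principle on $\Omega\setminus B(Y_0,r)$ with $r\sim\delta(Y_0)$ only yields $G\lesssim\delta(Y_0)^{2-n}$, which is weaker than $|X_0-Y_0|^{2-n}$ when the separation is much larger than $\delta(Y_0)$. You need an extra ingredient when both points are close to $\partial\Omega$ relative to their separation: for instance use \refprop{prop:GreenAdj} to reduce to that case, then estimate $G$ on $\partial B(Y_0,d/2)$ by the interior bound together with the boundary H\"older/corkscrew estimate \refprop{prop:BoundaryHoelderContinuity} (or the extension-by-zero subsolution trick of \cite{gruter_green_1982}) before invoking the maximum principle in $\Omega\setminus B(Y_0,d/2)$. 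Both gaps are fixable with standard machinery available in the paper's setting, but as written they are not closed.
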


\begin{prop}\label{prop:GreenAdj}
    Let \( L^* \) be the adjoint operator to \( L \) and let \( G^* \) 
        be its Green's function.
    Then
    \[ G^*(X,Y) = G(Y,X), \quad X,Y \in \Omega. \]
\end{prop}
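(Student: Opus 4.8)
The plan is to run the classical symmetrisation argument for Green's functions (as in Gr\"uter--Widman), after first reducing to the case of \emph{bounded} coefficients by truncating the antisymmetric part. To begin, note that $L^* = \Div(A^T\nabla\cdot)$, and $A^T$ has the same symmetric part $A^s$ as $A$ --- so it satisfies \eqref{eq:elliptic} with the same $\lambda_0$ --- and antisymmetric part $-A^a$ --- so it satisfies \eqref{eq:A^ainBMO} with the same $\Lambda_0$. Hence $L^*$ is an operator of exactly the same class, and every result of this section (in particular \refprop{prop:GreenExist}, \refprop{prop:GreenBounds}, \refprop{prop:Caccioppoli}, \refprop{prop:GradRevHol}, and the interior De Giorgi--Nash--Moser theory valid for these operators) applies to $L^*$ and $G^*$ with constants depending only on $n,\lambda_0,\Lambda_0$.

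For $k\in\N$ put $A_k := A^s + T_k(A^a)$, where $T_k$ is entrywise truncation at height $k$; since $T_k$ is odd and $1$-Lipschitz, $A_k$ is again elliptic with constant $\lambda_0$, has antisymmetric part $T_k(A^a)$ with $\|T_k(A^a)\|_{\BMO}\lesssim\Lambda_0$, is bounded, converges to $A$ a.e., and satisfies $|T_k(A^a)|\le|A^a|$. Write $L_k$, $L_k^*$ and $G_k$, $G_k^*$ for the corresponding operators and Green's functions, and fix $X\ne Y$ in $\Omega$. Since $A_k$ is bounded, for $\rho>0$ the form $(u,v)\mapsto\int_\Omega A_k\nabla u\cdot\nabla v$ is bounded and coercive on $W_0^{1,2}(\Omega)$ (the antisymmetric part vanishes on the diagonal, so $\int_\Omega A_k\nabla u\cdot\nabla u\ge\lambda_0\|\nabla u\|_2^2$), and Lax--Milgram yields averaged Green's functions $G_{k,\rho}(\cdot,X)\in W_0^{1,2}(\Omega)$ solving $\int_\Omega A_k\nabla_W G_{k,\rho}(W,X)\cdot\nabla\varphi(W)\,dW = \fint_{B(X,\rho)}\varphi$ for all $\varphi\in W_0^{1,2}(\Omega)$, and likewise $G^*_{k,\sigma}(\cdot,Y)$ for $L_k^*$. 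Testing the first equation with $\varphi = G^*_{k,\sigma}(\cdot,Y)$ and the second with $\varphi = G_{k,\rho}(\cdot,X)$, and using $\int A_k\nabla u\cdot\nabla v = \int A_k^T\nabla v\cdot\nabla u$ (a relabelling of indices, legitimate as $A_k$ is bounded), gives
\[ \fint_{B(X,\rho)} G^*_{k,\sigma}(W,Y)\,dW = \fint_{B(Y,\sigma)} G_{k,\rho}(W,X)\,dW . \]
Letting $\rho,\sigma\to0$ --- using that $G_{k,\rho}(\cdot,X)\to G_k(\cdot,X)$ and $G^*_{k,\sigma}(\cdot,Y)\to G_k^*(\cdot,Y)$ locally uniformly away from their poles, plus continuity of $G_k(\cdot,X)$ at $Y$ and of $G_k^*(\cdot,Y)$ at $X$ --- yields $G_k^*(X,Y) = G_k(Y,X)$; alternatively one may simply invoke the known symmetry of the Green's function for bounded nonsymmetric elliptic operators.

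It remains to let $k\to\infty$. By \refprop{prop:GreenBounds}, \refprop{prop:Caccioppoli}, \refprop{prop:GradRevHol} and interior H\"older continuity, all with constants uniform in $k$, the $G_k(\cdot,Z)$ satisfy $G_k(W,Z)\lesssim|W-Z|^{2-n}$, are bounded in $W^{1,2}_{\Loc}(\Omega\setminus\{Z\})$ and even in $W^{1,p_0}_{\Loc}(\Omega\setminus\{Z\})$ for some $p_0>2$, and are equicontinuous on compacta of $\Omega\setminus\{Z\}$; so along a subsequence $G_k(\cdot,Z)\to g(\cdot,Z)$ locally uniformly on $\Omega\setminus\{Z\}$ with $\nabla G_k(\cdot,Z)\rightharpoonup\nabla g(\cdot,Z)$ in $L^{p_0}_{\Loc}$. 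To identify $g = G$, pass to the limit in $\int_\Omega A_k\nabla_W G_k(W,Z)\cdot\nabla\varphi\,dW = \varphi(Z)$, $\varphi\in C_c^\infty(\Omega)$: for the symmetric part, $A_k^s\nabla G_k\rightharpoonup A^s\nabla g$ weakly in $L^2_{\Loc}$ (bounded, a.e.-convergent coefficients against a weakly convergent gradient); for the antisymmetric part, write $T_k(A^a)\nabla G_k\cdot\nabla\varphi = -\nabla G_k\cdot T_k(A^a)\nabla\varphi$, note $T_k(A^a)\nabla\varphi\to A^a\nabla\varphi$ strongly in $L^{p_0'}_{\Loc}$ by dominated convergence (dominated by $|A^a||\nabla\varphi|\in\bigcap_{r<\infty}L^r_{\Loc}$ via John--Nirenberg), and pair it with $\nabla G_k\rightharpoonup\nabla g$ in $L^{p_0}_{\Loc}$. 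Hence the defining identity \eqref{eq:DefiningPropertyofGreensfct} passes to the limit; with the uniform $W_0^{1,1}(\Omega)$ bound from \refprop{prop:GreenExist} this shows $g(\cdot,Z)$ is the Green's function of $L$, so $g = G$ by uniqueness and the full sequence converges. The same argument gives $G_k^*\to G^*$. Therefore, for $X\ne Y$, $G^*(X,Y) = \lim_k G_k^*(X,Y) = \lim_k G_k(Y,X) = G(Y,X)$, and $X = Y$ is trivial since both quantities are infinite (logarithmically divergent for $n=2$).

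The main obstacle is this last passage to the limit: because the antisymmetric part is unbounded in the limit, $T_k(A^a)\nabla G_k$ cannot be controlled by a crude bound, and it is exactly the uniform higher integrability of $\nabla G_k$ (reverse H\"older, \refprop{prop:GradRevHol}) together with the domination $|T_k(A^a)|\le|A^a|\in\bigcap_{r<\infty}L^r$ that makes the weak--strong pairing go through; securing honest uniform-in-$k$ constants in all the cited estimates (legitimate since they depend only on $n,\lambda_0,\Lambda_0$) is the point requiring care.
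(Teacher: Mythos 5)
The paper itself offers no proof of this proposition: it is imported from Li's thesis \cite{li_lp_2019}, where the identity $G^*(X,Y)=G(Y,X)$ is obtained as part of the construction of $G$ and $G^*$ by approximation with better coefficients. Your plan --- truncate the antisymmetric part, use the classical averaged-Green-function duality for the bounded operators $L_k$, then pass to the limit with estimates uniform in $k$ --- is in the same spirit, and the bounded-coefficient step and the uniformity of the class ($A_k$ elliptic with the same $\lambda_0$, $\|T_k(A^a)\|_{\BMO}\lesssim\Lambda_0$ since $T_k$ is odd and $1$-Lipschitz) are fine.

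There is, however, a genuine gap in the passage to the limit. All the compactness you establish ($G_k(\cdot,Z)\to g(\cdot,Z)$ locally uniformly, $\nabla G_k(\cdot,Z)\rightharpoonup\nabla g(\cdot,Z)$ in $L^{p_0}_{\Loc}$ with $p_0>2$) lives on compact subsets of $\Omega\setminus\{Z\}$, but the identity $\int_\Omega A_k\nabla_W G_k(W,Z)\cdot\nabla\varphi\,dW=\varphi(Z)$ must be passed to the limit over the full support of $\varphi$, which necessarily contains the pole $Z$ (otherwise the identity carries no information). Near the pole $\nabla G_k(\cdot,Z)$ is \emph{not} uniformly in $L^{p_0}$; it is only uniformly in $L^q$ for $q<\tfrac{n}{n-1}\le 2$, so the weak--strong pairing you invoke simply does not apply on any neighbourhood of $Z$, and as written the limit identity is not justified. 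The fix is the same device the paper uses in the proof of \refprop{prop:DefF}: decompose $B(Z,s)$ into dyadic annuli $C_j$, use the uniform pointwise bound $G_k\lesssim|W-Z|^{2-n}$ plus Caccioppoli to get $\|\nabla G_k\|_{L^2(C_j)}\lesssim(2^{-j}s)^{1-n/2}$, and pair with $|A_k|\le|A^s|+|A^a|\in L^r_{\Loc}$ for large $r$ via H\"older; the near-pole contribution is then $O(s^{\theta})$, $\theta>0$, uniformly in $k$, and similarly for the limit integrand. A second, smaller gap sits in the identification $g=G$: the paper's uniqueness statement \refprop{prop:GreenExist} requires the defining property for all $\phi\in W_0^{1,p}(\Omega)\cap C^0(\Omega)$, $p>n$, together with membership $g(\cdot,Z)\in W^{1,2}(\Omega\setminus B(Z,r))\cap W_0^{1,1}(\Omega)$; the propositions you cite are qualitative statements about the limiting Green's function and do not literally provide the uniform-in-$k$ $W^{1,1}$/weak-$L^{n/(n-1)}$ gradient bounds near the pole needed here, so you must either extract them from the underlying construction in \cite{li_lp_2019} or prove them by the same annular argument. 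With these two points repaired the proof goes through, but at that stage you have essentially re-run the approximation scheme of \cite{li_lp_2019}, which is where the paper takes the result from.
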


Finally we have the following relation between the Green's function and the elliptic measure \( \omega^X \)
    which gives us that the elliptic measure must be doubling.
\begin{prop}[Corollary 4.3.1]\label{prop:GreenToOmega}
    \[ \omega^X(\Delta(Q,r)) \approx r^{n-2} G(X,A(Q,r)), \quad X \in \Omega \setminus B(Q,2r). \]
\end{prop}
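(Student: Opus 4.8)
This is the Caffarelli--Fabes--Mortola--Salsa comparison between the elliptic measure and the Green's function, and I would prove it by the standard combination of a maximum principle (for one inequality) and the boundary comparison principle \refprop{prop:CompPrinc} (for the other). Fix $Q \in \partial\Omega$ and $r > 0$ small compared to $\Diam(\Omega)$, and write $A_r := A(Q,r)$ for a fixed corkscrew point relative to $\Delta(Q,r)$, so that $\delta(A_r) \approx r \approx |A_r - Q|$; different choices of $A_r$ change $G(\cdot,A_r)$ only by a bounded factor away from $A_r$ by Harnack, so this is harmless. Put $u(Y) := \omega^{Y}(\Delta(Q,r))$ and $v(Y) := G(Y,A_r)$. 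Both are non-negative weak solutions: $Lu=0$ in all of $\Omega$, and $Lv=0$ in $\Omega \setminus \{A_r\}$ by \refprop{prop:GreenExist}. Moreover $v$ vanishes continuously on $\partial\Omega$ (using $G(\cdot,A_r) \in W_0^{1,1}(\Omega)$ and the boundary H\"older estimate \refprop{prop:BoundaryHolder}), and $u$ vanishes continuously on $\partial\Omega \setminus \overline{\Delta(Q,r)}$ (every boundary point of a CAD is regular, by the exterior corkscrew condition). Since $X \in \Omega \setminus B(Q,2r)$ keeps $X$ at distance $\gtrsim r$ from both $A_r$ and $\overline{\Delta(Q,r)}$, it suffices to show $u(X) \approx r^{n-2} v(X)$ for such $X$.

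\emph{Non-degeneracy and the lower bound.} I would first record $\omega^{A_r}(\Delta(Q,r)) \gtrsim 1$: the function $1 - u(Y) = \omega^{Y}(\partial\Omega \setminus \Delta(Q,r))$ is a non-negative solution, bounded by $1$, vanishing on $\Delta(Q,r/2)$, so \refprop{prop:BoundaryHoelderContinuity} gives $\sup_{T(\Delta(Q,\rho))}(1-u) \lesssim (\rho/r)^{\alpha}$; taking $\rho = \rho_0 r$ with $\rho_0$ a small constant yields $u \ge \tfrac12$ on $T(\Delta(Q,\rho_0 r))$, and \refprop{prop:Harnack} along a Harnack chain of length $\lesssim 1$ transfers this to $A_r$ --- the same reasoning giving $u(Y) \approx 1$ whenever $\delta(Y) \approx r$ and $|Y-Q| \lesssim r$. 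For the lower bound $u(X) \gtrsim r^{n-2} v(X)$ I would apply the maximum principle to $h := u - c\, r^{n-2} v$ on $\mathcal{O} := \Omega \setminus \overline{B(A_r,\kappa r)}$, with $\kappa$ small enough that $\overline{B(A_r,\kappa r)} \subset B(Q,r)\cap\Omega$ and that the two-sided bounds \refprop{prop:GreenBounds} apply on $\partial B(A_r,\kappa r)$: there $v \approx r^{2-n}$, and the non-degeneracy bound with Harnack gives $u \gtrsim 1$, while on $\partial\Omega$ we have $h = u \ge 0$ because $v = 0$; choosing $c$ small makes $h \ge 0$ on $\partial\mathcal{O}$, hence $h \ge 0$ in $\mathcal{O} \supset \Omega \setminus B(Q,2r)$.

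\emph{The upper bound.} This is the substantive half. Here $u$ and $v$ are two non-negative solutions of the \emph{same} operator $L$ both vanishing on $\partial\Omega \setminus \overline{\Delta(Q,r)}$, so \refprop{prop:CompPrinc} applies at any $Q' \in \partial\Omega$ whose distance from $Q$ exceeds a fixed multiple of the local scale, giving $u/v \approx u(A_\rho(Q'))/v(A_\rho(Q'))$ on $T_\rho(Q')$. I would globalise this --- by a Whitney--Carleson decomposition of $\Omega \setminus B(Q,2r)$, interior Harnack \refprop{prop:Harnack} for $u$ and $v$ away from $\partial\Omega$, and chaining the local ratio comparisons --- to produce one constant $K$ with $u(X)/v(X) \approx K$ throughout $\Omega \setminus B(Q,2r)$. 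Evaluating at a well-placed $X_* \in \Omega \setminus B(Q,2r)$ with $\delta(X_*) \approx |X_*-Q| \approx |X_*-A_r| \approx r$ (which exists by Ahlfors regularity of $\partial\Omega$ and the corkscrew condition --- e.g.\ a scale-$r$ corkscrew point based at a boundary point at distance $\approx 5r$ from $Q$), \refprop{prop:GreenBounds} and Harnack give $v(X_*) \approx r^{2-n}$, while $u(X_*) \approx 1$ by the non-degeneracy bound; hence $K \approx r^{n-2}$, and together with the lower bound this yields $u(X) \approx r^{n-2} G(X,A_r)$ on $\Omega \setminus B(Q,2r)$. All implicit constants depend only on $n,\lambda_0,\Lambda_0$ and the CAD constants of $\Omega$.

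The step I expect to be the main obstacle is the globalisation inside the upper bound: promoting the purely local comparisons of \refprop{prop:CompPrinc} to a single uniform estimate over $\Omega \setminus B(Q,2r)$ requires an organised Carleson decomposition and some care near the ``lateral'' sphere $\Omega \cap \partial B(Q,2r)$ and near the edge $\partial\Omega \cap \partial B(Q,r)$ where the surface ball $\Delta(Q,r)$ terminates (in practice one slightly enlarges and works on $\Omega \setminus B(Q,\kappa_1 r)$ for a convenient $\kappa_1 \in (1,2)$). A smaller but genuine technical point in this low-regularity setting is the continuity of $G(\cdot,A_r)$ up to $\partial\Omega$ with boundary value $0$; this is supplied by \refprop{prop:BoundaryHolder}.
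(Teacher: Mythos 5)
The paper does not prove this proposition at all: it is quoted verbatim from Li's thesis (Corollary 4.3.1), so your attempt can only be measured against the standard CFMS-type argument. Your lower bound is that standard argument and is sound: non-degeneracy of $\omega^{A_r}(\Delta(Q,r))$ via \refprop{prop:BoundaryHoelderContinuity} applied to $1-u$ plus a bounded Harnack chain, then the maximum principle on $\Omega\setminus\overline{B(A_r,\kappa r)}$, where only the upper Green bound of \refprop{prop:GreenBounds} on the small sphere and $v\to 0$, $u\ge 0$ on $\partial\Omega$ are needed. The normalisation at $X_*$ at the end is also fine.

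The genuine gap is the ``globalisation'' step in your upper bound. The statement you want to chain your way to --- a single constant $K$ with $u/v\approx K$ on all of $\Omega\setminus B(Q,2r)$, with constants independent of $r$ --- is essentially the proposition itself (it is the global comparison/quotient theorem, which in the NTA literature is \emph{deduced from} CFMS, not used to prove it), and your proposed derivation by chaining \refprop{prop:CompPrinc} with interior Harnack over a Whitney--Carleson decomposition cannot deliver uniform constants: to connect a point at height $\approx\Diam(\Omega)$ (or a boundary region at distance $\approx\Diam(\Omega)$ from $Q$) to the reference point $X_*$ at scale $r$ one must pass through $\approx\log(\Diam(\Omega)/r)$ scales, and every Harnack step or local comparison loses a fixed multiplicative factor, so the chained constant grows like a power of $\Diam(\Omega)/r$. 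Since the proposition must hold uniformly in $r$ (it underlies the doubling of $\omega_0$ and all subsequent Carleson estimates), this is a missing idea rather than bookkeeping. The standard non-circular routes are: (i) a second maximum-principle argument on $\Omega\setminus\overline{B(Q,2r)}$, where the majorisation $u\le C r^{n-2}v$ is only needed on the lateral sphere $\Omega\cap\partial B(Q,2r)$ and is obtained from \refprop{prop:CompPrinc} applied at a \emph{bounded} number of boundary points $Q'$ with $|Q'-Q|\approx 2r$ at the single scale $\approx r$, interior Harnack handling lateral points with $\delta\gtrsim r$ and a bounded chain to the pole giving $v\gtrsim r^{2-n}$ at the relevant corkscrew points; or (ii) the Riesz formula $\int_{\partial\Omega}\phi\,d\omega^X=-\int_\Omega A\nabla_Y G(X,Y)\cdot\nabla\phi\,dY$ with a cutoff $\phi$ adapted to $\Delta(Q,2r)$, combined with Caccioppoli, H\"older (to absorb the merely $L^r$-bounded coefficients, using \refprop{prop:GradRevHol}), and the boundary growth estimate \refprop{prop:BoundaryHarnack} applied to $G(X,\cdot)$, which is an adjoint solution vanishing on $\Delta(Q,4r)$ (the paper's estimates apply to $L^*$ as well), giving $\omega^X(\Delta(Q,r))\lesssim r^{n-2}\sup_{T(\Delta(Q,2r))}G(X,\cdot)\lesssim r^{n-2}G(X,A_r)$. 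Either way the point is that only finitely many estimates at the single scale $r$ are used, which is what keeps the constants uniform.
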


\begin{prop}[Corollary 4.3.2]\label{prop:DoublingPropertyOfomega}
    \[ \omega^X(\Delta(Q,2r)) \lesssim \omega^X(\Delta(Q,r)), \quad X \in \Omega \setminus B(Q,2r). \]
\end{prop}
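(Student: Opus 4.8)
The plan is to transfer both elliptic measures to values of the Green's function via the CFMS-type estimate \refprop{prop:GreenToOmega}, and then to compare those two values by iterating Harnack's inequality for the adjoint operator along a short Harnack chain. Fix $Q\in\partial\Omega$. After disposing of the case where $r$ is comparable to $\operatorname{diam}\Omega$ (there $\Delta(Q,2r)=\partial\Omega$) and of the band $2r\le|X-Q|<C_0r$ by direct estimates, we may assume $X\in\Omega$ with $|X-Q|\ge C_0r$, where $C_0=C_0(n,\lambda_0,\Lambda_0)\ge 4$ is a structural constant fixed below. Write $A_1:=A(Q,r)$ and $A_2:=A(Q,2r)$ for corkscrew points relative to $\Delta(Q,r)$ and $\Delta(Q,2r)$. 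Since $X$ lies outside both $B(Q,2r)$ and $B(Q,4r)$, applying \refprop{prop:GreenToOmega} at radii $r$ and $2r$ and using $(2r)^{n-2}\approx r^{n-2}$ gives
\[
\omega^X(\Delta(Q,r))\approx r^{n-2}G(X,A_1),\qquad
\omega^X(\Delta(Q,2r))\approx r^{n-2}G(X,A_2),
\]
so it suffices to prove $G(X,A_2)\lesssim G(X,A_1)$; in fact we will obtain $G(X,A_2)\approx G(X,A_1)$.

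To compare the two Green's function values, set $v(Y):=G(X,Y)$. By \refprop{prop:GreenAdj}, $v(Y)=G^{*}(Y,X)$, so by \refprop{prop:GreenExist} applied to the adjoint operator $L^{*}=\Div(A^{T}\nabla)$ — which satisfies \eqref{eq:elliptic} and \eqref{eq:A^ainBMO} with the same constants — $v$ is a nonnegative weak solution of $L^{*}v=0$ in $\Omega\setminus\{X\}$. The corkscrew points obey $\delta(A_i)\gtrsim r$ and $|A_i-Q|\le 2r$, hence $|A_1-A_2|\le 3r$; by the Harnack chain condition they are joined by a chain $B_1,\dots,B_N\subset\Omega$ of $N\lesssim 1$ balls with $r(B_i)\approx\operatorname{dist}(B_i,\partial\Omega)\gtrsim r$, and an induction on $i$ (using $B_i\cap B_{i+1}\ne\emptyset$ and $N\lesssim 1$) shows $\bigcup_i B_i\subset B(Q,C_1 r)$ for a structural $C_1$. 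Fixing $C_0:=\max\{4,2C_1\}$, the chain — and a fixed neighbourhood of it — lies in $\Omega\cap B(Q,2C_1 r)$ and hence avoids the pole $X$; so $v$ is a nonnegative $L^{*}$-solution in a neighbourhood of the chain, and iterating Harnack's inequality (\refprop{prop:Harnack}, for $L^{*}$) along it gives $v(A_2)\approx v(A_1)$, i.e.\ $G(X,A_2)\approx G(X,A_1)$. Combined with the previous display this gives $\omega^X(\Delta(Q,2r))\approx\omega^X(\Delta(Q,r))$ in the main range.

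\emph{Main obstacle.} All the analytic ingredients (the CFMS estimate, the adjoint Green's function, interior Harnack, Harnack chains) are already at our disposal, so the actual work is geometric: one has to verify that the Harnack chain joining $A(Q,r)$ to $A(Q,2r)$ stays inside a ball $B(Q,C_1 r)$ with $C_1$ a controlled structural constant, so that it is separated from both $\partial\Omega$ and the pole $X$ — which is precisely what licenses applying interior Harnack to $v=G(X,\cdot)$. This localisation of the chain is also why the pole is required at distance $\gtrsim C_1 r$ (rather than just $\ge 2r$) from $Q$; the auxiliary ranges ($r\gtrsim\operatorname{diam}\Omega$ and the borderline band $2r\le|X-Q|<C_0 r$) need their own, routine, direct estimates.
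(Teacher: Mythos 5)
Your main-range argument (pole $X$ outside $B(Q,C_0r)$ with $C_0\ge 4$) is sound and is precisely the standard proof: \refprop{prop:GreenToOmega} at the radii $r$ and $2r$, passage to the adjoint via \refprop{prop:GreenAdj}, and interior Harnack for $L^{*}$ along a Harnack chain between the two corkscrew points; your localisation of the chain inside $B(Q,C_1r)$ does follow from the definition, since $c_0^{-1}r(B_i)\le\Dist(B_i,\partial\Omega)\le c_0r(B_i)$ propagates the scale along the finitely many balls. (Note the paper itself gives no proof of this proposition; it is imported from \cite{li_lp_2019}, where the argument is the same one you give.) The genuine gap is your claim that the borderline band $2r\le|X-Q|<C_0r$ only "needs its own, routine, direct estimates". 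It does not need work there — the asserted inequality with a uniform constant is \emph{false} in that band. Take $L=\Delta$ and $\Omega$ a bounded smooth (hence chord-arc) domain whose boundary is flat near $Q=0$, let $P=(2r,0)\in\partial\Omega$ and $X_t=(2r,t)$ with $t\downarrow0$; then $|X_t-Q|=\sqrt{4r^2+t^2}>2r$, so $X_t$ is admissible, yet $\omega^{X_t}(\Delta(Q,2r))$ stays bounded below by a positive constant (the datum $\chi_{\Delta(Q,2r)}$ jumps at $P$; in the half-plane model the limit is $1/2$), while $\omega^{X_t}(\Delta(Q,r))\to0$ as $t\to0$ by the boundary H\"older estimate \refprop{prop:BoundaryHolder}, since that datum vanishes on $\Delta(P,r)$. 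So no direct estimate can close the band, and the ratio $\omega^{X}(\Delta(Q,2r))/\omega^{X}(\Delta(Q,r))$ is genuinely unbounded for poles near $\partial B(Q,2r)\cap$ a neighbourhood of $\partial\Omega$.

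The resolution is that the correct classical statement (Jerison--Kenig, Kenig's book, and the cited thesis) requires the pole outside $B(Q,4r)$ — equivalently outside $2B$ when $B=B(Q,2r)$ — and that is how the proposition should be read; everywhere the present paper uses doubling, the pole is at distance a large multiple of the relevant radius, so nothing downstream is affected, and your main-range argument is then the whole proof. Two smaller points of housekeeping in your write-up: when $2r\ge\operatorname{diam}\overline{\Omega}$ the statement is vacuous (there are no admissible $X$), rather than "trivial because $\Delta(Q,2r)=\partial\Omega$" — the latter would still require a lower bound on $\omega^{X}(\Delta(Q,r))$ which you have not supplied; and the corkscrew/Harnack-chain hypotheses are only assumed for radii below $r_0$, so scales $r$ comparable to $\operatorname{diam}\Omega$ need the usual remark about finitely many intermediate scales.
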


As $\delta(X)$ is a continuous function on $\overline{\Omega}$, without loss of generality assume that $0\in\Omega$ and that \(\delta(0)\geq \delta(X)\) for all \(X\in \Omega\). Let  \(\omega^0=\omega\). 
\begin{lemma} Then
    \[ \omega(\Delta(X^*,\delta(X))) \approx \delta(X)^{n-2} G(0,X), \quad X \in \Omega \setminus B(0, \tfrac{1}{2}\delta(0)). \]
\end{lemma}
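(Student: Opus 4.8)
The plan is to read this off from \refprop{prop:GreenToOmega}, replacing the corkscrew point \(A(X^*,\delta(X))\) by \(X\) itself by means of the interior and boundary Harnack estimates recorded in this section. Throughout, \(X^*\in\partial\Omega\) denotes a closest boundary point to \(X\), so \(\delta(X)=|X-X^*|\), and \(\omega=\omega^0\); put \(r:=\delta(X)\). The key elementary observation is that, since \(X^*\in\partial\Omega\) and \(0\) realises \(\sup_\Omega\delta\), one has for free
\[ |X^*-0|\ \ge\ \Dist(0,\partial\Omega)=\delta(0)\ \ge\ r. \]
Fix a large constant \(K\) (to be chosen below) and split into two cases.

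\emph{Case 1: \(r\le\delta(0)/(2K)\).} Then \(|X^*-0|\ge 2Kr>2r\), so \(0\notin\overline{B(X^*,2r)}\) and \refprop{prop:GreenToOmega} applies at \(\Delta(X^*,r)\), giving \(\omega(\Delta(X^*,r))\approx r^{\,n-2}G(0,A(X^*,r))\). It remains to see that \(G(0,A(X^*,r))\approx G(0,X)\). By \refprop{prop:GreenAdj}, \(G(0,\cdot)=G^*(\cdot,0)\); the adjoint operator \(L^*=\Div(A^T\nabla\cdot)\) has the same bounded elliptic symmetric part and antisymmetric part \(-A^a\) with \(\|-A^a\|_{\BMO}\le\Lambda_0\), so all results of this section apply to it, and \(G(0,\cdot)\) is a nonnegative \(L^*\)-solution in \(\Omega\setminus\{0\}\) vanishing on \(\partial\Omega\). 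Since \(0\notin T(\Delta(X^*,2r))\), \(G(0,\cdot)\) is a singularity-free solution on that Carleson region, so \refprop{prop:BoundaryHarnack} (for \(L^*\)) yields \(\sup_{T(\Delta(X^*,r))}G(0,\cdot)\lesssim G(0,A(X^*,r))\); as both \(X\) and \(A(X^*,r)\) lie in \(T(\Delta(X^*,r))\), in particular \(G(0,X)\lesssim G(0,A(X^*,r))\). For the reverse inequality, \(X\) and \(A(X^*,r)\) are at mutual distance \(\le 2r\) and at distance \(\gtrsim r\) from \(\partial\Omega\), hence joined by a Harnack chain of boundedly many balls inside \(T(\Delta(X^*,2r))\); iterating \refprop{prop:Harnack} (for \(L^*\)) along it gives \(G(0,A(X^*,r))\lesssim G(0,X)\). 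Hence \(\omega(\Delta(X^*,r))\approx r^{\,n-2}G(0,X)\).

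\emph{Case 2: \(r>\delta(0)/(2K)\),} so \(r\approx\delta(0)\) and \(X\) has bounded depth. If \(|X^*-0|\ge K\delta(0)\), the argument of Case 1 runs verbatim at the fixed scale \(\rho:=\delta(0)/(2K)\le|X^*-0|/2\): \refprop{prop:GreenToOmega} gives \(\omega(\Delta(X^*,\rho))\approx\rho^{\,n-2}G(0,A(X^*,\rho))\approx r^{\,n-2}G(0,X)\), the last step via \refprop{prop:BoundaryHarnack} and Harnack's inequality for \(L^*\) along a chain contained in some \(T(\Delta(X^*,C\delta(0)))\) avoiding \(0\) (possible since \(|X^*-0|\ge K\delta(0)\) with \(K\) large), while \(O(1)\) applications of doubling (\refprop{prop:DoublingPropertyOfomega}) and the trivial monotonicity \(\omega(\Delta(X^*,s))\le\omega(\Delta(X^*,2s))\) pass from \(\Delta(X^*,\rho)\) to \(\Delta(X^*,r)\). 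If instead \(|X^*-0|<K\delta(0)\), then \(\tfrac12\delta(0)\le|X-0|\le|X^*-0|+r<(K+1)\delta(0)\), so \(X\) lies in a fixed compact set \(K_0\Subset\Omega\setminus\{0\}\); there \(r^{\,n-2}\approx\delta(0)^{\,n-2}\), \(G(0,\cdot)\) is continuous and strictly positive (local regularity and \refprop{prop:Harnack} for \(L^*\), plus the lower bound in \refprop{prop:GreenBounds} near \(0\)) and hence \(\approx 1\), and \(\omega(\Delta(X^*,r))\le 1\) with the matching lower bound \(\gtrsim 1\) obtained by applying \refprop{prop:BoundaryHoelderContinuity} to the nonnegative \(L\)-solution \(Y\mapsto\omega^Y(\COMP{\Delta(X^*,c\delta(0))})\) — which vanishes on \(\Delta(X^*,c\delta(0))\) — to get \(\omega^{A(X^*,c'\delta(0))}(\Delta(X^*,c\delta(0)))\gtrsim 1\), followed by a change of pole to \(0\) along a Harnack chain of \(O(1)\) length (here \(|A(X^*,c'\delta(0))-0|\lesssim\delta(0)\)). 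In every case the asserted equivalence follows.

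The main obstacle is purely the bookkeeping in the interior-Harnack swap of \(A(X^*,\delta(X))\) for \(X\): one must keep the connecting chain inside a Carleson region that excludes the pole \(0\), and it is exactly the hypothesis \(X\notin B(0,\tfrac12\delta(0))\) together with \(X^*\in\partial\Omega\) (hence \(|X^*-0|\ge\delta(0)\)) that makes this possible after choosing \(K\) large. The non-degeneracy lower bound for \(\omega\) in the last sub-case is standard; everything else is a direct appeal to \refprop{prop:GreenToOmega}, \refprop{prop:BoundaryHarnack}, \refprop{prop:Harnack} and \refprop{prop:DoublingPropertyOfomega}.
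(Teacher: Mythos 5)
Your argument is correct, but it is organized differently from the paper's, and in one place it does more work than necessary. Your Case 1 is essentially the paper's first case (\(\delta(X)<\tfrac12\delta(0)\)), except that you make explicit the swap of the corkscrew point \(A(X^*,r)\) for \(X\) via \refprop{prop:BoundaryHarnack} and a Harnack chain for \(L^*\) avoiding the pole — the paper treats this as immediate. For the deep case the paper does not split on \(|X^*-0|\) at all: it uses the auxiliary point \(Z\in[X^*,X]\) with \(\delta(Z)=|Z-X^*|=\tfrac14\delta(0)\), notes that \(0\notin T(X^*,\tfrac12\delta(0))\) \emph{always} holds (precisely your observation \(|X^*-0|\geq\delta(0)\)), applies \refprop{prop:GreenToOmega} at that fixed scale, transfers \(G(0,Z)\approx G(0,X)\) by one bounded Harnack chain in \(B(X,\delta(X))\setminus B(0,\delta(0)/4)\), and compares \(\omega(\Delta(X^*,\delta(X)))\approx\omega(\Delta(X^*,\delta(0)/4))\) by doubling. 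Your subcase \(|X^*-0|\geq K\delta(0)\) is a scaled copy of this; your subcase \(|X^*-0|<K\delta(0)\) is avoidable for the same reason (\(|X^*-0|\geq\delta(0)\) already puts the pole outside \(B(X^*,\delta(0)/2)\), so \refprop{prop:GreenToOmega} applies at scale \(\delta(0)/4\) regardless of where \(X^*\) sits), and there you end up re-proving by hand ingredients that \refprop{prop:GreenToOmega} already packages: the non-degeneracy \(\omega^0(\Delta(X^*,c\delta(0)))\gtrsim1\) via \refprop{prop:BoundaryHoelderContinuity} and a pole change, plus two-sided Green bounds. That subcase is salvageable, but as phrased it has a soft spot: ``\(G(0,\cdot)\) is continuous and strictly positive, hence \(\approx1\)'' is a compactness statement whose constants would depend on the particular operator; you must instead use the quantitative route you cite in parentheses — the lower bound of \refprop{prop:GreenBounds} at a point with \(|Y|<\theta\delta(Y)\) near \(0\), propagated by \refprop{prop:Harnack} for \(L^*\) along a pole-avoiding chain — which gives \(G(0,X)\approx\delta(0)^{2-n}\) (not \(\approx1\)) with structural constants, and this is what actually matches \(r^{n-2}G(0,X)\approx1\approx\omega(\Delta(X^*,r))\). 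Both your write-up and the paper's rely on the same standard but unproved assertion that the connecting Harnack chains can be taken to avoid a fixed ball around the pole; you are, if anything, more explicit than the paper about why the geometry (\(|X^*-0|\geq\delta(0)\), \(K\) large) leaves room for this.
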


\textit{\textbf{Proof:}}
Let \( X \in \Omega \setminus B(0, \tfrac{1}{2}\delta(0)) \)
To begin with note that if \(\delta(X) < \frac{1}{2}\delta(0) \), then \(0\notin B(X^*,2\delta(X))\) 
    and hence the result immediately follows from \refprop{prop:GreenToOmega}.
Assume therefore that \( \delta(X) \geq \frac{1}{2}\delta(0) \).
Let \( Z \) be the point given by \( \partial B(X^*,\frac{1}{4}\delta(0)) \cap [X^*,X] \).
Then \( \delta(Z) = |Z-X^*| = \frac{1}{4}\delta(0) \) and we may choose \( Z^* = X^* \). 
Thus \( 0 \notin T(Z^*,2\delta(Z)) \) so  \refprop{prop:GreenToOmega} applies. 
    We get that 
\begin{equation}\label{eq:GreenToOmegaForZ}
    \omega(\Delta(Z^*,\delta(Z))) \approx \delta(Z)^{n-2} G(0,Z) 
\end{equation}

Next as our domain is CAD, there clearly exists a finite Harnack chain, 
from \( X \) to \( Y \) in \( B(X,\delta(X)) \setminus B(0,\delta(0)/4) \) whose length is independent of \( X \).
Thus by \refprop{prop:Harnack} we deduce that
\begin{equation}\label{eq:G(Z)=G(X)}
    G(0,X) \approx G(0,Z).
\end{equation}
Finally we note that since \( \omega \) is doubling and \( 4\delta(Z) = \delta(X) \leq \delta(0) \)
    we have
\begin{equation}\label{eq:OmegaDoubling}
    \omega(\Delta(X^*,\delta(X))) \approx \omega(\Delta(Z^*,\delta(Z))).
\end{equation}
Thus combining \eqref{eq:GreenToOmegaForZ}, \eqref{eq:G(Z)=G(X)} and \eqref{eq:OmegaDoubling}
    yields the desired result.\qed\\

Throughout this work \(G_i\) will denote the Green's function of \(L_i\) for \( i=0,1 \). 
Furthermore, as above, we assume \(0\in \Omega\) and declare this to be the special point that is the \lq\lq center of the domain" \(\Omega\) in the sense that \(\delta(0)=\max\{\delta(X);X\in \Omega\}\). 
We shorten notation and set \(G_0(Y):=G_0(0,Y)\).

\subsection{Nontangential behaviour and the square function in chord arc domains}

Recall that the nontangential maximal function is given by

\[N_\alpha[u](Q)\coloneqq\sup_{X\in\Gamma_\alpha(Q)}|u(X)|, \]

and the mean-valued nontangential maximal function is defined by

\[ \Tilde{N}^\eta_\alpha[u](Q) \coloneqq \sup_{X \in \Gamma_\alpha(Q)} 
    \left( \fint_{B(X,\eta\delta(X)/2)} |u(Z)|^2 dZ \right)^{1/2}. \]

It is immediately clear that
$$
    \Tilde{N}^\eta_\alpha[u](Q) \leq N_{\alpha+\eta/2}[u](Q).
$$
We write \(\Tilde{N}_\alpha[u]=\Tilde{N}^1_\alpha[u]\) and drop the aperture \(\alpha\) when it is clear from the context.\\

\begin{lemma}\label{lemma:NontanMaxFctWithDiffConesComparable}[Remark 7.2 in \cite{milakis_harmonic_2011}]
    Let \( \mu \) be a doubling measure on \( \partial \Omega \), where \( \Omega \) is a NTA domain.
    Let \( v : \Omega \to \R \) and let \( 0 < p < \infty,\alpha,\beta>0,2>\eta>0 \).
    Then 
    \[ \| \tilde{N}_\alpha [v] \|_{L^p(d\mu)} \approx \| \tilde{N}_\beta [v] \|_{L^p(d\mu)} \approx \| \tilde{N}^\eta_\alpha [v] \|_{L^p(d\mu)}, \]
    where the implied constant depends on the character of \( \Omega \), 
        the doubling constant of \( \mu \) and \( \beta/\alpha \).
\end{lemma}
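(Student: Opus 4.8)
The plan is to reduce everything to the classical change-of-aperture theorem for the (ordinary) nontangential maximal function on NTA domains, plus a routine argument that controls the $L^2$-averaged version $\tilde N^\eta_\alpha$ by a pointwise version $N_\beta$ of $|v|$ and vice versa. Concretely, I would prove the chain of equivalences in three steps: first $\|\tilde N_\alpha[v]\|_{L^p(d\mu)}\approx\|\tilde N_\beta[v]\|_{L^p(d\mu)}$ for two apertures at the same averaging scale $\eta=1$; then $\|\tilde N_\alpha[v]\|_{L^p(d\mu)}\approx\|\tilde N^\eta_\alpha[v]\|_{L^p(d\mu)}$ for $0<\eta<2$; the general statement follows by composing these. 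Since $v$ is an arbitrary function (not assumed to be a solution), no PDE input is available and the whole argument is geometric/measure-theoretic.

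For the change-of-aperture step, introduce the auxiliary pointwise maximal function $w(Z):=\chi_{B(X,\delta(X)/2)}$-type object; more precisely set, for a point $X\in\Omega$, $g(X):=\big(\fint_{B(X,\delta(X)/2)}|v|^2\big)^{1/2}$, so that $\tilde N_\alpha[v](Q)=\sup_{X\in\Gamma_\alpha(Q)}g(X)=N_\alpha[g](Q)$ is literally the ordinary nontangential maximal function of the (locally bounded) function $g$ on $\Omega$. Then the equivalence $\|N_\alpha[g]\|_{L^p(d\mu)}\approx\|N_\beta[g]\|_{L^p(d\mu)}$ is the standard change-of-aperture result, valid for any function $g$ on an NTA domain with a doubling measure $\mu$ on the boundary and any $0<p<\infty$: it is proved by the good-$\lambda$/distribution-function comparison using that $\{N_\beta[g]>\lambda\}$ is contained in a controlled enlargement of $\{N_\alpha[g]>\lambda\}$ on the boundary, which in turn uses only the corkscrew and Harnack-chain geometry of $\Omega$ together with doubling of $\mu$; this is exactly Remark 7.2 / the cited Lemma in \cite{milakis_harmonic_2011}, so I would invoke it (or reproduce the two-paragraph distributional-inequality proof) rather than reprove it from scratch. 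Note the implied constant here depends only on $\beta/\alpha$, the NTA character, and the doubling constant of $\mu$, as claimed.

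For the change-of-averaging-scale step, fix $\alpha$ and compare $\tilde N_\alpha=\tilde N^1_\alpha$ with $\tilde N^\eta_\alpha$. One direction, $\tilde N^\eta_\alpha[v]\lesssim \tilde N^{1}_{\alpha'}[v]$ for a slightly larger aperture $\alpha'=\alpha'(\alpha,\eta)$, follows because any ball $B(X,\eta\delta(X)/2)$ with $X\in\Gamma_\alpha(Q)$ can be covered by a bounded number (depending only on $n$ and $\eta$) of balls $B(X_j,\delta(X_j)/2)$ with $X_j$ still in a fixed enlarged cone $\Gamma_{\alpha'}(Q)$, using $\delta(X_j)\approx\delta(X)$ by the triangle inequality; summing the $L^2$ averages gives the pointwise bound, and then the previous step handles the aperture change. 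For the reverse direction, when $\eta$ is small one instead uses that $B(X,\delta(X)/2)$ can be covered by boundedly many balls $B(X_j,\eta\delta(X_j)/2)$ with comparable $\delta$ and $X_j\in\Gamma_{\alpha'}(Q)$ — the number of such balls is $\lesssim \eta^{-n}$, still a constant — so again $g(X)\lesssim_\eta \sup_j g_\eta(X_j)\le \tilde N^\eta_{\alpha'}[v](Q)$ pointwise, and the aperture is absorbed by step one. Composing the two steps yields the full statement.

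The main obstacle is the bookkeeping in the covering/aperture-enlargement argument: one must verify that when a ball $B(X,\eta\delta(X)/2)$ is covered by translates $B(X_j,\cdot)$, each center $X_j$ lies in a cone $\Gamma_{\alpha'}(Q)$ with $\alpha'$ depending only on $\alpha$ and $\eta$ (not on $Q$ or $X$), which requires the elementary but slightly fiddly estimate $|X_j-Q|\le |X_j-X|+|X-Q|\le \tfrac{\eta}{2}\delta(X)+\alpha\delta(X)$ together with $\delta(X_j)\ge \delta(X)-\tfrac{\eta}{2}\delta(X)=(1-\tfrac{\eta}{2})\delta(X)$, so $|X_j-Q|\le \frac{\alpha+\eta/2}{1-\eta/2}\delta(X_j)=:\alpha'\delta(X_j)$ — finite since $\eta<2$. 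Once this is in hand everything reduces to the already-available change-of-aperture theorem, and the constants track as asserted. I would present steps one and two as short lemmas (or just inline them) and state that the constants' dependence is as in the statement.
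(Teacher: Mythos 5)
Since the paper does not prove this lemma but simply imports it as Remark 7.2 of \cite{milakis_harmonic_2011}, there is no internal proof to compare with; your argument --- writing $\tilde N_\alpha[v]=N_\alpha[g]$ for $g(X)=\big(\fint_{B(X,\delta(X)/2)}|v|^2\big)^{1/2}$, invoking the standard distribution-function change-of-aperture estimate for a doubling $\mu$, and changing the averaging radius $\eta$ via bounded coverings by balls whose centers have comparable distance to the boundary and lie in a fixed enlarged cone --- is a correct reconstruction of exactly the standard argument the cited remark rests on. The only (harmless) caveats are that your constants also depend on $\eta$ and $n$ (as they must), and that the aperture-change step tacitly uses apertures larger than $1$, which is consistent with the paper's definition of $\Gamma_\alpha$ (cones with aperture $\le 1$ are empty).
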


Now for \( L_i = \Div(A_i\nabla), \; i=0,1 \), 
    where \( A_i \) satisfies \eqref{eq:elliptic} and \eqref{eq:A^ainBMO},
    let \( u_i \) be the solution to the Dirichlet problem for \( L_i \) 
    with boundary data \( f \in L^p(\partial\Omega)\cap C(\partial\Omega) \).
We set \(F\coloneqq u_1-u_0\) and note that clearly \( F = 0 \) on \( \partial\Omega \).

\begin{lemma}\label{lemma:NFleqtildeNF}
Let \( 0 < \eta < 2 \).
For any solution \( u \) of an elliptic PDE we have
$$
    N_\alpha[u](Q)\lesssim \tilde{N}^{\eta}_\alpha[u](Q)     
$$
    and furthermore
\begin{equation}\label{eq:NFLeqTNF}
    N_\alpha[F](Q)\lesssim
        \tilde{N}^{\eta}_{\alpha}[F](Q) + \tilde{N}^{\eta}_\alpha[u_0](Q).    
\end{equation}

\end{lemma}

\begin{proof}
First, note that by \refprop{prop:DiGNMestimate} we have, for any solution \( u \):
\[ |u(X)| \leq \sup_{B(X,\frac{\eta}{2}\delta(X)/4)} |u| \lesssim \left( \fint_{B(X,\frac{\eta}{2}\delta(X)/2)} |u|^2 \right)^{1/2}\lesssim \left( \fint_{B(X,\eta\delta(X)/2)} |u|^2 \right)^{1/2}, \]
    and hence 
\[ N_\alpha[u](Q) \lesssim \tilde{N}^{\eta}_\alpha[u](Q). \]
Using this and the triangle inequality we then have
\begin{align*}
    N_\alpha[F](Q) 
    &\leq N_\alpha[u_1](Q) + N_\alpha[u_0](Q) 
    \lesssim \tilde{N}^{\eta}_\alpha[u_1](Q) + \tilde{N}^{\eta}_\alpha[u_0](Q)
    \\
    &\lesssim \tilde{N}^{\eta}_\alpha[F](Q) + \tilde{N}^{\eta}_\alpha[u_0](Q). 
\end{align*}
\end{proof}

We also have an \lq\lq almost Caccioppoli inequality" for \( F \).
\begin{lemma}\label{lemma:CaccioppoliForF}
    \[ \int_{B(X,R)} |\nabla F |^2dZ 
        \lesssim \frac{1}{R^2}\int_{B(X,2R)} (F^2 + u_0^2) dZ.\]
\end{lemma}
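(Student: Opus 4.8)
The plan is to mimic the proof of the standard Caccioppoli inequality (Proposition \ref{prop:Caccioppoli}), but to account for the fact that $F = u_1 - u_0$ is not a solution of a single equation: instead it satisfies $L_1 F = -L_1 u_0 = (L_0 - L_1) u_0 = \Div((A_0 - A_1)\nabla u_0)$ in the weak sense. First I would fix a cutoff function $\zeta \in C_c^\infty(B(X,2R))$ with $\zeta \equiv 1$ on $B(X,R)$, $0 \le \zeta \le 1$, and $|\nabla\zeta| \lesssim 1/R$, and test the weak formulation of $L_1 F = \Div((A_0-A_1)\nabla u_0)$ with the admissible test function $\varphi = \zeta^2 F$ (which lies in $W^{1,2}_0(B(X,2R))$ since $F \in W^{1,2}_{\mathrm{loc}}$). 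This yields
\[
\int A_1 \nabla F \cdot \nabla(\zeta^2 F) \, dZ = \int (A_0 - A_1)\nabla u_0 \cdot \nabla(\zeta^2 F)\, dZ.
\]
Expanding $\nabla(\zeta^2 F) = \zeta^2 \nabla F + 2\zeta F \nabla\zeta$ on both sides and using ellipticity \eqref{eq:elliptic} to bound $\int \zeta^2 A_1^s \nabla F \cdot \nabla F \gtrsim \int \zeta^2 |\nabla F|^2$ from below gives the principal term; all remaining terms are handled by Cauchy–Schwarz and Young's inequality, absorbing the $\int \zeta^2 |\nabla F|^2$ and $\int \zeta^2 |\nabla u_0|^2$ contributions into suitable small-constant times large-constant splittings.

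The one subtlety — and the step I expect to be the main obstacle — is the antisymmetric part of $A_1$, which is only in $\mathrm{BMO}$ and in general unbounded, so the cross terms involving $A_1^a$ cannot be estimated by the naive bound $|A_1^a| \le \lambda_0^{-1}$. The clean way around this is to exploit Proposition \ref{prop:TheAvarageIs0}: on $E = B(X,2R)$ we may replace $A_1$ by $A_1 - (A_1^a)_E$ in the term $\int A_1 \nabla F \cdot \nabla(\zeta^2 F)$, and more importantly the purely antisymmetric piece $\int (A_1^a)_E \nabla F \cdot \nabla(\zeta^2 F)$, after expanding, contains a term $\int \zeta^2 (A_1^a)_E \nabla F \cdot \nabla F$ which vanishes pointwise because $(A_1^a)_E$ is a constant antisymmetric matrix. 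Thus the antisymmetric part only survives through the lower-order term $2\int \zeta F\, (A_1^a)_E \nabla F \cdot \nabla\zeta$; to control this one rewrites it using the antisymmetry as $\int (A_1^a)_E \nabla F \cdot \nabla(\zeta^2) F$ and then integrates by parts (again using that a constant antisymmetric matrix kills the $\nabla F \cdot \nabla F$ term, cf. the Proposition preceding \eqref{prop:TheAvarageIs0}) to transfer a derivative off $F$, after which a John–Nirenberg / $L^r$ bound on $\|A_1^a - (A_1^a)_E\|$ is not even needed since only the constant $(A_1^a)_E$ appears and it has been eliminated. What remains is a bound of the form $\int \zeta^2 |\nabla F|^2 \lesssim R^{-2}\int \zeta^2 F^2 + \int \zeta^2 |\nabla u_0|^2 \cdot (\text{coefficient factors}) + \ldots$.

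Finally, the term $\int \zeta^2 |\nabla u_0|^2$ is itself not yet of the desired form (the right side of the lemma has $u_0^2$, not $|\nabla u_0|^2$), so I would apply the ordinary Caccioppoli inequality, Proposition \ref{prop:Caccioppoli}, to the solution $u_0$ on $B(X,2R) \subset \Omega$ to replace $\int_{B(X,2R)}|\nabla u_0|^2$ by $R^{-2}\int_{B(X,4R)} u_0^2$ — if one wants the radii to match exactly one simply does the whole argument on $B(X,R)$ versus $B(X,\tfrac32 R)$ and then Caccioppoli on $B(X,\tfrac32 R)$ versus $B(X,2R)$; the stated constants are harmless and depend only on $n,\lambda_0,\Lambda_0$. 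Collecting the estimates and using $\zeta \equiv 1$ on $B(X,R)$ then gives
\[
\int_{B(X,R)} |\nabla F|^2 \, dZ \lesssim \frac{1}{R^2}\int_{B(X,2R)} (F^2 + u_0^2)\, dZ,
\]
which is the claim. The coefficient factors coming from $|A_0 - A_1|$ on $B(X,2R)$ are bounded (after possibly shrinking, they are controlled by $\beta_r$ which is finite a.e.), but in fact for this qualitative inequality one only needs that $A_0, A_1$ are each elliptic with $\mathrm{BMO}$ antisymmetric parts, so the perturbation smallness plays no role here.
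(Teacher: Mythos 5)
Your approach has a genuine gap at exactly the step you flag as the main subtlety. After replacing $A_1$ by $A_1-(A_1^a)_E$ on $E=B(X,2R)$ (which is legitimate, by the proposition preceding \eqref{prop:TheAvarageIs0}), the matrix you are left with is $A_1^s+\bigl(A_1^a-(A_1^a)_E\bigr)$; the oscillating part $A_1^a-(A_1^a)_E$ is still unbounded, and it does \emph{not} disappear from the cross term. What vanishes for free is $\int\zeta^2\,A_1^a\nabla F\cdot\nabla F$ (pointwise antisymmetry kills $\xi^T b\,\xi$, no constancy needed), and the constant piece $\int (A_1^a)_E\nabla F\cdot\nabla(\zeta^2F)$ is zero by that proposition; but the term $2\int\zeta F\,\bigl(A_1^a-(A_1^a)_E\bigr)\nabla F\cdot\nabla\zeta$ survives, and your claim that ``a John--Nirenberg / $L^r$ bound is not even needed since only the constant $(A_1^a)_E$ appears'' is false. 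Estimating that term forces a three-exponent H\"older with $\|A_1^a-(A_1^a)_E\|_{L^r(E)}$ (John--Nirenberg) together with an $L^{2r/(r-2)}$ bound on $F$ (hence local boundedness or higher integrability of $u_0,u_1$ on intermediate balls), or else the $\mathcal{H}^1$--BMO / div--curl argument as in Li--Pipher; none of this is supplied, and the absorption step does not close as sketched. The same issue recurs on the right-hand side, where $A_0-A_1$ is also unbounded, and in addition testing the weak formulation of $L_1F=\Div((A_0-A_1)\nabla u_0)$ with the $W^{1,2}_0$ function $\zeta^2F$ (rather than $C_c^\infty$) itself needs justification when coefficients are only in BMO -- the kind of point the paper is explicitly careful about elsewhere (Remark \ref{rem:GreenFunctionProperty}).

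Moreover, the equation-based route is unnecessary: the paper's proof is two lines and never uses an equation for $F$. Since $F=u_1-u_0$, one has $|\nabla F|^2\lesssim|\nabla u_1|^2+|\nabla u_0|^2$; apply the ordinary Caccioppoli inequality (Proposition \ref{prop:Caccioppoli}, already established for each operator with BMO antisymmetric part) to $u_0$ and to $u_1$ separately on $B(X,R)\subset B(X,2R)$, and then use $u_1^2\lesssim F^2+u_0^2$. This is precisely why the statement is only an ``almost'' Caccioppoli inequality with $u_0^2$ appearing on the right, and why its constant depends only on $n,\lambda_0,\Lambda_0$ with no trace of the discrepancy $A_0-A_1$.
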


To see this one simply uses Caccioppoli for \( u_0 \) and \( u_1 \), and the triangle inequality
\begin{align*}
    \int_{B(X,R)} |\nabla F |^2dZ 
    &\lesssim \int_{B(X,R)} (|\nabla u_1^2| + |\nabla u_0|^2) dZ
    \lesssim \frac{1}{R^2}\int_{B(X,2R)} (u_1^2 + u_0^2) dZ
    \\
    &\lesssim \frac{1}{R^2}\int_{B(X,2R)} (F^2 + u_0^2) dZ.
\end{align*}

As is customary, we can define the square function of a function \(u\in W^{1,2}_{loc}(\Omega)\) by
\[S_\alpha[u](Q):=\left(\int_{\Gamma_\alpha(Q)}|\nabla u(X)|^2\delta(X)^{2-n}dX\right)^{1/2}.\]

If we set \( f \coloneqq |\nabla u|\delta \),  the square function can be considered to be a restriction of a more general operator 
\[A^{(\alpha)}[f](Q):=\left(\int_{\Gamma_\alpha(Q)}\frac{|f|^2}{\delta(X)^{n}}dX\right)^{1/2}.\]

More results on this operator can be found in \cite{milakis_harmonic_2011}, in particular their Proposition 4.5:

\begin{prop}\label{prop:SquareFctWithDiffApertures}
    We have for \(0<p<\infty\) and two apertures \(\alpha,\beta\geq 1\)
    \[\Vert A^{(\alpha)}[f]\Vert_{L^p(d\sigma)}\approx \Vert A^{(\beta)}[f]\Vert_{L^p(d\sigma)}.\]
\end{prop}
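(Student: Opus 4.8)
\textbf{Proof proposal for Proposition \ref{prop:SquareFctWithDiffApertures}.}

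The plan is to reduce the two‑aperture equivalence for $A^{(\alpha)}$ to a single pointwise comparison made after averaging the function $f$ on Whitney‑type balls, and then to invoke a good‑$\lambda$/maximal‑function argument. The one genuinely nontrivial direction is $\|A^{(\beta)}[f]\|_{L^p}\lesssim\|A^{(\alpha)}[f]\|_{L^p}$ for $\beta>\alpha$, i.e. controlling the wider cone by the narrower one; the reverse inequality $\|A^{(\alpha)}[f]\|_{L^p}\le\|A^{(\beta)}[f]\|_{L^p}$ is immediate from $\Gamma_\alpha(Q)\subset\Gamma_\beta(Q)$. So I focus on the hard direction. Fix $\beta>\alpha\ge 1$.

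First I would set up a Whitney decomposition $\Omega=\bigcup_j I_j$ into dyadic Whitney cubes with $\operatorname{diam}(I_j)\approx\delta(X)$ for $X\in I_j$, and replace $A^{(\beta)}[f](Q)^2=\int_{\Gamma_\beta(Q)}|f(X)|^2\delta(X)^{-n}\,dX$ by the comparable discrete sum $\sum_{j:\,I_j\cap\Gamma_\beta(Q)\ne\emptyset} a_j\,\mathbf{1}_{\text{(some dilate of $I_j$ meets $\Gamma_\beta$)}}$, where $a_j:=\int_{I_j}|f|^2\delta^{-n}\,dX\approx \ell(I_j)^{-n}\int_{I_j}|f|^2\,dX$. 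Since $\Gamma_\beta(Q)$ is larger than $\Gamma_\alpha(Q)$ by a fixed factor, for any Whitney cube $I_j$ meeting $\Gamma_\beta(Q)$ there is a surface ball $\Delta_j\subset\partial\Omega$ of radius $\approx\ell(I_j)$, at distance $\lesssim\ell(I_j)$ from $I_j$, with the property that for \emph{every} $P\in\Delta_j$ the cone $\Gamma_\alpha(P)$ meets a fixed dilate $I_j^*$ of $I_j$ (this uses the corkscrew/Harnack‑chain structure of the CAD: a point in $I_j$ sees a positive‑aperture portion of the boundary within its own scale). Consequently
\[
\sigma\Bigl(\bigl\{P\in\partial\Omega:\ I_j^*\cap\Gamma_\alpha(P)\ne\emptyset\bigr\}\bigr)\ \gtrsim\ \sigma(\Delta_j)\ \approx\ \ell(I_j)^{n-1},
\]
by Ahlfors regularity; this is the key geometric lemma, and proving it carefully (that the overlap of $\Gamma_\alpha(\cdot)$ with a single Whitney cube has boundary shadow of full measure at that cube's scale) is where the work lies.

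Next I would run the standard argument of Coifman–Meyer–Stein / Fefferman–Stein type for the generalized area integral: using the geometric lemma, one shows a distributional inequality
\[
\sigma\bigl(\{A^{(\beta)}[f]>\lambda\}\bigr)\ \lesssim\ \sigma\bigl(\{A^{(\alpha)}[f]>c\lambda\}\bigr)
\ +\ (\text{good‑}\lambda\ \text{error}),
\]
or, more cleanly, one integrates the pointwise bound $A^{(\beta)}[f](Q)^2\lesssim M_\sigma\bigl(A^{(\alpha)}[f]^2\bigr)(Q)$ — which follows because each Whitney cube $I_j$ contributing to $A^{(\beta)}[f](Q)$ contributes, via the geometric lemma and Ahlfors regularity, to $A^{(\alpha)}[f](P)$ for a set of $P$ of relative measure $\gtrsim 1$ in a surface ball containing $Q$, so summing $a_j$ over such cubes is controlled by the Hardy–Littlewood maximal function of $A^{(\alpha)}[f]^2$ — and then using the weak $(1,1)$ and strong $(s,s)$ boundedness of $M_\sigma$ on the Ahlfors regular measure space $(\partial\Omega,\sigma)$. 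For $p>2$ this gives $\|A^{(\beta)}[f]\|_{L^p}^2=\|A^{(\beta)}[f]^2\|_{L^{p/2}}\lesssim\|M_\sigma(A^{(\alpha)}[f]^2)\|_{L^{p/2}}\lesssim\|A^{(\alpha)}[f]^2\|_{L^{p/2}}=\|A^{(\alpha)}[f]\|_{L^p}^2$.

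Finally, for the range $0<p\le 2$ the maximal‑function majorant is too lossy, and I would instead use a genuine good‑$\lambda$ inequality: one shows that on a Whitney piece of the open set $\{A^{(\alpha)}[f]\le\lambda\}$, the larger‑aperture area function $A^{(\beta)}[f]$ exceeds $K\lambda$ only on a set of small relative $\sigma$‑measure (with smallness $\to 0$ as $K\to\infty$), because contributions to $A^{(\beta)}[f]$ from cubes $I_j$ that are "seen" by $\Gamma_\alpha$ from a large‑measure subset of the local boundary are already bounded by $C\lambda$, while the remaining cubes are too far or too large to contribute. Integrating the good‑$\lambda$ inequality in $\lambda^{p-1}\,d\lambda$ and absorbing yields $\|A^{(\beta)}[f]\|_{L^p(d\sigma)}\lesssim\|A^{(\alpha)}[f]\|_{L^p(d\sigma)}$ for all $0<p<\infty$. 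I expect the main obstacle to be the geometric lemma about boundary shadows of single Whitney cubes on a general CAD — in the classical half‑space or ball setting it is trivial, but here one must use the corkscrew condition and Ahlfors regularity to guarantee that narrowing the aperture from $\beta$ to $\alpha$ costs only a bounded factor in the relevant measures, uniformly in scale and location; once that is in hand, the measure‑theoretic machinery is routine and, as the statement notes, is already carried out in \cite{milakis_harmonic_2011}.
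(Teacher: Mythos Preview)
The paper does not prove this proposition at all: it simply records it as Proposition 4.5 of \cite{milakis_harmonic_2011}. Your outline---geometric ``shadow'' lemma for Whitney cubes on a CAD boundary, followed by a distributional/good-$\lambda$ inequality---is indeed the standard route, and it is essentially what \cite{milakis_harmonic_2011} carries out (adapting the classical Coifman--Meyer--Stein argument to the chord-arc setting). So in spirit your proposal matches the cited proof.

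There is, however, one genuine error in your sketch. The pointwise bound
\[
A^{(\beta)}[f](Q)^2 \lesssim M_\sigma\bigl(A^{(\alpha)}[f]^2\bigr)(Q)
\]
that you invoke for the range $p>2$ is \emph{false}. To see why, place Whitney cubes $I_k$ at heights $\approx 2^k$ along the edge of the wide cone $\Gamma_\beta(Q)$, normalised so each contributes $a_k=1$ to $A^{(\beta)}[f](Q)^2$. Then $A^{(\beta)}[f](Q)^2\approx N$, the number of cubes, while for any boundary point $P$ the narrow cone $\Gamma_\alpha(P)$ can meet only boundedly many of the $I_k$ (their $\alpha$-shadows are disjoint annuli on $\partial\Omega$), so $A^{(\alpha)}[f]^2$ is bounded uniformly and hence so is its maximal function. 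Your heuristic (``each cube contributes to $A^{(\alpha)}[f](P)$ on a set of relative measure $\gtrsim 1$'') is correct cube-by-cube, but summing these estimates over infinitely many scales does not yield a maximal-function bound, because the relevant surface balls are nested and each average only sees the contribution of $O(1)$ cubes.

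The fix is simply to drop the maximal-function shortcut entirely and use the level-set inequality
\[
\sigma\bigl(\{A^{(\beta)}[f]>\lambda\}\bigr)\ \lesssim_{\alpha,\beta,n}\ \sigma\bigl(\{A^{(\alpha)}[f]>c\lambda\}\bigr)
\]
for \emph{all} $0<p<\infty$, not just $p\le 2$. This inequality follows from your geometric shadow lemma by the Fubini-type argument you allude to, and integrating it against $p\lambda^{p-1}\,d\lambda$ gives the $L^p$ comparison directly, with no need to split into cases.
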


This holds for any doubling measure \( \mu \) and in our case it means that  the \(L^p\) norms of square functions for cones of different aperture are comparable.

\subsection{Dyadic decomposition of \(\partial\Omega\) and definition of decomposition \((\partial\Omega,4R_0)\)} \label{subsection:dyadic decomposition properties}

Recall that \( 0\in \Omega \) and set \(R_0=\min(\frac{1}{2^{30}}\delta(0), 1)\). 
As in \cite{milakis_harmonic_2011} we consider a matrix \(A'\) 
    with \(A'=A_1\) on \((\partial\Omega,R_0/2) \coloneqq \{Y\in \Omega : \delta(Y)<R_0/2\} \)
    and \(A'=A_0\) on \(\Omega\setminus (\partial\Omega,2R_0)\). Then the following holds
\begin{lemma}[cf. Lemma 7.5 in \cite{milakis_harmonic_2011}]
    If \(\omega'\) denotes the elliptic measure associated to \(L'=\mathrm{div}(A'\nabla \cdot)\). Then \(\omega_1\in B_p(\omega_0)\) iff \(\omega'\in B_p(\omega_0)\). 
\end{lemma}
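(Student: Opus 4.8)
The plan is to show that $L'$ is a perturbation of $L_1$ near the boundary, with discrepancy supported away from $\partial\Omega$, and then invoke the perturbation machinery in both directions. Since $A' = A_1$ on the strip $(\partial\Omega, R_0/2)$, the discrepancy function $|A' - A_1|$ vanishes on $(\partial\Omega, R_0/2)$ and is bounded (by ellipticity and the BMO bound, after fixing the representative $r$) on $\Omega \setminus (\partial\Omega, R_0/2)$. Consequently the associated Carleson-type measure $d\mu''(Z) = \beta_r^{L_1 \to L'}(Z)^2 \delta(Z)^{-1}\, dZ$ is supported in the region where $\delta(Z) \geq R_0/4$, which meets only finitely many Whitney cubes; in particular $\|\mu''\|_{\CO} < \infty$ and, since $R_0$ is comparable to $\delta(0)$, the measure $\mu''$ is finite. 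The same is true with the roles of $L_1$ and $L'$ reversed, so each operator is a (large-norm) perturbation of the other.

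First I would make precise the claim that the Carleson measure comparing $A_1$ and $A'$ has finite norm: on $\Omega \setminus (\partial\Omega, R_0/2)$ we have $\delta(Z) \geq R_0/2$, so on $B(Z, \delta(Z)/2)$ all points have $\delta \geq R_0/4$, and by interior estimates (or simply by the fact that this region is a fixed compact subset of $\Omega$ covered by boundedly many balls) $\beta_r(Z) \lesssim 1$ uniformly; combined with $\delta(Z)^{-1} \lesssim R_0^{-1} \lesssim \delta(0)^{-1}$ this gives $d\mu''(Z) \lesssim \delta(0)^{-1}\, dZ$ restricted to a set of finite measure, hence $\|\mu''\|_{\CO} \lesssim \delta(0)^{-1}|\Omega| / \inf_\Delta \sigma(\Delta)$, which is finite since $\partial\Omega$ is Ahlfors regular and bounded. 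Then I would apply \refthm{thm:NormBig} to the pair $(L_1, L')$: since $\|\mu''\|_{\CO} < \infty$, we get $\omega_1 \in A_\infty(d\sigma) \iff \omega' \in A_\infty(d\sigma)$.

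However, the statement we must prove is sharper — it asserts equivalence of membership in $B_p(\omega_0)$, not merely transfer of the $A_\infty$ property. Here I would use that $\omega_1$ and $\omega'$ are \emph{mutually absolutely continuous with comparable densities}: indeed, by the comparison principle \refprop{prop:CompPrinc} applied in Carleson regions $T(\Delta(Q,r))$ with $r < R_0/C$ — where $L_1$ and $L'$ literally coincide — the nonnegative $L_1$- and $L'$-solutions vanishing on a boundary ball have comparable ratios, and translating this to elliptic measures via \refprop{prop:GreenToOmega} yields $\omega_1^X(\Delta) \approx \omega'^X(\Delta)$ for all small boundary balls $\Delta$ and poles $X$ far from $\Delta$; a standard localization/chaining argument upgrades this to $\frac{d\omega_1}{d\sigma} \approx \frac{d\omega'}{d\sigma}$ a.e. on $\partial\Omega$. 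Two doubling measures with densities comparable pointwise to a common third measure define the same weighted spaces $B_p$; hence $\omega_1 \in B_p(\omega_0)$ iff $\omega' \in B_p(\omega_0)$. This is exactly the content of \cite[Lemma 7.5]{milakis_harmonic_2011}, and the modification relative to that reference is only bookkeeping: one must check that the BMO antisymmetric part causes no trouble, which it does not, because $A_1$ and $A'$ agree near the boundary so their antisymmetric parts agree there too, and the comparison principle \refprop{prop:CompPrinc} is already stated for our class of operators.

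The main obstacle I anticipate is the passage from ``comparability of elliptic measures of boundary balls with distant pole'' to ``comparability of Radon–Nikodym derivatives with respect to $\sigma$ a.e.'' — one has to handle the change of pole carefully (using the Harnack chain condition and \refprop{prop:GreenToOmega} to fix a common pole, say $X = 0$) and then run a Lebesgue differentiation argument on $\partial\Omega$ with respect to the doubling measure $\sigma$, using that both $\omega_1$ and $\omega'$ are doubling (\refprop{prop:DoublingPropertyOfomega}) and mutually comparable on all small balls. None of this is deep, but it is the step where the chord-arc hypotheses (Ahlfors regularity of $\sigma$, NTA structure) genuinely enter, and it is worth stating cleanly rather than citing. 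Everything else reduces to the already-established perturbation theorems and the elementary observation that a bounded, boundary-vanishing discrepancy yields a finite Carleson norm.
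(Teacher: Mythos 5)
The paper does not actually prove this lemma --- it is quoted from \cite{milakis_harmonic_2011} (Lemma 7.5), and the argument behind that lemma is exactly the one in the second half of your proposal: since $A_1=A'$ on the strip $(\partial\Omega,R_0/2)$, one compares $G_1(0,\cdot)$ and $G'(0,\cdot)$, which are nonnegative solutions of the \emph{same} operator near $\partial\Omega$ vanishing on the boundary, via \refprop{prop:CompPrinc}, and converts this into comparability of the two elliptic measures through \refprop{prop:GreenToOmega}; once $\omega_1(\Delta)\approx\omega'(\Delta)$ for all boundary balls, the $B_p(\omega_0)$ conditions for the two measures are trivially equivalent. Your first paragraph (treating $L'$ as a large-norm perturbation of $L_1$ and invoking \refthm{thm:NormBig}) is both superfluous and insufficient, as you yourself note: it only transfers $A_\infty(d\sigma)$, not $B_p(\omega_0)$, and the uniform bound ``$\beta_r\lesssim 1$ by the BMO bound'' is not literally correct since BMO controls oscillation, not means --- though finiteness of that Carleson norm (with constants depending on $A_0,A_1,\Omega$) is all one could want there anyway.

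There are two genuine gaps in the part of the argument that matters. First, the comparison principle only tells you that the ratio $h(X)=G_1(0,X)/G'(0,X)$ is essentially constant on each region $T_r(Q)$, $r\lesssim R_0$; it does not by itself give $h\approx 1$, and \refprop{prop:GreenToOmega} then only yields $\omega_1(\Delta(Q,s))\approx h\big(A(Q,R_0/C)\big)\,\omega'(\Delta(Q,s))$ with an unanchored factor. The missing step is the normalization at the top scale: $\omega_1(\partial\Omega)=\omega'(\partial\Omega)=1$ together with \refprop{prop:DoublingPropertyOfomega}, iterated boundedly many times from scale $\mathrm{diam}(\Omega)$ down to the fixed scale $R_0\approx\min(\delta(0),1)$, gives $\omega_1(\Delta(Q,R_0))\approx 1\approx\omega'(\Delta(Q,R_0))$, hence $G_1(0,A(Q,R_0))\approx G'(0,A(Q,R_0))$ by \refprop{prop:GreenToOmega}; only then does the comparison principle propagate $h\approx 1$ to all smaller scales. (Alternatively one can observe that the ball-dependent constant cancels from both sides of the reverse H\"older inequality, but then large balls must be handled by a covering/doubling argument; either way the step has to be made explicit.) Second, your upgrade is phrased as ``$\tfrac{d\omega_1}{d\sigma}\approx\tfrac{d\omega'}{d\sigma}$ a.e.'', but neither $\omega_1$ nor $\omega'$ is known to be absolutely continuous with respect to $\sigma$ --- that is precisely what the perturbation theory is trying to establish. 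The correct formulation is that $\omega_1$ and $\omega'$ are mutually absolutely continuous with $d\omega_1/d\omega'\approx 1$ (which follows from $\omega_1(\Delta)\approx\omega'(\Delta)$ on all balls by a Vitali-type covering argument and doubling); then, under the hypothesis $\omega'\in B_p(\omega_0)$ one has $\omega'\ll\omega_0$, and $d\omega_1/d\omega_0=(d\omega_1/d\omega')(d\omega'/d\omega_0)\approx d\omega'/d\omega_0$, so the $B_p(\omega_0)$ condition transfers, and symmetrically in the other direction.
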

Thus without loss of generality we may assume that \(\beta_r(Y)=0\) 
    for \(Y\in \Omega, \delta(Y)> 4R_0\). 
\\

Recall the famous decomposition of M. Christ. 
By \cite{christ_tb_1990} there exists a family of \lq\lq cubes"
    \(\{Q_\alpha^k\subset\partial \Omega; k\in \mathbb{Z},\alpha\in I_k \subset \mathbb{N} \} \) 
    where each scale $k$ decomposes \(\partial \Omega \) such that for every \(k\in \mathbb{Z}\):
\[\sigma\left(\partial \Omega\setminus\bigcup_{\alpha\in I_k}Q_\alpha^k\right)=0
    \qquad\textrm{and}\qquad 
    \omega_0\left(\partial \Omega\setminus\bigcup_{\alpha\in I_k}Q_\alpha^k\right)=0. \]
Furthermore, the following properties hold:
\begin{enumerate}
    \item If \(l\geq k\) then either \(Q_\beta^l\subset Q_\alpha^k\) or \(Q_\beta^l\cap Q_\alpha^k=\emptyset\).
    \item For each \((k,\alpha)\) and \(l<k\) there is a unique \(\beta\) so that \(Q_\alpha^k\subset Q_\beta^l\).
        \item Each \(Q_\alpha^k\) contains a ball \(\Delta(Z_\alpha^k,8^{-k-1})\).
    \item There exists a constant \(C_0>0\) such that \(8^{-k-1}\leq \mathrm{diam}(Q_\alpha^k)\leq C_08^{-k}\).
\end{enumerate}
The last listed property implies together with the Ahlfors regularity of the surface measure that \(\sigma(Q_\alpha^k)\approx 8^{-k(n-1)}\). 
Similarly, the doubling property \refprop{prop:DoublingPropertyOfomega} 
    of the elliptic measure guarantees us \(\omega_0(Q_\alpha^k)\approx \omega_0(B(Z_\alpha^k, 8^{-k-1})).\)
\\

Now we can define a  decomposition of \((\partial\Omega,4R_0) \). 
For \(k\in \mathbb{Z},\alpha\in I_k\), set
\[I_\alpha^k \coloneqq \{Y\in \Omega;  8^{-k-1}<\delta(Y)<8^{-k+1}, 
    \exists P\in Q_\alpha^k:  8^{-k-1}<|P-Y|< 8^{-k+1}\},\]
Note that this collection has finite overlap.
Next choose the scale \( k_0 \) such that
\[(\partial\Omega,4R_0)\subset\bigcup_{\alpha,k\geq k_0}I^k_\alpha.\]
Additionally, for \(\EPS>0\) we set the scale \(k_\EPS\) as the smallest integer 
    such that \(I^k_\alpha\subset (\partial\Omega,\EPS)\) for all \(k\geq k_\EPS\).
The choices of \(k_0\) and \(k_\EPS\) guarantee that
\begin{align} 
    (\partial\Omega,4R_0)\setminus (\partial\Omega,\EPS)\subset \bigcup_{\alpha, k_0\leq k\leq k_\EPS} I_\alpha^k 
        \subset (\partial\Omega,32R_0)\setminus (\partial\Omega,\EPS/8)\label{eq:Prelimk_0leqkleqk_EPS}  
\end{align}

Furthermore, we define the following enlarged decomposition
\[\hat{I}_\alpha^k=\{Y\in \Omega;  8^{-k-2}<\delta(Y)<8^{-k+2}, 
    \exists P\in Q_\alpha^k:  8^{-k-2}<|P-Y|< 8^{-k+2}\}, \]
    and it is clear that that \(I_\alpha^k\subset\hat{I}_\alpha^k\) 
    and that the \(\hat{I}_\alpha^k\) have finite overlap.
Observe for \( l\geq 0 \) that we can cover \(I_\alpha^k\) by balls \(\{B(X_i, 8^{-k-l})\}_{1\leq i\leq N_l}\) 
    with \(X_i\in I_\alpha^k\) such that \(|X_i-X_l|\geq 8^{-k-3}/2\). 
Note here that $N_l$ is independent of $k$ and \(\alpha\). 
Furthermore, we have for each \(Z\in B(X_i,2 8^{-k-3})\) that 

\begin{align}\label{eq:Prelimdelta(Z)}
     8^{-k+2}>\delta(Z)>  8^{-k-1}-2 8^{-k-3}=\frac{62}{8} 8^{-k-2},
\end{align}

  and hence

\begin{align}\label{eq:PrelimB(Xi)SubsetB(Z)}
    B(X_i, 8^{-k-3})\subset B(Z,3 8^{-k-3})\subset B(Z,\frac{62}{8} 8^{-k-3})
    \subset B(Z,\delta(Z)/8),
\end{align}

\begin{align}\label{eq:PrelimSameSizeOfIalphaB(Z)}
    |B(X_i, 8^{-k-3})|\approx |I_\alpha^k|\approx B(Z,\delta(Z)/2),
\end{align}

and

\begin{align}\label{eq:PrelimIalphaSubsetB(Xi)SubsethatI}
    I_\alpha^k\subset\bigcup_{i=1}^NB(X_i, 8^{-k-3})\subset \bigcup_{i=1}^NB(X_i,2 8^{-k-3})
    \subset \hat{I}_\alpha^k.
\end{align}

Note also that for \(Z\in (\partial\Omega,4R_0)\) there exists \(Z^*\in\partial \Omega\) 
    with \(|Z^*-Z|=\delta(Z)\) and an \(I_\alpha^k\) such that \(Z^*\in Q^k_\alpha\) and
\[2 8^{-k-1}\leq\delta(Z)\leq\frac{1}{2}8^{k+1}.\] 
Thus, for every \(X\in B(Z,\delta(Z)/4)\)
\[ 8^{-k-1}\leq \delta(X)\leq  8^{-k+1}\qquad\textrm{and}\qquad 8^{-k-1}\leq |Z^*-X| \leq  8^{k+1}. \]
Hence
\begin{align}\label{eq:PrelimB(Z,delta(Z)/4)SubsetIalphak}
    B(Z,\delta(Z)/4)\subset I_\alpha^k.
\end{align}

Next, we note that
\begin{align}\label{eq:PrelimISubsetCarlesonregion}
    \hat{I}_\alpha^k\subset T(\Delta(Z_\alpha^k, C_0+16)8^{-k}).
\end{align}

Lastly, we also observe that if \(P\in Q_\alpha^k\) and \(Z\in \hat{I}_\alpha^k\) then, 
    for \(M=8^4\),

\begin{align}\label{eq:PrelimIalphaSubsetGammaM}
    Z\in \Gamma_M(P).
\end{align}

We are also going to need an intermediate decomposition in cubes \( \tilde{I}_\alpha^k\) defined by
\begin{align*}
    \tilde{I}_\alpha^k:= \bigg\{Y\in \Omega: & \frac{14}{16}8^{-k-1}<\delta(Y)< \frac{18}{16}8^{-k+1}, 
    \\
        &\exists P\in Q_\alpha^k:  \frac{14}{16}8^{-k-1}<|P-Y|< \frac{18}{16}8^{-k+1}\bigg\}.    
\end{align*}

The enlargement compared to \(I_\alpha^k\) here is chosen such that

\begin{align}\label{eq:PrelimIalphasubsettildeIalphasubsethatIalpha}
    I_\alpha^k\subset\bigcup_{i=1}^NB(X_i, 8^{-k-3})\subset  \bigcup_{i=1}^NB(X_i,2 8^{-k-3})\subset \tilde{I}_\alpha^k\subset \bigcup_{i=1}^NB(X_i,4 8^{-k-3})
    \subset \hat{I}_\alpha^k.
\end{align}

All of this will be used to prove the following important proposition:

\begin{prop}\label{prop:EPSleqEPS_0}
    With $I_\alpha^k$, $\tilde I_\alpha^k$ as above we have:
    \[ \left(\fint_{I_\alpha^k}|\EPS(Y)|^rdY\right)^{1/r} \lesssim \frac{1}{\omega_0(Q_\alpha^k)}\left(\int_{\hat{I}^k_\alpha}\delta(Z)^{-2}G_0(Z)\beta_r(Z)^2dZ\right)^{1/2}.\]
   In particular, if \(\Vert \frac{\beta_r(Z)^2G_0(Z)}{\delta(Z)^2}\Vert_\mathcal{C}\leq \EPS_0^2 \), then
    \[\left(\fint_{I_\alpha^k}|\EPS(Y)|^rdY\right)^{1/r}\lesssim \EPS_0.\]
\end{prop}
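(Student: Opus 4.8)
The goal is to bound the local average of $|\EPS|^r$ on $I_\alpha^k$ by the Carleson-type quantity $\omega_0(Q_\alpha^k)^{-1}\bigl(\int_{\hat I_\alpha^k}\delta^{-2}G_0\beta_r^2\bigr)^{1/2}$. The plan is to first reduce $\bigl(\fint_{I_\alpha^k}|\EPS|^r\bigr)^{1/r}$ to a sum of local $L^r$ averages of $\EPS$ over the finitely many balls $B(X_i,\lambda 8^{-k-3})$, $1\le i\le N$, that cover $I_\alpha^k$ (property \eqref{eq:PrelimIalphaSubsetB(Xi)SubsethatI}); since $N$ is bounded independently of $k,\alpha$, it suffices to estimate each $\bigl(\fint_{B(X_i,\lambda 8^{-k-3})}|\EPS|^r\bigr)^{1/r}$. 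Next, for each fixed $i$, I would compare this average to $\beta_r(Z)$ for a suitable $Z$. By \eqref{eq:Prelimdelta(Z)}, \eqref{eq:PrelimB(Xi)SubsetB(Z)} and \eqref{eq:PrelimSameSizeOfIalphaB(Z)}, for every $Z\in B(X_i,2\lambda 8^{-k-3})$ we have $B(X_i,\lambda 8^{-k-3})\subset B(Z,\delta(Z)/8)\subset B(Z,\delta(Z)/2)$ with comparable measures, so $\fint_{B(X_i,\lambda 8^{-k-3})}|\EPS|^r\lesssim \fint_{B(Z,\delta(Z)/2)}|\EPS|^r=\beta_r(Z)^r$. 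Thus raising to the power $2/r$ and averaging over $Z$ in a ball $B(X_i,2\lambda 8^{-k-3})\subset\hat I_\alpha^k$,
\[
\Bigl(\fint_{B(X_i,\lambda 8^{-k-3})}|\EPS|^r\Bigr)^{2/r}\lesssim \fint_{B(X_i,2\lambda 8^{-k-3})}\beta_r(Z)^2\,dZ.
\]

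\textbf{Bringing in the Green's function.} The remaining task is to insert the weight $\delta(Z)^{-2}G_0(Z)$ and the normalization $\omega_0(Q_\alpha^k)$. On $B(X_i,2\lambda 8^{-k-3})\subset\hat I_\alpha^k$ the distance $\delta(Z)\approx 8^{-k}$ is comparable to a fixed multiple of the scale, and $|B(X_i,2\lambda 8^{-k-3})|\approx |I_\alpha^k|\approx 8^{-kn}$; hence
\[
\fint_{B(X_i,2\lambda 8^{-k-3})}\beta_r^2\,dZ\approx 8^{kn}\int_{B(X_i,2\lambda 8^{-k-3})}\beta_r^2\,dZ
\approx 8^{k(n-2)}\int_{B(X_i,2\lambda 8^{-k-3})}\delta(Z)^{-2}\beta_r(Z)^2\,dZ.
\]
To recover the factor $G_0(Z)$ I would use a lower bound $G_0(Z)\gtrsim 8^{-k(n-2)}\,\omega_0(Q_\alpha^k)$ valid for $Z\in\hat I_\alpha^k$. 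This follows by combining \refprop{prop:GreenToOmega} (which gives $\omega_0(\Delta(Z_\alpha^k,8^{-k}))\approx (8^{-k})^{n-2}G_0(A(Z_\alpha^k,8^{-k}))$), the Harnack inequality \refprop{prop:Harnack} along a bounded Harnack chain from $A(Z_\alpha^k,8^{-k})$ to $Z$ inside $\Omega$ (legitimate since $\delta(Z)\approx 8^{-k}$ and $|Z-A(Z_\alpha^k,8^{-k})|\lesssim 8^{-k}$ by \eqref{eq:PrelimISubsetCarlesonregion}), and the comparability $\omega_0(Q_\alpha^k)\approx\omega_0(\Delta(Z_\alpha^k,8^{-k-1}))\approx \omega_0(\Delta(Z_\alpha^k,8^{-k}))$ from Ahlfors regularity and the doubling property \refprop{prop:DoublingPropertyOfomega}. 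Plugging this in, $8^{k(n-2)}\lesssim G_0(Z)/\omega_0(Q_\alpha^k)$, so
\[
\Bigl(\fint_{B(X_i,\lambda 8^{-k-3})}|\EPS|^r\Bigr)^{2/r}\lesssim \frac{1}{\omega_0(Q_\alpha^k)}\int_{\hat I_\alpha^k}\delta(Z)^{-2}G_0(Z)\beta_r(Z)^2\,dZ,
\]
and summing over the $N$ balls and taking square roots gives the first claim. The second claim is then immediate: by \eqref{eq:PrelimISubsetCarlesonregion}, $\hat I_\alpha^k\subset T(\Delta(Z_\alpha^k,(C_0+16\lambda)8^{-k}))$, so if $\|\delta^{-2}G_0\beta_r^2\|_{\CO}\le\EPS_0^2$ then $\int_{\hat I_\alpha^k}\delta^{-2}G_0\beta_r^2\lesssim \sigma(\Delta(Z_\alpha^k,(C_0+16\lambda)8^{-k}))\EPS_0^2\approx 8^{-k(n-1)}\EPS_0^2$; one more application of \refprop{prop:GreenToOmega}-type comparisons $\omega_0(Q_\alpha^k)\approx 8^{-k(n-1)}$ is \emph{not} available in general, but it is not needed either — actually one should argue that the Carleson norm is taken with respect to $\sigma$ while the normalization is by $\omega_0(Q_\alpha^k)$, so the two factors of $8^{-k(n-1)}$ do not directly cancel; instead use that the statement only asserts $\lesssim\EPS_0$, which follows once we observe that in fact $\omega_0(Q^k_\alpha)\approx \sigma(Q^k_\alpha)\cdot[\text{bounded}]$ is false, hence one must keep the $\omega_0$-normalization throughout and use the Carleson condition for the measure $\delta^{-2}G_0\beta_r^2$ together with $\omega_0(Q_\alpha^k)\gtrsim 8^{k(n-2)}\int_{\hat I_\alpha^k}\delta^{-2}G_0\,dZ$ — which again follows from the lower Green bound above. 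I would therefore restate the second claim's proof as: $\bigl(\fint_{I_\alpha^k}|\EPS|^r\bigr)^{1/r}\lesssim \omega_0(Q_\alpha^k)^{-1/2}\bigl(\int_{T(\Delta)}\delta^{-2}G_0\beta_r^2\bigr)^{1/2}\lesssim \omega_0(Q_\alpha^k)^{-1/2}(\omega_0(Q_\alpha^k))^{1/2}\EPS_0=\EPS_0$, where the last comparison uses the dual estimate $\int_{T(\Delta(Z_\alpha^k,r))}\delta^{-2}G_0\lesssim \omega_0(\Delta(Z_\alpha^k,r))$ (a standard consequence of \refprop{prop:GreenToOmega} and the construction of $\omega_0$ as harmonic measure).

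\textbf{Main obstacle.} The crux is the two-sided comparison between $\int_{\hat I_\alpha^k}\delta^{-2}G_0$ and $\omega_0(Q_\alpha^k)$, i.e. controlling the Green's function on the Whitney-type region $\hat I_\alpha^k$ by the elliptic measure of the surface cube below it. The upper direction $\int_{T(\Delta)}\delta^{-2}G_0\lesssim\omega_0(\Delta)$ is essentially the statement that $\delta^{-2}G_0\,dZ$ is a Carleson measure with norm controlled by a constant times $\omega_0$, which can be extracted from \refprop{prop:GreenToOmega} by an integration/dyadic-summation argument (sum $(8^{-j})^{n-2}G_0(A(Q,8^{-j}))\approx\omega_0(\Delta(Q,8^{-j}))$ over dyadic scales $j$ and use that $\sum_j\omega_0(\Delta(Q,8^{-j}))\lesssim\omega_0(\Delta(Q,r))$ by doubling). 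The lower bound $G_0(Z)\gtrsim 8^{-k(n-2)}\omega_0(Q_\alpha^k)$ for $Z\in\hat I_\alpha^k$ is the Harnack-chain argument sketched above and is the place where the NTA/CAD hypotheses and the bounded length of Harnack chains at the relevant scale enter. Both pieces are standard in this circle of ideas (they appear in \cite{milakis_harmonic_2011} and ultimately go back to \cite{fefferman_theory_1991}), and in the BMO-antisymmetric setting they go through verbatim because the Green's function estimates \refprop{prop:GreenBounds}, \refprop{prop:GreenToOmega} and the Harnack inequality \refprop{prop:Harnack} are exactly the ones recorded above from \cite{li_lp_2019}; the only genuinely new point is that $\EPS\in L^r_{\mathrm{loc}}$ rather than $L^\infty_{\mathrm{loc}}$, which is harmless since everything above was phrased with $L^r$ averages from the start.
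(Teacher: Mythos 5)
Your proof of the first (main) inequality is correct and essentially the paper's own argument: cover \(I_\alpha^k\) by the balls \(B(X_i,\lambda 8^{-k-3})\), dominate the local average of \(|\EPS|^r\) by \(\beta_r(Z)^r\) for \(Z\) in the doubled ball, average in \(Z\), and convert \(\delta(Z)^{-n}\) into \(\omega_0(Q_\alpha^k)^{-1}\delta(Z)^{-2}G_0(Z)\) via the lower bound \(G_0(Z)\approx\delta(Z)^{2-n}\omega_0(\Delta(Z^*,\delta(Z)))\gtrsim 8^{k(n-2)}\omega_0(Q_\alpha^k)\); the paper obtains this from the lemma following \refprop{prop:DoublingPropertyOfomega} together with doubling, which is the same corkscrew/Harnack-chain argument you sketch. (Watch the exponent: you first state \(G_0(Z)\gtrsim 8^{-k(n-2)}\omega_0(Q_\alpha^k)\) but then correctly use \(8^{k(n-2)}\lesssim G_0(Z)/\omega_0(Q_\alpha^k)\).) The only cosmetic difference is that the paper averages \(\beta_r\) and applies Cauchy--Schwarz, while you average \(\beta_r^2\) directly.

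The second claim is where you go astray. In this proposition, as in the hypothesis of \refthm{thm:theorem2.3} where it is applied, \(\Vert\beta_r^2G_0\delta^{-2}\Vert_{\mathcal C}\le\EPS_0^2\) is the Carleson-type condition normalized by \(\omega_0\), i.e. \(\sup_\Delta\omega_0(\Delta)^{-1}\int_{T(\Delta)}\beta_r^2G_0\delta^{-2}\le\EPS_0^2\). With that reading the second claim is an immediate corollary of the first: \(\hat I_\alpha^k\subset T(\Delta(Z_\alpha^k,(C_0+16\lambda)8^{-k}))\) by \eqref{eq:PrelimISubsetCarlesonregion}, and \(\omega_0(Q_\alpha^k)\approx\omega_0(\Delta(Z_\alpha^k,(C_0+16\lambda)8^{-k}))\) by doubling (\refprop{prop:DoublingPropertyOfomega}), which is exactly the paper's two-line argument. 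You instead read the norm as \(\sigma\)-normalized, correctly observed that \(\sigma(\Delta)\) and \(\omega_0(Q_\alpha^k)\) need not be comparable, and then attempted a repair via the chain \(\bigl(\int_{T(\Delta)}\delta^{-2}G_0\beta_r^2\bigr)^{1/2}\lesssim\omega_0(Q_\alpha^k)^{1/2}\EPS_0\), justified by the ``dual estimate'' \(\int_{T(\Delta)}\delta^{-2}G_0\lesssim\omega_0(\Delta)\). That step is not valid: to pass from \(\int_{T(\Delta)}\delta^{-2}G_0\lesssim\omega_0(\Delta)\) to \(\int_{T(\Delta)}\delta^{-2}G_0\beta_r^2\lesssim\EPS_0^2\,\omega_0(\Delta)\) you would need the pointwise bound \(\beta_r\le\EPS_0\), which is not assumed; the latter inequality \emph{is} the (\(\omega_0\)-normalized) hypothesis, not a consequence of the dual estimate. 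Moreover, from a genuinely \(\sigma\)-normalized smallness assumption the conclusion would fail in general, since at this stage nothing comparing \(\omega_0\) with \(\sigma\) is assumed. The fix is simply to interpret the Carleson condition with respect to \(\omega_0\), as in \refthm{thm:theorem2.3}, and invoke doubling, rather than the circular patch you wrote.
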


Proof: Using the covering of $I_\alpha^k$ by balls defined above we have

\begin{align*}
     \left(\fint_{I_\alpha^k}|\EPS(Y)|^rdY\right)^{1/r}
     &\lesssim \left(\sum_{i=1}^N  \fint_{B(X_i, 8^{-k-3})}|\EPS(Y)|^rdY\right)^{1/r}
     \\
     &= \sum_{i=1}^N \fint_{B(X_i, 8^{-k-3})}\left(\fint_{B(X_i, 8^{-k-3})}|\EPS(Y)|^rdY\right)^{1/r}dZ
     \\
     &\lesssim \sum_{i=1}^N \fint_{B(X_i, 8^{-k-3})}\left(\fint_{B(Z,\delta(Z)/2)}|\EPS(Y)|^rdY\right)^{1/r}dZ
     \\
     &\leq \sum_{i=1}^N \fint_{B(X_i, 8^{-k-3})}\beta_r(Z)dZ
\\     &\leq \sum_{i=1}^N \left(\fint_{B(X_i, 8^{-k-3})}\beta_r(Z)^2dZ\right)^{1/2}
     \\
     &\lesssim \sum_{i=1}^N \left(\int_{B(X_i, 8^{-k-3})}\delta(Z)^{-n}\beta_r(Z)^2dZ\right)^{1/2}
     \\
     &\lesssim \sum_{i=1}^N \left(\frac{1}{\omega_0(Q_\alpha^k)} 
        \int_{B(X_i, 8^{-k-3})}\delta(Z)^{-2}G_0(Z)\beta_r(Z)^2dZ\right)^{1/2}
     \\
     &\lesssim \left(\frac{1}{\omega_0(Q_\alpha^k)} \int_{\hat{I}^k_\alpha}\delta(Z)^{-2}G_0(Z)\beta_r(Z)^2dZ\right)^{1/2}.
\end{align*}

Furthermore, for the second part under the assumption that
\(\Vert
\frac{\beta_r(Z)^2G_0(Z)}{\delta(Z)^2}\Vert_\mathcal{C}\leq \EPS_0^2 \), we have that
\begin{align*}
    &\left(\frac{1}{\omega_0(Q_\alpha^k)} \int_{\hat{I}^k_\alpha}\delta(Z)^{-2}G_0(Z)\beta_r(Z)^2dZ\right)^{1/2}
    \\
     &\qquad\lesssim \left(\frac{1}{\omega_0(\Delta(Z_\alpha^k, C_0+16)8^{-k})}
        \int_{T(\Delta(Z_\alpha^k, (C_0+16)8^{-k}))}\frac{\beta_r(Z)^2G_0(Z)}{\delta(Z)^{2}}dZ\right)^{1/2}
    \\
    &\qquad\lesssim \EPS_0.
\end{align*}

\section{Proof of \refthm{thm:NormBig}} \label{S:LargeNormProof}

We use a method inspired by \cite{milakis_harmonic_2011}. 
Recall that \(A_0\) and \(A_1\) satisfy \eqref{eq:elliptic} and \eqref{eq:A^ainBMO}, 
    and that \(\omega_0\) and \(\omega_1\) denote their elliptic measures. 
Furthermore,  \(F=u_1-u_0\) and \(\beta_r\) are as in \eqref{def:beta_r}.
First, we need to prove

\begin{thm}\label{thm:theorem2.3}
Let \(\Omega\) be a bounded CAD. 
There exists \(\EPS_0=\EPS_0(n,\lambda_0,\Lambda_0)>0\) and \(r>0\), such that if
\[\sup_{\Delta\subset \partial\Omega} \left(\frac{1}{\omega_0(\Delta)}\int_{T(\Delta)}\beta_r^2(X)\frac{G_0(X)}{\delta(X)^2}dX\right)^{1/2}<\EPS_0,\]
then \(\omega_1\in B_2(\omega_0)\).
\end{thm}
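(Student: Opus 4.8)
The plan is to reduce the conclusion $\omega_1\in B_2(\omega_0)$ to an a priori nontangential estimate for $F=u_1-u_0$ and then close by absorption. Since $\omega_0\in B_\infty(\omega_0)$ trivially, the $L^2(\omega_0)$ Dirichlet problem for $L_0$ is solvable, i.e.\ $\|N_\alpha(u_0)\|_{L^2(\omega_0)}\lesssim\|f\|_{L^2(\omega_0)}$; by the standard equivalence between solvability of the $L^2(\mu)$ Dirichlet problem and membership of the elliptic measure in $B_2(\mu)$ (valid for any doubling $\mu$, here $\mu=\omega_0$), it suffices to prove $\|N_\alpha(u_1)\|_{L^2(\omega_0)}\lesssim\|f\|_{L^2(\omega_0)}$ for all $f\in C(\partial\Omega)$, $u_1$ being the $L_1$-solution with data $f$. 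By $N_\alpha(u_1)\le N_\alpha(F)+N_\alpha(u_0)$, \reflemma{lemma:NFleqtildeNF} and \reflemma{lemma:NontanMaxFctWithDiffConesComparable}, this follows once we show
\[ \|\widetilde N_\alpha(F)\|_{L^2(\omega_0)}\lesssim \EPS_0\,\|\widetilde N_\alpha(u_1)\|_{L^2(\omega_0)}. \]
Since $f\in C(\partial\Omega)$ forces $u_0,u_1\in C(\overline{\Omega})$ and $\omega_0$ is a finite measure, all quantities here are finite; so once the last display holds, taking $\EPS_0$ small and using $\widetilde N_\alpha(u_1)\le\widetilde N_\alpha(F)+\widetilde N_\alpha(u_0)$ together with $\|\widetilde N_\alpha(u_0)\|_{L^2(\omega_0)}\lesssim\|f\|_{L^2(\omega_0)}$ lets us absorb $\|\widetilde N_\alpha(F)\|_{L^2(\omega_0)}$ and conclude.

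The displayed estimate rests on the representation formula
\[ F(X)=\int_\Omega \EPS(Y)\,\nabla u_1(Y)\cdot\nabla_Y G_0(X,Y)\,dY,\qquad \EPS:=A_0-A_1, \]
established in Section~\ref{S:LargeNormProof} (legitimate because $\EPS$ vanishes near the fixed pole $0$); equivalently $F$ solves $L_0F=\Div(\EPS\nabla u_1)$ with $F=0$ on $\partial\Omega$. From here the proof assembles two facts, the technical heart of the paper, proved in Sections~\ref{S:Lemma2.9Proof} and~\ref{S:Lemma2.10Proof}. \reflemma{lem:2.9} dominates the mean-valued nontangential maximal function of the inhomogeneous function $F$ by its square function,
\[ \|\widetilde N_\alpha(F)\|_{L^2(\omega_0)}\lesssim \|S_\alpha(F)\|_{L^2(\omega_0)}+\bigl(\text{Carleson error}\bigr), \]
the error term arising from the inhomogeneity $\Div(\EPS\nabla u_1)$ and being of the same type as the main term below; and \reflemma{lemma:2.10/2.16} is the square-function bound
\[ \|S_\alpha(F)\|_{L^2(\omega_0)}^2\lesssim \EPS_0^2\,\|\widetilde N_\alpha(u_1)\|_{L^2(\omega_0)}^2. \]
Granting these, the estimate of the first paragraph follows.

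For orientation, the mechanism behind \reflemma{lemma:2.10/2.16} is as follows. A Fubini computation together with the comparison $\omega_0(\Delta(X^*,\delta(X)))\approx\delta(X)^{n-2}G_0(X)$ (the lemma following \refprop{prop:GreenToOmega}, with $G_0=G_0(0,\cdot)$) gives $\|S_\alpha(F)\|_{L^2(\omega_0)}^2\approx\int_\Omega|\nabla F(X)|^2 G_0(X)\,dX$. Testing the equation for $F$ against $FG_0$ (after truncating $G_0$ near the pole), using ellipticity of the symmetric part $A_0^s$ and neutralising the antisymmetric part of $A_0$ on Whitney balls by \eqref{prop:TheAvarageIs0}, and using $\int_\Omega A_0\nabla(F^2/2)\cdot\nabla G_0=\tfrac12 F(0)^2\ge 0$ (the defining property of $G_0$, \refprop{prop:GreenExist}, \refprop{prop:GreenAdj}), one reduces, modulo a cross term, to bounding $\int_\Omega|\EPS|^2|\nabla u_1|^2 G_0\,dX$. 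On each Whitney ball $B=B(X,\delta(X)/2)$, H\"older's inequality, the interior reverse-H\"older inequality for $\nabla u_1$ (\refprop{prop:GradRevHol}) and Caccioppoli's inequality (\refprop{prop:Caccioppoli}) yield
\[ \int_B|\EPS|^2|\nabla u_1|^2 G_0\;\lesssim\;\beta_r(X)^2\,\frac{G_0(X)}{\delta(X)^2}\,|B|\fint_{cB}|u_1|^2, \]
once $r=r(n,\lambda_0,\Lambda_0)$ is chosen large enough that $\tfrac{2r}{r-2}$ does not exceed the reverse-H\"older exponent of \refprop{prop:GradRevHol} and large enough for the John--Nirenberg estimate used to handle the BMO antisymmetric parts (hence the dependence on $\Lambda_0$). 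Summing over $B$ and applying the Carleson embedding theorem with respect to the doubling measure $\omega_0$ to the measure $\beta_r(X)^2 G_0(X)\delta(X)^{-2}\,dX$, whose Carleson norm is $<\EPS_0^2$ by hypothesis, bounds the total by $\EPS_0^2\,\|\widetilde N_\alpha(u_1)\|_{L^2(\omega_0)}^2$; \refprop{prop:EPSleqEPS_0} is precisely the discretised version of this estimate over the pieces $I_\alpha^k$.

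The genuine difficulty lies in the two cited lemmas, not in the assembly. Two points stand out. First, the antisymmetric parts of $A_0,A_1$ are unbounded (only BMO), so every estimate must run through the $L^r$-averaged discrepancy $\beta_r$ rather than an $L^\infty$ quantity; making this work requires fixing an admissible $r$ via the interplay of interior higher integrability of solutions with John--Nirenberg, and repeated subtraction of the constant mean of the antisymmetric part on each Whitney scale using \eqref{prop:TheAvarageIs0}. Second, the inhomogeneous cross term $\int_\Omega F\,\EPS\nabla u_1\cdot\nabla G_0$ in the energy identity cannot be handled by a naive Cauchy--Schwarz (the obvious weight yields a non-Carleson measure); it must be reorganised so as to split into a piece absorbable into $\|S_\alpha(F)\|_{L^2(\omega_0)}$ and a Carleson piece $\lesssim\EPS_0\|\widetilde N_\alpha(u_1)\|_{L^2(\omega_0)}$, and the same circle of ideas governs the error term in \reflemma{lem:2.9}.
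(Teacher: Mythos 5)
Your overall skeleton --- reduce $\omega_1\in B_2(\omega_0)$ to $\|N_\alpha(u_1)\|_{L^2(\omega_0)}\lesssim\|f\|_{L^2(\omega_0)}$, prove a smallness estimate for $F=u_1-u_0$, and absorb --- is the same as the paper's, and your final assembly would be fine if your two intermediate estimates were available. But they are not: you have misquoted both key lemmas, and the misquotation is substantive because it puts the $\EPS_0$-smallness in the wrong place. \reflemma{lem:2.9} does \emph{not} say $\|\tilde{N}_\alpha[F]\|\lesssim\|S(F)\|+\text{(Carleson error)}$; it bounds $\tilde{N}_\alpha[F]$ in $L^p(d\mu)$ by $\EPS_0\,M_{\omega_0}[S_{\bar{M}}u_1]$, i.e.\ by the square function of $u_1$ (not of $F$) \emph{with} the $\EPS_0$ gain, and it is proved directly from the representation formula via the decomposition $F=\tilde{F}_1+\tilde{\tilde{F}}_1+F_2$ and stopping-time arguments; no ``$N\lesssim S$'' estimate for the inhomogeneous equation $L_0F=\Div(\EPS\nabla u_1)$ is proved or needed anywhere in the paper, and such an estimate would itself require an argument of the same depth as Section \ref{S:Lemma2.9Proof}. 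Conversely, \reflemma{lem:2.10} (and \reflemma{lemma:2.10/2.16}) states $\|S_{\bar{M}}[F]\|^2_{L^2(\omega_0)}\lesssim\|\tilde{N}_\alpha[F]\|^2_{L^2(\omega_0)}+\|f\|^2_{L^2(\omega_0)}$ with \emph{no} small constant; your claimed version $\|S(F)\|^2\lesssim\EPS_0^2\|\tilde{N}_\alpha(u_1)\|^2$ neither is that lemma nor follows from it (the $\|f\|^2$ term carries no $\EPS_0$), so your central display $\|\tilde{N}_\alpha(F)\|\lesssim\EPS_0\|\tilde{N}_\alpha(u_1)\|$ rests on two statements that the cited sections do not supply.

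The gap is not repaired by your sketch of the ``mechanism,'' because the cross term $\int_\Omega F\,\EPS\nabla u_1\cdot\nabla G_0$ is exactly where the work lies: to absorb it into $\|S(F)\|^2$ you would need either an a priori bound on $N_{\hat{M}}[F]$ --- which in your organisation sits downstream, in your version of \reflemma{lem:2.9}, making the scheme circular --- or a weighted Hardy-type inequality of the form $\int|F|^2G_0\,\delta^{-2}\lesssim\int|\nabla F|^2G_0$, which the paper neither states nor proves. The paper instead routes this term through the good-$\lambda$ inequality of \reflemma{lemma:GoodLambdaInequalityLem2.18}, producing the products $N_{\hat{M}}[F]S_{\hat{M}}[F]$, $N_{\hat{M}}[F]S_{\hat{M}}[u_1]$, $\tilde{N}_{\hat{M}}[u_0]S_{\hat{M}}[u_1]$, precisely because no $\EPS_0$ gain is claimed at the square-function stage. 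If you keep your assembly but use the lemmas as actually stated, the proof closes the paper's way: from \reflemma{lem:2.9}, $\|\tilde{N}_\alpha[F]\|^2_{L^2(\omega_0)}\lesssim\EPS_0^2\|S_{\bar{M}}[u_1]\|^2_{L^2(\omega_0)}$; then $S_{\bar{M}}[u_1]\le S_{\bar{M}}[F]+S_{\bar{M}}[u_0]$ with $\|S_{\bar{M}}[u_0]\|_{L^2(\omega_0)}\lesssim\|f\|_{L^2(\omega_0)}$, \reflemma{lem:2.10} to replace $\|S_{\bar{M}}[F]\|$ by $\|\tilde{N}_\alpha[F]\|+\|f\|$, absorption for small $\EPS_0$, and \reflemma{lemma:NFleqtildeNF} to pass to $N_\alpha[u_1]$.
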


This theorem corresponds to Theorem 2.9 in \cite{milakis_harmonic_2011} but
    with the discrepancy function \(\alpha\) instead of \(\beta_r\). 
Using this the authors of \cite{milakis_harmonic_2011} first prove

\begin{thm}[Theorem 8.2]\label{Thm:8.2}
Let \(\Omega\) be a bounded CAD and let
\[A(a)(Q)=\left(\int_{\Gamma(Q)}\frac{\alpha^2(X)}{\delta(X)^n}dX\right)^{1/2}.\]
If \(\Vert A(a) \Vert_{L^{\infty}}\leq C<\infty\) and \(\omega_0\in A_\infty(\sigma)\) then \(\omega_1\in A_\infty(\sigma)\).
\end{thm}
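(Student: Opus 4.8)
The plan is to follow \cite{milakis_harmonic_2011} (which in turn follows \cite{fefferman_theory_1991}): \emph{localise} the smallness hypothesis of \refthm{thm:theorem2.3} to every Carleson box by a stopping-time argument, and then \emph{glue} the resulting local $B_2$-estimates into a global $A_\infty$ bound. (We read the $\alpha$ appearing in the statement as our discrepancy $\beta_r$, the $L^\infty$ discrepancy being generally infinite in our setting.) The whole argument is a transcription of \cite{milakis_harmonic_2011,fefferman_theory_1991} once one feeds it the facts about solutions and the Green function of operators with a BMO antisymmetric part collected in Section~\ref{S:Preliminaries}: Harnack (\refprop{prop:Harnack}), the boundary Hölder and boundary Harnack estimates (\refprop{prop:BoundaryHolder}, \refprop{prop:BoundaryHarnack}, \refprop{prop:BoundaryHoelderContinuity}), the comparison principle (\refprop{prop:CompPrinc}), the Green function bounds (\refprop{prop:GreenBounds}), and the identity $\omega(\Delta(X^*,\delta(X)))\approx\delta(X)^{n-2}G(0,X)$ with its companions \refprop{prop:GreenToOmega} and \refprop{prop:DoublingPropertyOfomega}; the BMO part of the coefficients never enters beyond these citations.

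First I would reduce: since $A_\infty$ is an equivalence relation on doubling measures, it is enough to prove $\omega_1\in A_\infty(\omega_0)$, which together with $\omega_0\in A_\infty(\sigma)$ yields $\omega_1\in A_\infty(\sigma)$. Next I would turn the hypothesis into an $\omega_0$-Carleson bound. A Fubini computation gives that $\beta_r^2\,\delta^{-1}\,dX$ is a Carleson measure (with respect to $\sigma$) of norm $\lesssim\|A(\beta_r)\|_{L^\infty}^2$; rewriting $G_0(X)\delta(X)^{-2}\approx\delta(X)^{-n}\,\omega_0(\Delta(X^*,\delta(X)))$ via the lemma of Section~\ref{S:Preliminaries}, and then running a Carleson-embedding estimate against the $A_\infty(\sigma)$-weight $d\omega_0/d\sigma$, gives
\[\sup_{\Delta\subset\partial\Omega}\ \frac{1}{\omega_0(\Delta)}\int_{T(\Delta)}\beta_r^2(X)\,\frac{G_0(X)}{\delta(X)^2}\,dX \ =:\ M\ <\ \infty,\]
with $M$ controlled by $\|A(\beta_r)\|_{L^\infty}$ and the $A_\infty(\sigma)$-character of $\omega_0$; this is exactly the quantity appearing in \refthm{thm:theorem2.3}, now only known to be finite rather than small.

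The core step is the stopping time. Let $\EPS_0$ be the threshold of \refthm{thm:theorem2.3}, fix a boundary cube $Q_0$, and let $\{Q_j\}$ be the maximal subcubes of $Q_0$ for which the $\omega_0$-normalised mass of $\beta_r^2 G_0\delta^{-2}dX$ over $T(Q_j)$ first exceeds $c\EPS_0^2$, with $c$ a small dimensional constant. Then on the sawtooth region $\Omega^*_{Q_0}:=T(Q_0)\setminus\bigcup_j T(Q_j)$ the remaining such mass stays, at every scale, below $\EPS_0^2$. Let $\tilde L=\Div(\tilde A\nabla\cdot)$ with $\tilde A=A_1$ on $\Omega^*_{Q_0}$ and $\tilde A=A_0$ elsewhere; the discrepancy $\tilde A-A_0$ is supported near $\Omega^*_{Q_0}$, so the Carleson norm of $|\tilde A-A_0|^r$ against $G_0\delta^{-2}dX$ is $\le\EPS_0^2$ \emph{globally}, and \refthm{thm:theorem2.3} applies to the pair $(L_0,\tilde L)$ to give $\tilde\omega\in B_2(\omega_0)$ with uniform constants. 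Since $L_1$ and $\tilde L$ coincide on $\Omega^*_{Q_0}$, a sawtooth comparison resting on \refprop{prop:CompPrinc} transfers this to $\omega_1$ on $Q_0\setminus\bigcup_j Q_j$; on $\bigcup_j Q_j$ one recurses into the $Q_j$. Finally, the disjointness of the $T(Q_j)$ and the bound of the previous paragraph give the packing estimate $\sum_j\omega_0(Q_j)\le(cM/\EPS_0^2)\,\omega_0(Q_0)$, and a Dahlberg-type summation over the generations of stopping cubes produces, for every $Q_0$ and every Borel $E\subset Q_0$, an estimate $\omega_1(E)/\omega_1(Q_0)\le C'\,(\omega_0(E)/\omega_0(Q_0))^{\theta}$ with $C',\theta>0$ depending only on $n,\lambda_0,\Lambda_0$ and $M$; this is $\omega_1\in A_\infty(\omega_0)$, and the first step closes the proof.

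The hard part will be the localisation together with the final summation, and this is also where the gaps of \cite{fefferman_theory_1991,milakis_harmonic_2011} flagged in the introduction must be handled: one must check that the modified operator $\tilde L$ behaves correctly across $\partial\Omega^*_{Q_0}$ so that \refprop{prop:CompPrinc} really does convert the global statement $\tilde\omega\in B_2(\omega_0)$ into the desired local comparison of $\omega_1$ and $\omega_0$, and one must organise the iteration over stopping generations so that the (possibly large) constant $M$ enters only through the final $A_\infty$-character and does not compound uncontrollably. Everything else — the Fubini computation, the Carleson embedding, and the dyadic/stopping-time bookkeeping on the CAD boundary — is routine given Section~\ref{S:Preliminaries} and the decomposition of Subsection~\ref{subsection:dyadic decomposition properties}.
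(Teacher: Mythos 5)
Your reduction of the hypothesis to the finiteness of $M:=\sup_{\Delta}\,\omega_0(\Delta)^{-1}\int_{T(\Delta)}\beta_r^2\,G_0\,\delta^{-2}\,dX$ is fine (Fubini plus $G_0(X)\approx\delta(X)^{2-n}\omega_0(\Delta(X^*,\delta(X)))$ and doubling of $\omega_0$; the $A_\infty(\sigma)$ hypothesis is not even needed for that step), but the stopping-time step that follows does not work. If you stop at the maximal cubes $Q_j\subset Q_0$ whose Carleson boxes carry normalised mass at least $c\EPS_0^2$, the only packing bound available is $\sum_j\omega_0(Q_j)\le (M/c\EPS_0^2)\,\omega_0(Q_0)$, which is vacuous once $M\gg\EPS_0^2$: nothing prevents $Q_0$ itself (or all of its children, at every generation) from being a stopping cube, so the sawtooth may be empty, the ``good set'' $Q_0\setminus\bigcup_j Q_j$ need not have $\omega_0$-measure comparable to $\omega_0(Q_0)$, and the recursion and ``Dahlberg-type summation'' never gain anything. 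In effect you have discarded the pointwise nature of the hypothesis $\Vert A(\beta_r)\Vert_{L^\infty}\le C$ and kept only a (large) Carleson norm; upgrading a large Carleson norm alone to $A_\infty$ is a genuinely harder problem (this is what the extrapolation-of-Carleson-measures machinery of \cite{cavero_perturbations_2019,akman_perturbation_2021} is for) and is not what either \cite{milakis_harmonic_2011} or this paper does at this point.

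The route the paper relies on (the FKP/MPT argument, invoked with $\alpha$ replaced by $\beta_r$) uses the $L^\infty$ bound in a much more elementary way: interpolate between the operators. Set $A^{(k)}=A_0+\tfrac{k}{N}(A_1-A_0)$ for $k=0,\dots,N$; each $A^{(k)}$ is $\lambda_0$-elliptic with antisymmetric part of BMO norm at most $\Lambda_0$ (both conditions are preserved by convex combinations), and the discrepancy of the consecutive pair $(A^{(k-1)},A^{(k)})$ is $\tfrac1N(A_1-A_0)$, so its square function is bounded by $C/N$ pointwise. Your Fubini computation, now run with the elliptic measure and Green's function of $L^{(k-1)}$ (which is doubling and satisfies the same $G\approx\delta^{2-n}\omega$ comparison, uniformly in $k$), shows that the hypothesis of \refthm{thm:theorem2.3} holds for each consecutive pair as soon as $N\gtrsim C/\EPS_0$; hence $\omega^{(k)}\in B_2(\omega^{(k-1)})\subset A_\infty(\omega^{(k-1)})$ for every $k$, and transitivity over the $N$ steps gives $\omega_1\in A_\infty(\omega_0)$, hence $\omega_1\in A_\infty(\sigma)$. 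No sawtooth, stopping time or comparison-principle transfer is needed for \refthm{Thm:8.2}; those ingredients belong to the subsequent deduction of \refthm{thm:NormBig} from \refthm{Thm:8.2}, where the good set of definite $\omega_0$-proportion comes from Chebyshev applied to the square function together with $\omega_0\in A_\infty(\sigma)$ --- an ingredient for which your scheme has no substitute.
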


After that, they establish \refthm{thm:NormBig} (which is Theorem 8.1 in their notation). 
If we replace \(\alpha\) by \(\beta_r\), 
    we can conclude \refthm{thm:NormBig} from \refthm{Thm:8.2} 
    and \refthm{Thm:8.2} from \refthm{thm:theorem2.3} in the same way as in \cite{milakis_harmonic_2011}. 
The only modification  is the substitution of their discrepancy function \(\alpha\) by \(\beta_r\).
\\

Hence it remains to prove \refthm{thm:theorem2.3}. We first establish the following two lemmas.

\begin{lemma}\label{lem:2.9}
    Let \( \mu \) be a doubling measure on \( \partial\Omega \). 
    Under the assumptions of \refthm{thm:theorem2.3} we have for every \(0<p<\infty\)
    \[ \int_{\partial\Omega}\Tilde{N}_{\alpha}[F]^pd\mu 
    \lesssim_{p} \EPS_0\int_{\partial \Omega} M_{\omega_0}[S_{\bar{M}}u_1]^pd\mu, \]
    where the aperture \( \bar{M}=2^8 \) is at least twice as large as \( \alpha \).
\end{lemma}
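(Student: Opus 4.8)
The plan is to start from the key identity $F(X) = u_1(X)-u_0(X) = \int_\Omega \EPS\,\nabla u_1(Y)\cdot\nabla_Y G_0(X,Y)\,dY$ and estimate the mean-valued nontangential maximal function $\tilde N_\alpha[F]$ pointwise on $\partial\Omega$ by a sum over the dyadic decomposition $\{I_\alpha^k\}$ of $(\partial\Omega,4R_0)$ introduced in Section \ref{subsection:dyadic decomposition properties}. Fix $Q\in\partial\Omega$ and $X\in\Gamma_\alpha(Q)$; I would split the integral defining $F$ over the Whitney-type regions $I_\beta^j$, estimate $\fint_{B(X,\delta(X)/2)}|F|^2$ by duality/Cauchy--Schwarz against $\nabla_Y G_0$, and use the local $L^r$ control of $\EPS$ on each $I_\beta^j$ from \refprop{prop:EPSleqEPS_0}, which under the hypothesis of \refthm{thm:theorem2.3} gives $\big(\fint_{I_\beta^j}|\EPS|^r\big)^{1/r}\lesssim\EPS_0$. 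The point is to pay one factor of $\EPS_0$ and absorb the geometric series in $j$ using the decay of the Green's function together with the comparison $\delta^{n-2}G_0(Z)\approx\omega_0(\Delta(Z^*,\delta(Z)))$ (the Lemma following \refprop{prop:DoublingPropertyOfomega}).

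Concretely, the main chain of estimates I would carry out: first replace $\EPS$ by $\EPS-(\EPS^a)_{I}$ region-by-region using \eqref{prop:TheAvarageIs0} so only the genuinely oscillating part of the coefficients contributes; then apply Cauchy--Schwarz in $Y$ on each $I_\beta^j$ to separate $\big(\fint |\EPS|^r\big)^{1/r}$ (times the measure) from a weighted $L^2$ integral of $|\nabla u_1|$ and a dual $L^2$ integral of $|\nabla_Y G_0(X,\cdot)|$; estimate the Green's function gradient integral over $I_\beta^j$ by Caccioppoli for $G_0(X,\cdot)$ together with the pointwise bound $G_0(X,Y)\lesssim|X-Y|^{2-n}$ from \refprop{prop:GreenBounds}; and finally recognize the surviving $|\nabla u_1|$ integral, after inserting the weight $\delta^{2-n}$ and the factor $G_0(0,Y)$, as being dominated by $S_{\bar M}u_1$ averaged against $\omega_0$ — i.e.\ by $M_{\omega_0}[S_{\bar M}u_1](Q)$. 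The aperture has to be enlarged to $\bar M=2^8$ precisely because of \eqref{eq:PrelimIalphaSubsetGammaM}: points of $\hat I_\beta^j$ lie in the cone $\Gamma_M(P)$ with $M=8^4$ for $P$ in the corresponding boundary cube, so $S_{\bar M}u_1$ sees all the Whitney regions contributing to $F$ at $Q$. Summing the geometric series in $j$ (convergent because each further dyadic generation contributes a fixed factor $<1$ from $G_0$ decay and the finite overlap of the $\hat I_\beta^j$) and then in the "horizontal" index $\beta$ via the maximal function $M_{\omega_0}$ produces the pointwise bound $\tilde N_\alpha[F](Q)\lesssim\EPS_0\,M_{\omega_0}[S_{\bar M}u_1](Q)$; raising to the $p$-th power and integrating $d\mu$ finishes the case $0<p<\infty$ with no further work since the estimate is already pointwise.

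The main obstacle I expect is the bookkeeping that couples the $Y$-integration against $\nabla_Y G_0(X,Y)$ with the $X$-averaging inside $\tilde N_\alpha$: one cannot simply pull $\fint_{B(X,\delta(X)/2)}$ inside, since $G_0(X,Y)$ is singular at $X=Y$, so the "diagonal" Whitney regions $I_\beta^j$ with $\delta(Y)\approx\delta(X)$ and $|Y-Q|\approx|X-Q|$ must be handled by the $W^{1,1}_0$-membership of $G_0$ and an $L^2$-averaged Caccioppoli rather than by pointwise Green bounds (this is one of the places the excerpt flags that \cite{fefferman_theory_1991} had a gap — cf.\ \refrem{rem:GreenFunctionProperty}). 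A secondary subtlety is justifying that the geometric decay really closes: this needs the Harnack-chain comparison $G_0(0,Z)\approx G_0(0,Z')$ between the two endpoints of a Whitney region, \refprop{prop:GreenToOmega}, and the doubling of $\omega_0$, so that the weight $\delta^{2-n}G_0(0,\cdot)$ behaves like $\omega_0$ of the surface ball at the right scale and the sum over $j$ telescopes against the reverse-Hölder/doubling structure rather than blowing up. Once those two points are secured, the remaining steps are routine applications of the propositions collected in Section \ref{S:Preliminaries}.
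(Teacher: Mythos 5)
Your high-level scaffolding agrees with the paper's: the representation formula for \(F\), the dyadic/Whitney decomposition of \((\partial\Omega,4R_0)\), \refprop{prop:EPSleqEPS_0} to pay the factor \(\EPS_0\), Caccioppoli plus the comparison principle and doubling to convert \(\delta^{2-n}G_0\) into \(\omega_0\) of surface balls, and the enlarged aperture \(\bar M\) via \eqref{eq:PrelimIalphaSubsetGammaM}. But there is a genuine gap at the diagonal term, which is precisely where the paper's proof does its real work. Your plan is to handle the Whitney box containing \(X\) by Cauchy--Schwarz against \(\nabla_Y G_0(X,\cdot)\) with an ``\(L^2\)-averaged Caccioppoli''; this fails as stated, since \(|\nabla_Y G_0(X,Y)|\sim|X-Y|^{1-n}\) is not square integrable near the pole for \(n\geq 2\), and Caccioppoli is unavailable there because \(G_0(X,\cdot)\) is not a solution across the pole. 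The natural fix --- regard \(F_1(Z)=\int_{B(X)}\nabla_Y G_0(Z,Y)\cdot\EPS\nabla u_1\,dY\) as the solution of \(L_0w=\Div(\EPS\nabla u_1\chi_{B(X)})\) and run an energy estimate --- is exactly what \refrem{rem:TildeF} rules out: one does not know a priori that this function lies in \(W_0^{1,2}(2B(X))\), and the Green's function identity \eqref{eq:DefiningPropertyofGreensfct} only applies to test functions in \(W_0^{1,p}\), \(p>n\). The paper instead splits \(F_1=\tilde F_1+\tilde{\tilde F}_1\) via a local Green function on \(2B(X)\), mollifies the coefficients (\(\hat A^m\), \(\hat\EPS^m\), approximate solutions \(\hat u_m\)), works with \(\rho\)-averaged Green functions \(\tilde G_0^\rho\), proves the energy bound for \(\hat F_m^\rho\) by absorption, and only then passes to the limits \(\rho\to 0\), \(m\to\infty\); your proposal contains no workable substitute for this two-parameter approximation.

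A second, quieter gap is in the away term. After Cauchy--Schwarz on each box one faces a sum of products of the form \(\omega_0(Q_\alpha^k)^{1/2}\bigl(\int_{\hat I_\alpha^k}\beta_r^2G_0\delta^{-2}\bigr)^{1/2}\bigl(\int_{\hat I_\alpha^k}|\nabla u_1|^2\delta^{2-n}\bigr)^{1/2}\), and summing it naively (Cauchy--Schwarz in the sum, then the Carleson hypothesis) controls \(F_2\) by \(\bigl(\fint_{3\Delta_0}S_{\bar M}[u_1]^2\,d\omega_0\bigr)^{1/2}\), i.e.\ by \(M_{\omega_0}[S_{\bar M}[u_1]^2](Q)^{1/2}\), not by \(M_{\omega_0}[S_{\bar M}u_1](Q)\) as the lemma asserts; that weaker bound would not suffice in the proof of \refthm{thm:theorem2.3} at \(p=2\), since it would require \(L^1\)-boundedness of \(M_{\omega_0}\). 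The paper obtains the \(L^1\)-type average through the stopping-time argument on the level sets \(O_j\) of the truncated square function \(T_\EPS u_1\) together with \(\tilde O_j=\{M_{\omega_0}(\chi_{O_j})>1/2\}\); ``summing the geometric series in \(j\) and then the horizontal index via the maximal function'' does not by itself produce this, so you would need to add that argument (or an equivalent) to close the proof.
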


In particular this lemma holds with \( \mu \in \{ \omega_0,\sigma \} \).

\begin{lemma}\label{lem:2.10}
Under the assumptions of \refthm{thm:theorem2.3} we have for any aperture \(\alpha>0\)
    \[ \int_{\partial\Omega} S_{\bar{M}}[F]^2 d\omega_0 \lesssim 
        \int_{\partial\Omega} \left( \Tilde{N}_{\alpha}[F]^2 + f^2 \right) d\omega_0. \]
\end{lemma}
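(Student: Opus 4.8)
The plan is to use the key integral identity
\[ F(X) = u_1(X) - u_0(X) = \int_\Omega \EPS(Y)\nabla u_1(Y)\cdot\nabla_Y G_0(X,Y)\,dY, \]
which allows one to estimate the gradient of $F$ in terms of the gradient of $u_1$ and the Green's function $G_0$. The square function $S_{\bar M}[F]$ is an $L^2(\Gamma_{\bar M}(Q))$-norm of $|\nabla F(X)|\delta(X)^{(2-n)/2}$; integrating its square over $\partial\Omega$ against $\omega_0$ and applying Fubini turns the expression into a solid integral over $\Omega$ of $|\nabla F(X)|^2\delta(X)^{2-n}$ weighted by $\omega_0(\{Q : X\in\Gamma_{\bar M}(Q)\})\approx \omega_0(\Delta(X^*,\delta(X)))\approx \delta(X)^{n-2}G_0(X)$ (the last comparison being the unnumbered Lemma just before the dyadic-decomposition subsection). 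So the left side is comparable to $\int_\Omega |\nabla F(X)|^2 G_0(X)\,dX$.

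Next I would localize: cover the collar $(\partial\Omega,4R_0)$ by the Whitney-type regions $I_\alpha^k$, and on each $I_\alpha^k$ replace $\int_{I_\alpha^k}|\nabla F|^2 G_0$ by an integral over the enlarged region $\hat I_\alpha^k$ of $(F^2+u_0^2)$ times $G_0\delta^{-2}$, using the almost-Caccioppoli inequality for $F$ (Lemma \ref{lemma:CaccioppoliForF}) together with the fact that $G_0$ is a nonnegative solution of $L_0^*$ away from the pole, hence roughly constant on each Whitney region by Harnack (Proposition \ref{prop:Harnack}), and that $\delta\approx\lambda 8^{-k}$ there. This produces a bound of the form
\[ \int_{\partial\Omega} S_{\bar M}[F]^2\,d\omega_0 \lesssim \sum_{\alpha,k} \frac{1}{\omega_0(Q_\alpha^k)}\int_{\hat I_\alpha^k} (F^2+u_0^2)\,\delta^{-2}G_0\,dY \cdot \omega_0(Q_\alpha^k)\cdot(\text{normalization}), \]
and after summing with finite overlap of the $\hat I_\alpha^k$ one arrives at $\int_\Omega (F^2+u_0^2)\delta(Y)^{-2}G_0(Y)\,dY$. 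The term with $u_0^2$ must be recognized, via the identity $u_0(X)=\int f\,d\omega_0^X$ and the estimate $\omega_0(\Delta(X^*,\delta(X)))\approx\delta(X)^{n-2}G_0(X)$ again, as being controlled by $\int_{\partial\Omega} f^2\,d\omega_0$ — this is essentially the $L^2$-boundedness (with respect to $\omega_0$) of the relevant maximal/square operator, or equivalently a Carleson-measure/Poisson-kernel computation; the $F^2$ term is handled the same way and is bounded by $\int_{\partial\Omega}\tilde N_\alpha[F]^2\,d\omega_0$ since $\delta^{-2}G_0\,dY$ is (locally) comparable to the appropriate Carleson measure and $F$ is averaged on Whitney balls, matching the definition of $\tilde N$.

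The main obstacle I expect is the passage from the solid integral $\int_\Omega(F^2+u_0^2)\delta^{-2}G_0\,dY$ to the nontangential quantities $\int_{\partial\Omega}(\tilde N_\alpha[F]^2+f^2)\,d\omega_0$: this requires showing that $\delta(Y)^{-2}G_0(Y)\,dY$ behaves like a Carleson measure with respect to $\omega_0$ — i.e. that $\frac{1}{\omega_0(\Delta)}\int_{T(\Delta)}\delta^{-2}G_0\lesssim 1$ — which is where the comparison $\omega_0(\Delta(X^*,\delta(X)))\approx\delta(X)^{n-2}G_0(X)$ and the doubling of $\omega_0$ (Proposition \ref{prop:DoublingPropertyOfomega}) are crucial, and then invoking the standard Carleson-embedding/good-$\lambda$ argument that dominates a solid Carleson integral of $|v|^2$ by $\|\tilde N_\alpha[v]\|_{L^2(\omega_0)}^2$. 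Care is also needed near the pole $0$ and at scales $\delta(Y)>4R_0$; there one uses that $\EPS$ (hence $F$, up to the harmless modification of $A'$) is supported in the collar, and interior estimates plus Harnack control everything on the compact central part. Throughout, the finite-overlap properties \eqref{eq:PrelimIalphaSubsetB(Xi)SubsethatI}–\eqref{eq:PrelimIalphasubsettildeIalphasubsethatIalpha} and the aperture-change lemmas (Lemma \ref{lemma:NontanMaxFctWithDiffConesComparable}, Proposition \ref{prop:SquareFctWithDiffApertures}) let one move freely between the cone aperture $\bar M$ in the square function and the aperture $\alpha$ in $\tilde N$.
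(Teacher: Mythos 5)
There is a genuine gap at the step you yourself flag as the main obstacle: $\delta(Y)^{-2}G_0(Y)\,dY$ is \emph{not} a Carleson measure with respect to $\omega_0$, and the comparison $\omega_0(\Delta(Y^*,\delta(Y)))\approx \delta(Y)^{n-2}G_0(Y)$ shows exactly why it fails rather than why it holds. Decomposing $T(\Delta)$ into the Whitney regions $I_\alpha^k$, each region contributes $\int_{I_\alpha^k}\delta^{-2}G_0\,dY\approx \omega_0(Q_\alpha^k)$, so each dyadic scale $k$ contributes $\approx\omega_0(C\Delta)$, and summing over the infinitely many scales gives $\int_{T(\Delta)}\delta^{-2}G_0\,dY=\infty$ (already for the Laplacian on the unit ball, where $G_0\approx\delta$ near $\partial\Omega$, the integral diverges logarithmically). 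Consequently the reduction you make before this step is fatally lossy: replacing $\int_\Omega|\nabla F|^2G_0\,dY$ by $\int_\Omega(F^2+u_0^2)\delta^{-2}G_0\,dY$ via the almost-Caccioppoli inequality (Lemma \ref{lemma:CaccioppoliForF}) produces a quantity that can be infinite while both sides of Lemma \ref{lem:2.10} are finite — take $f\equiv 1$, so $u_0\equiv u_1\equiv 1$, $F\equiv 0$: the left side is $0$ and the right side is $\omega_0(\partial\Omega)$, but your intermediate bound is $\int_\Omega\delta^{-2}G_0\,dY=\infty$. So no Carleson-embedding or good-$\lambda$ argument can rescue the chain of inequalities as written; the naive Caccioppoli step throws away the cancellation that the lemma depends on.

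The paper's proof avoids this precisely by never estimating $|\nabla F|^2$ by Caccioppoli. After reducing (as you do) to $\int_{\tilde\Omega}|\nabla F|^2G_0\,dX$ over a sawtooth region inside a good-$\lambda$ framework (Lemma \ref{lemma:GoodLambdaInequalityLem2.18}), it writes, on each Whitney piece with a partition of unity $\eta_\alpha^{k,l}$,
\begin{equation*}
\int A_0\nabla F\cdot\nabla F\,G_0\eta_\alpha^{k,l}
=\int \Div\!\big(A_0\nabla(F^2)\big)\,G_0\eta_\alpha^{k,l}
+\int \Div\!\big(\EPS\nabla u_1\big)\,F\,G_0\eta_\alpha^{k,l},
\end{equation*}
using $L_0u_0=0$ and $L_1u_1=0$, and then integrates by parts so that derivatives land on $G_0\eta_\alpha^{k,l}$; Caccioppoli is applied to $G_0$ (together with \refprop{prop:GreenToOmega} and the $B_p$ property of $k=d\omega_0/d\sigma$), not to $F$. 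This yields bounds by the mixed products $N_{\hat M}[F]S_{\hat M}[F]$, $N_{\hat M}[F]S_{\hat M}[u_1]$ and $\tilde N_{\hat M}[u_0]S_{\hat M}[u_1]$, which the good-$\lambda$/stopping-time argument and the absorption scheme in Lemma \ref{lemma:2.10/2.16} convert into the stated estimate with $\tilde N_\alpha[F]^2+f^2$ on the right. If you want to salvage your outline, you would need to replace the almost-Caccioppoli step by this integration-by-parts identity (or an equivalent Rellich-type identity exploiting $L_0(F^2)$ and $L_0F=-\Div(\EPS\nabla u_1)$) and run the argument through a good-$\lambda$ inequality; as proposed, the argument does not close.
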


These lemmas are versions of Lemma 2.9 and Lemma 2.10 in \cite{fefferman_theory_1991} or Lemma 7.7 and Lemma 7.8 in \cite{milakis_harmonic_2011}.\\

\textit{Proof of }\refthm{thm:theorem2.3}: Assume that \reflemma{lem:2.9} and \reflemma{lem:2.10} hold. Since
\[ \| S_{\alpha}[u_0] \|_{L^2(d\omega_0)} \lesssim \| f \|_{L^2(d\omega_0)}, \]
we have that
\begin{align*}
    \int_{\partial\Omega} \tilde{N}_{\alpha}[F]^2 d\omega_0
    &\lesssim \int_{\partial\Omega} \EPS_0^2 M_{\omega_0}[S_{\bar{M}}u_1]^2 d\omega_0
    \lesssim \EPS_0^2 \int_{\partial\Omega}  S_{\bar{M}}[u_1]^2 d\omega_0
    \\
    & \leq \EPS_0^2 \int_{\partial\Omega}  S_{\bar{M}}[F]^2 d\omega_0 + \EPS_0^2 \int_{\partial\Omega} S_{\bar{M}}[u_0]^2 d\omega_0 
    \\
    &\lesssim \EPS_0^2 \int_{\partial\Omega} S_{\bar{M}}[F]^2 d\omega_0 + \EPS_0^2 \int_{\partial\Omega} f^2 d\omega_0
    \\
    &\lesssim \EPS_0^2\int_{\partial\Omega}   \Tilde{N}_{\alpha}[F]^2  d\omega_0
        + \EPS_0^2 \int_{\partial\Omega} f^2 d\omega_0.
\end{align*}
Thus, with \( \EPS_0 \) small enough we can hide the term \( \Tilde{N}_\alpha[F]^2 \)  and absorb it by the lefthand side. We get
\[ \| \tilde{N}_{\alpha}[F] \|_{L^2(d\omega_0)} \lesssim \| f \|_{L^2(d\omega_0)}. \]

By \reflemma{lemma:NFleqtildeNF} we obtain
\begin{align*}
    \int_{\partial\Omega} N_{\alpha}[u_1]^2 d\omega_0\lesssim \int_{\partial\Omega} \tilde{N}_{\alpha}[F]^2 d\omega_0+\int_{\partial\Omega} \tilde{N}_{\alpha}[u_0]^2 d\omega_0\lesssim \int_{\partial\Omega} f^2 d\omega_0,    
\end{align*}

 Theorefore \( \omega_1 \in B_2(\omega_0) \) as desired. \qed\\

\subsection{The difference function F}
Our first proposition is a generalization of Lemma 3.12 in \cite{cavero_perturbations_2019}
   using the same strategy for its proof.

\begin{prop}\label{prop:DefF}
    Suppose that \(\Omega\subset \R^{n}\) is a bounded CAD 
        and let \(L_0,L_1\) be two elliptic operators. 
    Let \(u_0\in W^{1,2}(\Omega)\) be a weak solution of \(L_0u_0=0\) in \(\Omega\), 
        and let \(G_1\) be the Green's function of \(L_1\). Then
\[\int_\Omega A_0(Y)\nabla_Y G_1(Y,X) \cdot \nabla u_0(Y)dY=0
    \qquad \textrm{for a.e. } X\in \Omega.\]
\end{prop}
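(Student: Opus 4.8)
The plan is to prove the identity
\[
\int_\Omega A_0(Y)\nabla_Y G_1(Y,X)\cdot\nabla u_0(Y)\,dY = 0
\qquad\text{for a.e. }X\in\Omega
\]
by testing the defining weak formulation of $u_0$ (equation \eqref{eq:DefWeakSol}) against a cutoff of the Green's function $G_1(\cdot,X)$, and then removing the cutoff by a limiting argument. The subtlety, as always with Green's functions, is that $G_1(\cdot,X)$ is a legitimate test function for $L_0u_0 = 0$ only away from its pole $X$, since near $X$ it only lies in $W^{1,1}$, not $W^{1,2}$, and so one cannot directly plug it into \eqref{eq:DefWeakSol}. The strategy, following Lemma 3.12 in \cite{cavero_perturbations_2019}, is therefore to work with a truncated function that is globally $W^{1,2}_0$ and control the error terms coming from the truncation using the pointwise bounds on $G_1$ and $\nabla G_1$ together with the local $L^2$-integrability of $\nabla u_0$.

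First I would fix $X\in\Omega$ (outside a null set where things may fail) and a small $\rho>0$ with $B(X,2\rho)\subset\Omega$, and introduce a smooth cutoff $\eta_\rho$ that vanishes on $B(X,\rho)$ and equals $1$ outside $B(X,2\rho)$, with $|\nabla\eta_\rho|\lesssim\rho^{-1}$. Since $G_1(\cdot,X)\in W^{1,2}(\Omega\setminus B(X,\rho))\cap W^{1,1}_0(\Omega)$, the product $\eta_\rho\, G_1(\cdot,X)$ lies in $W^{1,2}_0(\Omega)$ and hence (after the usual density argument, or because $u_0\in W^{1,2}(\Omega)$ is a global solution) is an admissible test function in \eqref{eq:DefWeakSol} for $L_0$. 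This gives
\[
0 = \int_\Omega A_0\nabla u_0\cdot\nabla\bigl(\eta_\rho G_1(\cdot,X)\bigr)\,dY
 = \int_\Omega \eta_\rho\, A_0\nabla u_0\cdot\nabla_Y G_1(Y,X)\,dY
 + \int_\Omega G_1(Y,X)\, A_0\nabla u_0\cdot\nabla\eta_\rho\,dY.
\]
The first term converges to $\int_\Omega A_0\nabla u_0\cdot\nabla_Y G_1(Y,X)\,dY$ as $\rho\to0$ by dominated convergence, provided $A_0\nabla u_0\cdot\nabla_Y G_1(\cdot,X)\in L^1(\Omega)$; this integrability follows by splitting $\Omega$ into $B(X,\delta(X)/2)$ and its complement and using $|\nabla_Y G_1(Y,X)|\lesssim|Y-X|^{1-n}$ near the pole (a standard consequence of the bounds in \refprop{prop:GreenBounds} together with Caccioppoli, cf. \cite{li_lp_2019}) against the reverse-Hölder improved integrability of $\nabla u_0$ from \refprop{prop:GradRevHol}, Hölder's inequality absorbing the mild singularity since $1-n$ beats $-n$.

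The main obstacle is showing the error term $\int_{B(X,2\rho)\setminus B(X,\rho)} G_1(Y,X)\,A_0\nabla u_0\cdot\nabla\eta_\rho\,dY$ tends to $0$. Using ellipticity/boundedness of the symmetric part of $A_0$ together with the BMO bound on the antisymmetric part to control $|A_0\nabla u_0|$ in $L^2_{\mathrm{loc}}$ (via \refprop{prop:Caccioppoli} applied on a ball around $X$), $|\nabla\eta_\rho|\lesssim\rho^{-1}$, and $|G_1(Y,X)|\lesssim|Y-X|^{2-n}\lesssim\rho^{2-n}$ on the annulus, the error is bounded by
\[
\rho^{-1}\rho^{2-n}\int_{B(X,2\rho)\setminus B(X,\rho)}|A_0\nabla u_0|\,dY
 \lesssim \rho^{1-n}\,|B(X,2\rho)|^{1/2}\Bigl(\int_{B(X,2\rho)}|\nabla u_0|^2\Bigr)^{1/2}
 \lesssim \rho^{1-n/2}\Bigl(\int_{B(X,2\rho)}|\nabla u_0|^2\Bigr)^{1/2},
\]
which is $o(1)$ as $\rho\to0$ since $\int_{B(X,2\rho)}|\nabla u_0|^2 = o(\rho^{n-2})$ for a.e.\ $X$ (indeed $\nabla u_0\in L^2_{\mathrm{loc}}$, so $\rho^{2-n}\int_{B(X,2\rho)}|\nabla u_0|^2\to 0$ at every Lebesgue point, which is a.e.\ $X$). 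Combining, the error vanishes, the first term converges, and we conclude $\int_\Omega A_0\nabla_Y G_1(Y,X)\cdot\nabla u_0(Y)\,dY = 0$ for a.e.\ $X$. One minor bookkeeping point to be careful about: $A_0$ need not be symmetric, so $A_0\nabla u_0\cdot\nabla\phi$ is not symmetric in $u_0,\phi$; but since we only ever test the equation $L_0u_0=0$ in the form \eqref{eq:DefWeakSol} with $\phi = \eta_\rho G_1(\cdot,X)$, no symmetry is needed and the argument goes through verbatim.
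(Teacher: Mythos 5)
Your overall strategy (truncate $G_1(\cdot,X)$ near its pole, test the weak formulation of $L_0u_0=0$ with the truncated function, then remove the truncation) is the same as the paper's, but two of the estimates you use to close the argument are not available in the setting of this paper, and this is precisely where the real work lies. First, in the error term you bound
\[
\int_{B(X,2\rho)\setminus B(X,\rho)}|A_0\nabla u_0|\,dY \lesssim |B(X,2\rho)|^{1/2}\Bigl(\int_{B(X,2\rho)}|\nabla u_0|^2\Bigr)^{1/2},
\]
which silently uses $A_0\in L^\infty$. Here the antisymmetric part of $A_0$ is only in BMO, so $A_0$ is unbounded and $|A_0\nabla u_0|$ is \emph{not} controlled in $L^2_{\Loc}$ by Caccioppoli; a BMO function times an $L^2$ function need not even be locally integrable with the right quantitative gain. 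The paper instead applies H\"older with a large exponent $r$ (John--Nirenberg gives $A_0\in L^r_{\Loc}$ for every finite $r$), pairs this with the reverse H\"older inequality \refprop{prop:GradRevHol} to put $\nabla u_0$ in $L^{2r/(r-2)}$ locally, and then must choose $r>2n$ so that the resulting powers of the truncation radius still produce a positive gain ($\sqrt{\EPS}$ times a maximal function). This choice of $r$ is exactly the origin of the exponent $r=r(n,\lambda_0,\Lambda_0)$ appearing in the main theorems, so it cannot be bypassed.

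Second, your dominated-convergence step for the main term relies on the pointwise bound $|\nabla_Y G_1(Y,X)|\lesssim |Y-X|^{1-n}$, which you attribute to \refprop{prop:GreenBounds} plus Caccioppoli. For operators with merely measurable (let alone BMO-antisymmetric) coefficients no such pointwise gradient bound for the Green's function holds; Caccioppoli only yields the averaged estimate $\bigl(\fint_{C_j}|\nabla_Y G_1|^2\,dY\bigr)^{1/2}\lesssim (2^{-j}\rho)^{1-n}$ on dyadic annuli $C_j$ around the pole. This is why the paper decomposes $B(X_0,2\EPS)$ into annuli and combines the averaged gradient bound with the $L^r$--$L^{2r/(r-2)}$ H\"older splitting described above, summing a geometric series. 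Your Lebesgue-point argument for the smallness of $\int_{B(X,2\rho)}|\nabla u_0|^2$ is fine as an alternative to the paper's maximal-function device, but the two issues above are genuine gaps: as written, both the error-term estimate and the integrability claim for $A_0\nabla u_0\cdot\nabla_Y G_1$ fail for unbounded $A_0$, and repairing them forces essentially the paper's argument.
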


\begin{proof}

We begin by fixing a point \(X_0\in \Omega\) and considering a cut-off function \(\varphi\in C_c([-2,2])\) 
    such that \(0\leq \varphi\leq 1\) and \(\varphi\equiv 1\) on \([-1,1]\). 
For each \(0<\EPS<\delta(X_0)/16\) we set \(\varphi_\EPS(X)=\varphi(|X-X_0|/\EPS)\) 
    and \(\psi_\EPS=1-\varphi_\EPS\). 
Furthermore let \( G_1^{X_0} = G_1(Y,X_0) \).
We see that
\[ \int_\Omega  A_0 \nabla G_1^{X_0} \cdot \nabla u_0 dY 
    = \int_\Omega A_0\nabla (G_1^{X_0} \psi_\EPS) \cdot \nabla u_0 dY
    + \int_\Omega A_0\nabla (G_1^{X_0} \varphi_\EPS) \cdot \nabla u_0dY.\]

Thanks to \refprop{prop:GreenExist} we have that \(G_1(\cdot,X_0)\psi_\EPS\in W_0^{1,2}(\Omega)\), 
    which gives us (as $L_0u_0=0$) that
    \[\int_\Omega A_0(Y)\nabla (G_1(Y,X_0)\psi_\EPS(Y)) \cdot \nabla u_0(Y)dY=0.\]
Next we note that 
\begin{align*}
    \int_\Omega A_0 \nabla (G_1^{X_0} \varphi_\EPS) \cdot \nabla u_0dY
    &=\int_\Omega A_0\nabla G_1^{X_0} \cdot \nabla u_0 \varphi_\EPS dY
        +\int_\Omega A_0 \nabla \varphi_\EPS\cdot G_1^{X_0} \nabla u_0dY
    \\
    &\eqqcolon I_\EPS(X_0) +II_\EPS(X_0).
\end{align*}
Thus if we can show that \( I_\EPS(X_0) + II_\EPS^k(X_0) \to \) 0 as \( \EPS \to 0 \) 
    for a.e. \( X_0 \in \Omega \), then we have shown our claim.
We start by considering the first term. Clearly,
\begin{align*}
    |I_\EPS(X_0)|&\leq \int_{B(X_0,2\EPS)}|A_0| |\nabla G_1^{X_0}| |\nabla u_0| dY
    \\
    &\leq \left(\int_{B(X_0,\delta(X_0/8))}|A_0|^r dY\right)^{1/r}\left(\int_{B(X_0,2\EPS)}
        \left(|\nabla G_1^{X_0}||\nabla u_0|\right)^{r'}dY\right)^{1/r'},
\end{align*}
    for some \( r>2 \) to be determined later.
Notice that the first term is bounded since \(A\in L^r_{loc}(\Omega)\).
To deal with the second term we decompose the ball \( B(X_0,2\EPS) \) into family of annuli
    \(C_j(X_0,\EPS)=B(X_0,2^{-j+1}\EPS)\setminus B(X_0,2^{-j}\EPS), j\geq 0\).
This gives us

\begin{align*}
    &|I_\EPS(X_0)|\lesssim
        \left(\int_{B(X_0,2\EPS)}
        \left(|\nabla G_1^{X_0}||\nabla u_0|\right)^{r'}dY\right)^{1/r'}
    \\
    &\quad\leq \left(\sum_{j=0}^\infty (2^{-j}\EPS)^n\fint_{C_j(X_0,\EPS)}
            \left(|\nabla G_1^{X_0}||\nabla u_0|\right)^{r'}dY
        \right)^{1/r'}
    \\
    &\quad\lesssim \sum_{j=0}^\infty (2^{-j}\EPS)^{n/r'}
        \left(\fint_{C_j(X_0,\EPS)}|\nabla G_1^{X_0}|^2dY\right)^{1/2}
        \left(\fint_{B(X_0,2^{-j})}|\nabla u_0|^{\tfrac{2r}{r-2}}dY\right)^{\tfrac{r-2}{2r}}.
\end{align*}

Using \refprop{prop:GradRevHol}, Caccioppoli's inequality and \refprop{prop:GreenBounds} 
    we get that for \(r\) sufficiently large we have:
\begin{align*}
    \left(\fint_{B(X_0,2^{-j})}|\nabla u_0|^{\frac{2r}{r-2}}dY\right)^{\frac{r-2}{2r}}
    &\lesssim \left(\fint_{B(X_0,2^{-j+1})}|\nabla u_0|^{2}dY\right)^{1/2}
    \\
    &\leq M[|\nabla u_0|^2\chi_\Omega](X_0)^{1/2}
\end{align*}
    and
\begin{align*}
    \left(\fint_{C_j(X_0,\EPS)}|\nabla G_1^{X_0}|^2dY\right)^{1/2}
    &\leq\frac{1}{2^{-j}\EPS}\left(\fint_{\bigcup_{l=j-1}^{j+1}C_l(X_0,\EPS)} |G_1^{X_0}|^2dY\right)^{1/2}
    \\
    &\lesssim \frac{1}{2^{-j}\EPS}\left(\fint_{\bigcup_{l=j-1}^{j+1}C_l(X_0,\EPS)}
        (2^{-j}\EPS)^{2(2-n)}dY\right)^{1/2}
    \\
    &= (2^{-j}\EPS)^{1-n}.
\end{align*}
Hence 
\begin{align*}
    |I_\EPS(X_0)|
    &\lesssim \sum_{j=0}^\infty (2^{-j}\EPS)^{1-(n-n/r')}M[|\nabla u_0|^2\chi_\Omega](X_0)^{1/2}
    \\
    &= \sum_{j=0}^\infty (2^{-j}\EPS)^{1-n/r}M[|\nabla u_0|^2\chi_\Omega](X_0)^{1/2}.
\end{align*}
Choosing \( r > 2n \) we get  that
\[|I_\EPS(X_0)|\lesssim \sum_{j=0}^\infty 2^{-j/2}\sqrt{\EPS}M[|\nabla u_0|^2\chi_\Omega](X_0)^{1/2}
    \lesssim \sqrt{\EPS}M[|\nabla u_0|^2\chi_\Omega](X_0)^{1/2}.\]
    
For the second term we note that \(\Vert\nabla \varphi_\EPS\Vert_\infty\approx \EPS^{-1}\). Then \refprop{prop:GreenBounds} and H\"older's inequality give us:
    \begin{align*}
        |II_\EPS(X_0)|&\leq \EPS^{-1}\int_{B(X_0,2\EPS)} |A_0| |\nabla u_0| |G_1^{X_0}| dY
        \\
        &\lesssim \EPS^{-n+1} \int_{B(X_0,2\EPS)} |A_0| |\nabla u_0| dY
\end{align*}
\begin{align*}
        &\lesssim \EPS^{1-n/r} \left(\int_{B(X_0,\delta(X_0)/8)} |A_0|^r\right)^{1/r} 
            \left(\fint_{B(X_0,2\EPS)}|\nabla u_0|^{r'} dY\right)^{1/r'}
        \\
        &\lesssim \sqrt{\EPS} \left(\fint_{B(X_0,2\EPS)}|\nabla u_0|^{2} dY\right)^{1/2}
        \\
        &\leq \sqrt{\EPS} M[|\nabla u_0|^2\chi_\Omega](X_0)^{1/2}.
    \end{align*}
    
Combining both integrals we have 
    \begin{align*}
        \int_\Omega A_0\nabla (G_1^{X_0}\varphi_\EPS) \cdot \nabla u_0dY\leq |I_\EPS(X_0)|+|II_\EPS(X_0)|\lesssim \sqrt{\EPS}M[|\nabla u_0|^2\chi_\Omega](X_0)^{1/2}
    \end{align*}
    for all \(\EPS>0\). Since \(\nabla u_0\in L^2(\Omega)\) 
    we have \(M[|\nabla u_0|^2\chi_\Omega]\in L^{1,\infty}(\Omega)\) 
    and thus \(M[|\nabla u_0|^2\chi_\Omega] <\infty\) a.e. on \( \Omega \). 
Thus letting \(\EPS\to 0+\) finishes the proof.
    
\end{proof}

Next we prove a result similar to Lemma 3.18 in \cite{cavero_perturbations_2019}.
However, we note that the proof of this result is not actually given in \cite{cavero_perturbations_2019},
    the paper instead cites \cite{workinprogress} 
    which was not available to us as it has not yet appeared anywhere.

\begin{prop}\label{prop:DefF_(u_0-u_1)}
    Suppose that \(\Omega\subset \R^{n}\) is a bounded CAD. 
    Let \(L_0,L_1\) be two elliptic operators, 
        \(u_0,u_1\in W^{1,2}(\Omega)\) be a pair of weak solutions 
        of \(L_0u_0=0,\,L_1u_1=0\) in \(\Omega\), with \(u_0-u_1\in W_0^{1,2}(\Omega)\)
        and \(G_0\) be the Green's function of \(L_0\). Then for a.e. \( X\in \Omega\) we have
    \[F(X) \coloneqq u_0(X)-u_1(X)
        =\int_\Omega A_0(Y)\nabla_Y G_0(Y,X) \cdot \nabla (u_0-u_1)(Y)dY.\]
\end{prop}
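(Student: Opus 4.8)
The plan is to use the defining property of the Green's function $G_0$ for $L_0$ (Proposition~\ref{prop:GreenExist}), namely that for suitable test functions $\phi$,
\[
\int_\Omega A_0(Y)\nabla_Y G_0(Y,X)\cdot\nabla\phi(Y)\,dY=\phi(X),
\]
applied with $\phi=u_0-u_1$. The immediate obstacle is that $u_0-u_1$ is only known to lie in $W_0^{1,2}(\Omega)$, whereas \eqref{eq:DefiningPropertyofGreensfct} is stated for $\phi\in W_0^{1,p}(\Omega)\cap C^0(\Omega)$ with $p>n$; moreover $G_0(\cdot,X)$ has a singularity at $Y=X$, so one cannot naively test. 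The strategy is therefore to combine this identity with Proposition~\ref{prop:DefF} (which says $\int_\Omega A_0\nabla_Y G_1(Y,X)\cdot\nabla u_0(Y)\,dY=0$ for a.e.\ $X$), but actually the cleaner route is to prove the identity directly by a density/approximation argument for $G_0$, mimicking the cut-off technique already used in the proof of Proposition~\ref{prop:DefF}.

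First I would fix $X\in\Omega$ outside the (measure-zero) exceptional set and introduce a cut-off $\psi_\EPS$ vanishing near $X$ (as in the proof of Proposition~\ref{prop:DefF}), writing $u_0-u_1=(u_0-u_1)\psi_\EPS+(u_0-u_1)\varphi_\EPS$. For the piece supported away from $X$: since $u_0-u_1\in W_0^{1,2}(\Omega)$, an approximation argument gives $(u_0-u_1)\psi_\EPS$ as a limit (in the appropriate sense) of functions in $W_0^{1,2}\cap C^0$ supported away from $X$; combined with $G_0(\cdot,X)\in W^{1,2}(\Omega\setminus B(X,r))$ and $L_0^*G_0(\cdot,X)=\delta_X$ away from $X$, the weak formulation yields
\[
\int_\Omega A_0(Y)\nabla_Y G_0(Y,X)\cdot\nabla\big((u_0-u_1)\psi_\EPS\big)(Y)\,dY=(u_0-u_1)(X)
\]
once $\EPS$ is small enough that $\psi_\EPS\equiv1$ near $X$. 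For the piece $\varphi_\EPS(u_0-u_1)$, which is supported in $B(X,2\EPS)$, I would show the corresponding integral tends to $0$ as $\EPS\to0$, using the same annular decomposition, Caccioppoli and the gradient reverse-Hölder estimate for $u_0$ and $u_1$ (Proposition~\ref{prop:GradRevHol}), and the bound $G_0(X,Y)\lesssim|X-Y|^{2-n}$ from Proposition~\ref{prop:GreenBounds}; this reduces to controlling $\EPS^{-1}\int_{B(X,2\EPS)}|A_0|\,|\nabla(u_0-u_1)|\,|G_0^X|$ and $\int_{B(X,2\EPS)}|A_0|\,|\nabla G_0^X|\,|\nabla(u_0-u_1)|$ by $\sqrt{\EPS}\,M[|\nabla(u_0-u_1)|^2\chi_\Omega](X)^{1/2}$, which is finite for a.e.\ $X$ since $\nabla(u_0-u_1)\in L^2(\Omega)$.

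Passing $\EPS\to0$ in the sum of the two pieces then gives
\[
\int_\Omega A_0(Y)\nabla_Y G_0(Y,X)\cdot\nabla(u_0-u_1)(Y)\,dY=(u_0-u_1)(X)
\]
for a.e.\ $X\in\Omega$, which is the claimed identity. The main obstacle is the first step: justifying that the singular function $G_0(\cdot,X)$ may be tested against $(u_0-u_1)\psi_\EPS\in W_0^{1,2}(\Omega)$ rather than against the class $W_0^{1,p}\cap C^0$ with $p>n$ appearing in Proposition~\ref{prop:GreenExist}. This is handled by noting that on $\Omega\setminus B(X,\EPS/2)$ the function $G_0(\cdot,X)$ is a genuine $W^{1,2}$ solution of $L_0^*v=0$ with zero boundary data, so the bilinear form $\int A_0\nabla v\cdot\nabla w$ extends by density from test functions to all of $W_0^{1,2}$ supported in that region, and $(u_0-u_1)\psi_\EPS$ can be approximated in $W^{1,2}$ by such functions because $u_0-u_1\in W_0^{1,2}(\Omega)$. (Alternatively, one may invoke Proposition~\ref{prop:DefF} with the roles of $L_0,L_1$ interchanged together with the defining property of $G_0$ against $u_1$, but the self-contained cut-off argument above parallels the preceding proof most directly and is what I would write out.)
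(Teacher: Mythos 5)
There is a genuine gap: your cut-off decomposition assigns the point value to the wrong piece, and the piece you claim vanishes does not. With \(\psi_\EPS\) vanishing near \(X\) (and \(\varphi_\EPS\equiv1\) there), the function \((u_0-u_1)\psi_\EPS\) is supported away from the pole, so once your density argument is carried out it pairs to \(\big((u_0-u_1)\psi_\EPS\big)(X)=0\), not to \((u_0-u_1)(X)\); your clause ``once \(\EPS\) is small enough that \(\psi_\EPS\equiv1\) near \(X\)'' contradicts your own definition of \(\psi_\EPS\). Conversely, the near piece does not tend to \(0\): writing \(\nabla\big((u_0-u_1)\varphi_\EPS\big)=\varphi_\EPS\nabla(u_0-u_1)+(u_0-u_1)\nabla\varphi_\EPS\), the first term is indeed \(O\big(\sqrt{\EPS}\,M[|\nabla(u_0-u_1)|^2\chi_\Omega](X)^{1/2}\big)\), but the second contributes
\[
\int_{B(X,2\EPS)\setminus B(X,\EPS)} A_0\,(u_0-u_1)\,\nabla_Y G_0(Y,X)\cdot\nabla\varphi_\EPS\,dY,
\]
which, with \(|\nabla\varphi_\EPS|\approx\EPS^{-1}\) and \(|\nabla_Y G_0(\cdot,X)|\) of average size \(\EPS^{1-n}\) on the annulus, is of size \(\approx|(u_0-u_1)(X)|\) at Lebesgue points: this is exactly the flux term that must produce the value \((u_0-u_1)(X)\), and it is not controlled by the quantity \(\EPS^{-1}\int_{B(X,2\EPS)}|A_0||\nabla(u_0-u_1)||G_0(\cdot,X)|\) you list (that is the term one gets by cutting off \(G_0\), as in Proposition \ref{prop:DefF}, where the total is \(0\)). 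Taken at face value, your two claims would force \((u_0-u_1)(X)=0\); the mechanism that actually produces the point value is missing, and proving directly that the flux term converges to \((u_0-u_1)(X)\) would require quantitative control of \(\nabla G_0\) near the pole that is not among the tools you invoke.

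The paper avoids this entirely: it takes \(\varphi_k\in C_0^\infty(\Omega)\) with \(\|\varphi_k-(u_0-u_1)\|_{W^{1,2}(\Omega)}\lesssim 2^{-k}\), applies the defining property of \(G_0\) to the admissible test functions \(\varphi_k\) (giving exactly \(\varphi_k(X_0)\)), and then shows \(\int_\Omega A_0\nabla_Y G_0(Y,X_0)\cdot\nabla(\varphi_k-(u_0-u_1))\,dY\to0\) for a.e.\ \(X_0\) along a diagonal sequence \(k=k(\EPS)\), using the cut-off only on this error term together with the weak \((1,1)\) maximal function bound (the geometric rate gives \(\sup_k M[g_k]<\infty\) a.e.). The point value then arises from \(\varphi_k(X_0)\to(u_0-u_1)(X_0)\) a.e., the step for which your decomposition has no substitute. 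Note also that even your ``away'' step needs more care than stated: extending the identity from \(C_c^\infty(\Omega\setminus\{X\})\) to \(W_0^{1,2}\) functions supported away from the pole requires pairing \(A_0\nabla_Y G_0\) against \(L^2\) gradients with \(A_0\) unbounded, i.e.\ a three-exponent H\"older argument using higher integrability of \(\nabla G_0\) — precisely the subtlety flagged in Remark \ref{rem:GreenFunctionProperty} — but this is secondary to the structural problem above.
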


\begin{proof}

Fix \(X_0\in \Omega\) and consider a cut-off function \(\vartheta\in C_c([-2,2])\) 
    such that \(0\leq \vartheta\leq 1\) and \(\vartheta\equiv 1\) on \([-1,1]\). 
For each \(0<\EPS<\delta(X_0)/16\) we set \(\vartheta_\EPS(X)=\vartheta(|X-X_0|/\EPS)\) 
    and \(\psi_\EPS=1-\vartheta_\EPS\). 
Consider a sequence of functions \(\varphi_k\in C_0^\infty(\Omega)\)     
    such that \( \varphi_k \to u_0-u_1\) in \(W^{1,2}(\Omega)\) with
\begin{equation}\label{eq:SpeedOfConvergence}
    \Vert\varphi_{k+1}-( u_0-u_1)\Vert_{W^{1,2}(\Omega)}
    \leq \frac{1}{2}\Vert\varphi_k-(u_0-u_1)\Vert_{W^{1,2}(\Omega)}, \quad \forall k\geq 1.     
\end{equation}

By \refprop{prop:GreenExist}
\[ \varphi_k(X_0) = \int_\Omega A_0(Y)\nabla_Y G_0(Y,X_0) \cdot \nabla \varphi_k(Y)dY. \]
Notice that, by taking a subsequence, we may assume that \( \varphi_k \to u_0-u_1\) a.e.
It follows that
\begin{align*}
    & \int_\Omega A_0(Y) \nabla_Y G_0(Y,X_0) \nabla (\varphi_k-(u_0-u_1))(Y) dY
    \\
    &=\int_\Omega A_0(Y) \nabla_Y (G_0(Y,X_0)\psi_\EPS(Y)) \nabla (\varphi_k-(u_0-u_1))(Y)dY
    \\
    &\qquad +\int_\Omega A_0(Y) \nabla_Y G_0(Y,X_0) \nabla (\varphi_k-(u_0-u_1))(Y)\vartheta_\EPS(Y) dY
    \\
    &\qquad +\int_\Omega A_0(Y) \nabla \vartheta_\EPS(Y) \nabla (\varphi_k-(u_0-u_1))(Y)G_0(Y,X_0)dY
    \\
    &\eqqcolon I_\EPS^k(X_0) +II_\EPS^k(X_0) +III_\EPS^k(X_0).
\end{align*}

We aim to show that 
\begin{equation}\label{eq:I+II+III}
     I_\EPS^{k}(X_0) + II_\EPS^k(X_0) + III_\EPS^k(X_0) \to 0, \quad \EPS \to 0,
\end{equation}
for a well chosen sequence $k=k(\varepsilon)$ with property that $k(\varepsilon)\to\infty$ as $\EPS \to 0$.
\\

Analogous to the proof of \refprop{prop:DefF} we get that for a large \(k\)
\[|I_\EPS^k|
    \lesssim \Vert G_0(\cdot,X_0)\psi_\EPS\Vert_{W^{1,2}(\Omega)}     
        \Vert\varphi_k-(u_0-u_1)\Vert_{W^{1,2}(\Omega)} 
    \leq \sqrt{\EPS}, \]
and
\[|II_\EPS^k|,|II_\EPS^k|\lesssim \sqrt{\EPS} M[g_k](X_0)^{1/2},\]
    where \( g_k \coloneqq |\nabla(\varphi_k-(u_0-u_1))|^2\chi_\Omega \).
Thus
\begin{equation}\label{eq:3parts}
    |I_\EPS^k|+|II_\EPS^k|+|III_\EPS^k|
    \lesssim \sqrt{\EPS} \max\left\{1, \sup_{k} g_k \right\}
\end{equation}
Now, since \( g_k \in L^1(\R^n) \)
    we have that  \(M[g_k]\in L^{1,\infty}(\R^n)\) with the bound
\begin{align*}
    \| M[g_k] \|_{L^{1,\infty}(\R^n)}
    \lesssim \Vert g_k \Vert_{L^1(\Omega)} 
    \leq \Vert\varphi_k-(u_0-u_1)\Vert_{W^{1,2}(\Omega)}^2 \lesssim 2^{-k},
\end{align*}
Thus \( \sup_{k} M[g_k]^{1/2} <\infty \) a.e. allowing us to let \( \EPS \to 0 \) in \eqref{eq:3parts} and yielding \eqref{eq:I+II+III} for a.e. $X_0$ as desired.
\end{proof}

\section{Proof of \reflemma{lem:2.9}} \label{S:Lemma2.9Proof}

Let \(Q\in \partial \Omega\) and \(X\in \Gamma_\alpha(Q)\). 
Let \( G_0 \) be the Green's function corresponding to \( L_0 \)
    and \( G_0^* \) its adjoint.
As before let \( F = u_1 - u_0 \) and note that by 
    \refprop{prop:DefF_(u_0-u_1)}, \refprop{prop:DefF} and \refprop{prop:GreenAdj} we have
\begin{align*}
    F(X) 
    &= u_1(X) - u_0(X) 
    = \int_{\Omega} A_0^T \nabla_Y G_0^*(Y,X) \nabla(u_1-u_0)(Y) dY
    \\
    &= \int_{\Omega} A_0 \nabla u_1(Y) \nabla_Y G_0(X,Y) dY
        - \int_{\Omega} A_0 \nabla u_0(Y) \nabla_Y G_0(X,Y) dY
    \\
    &= \int_{\Omega} A_0 \nabla u_1(Y) \nabla_Y G_0(X,Y) dY 
        - \int_{\Omega} A_1 \nabla u_1(Y) \nabla_Y G_0(X,Y) dY
    \\
    &= \int_{\Omega} \EPS \nabla u_1(Y) \nabla_Y G_0(X,Y) dY.
\end{align*}

\begin{rem}\label{rem:GreenFunctionProperty} Note that the Green's function property \eqref{eq:DefiningPropertyofGreensfct} only holds for \( \varphi \in W_{0}^{1,p}(\Omega) \)
    with \( p > n \geq 2 \) and so cannot be applied directly unless we have shown statements such as \refprop{prop:DefF} and \refprop{prop:DefF_(u_0-u_1)}. This is true even if  \( A_0,A_1 \) are bounded and symmetric. This was perhaps overlooked in  \cite{milakis_harmonic_2011} and \cite{fefferman_theory_1991} but can be fortunately fixed via an approximation argument similar to the one we have given in the previous section.
\end{rem}

We split \( F \) into two terms (first of which is the near part close to $X$).
\[ F = F_1 + F_2, \quad F_1(Z) = \int_{B(X)} \nabla_{Y} G_0(Z,Y)\cdot \EPS(Y) \nabla u_1(Y) dY, \]
    and then split \( F_1 \) further and write it as
\[ F_1 = \Tilde{F}_1 + \Tilde{\Tilde{F}}_1, \quad 
    \Tilde{F}_1(Z) = \int_{B(X)} \nabla_Y \Tilde{G}_0(Z,Y) \cdot \EPS(Y) \nabla u_1(Y) dY. \]
Here 
\(B(X):=B(X,\delta(X)/4)\) and \( \Tilde{G}_0 \) denotes the \lq\lq local Green function" for \( L_0 \) on \( 2B(X) \). 
We also set \( K(Z,Y) \coloneqq G_0(Z,Y) - \Tilde{G}_0(Z,Y) \).
Since \( \mu \) is a doubling measure we have by
    \reflemma{lemma:NontanMaxFctWithDiffConesComparable} that
\begin{align*}
\Vert \tilde{N}_\alpha[F]\Vert_{L^p(d\mu)}
    &\leq \Vert \tilde{N}_\alpha[\tilde{F_1}]\Vert_{L^p(d\mu)}
        +\Vert \tilde{N}_\alpha[\tilde{\tilde{F}}_1]\Vert_{L^p(d\mu)}
        +\Vert \tilde{N}_\alpha[F_2]\Vert_{L^p(d\mu)}
\\
&\lesssim \Vert \tilde{N}_\alpha^{1/2}[\tilde{F_1}]\Vert_{L^p(d\mu)}
    +\Vert \tilde{N}_\alpha^{1/2}[\tilde{\tilde{F}}_1]\Vert_{L^p(d\mu)}
    +\Vert \tilde{N}_\alpha^{1/4}[F_2]\Vert_{L^p(d\mu)}.
\end{align*}

Hence to conclude that \reflemma{lem:2.9} holds
it is enough to show that the pointwise bound
\[ \left(\fint_{B(X)} |\tilde{F}_1|^2\right)^{1/2}
        +\left(\fint_{B(X)} |\tilde{\tilde{F}}_1|^2\right)^{1/2}
        +\left(\fint_{B(X,\delta(X)/8)} |F_2|^2\right)^{1/2}
    \lesssim \EPS_0 S_{\bar{M}}[u_1](Q) \]
is true for almost every \(Q\in \partial \Omega\) and \(X\in\Gamma_\alpha(Q)\). We shall consider each of the terms above separately.

\subsection{The \lq\lq local term" \( F_1\)}\label{subsection:Lemma2.9TildeF1}
Consider \( \phi \in C_c^\infty(\mathbb{B}^n) \) non-negative with \( \int_{\R^n} \phi = 1 \)
    and set \( \phi_m = (\frac{2m}{\delta(X)})^{n}\phi(2mX/\delta(X)) \).
Define 
\[ \hat{A}^m \coloneqq A_{1} * \phi_m, \quad \hat{\EPS}^m \coloneqq \EPS * \phi_m, 
    \quad \hat{L}_m \coloneqq \Div(\hat{A}^m \nabla), \]
    and let \( r >1 \) be large enough that \refprop{prop:GradRevHol} holds with \( p = \frac{2r}{r-2} \). 
Let \( \hat{u}_m \) be the weak solution to the Dirichlet problem
\[ \hat{L}_m v = 0, \text{ in } 2B(X), \quad u_1-v \in W_0^{1,2}(2B(X)).  \]
We know that \( A_{0},A_{1} \in L^{r}(2B(X)) \), wherefore
\[ \hat{A}^m \to A_{1}, \quad \hat{\EPS}^m \to \EPS, \quad \text{in }  L^{r}(2B(X)). \]
Moreover \( A_{1}^s \in L^{\infty}(2B(X)) \), 
    hence by the ellipticity of our original $A_1$ for large  \( m \) there exists \( \lambda_{0,m} > 0 \) such that
\[ \lambda_{0,m} |\xi|^2 \leq \xi^T \hat{A}^m \xi \leq  \lambda_{0,m}^{-1} |\xi|^2, 
    \quad \forall \xi \in \R^n, \; a.e. \; X \in 2B(X), \]
    with \( \lambda_{0,m} \to \lambda_{0} \) as \( m \to \infty \) and so the ellipticity constant does not depend on $m$. Next we will show that
\begin{align}\label{e42}
    \| \nabla(\hat{u}_m - u_1) \|_{L^2(2B(X))} \lesssim \| \hat{A}^m - A_1 \|_{L^r(2B(X))}^2,     
\end{align}
    and in particular it follows that \( \nabla \hat{u}_m \to \nabla u_1 \) in \( L^2(2B(X)) \).
Clearly for \( m \) large enough  we have that
\begin{align*}
    \int_{2B(X)}|\nabla(\hat{u}_m - u_1)|^2
    &\lesssim 2\lambda_0^{-1} \int_{2B(X)}\hat{A}^m\nabla(\hat{u}_m - u_1)\cdot \nabla(\hat{u}_m - u_1)
    \\
    &= - 2\lambda_0^{-1} \int_{2B(X)}(\hat{A}^m)\nabla u_1\cdot \nabla(\hat{u}_m - u_1)
    \\
    &= 2\lambda_0^{-1} \int_{2B(X)}(A_1 - \hat{A}^m)\nabla u_1\cdot \nabla(\hat{u}_m - u_1)
    \\
    &\leq 2\lambda_0^{-1} \| \hat{A}^m - A_1 \|_{L^r(2B(X))}
        \| \nabla u_1 \|_{L^{\frac{2r}{r-2}}(2B(X))}
        \\
        &\hspace{3em} \cdot \left(\int_{2B(X)} |\nabla \hat{u}_m-\nabla u_1|^2\right)^{1/2}
    \\[1 ex]
    &\leq 2\lambda_0^{-2} \| \hat{A}^m - A_1 \|_{L^r(2B(X))}^2
        \| \nabla u_1 \|_{L^{\frac{2r}{r-2}}(2B(X))}^2
        \\
        & \hspace{3em} + \frac{1}{2} \left(\int_{2B(X)} |\nabla \hat{u}_m-\nabla u_1|^2\right)
    \\[1 ex]
    & \lesssim \| \hat{A}^m - A_1 \|_{L^r(2B(X))}^2 \| \nabla u_1 \|_{L^{\frac{2r}{r-2}}(2B(X))}^2.
\end{align*}
By \refprop{prop:GradRevHol}  we have that
\[ \| \nabla u_1 \|_{L^{\frac{2r}{r-2}}(2B(X))}^2 \lesssim_X \| \nabla u_1 \|_{L^{2}(3B(X))}^2 
    \leq \| \nabla u_1 \|_{L^{2}(\Omega)}^2 < \infty, \]
    and hence \eqref{e42} follows.\\

The reason we consider approximations of \(\tilde{F}_1\), namely
\[\hat{F}_m(Z) \coloneqq \int_{B(X)} \nabla_Y \tilde{G}_0(Z,Y) \cdot \hat{\EPS}^m(Y) \nabla \hat{u}_m(Y) dY,\]
and
\begin{align}
    \hat{F}_m^{\rho}(Z) &\coloneqq \int_{B(X)} \nabla_Y \tilde{G}^{\rho}_0(Z,Y) 
        \cdot \hat{\EPS}^m(Y) \nabla \hat{u}_m(Y) dY\nonumber
    \\  
    &= - \int_{B(X)} \Div (\hat{\EPS}^m \nabla \hat{u}_m) \tilde{G}_{0}^{\rho}(Z,Y)dY
    + \int_{\partial B(X)} \hat{\EPS}^m \nabla \hat{u}_m \cdot \nu \tilde{G}_{0}^{\rho}(Z,Y)dY\label{eq:defhatF^rho}.
\end{align}
is that for the term \(\tilde{F}_1\) we have few situations where derivative hit terms that do not have required regularity. This is not true for mollified coefficients as those are smooth. 
Here, in the formula above \(\tilde{G}_0^\rho(Z,Y)\in W^{1,2}_0(2B(X))\) is the unique function that satisfies
\[\int_{2B(X)}A_0(Y)\nabla \tilde{G}_0^{\rho}(Z,Y)\nabla \phi(Y) dY=\fint_{B(Z,\rho)}\phi(Y) dY
    \quad \forall \phi\in W^{1,2}_0(2B(X)), \]
    which exists by the Lax-Milgram theorem.
From \eqref{eq:defhatF^rho}, it is clear that \( \hat{F}_m^{\rho} \) is continuous on \( 2B(X) \setminus \frac{3}{2}B(X) \) 
    with \( \hat{F}_m^{\rho} = 0 \) on \( \partial 2B(X) \).
Next note that by \cite{gruter_green_1982}
\[ \|\tilde{G}_0^{\rho}\|_{L^{\frac{n}{n-2},\infty}(2B(X))} \lesssim 1, \]
    where the implied constant is independent of \( \rho \).
Hence \( \|\tilde{G}_0^{\rho}\|_{L^{1}(2B(X))} \lesssim 1 \) and therefore
\begin{align}
    |\hat{F}_m^{\rho}| 
    \lesssim \| \Div (\hat{\EPS}^m \nabla \hat{u}_m) \|_{\infty} + \| \hat{\EPS}^m \nabla \hat{u}_m \|_{\infty}
    \lesssim_m 1. \label{eq:FhatrhoBounded}
\end{align}
  This allows us to claim that \( \hat{F}_m^{\rho} \in L^{\infty}(2B(X)) \).
Finally by Minkowski's integral inequality
\begin{align*}
    &\left\|  \int_{B(X)} |\Div (\hat{\EPS}^m \nabla \hat{u}_m)| |\nabla_Z \tilde{G}_{0}^{\rho}(Z,Y)|dY \right\|_{L_Z^2(2B(X))}
    \\
    &\leq \int_{B(X)} |\Div (\hat{\EPS}^m \nabla \hat{u}_m)| \| \nabla_Z\tilde{G}_{0}^{\rho}(\cdot,Y) \|_{L^2(2B(X))} dY
    \\
    &\lesssim_{\rho} \int_{B(X)} |\Div (\hat{\EPS}^m \nabla \hat{u}_m)| dY
    \lesssim_m |B(X)| \lesssim_X 1,
\end{align*}
    and similarly
\[ \left\| \int_{\partial B(X)} |\hat{\EPS}^m \nabla \hat{u}_m| 
    |\nabla_Z \tilde{G}_{0}^{\rho}(Z,Y)|dY \right\|_{L_Z^2(2B(X))}
    \lesssim_{\rho,X} 1. \]
Thus  \( \| \nabla \hat{F}_m^{\rho} \|_{L^2(2B(X))} \lesssim_{\rho,X} 1 \) 
    and we can conclude that \( \hat{F}_m^{\rho} \in W_0^{1,2}(2B(X)) \).
Next,
\begin{align*}
     \fint_{2B(X)} |\nabla \hat{F}^\rho_m|^2 
    &\lesssim \fint_{2B(X)} A_0 \nabla \hat{F}^\rho_m \cdot \nabla \hat{F}^\rho_m
    \\
    &= -   \int_{B(X)} \Div (\hat{\EPS}^m \nabla \hat{u}_m)
    \left( \int_{2B(X)} A_0  \nabla_Z \tilde{G}^\rho_{0}(Z,Y) \cdot \nabla \hat{F}^\rho_m
        dZ \right)dY 
        \\
        &\quad+   \int_{\partial B(X)} \nu \cdot \hat{\EPS}^m \nabla \hat{u}_m
        \left( \int_{2B(X)} A_0  \nabla_Z \tilde{G}^\rho_{0}(Z,Y) \cdot \nabla \hat{F}^\rho_m 
        dZ \right)dY 
    \\[1 ex]
    &= -   \int_{B(X)} \Div (\hat{\EPS}^m \nabla \hat{u}_m) 
    \left(\fint_{B(Y,\rho)}\hat{F}_m^{\rho} \right) dY
        \\
        &\quad+   \int_{\partial B(X)} \nu \cdot \hat{\EPS}^m \nabla \hat{u}_m
    \left(\fint_{B(Y,\rho)}\hat{F}^\rho_m\right) dY
    \\[1 ex]
    &= \int_{B(X)} \hat{\EPS}^m \nabla \hat{u}_m \cdot \nabla \left(\fint_{B(Y,\rho)}\hat{F}^\rho_m\right) dY
    \\
    &= \int_{B(X)} \hat{\EPS}^m \nabla \hat{u}_m \cdot   \left(\fint_{B(Y,\rho)}\nabla\hat{F}^\rho_m\right)  dY.
\end{align*}
Therefore
\begin{align*}
    \fint_{2B(X)} |\nabla \hat{F}^\rho_m|^2 
    &\leq C\fint_{B(X)} \hat{\EPS}^m \nabla \hat{u}_m \cdot 
        \left(\fint_{B(Y,\rho)}\nabla\hat{F}^\rho_m\right) 
    \\
    &\leq \frac{1}{2} \fint_{B(X)} \left|\left(\fint_{B(Y,\rho)}\nabla\hat{F}^\rho_m\right)\right|^2
        + C \fint_{B(X)} |\hat{\EPS}^m|^2 |\nabla \hat{u}_m |^2. 
\end{align*}

Since the term
\[\fint_{B(X)} \left|\left(\fint_{B(Y,\rho)}\nabla\hat{F}^\rho_m\right)\right|^2\leq \fint_{B(X)} \fint_{B(Y,\rho)}|\nabla\hat{F}^\rho_m|^2\lesssim \fint_{2B(X)}|\nabla\hat{F}^\rho_m|^2,\]
 it can be absorbed by the left side of the expression above and thus giving us that
 
 \begin{align*}
    \fint_{2B(X)} |\nabla \hat{F}^\rho_m|^2&\lesssim \fint_{B(X)} |\hat{\EPS}^m|^2 |\nabla \hat{u}_m|^2 
    \leq \left(\fint_{B(X)} |\hat{\EPS}^m|^r\right)^{2/r} \left(\fint_{B(X)}|\nabla \hat{u}_m|^\frac{2r}{r-2}\right)^\frac{r-2}{r}.
\end{align*}

We also note that
\[\left(\fint_{B(X)} |\hat{\EPS}^m|^r\right)^{2/r}
    \to \left(\fint_{B(X)} |\EPS|^r\right)^{2/r}, \quad m \to \infty \]
We know that if \(\delta(X)\geq 4R_0\), then \(\EPS=0\) on \(B(X)\) nd there is nothing to show. 
Hence may assume that \(\delta(X)\leq 4R_0\)  and use \eqref{eq:PrelimB(Z,delta(Z)/4)SubsetIalphak} 
    and \refprop{prop:EPSleqEPS_0}, to obtain 
\[\left(\fint_{B(X)} |\EPS|^r\right)^{2/r}
    \leq \left(\fint_{I_\alpha^k} |\EPS|^r\right)^{2/r}\leq \EPS_0^2. \]
For the remaining term we have by \refprop{prop:GradRevHol} 
\begin{align*}
    \left(\fint_{B(X)}|\nabla \hat{u}_m|^\frac{2r}{r-2}\right)^\frac{r-2}{r}
    \lesssim \fint_{2B(X)}|\nabla \hat{u}_m|^2\to \fint_{2B(X)}|\nabla u_1|^2, \quad m \to \infty,
\end{align*}

and by the \emph{Poincare inequality} for \( \hat{F}^\rho_m \) we have
\begin{align}
    \fint_{B(X)} |\hat{F}^\rho_m|^2 
    \lesssim\fint_{2B(X)} |\hat{F}^\rho_m|^2 
    \lesssim \delta(X)^{2-n} \int_{2B(X)} |\nabla \hat{F}^\rho_m|^2.\label{eq:hatFmrhobound1}
\end{align}
Collecting all terms together then yields:
\begin{align}
    \lim_{m\to \infty}\left(\delta(X)^{2-n} \int_{2B(X)} |\nabla \hat{F}^\rho_m|^2\right)
    &\lesssim \EPS_0^2  \int_{2B(X)}  |\nabla \hat{u}_1|^2 \delta(Z)^{2-n} dZ \nonumber
    \\
    &\leq \EPS_0^2 S_{\bar{M}}[u_1](Q_0)^2.\label{eq:hatFmrhobound2}
\end{align}

To get statement on the original $\tilde{F}_1$ we let $\rho\to 0$ and then $m\to\infty$. We claim that
\begin{align}
    \fint_{B(X)} |\hat{F}_m^{\rho}|^2 \to \fint_{B(X)} |\hat{F}_m|^2, \quad \rho \to 0,\label{eq:F_rhogoestoF_m}     
\end{align}
    and
\begin{align} \fint_{B(X)} |\hat{F}_m|^2 \to \fint_{B(X)} |\tilde{F_1}|^2, \quad m \to \infty.\label{eq:F_mgoestotildeF_1} \end{align}
Having this \eqref{eq:hatFmrhobound1} and \eqref{eq:hatFmrhobound2} combined give us 
\[ \left(\fint_{B(X)} |\tilde{F}_1|^2\right)^{1/2} \lesssim \EPS_0 S_{\bar{M}}[u_1](Q), \]
    as desired.\\
    
We start by proving \eqref{eq:F_rhogoestoF_m}. As this is a claim at every $X$ we allow the implicit constants below to depend on \( X \).
We know from \cite{li_lp_2019} that for a fixed \( Z \), 
    \( \tilde{G}_{0}^{\rho}(Z,\cdot) \to \tilde{G}_{0}(Z,\cdot) \) 
    weakly in \( W_0^{1,1+\eta}(2B(X)) \) for small \( \eta > 0 \). 
Hence we also have weak convergence in \( W^{1,1+\eta}(B(X)) \).
For any \( v \in W^{1,1+\eta}(B(X)) \)
\[ \left| \int_{B(X)} \nabla v \cdot \hat{\EPS}^m \nabla \hat{u}_m dY \right| 
    \leq \| \hat{\EPS}^m \nabla \hat{u}_m \|_{L^\infty(B(X))} \| \nabla v \|_{L^1(B(X))}
    \lesssim_{m,X} \| v \|_{W^{1,1+\eta}(B(X))}, \]
    and in particular, this means that
\[ \int_{B(X)} \nabla \tilde{G}_{0}^{\rho}(Z,\cdot) \cdot \hat{\EPS}^m \nabla \hat{u}_m dY
    \to \int_{B(X)} \nabla \tilde{G}_{0}(Z,\cdot) \cdot \hat{\EPS}^m \nabla \hat{u}_m dY, \quad \rho \to 0, \]
    i.e., \( \hat{F}_m^{\rho}(Z) \to \hat{F}_m(Z) \).
Thus using \eqref{eq:FhatrhoBounded} and the dominated convergence theorem we conclude that
\begin{align} \fint_{B(X)} |\hat{F}_m^{\rho}|^2 \to \fint_{B(X)} |\hat{F}_m|^2, \quad \rho \to 0. \end{align}
\\

To establish \eqref{eq:F_mgoestotildeF_1} we consider the pointwise  difference of the two functions at \(Z\in B(X)\).
We have that
\begin{align*}
    (\hat{F}_m-\tilde{F}_1)(Z)
    &= \int_{B(X)} \nabla_Y \tilde{G}_0(Z,Y) 
        \cdot (\hat{\EPS}^m-\EPS)(Y) \nabla \hat{u}_m(Y) dY
    \\
    &\qquad+ \int_{B(X)} \nabla_Y \tilde{G}_0(Z,Y) 
        \cdot \EPS(Y) \nabla (\hat{u}_m-u_1)(Y) dY
    \\
    &\qquad \eqqcolon I^m(Z)+II^m(Z).
\end{align*}
       
We proceed as in the proof of \refprop{prop:DefF_(u_0-u_1)} and consider  a cut-off function \(\vartheta\in C_c([-2,2])\) 
    such that \(0\leq \vartheta\leq 1\) and \(\vartheta\equiv 1\) on \([-1,1]\). 
For each \(0< s <\delta(X)/16\) we set \(\vartheta_s(Y) \coloneqq \vartheta(|Y-Z|/s)\) 
    and \(\psi_s \coloneqq 1-\vartheta_s\). 
This allows us to write
\begin{align*}
    I^m(Z)&=\int_{B(X)} \nabla_Y (\tilde{G}_0(Z,Y)\psi_s(Y)) \cdot (\hat{\EPS}^m-\EPS)(Y) \nabla \hat{u}_m(Y) dY
    \\
    &\qquad +\int_{B(X)} \nabla_Y \tilde{G}_0(Z,Y)\vartheta_s(Y) \cdot (\hat{\EPS}^m-\EPS)(Y) \nabla \hat{u}_m(Y) dY
    \\
    &\qquad +\int_{B(X)} \tilde{G}_0(Z,Y)\nabla\vartheta_s(Y) \cdot (\hat{\EPS}^m-\EPS)(Y) \nabla \hat{u}_m(Y) dY
    \\
    &\qquad \eqqcolon \tilde{I}_s^m(Z)+\hat{I}_s^m(Z)+\bar{I}_s^m(Z),
\end{align*}

and 
\begin{align*}
    II^m(Z)&=\int_{B(X)} \nabla_Y (\tilde{G}_0(Z,Y)\psi_s(Y)) \cdot \EPS(Y) \nabla (\hat{u}_m-u_1)(Y) dY
    \\
    &\qquad +\int_{B(X)} \nabla_Y \tilde{G}_0(Z,Y)\vartheta_s(Y) \cdot \EPS(Y) \nabla (\hat{u}_m-u_1)(Y) dY
    \\
    &\qquad +\int_{B(X)}  \tilde{G}_0(Z,Y)\nabla\vartheta_s(Y) \cdot \EPS(Y) \nabla (\hat{u}_m-u_1)(Y) dY
    \\
    &\qquad \eqqcolon \tilde{II}^m_s(Z) +\hat{II}_s^m(Z)+\bar{II}_s^m(Z),
\end{align*}

For \(\tilde{I}_s^m\) and \(\tilde{II}_s^m\) we can use H\"older's inequality to get
\begin{align*}
    \tilde{I}_s^m(Z)
    &\lesssim \Vert \nabla(\tilde{G}_0(Z,\cdot)\psi_s)\Vert_{L^{\frac{2r}{2-r}}(B(X)\setminus B(Z,s))}
        \Vert\hat{\EPS}_m-\EPS\Vert_{L^r(B(X))}
        \Vert\nabla \hat{u}_m\Vert_{L^{2}(B(X))}
    \\
    & \lesssim \Vert \nabla(\tilde{G}_0(Z,\cdot)\psi_s)\Vert_{L^{\frac{2r}{2-r}}(B(X)\setminus B(Z,s))}
        \Vert\hat{\EPS}_m-\EPS\Vert_{L^r(B(X))},
\end{align*}
    and
\begin{align*}
    \tilde{II}_s^m(Z)
    &\lesssim \Vert\nabla(\tilde{G}_0(Z,\cdot)\psi_s)\Vert_{L^{\frac{2r}{2-r}}(B(X)\setminus B(Z,s))}
        \Vert\EPS\Vert_{L^r(B(X))}
        \Vert\nabla (\hat{u}_m-u_1)\Vert_{L^{2}(B(X))}
    \\
    &\lesssim
        \Vert\nabla(\tilde{G}_0(Z,\cdot)\psi_s)\Vert_{L^{\frac{2r}{2-r}}(B(X)\setminus B(Z,s))}
        \Vert\nabla (\hat{u}_m-u_1)\Vert_{L^{2}(B(X))}.
\end{align*}

By the chain rule we see that
\begin{align*}
    \| \nabla(\tilde{G}_0(Z,\cdot)\psi_s)\|_{L^{\frac{2r}{2-r}}(B(X)\setminus B(Z,s))}
    \leq & \Vert \nabla(\tilde{G}_0(Z,\cdot))\Vert_{L^{\frac{2r}{2-r}}(B(X)\setminus B(Z,s))} 
    \\
    &+  \| \nabla \psi_s \|_{\infty} \Vert \tilde{G}_0(Z,\cdot) \Vert_{L^{\frac{2r}{2-r}}(B(X)\setminus B(Z,s))}. 
\end{align*}
Since \( \| \nabla \psi_s \|_{\infty} \lesssim \frac{1}{s} \) by \refprop{prop:GreenBounds}
$\Vert \tilde{G}_0(Z,\cdot) \Vert_{L^{\frac{2r}{2-r}}(B(X)\setminus B(Z,s))}
    \lesssim_{s} 1$ and therefore by \refprop{prop:GradRevHol} and \refprop{prop:GreenExist}
\begin{align*}
    \Vert \nabla\tilde{G}_0(Z,\cdot)\Vert_{L^{\frac{2r}{2-r}}(2B(X)\setminus B(Z,s))} 
    &\lesssim_{s} \Vert \nabla\tilde{G}_0(Z,\cdot)\Vert_{L^{2}(\frac{3}{2}B(X)\setminus \frac{1}{2}B(Z,s))} 
    \\
    &\lesssim_{s} \Vert \tilde{G}_0(Z,\cdot)\Vert_{L^{2}(2B(X)\setminus \frac{1}{3}B(Z,s))} 
    \lesssim_{s} 1.
\end{align*}
Since \( \Vert\hat{\EPS}_m-\EPS\Vert_{L^r(B(X))}, \Vert\nabla (\hat{u}_m-u_1)\Vert_{L^{2}(B(X))} \to 0 \)   for a fixed $s>0$ we may therefore choose  \( m=m(s) \) so that
\[|\tilde{I}_s^{m(s)}|+|\tilde{II}_s^{m(s)}|\lesssim \sqrt{s}.\]

For the remaining terms, estimates similar to the ones in the proofs of \refprop{prop:DefF_(u_0-u_1)} and \refprop{prop:DefF} give us:

\[|\hat{I}_s^m(Z)|,|\bar{I}_s^m(Z)| 
    \lesssim \sqrt{s}M[|\nabla\hat{u}_m|^2\chi_{\frac{3}{2}B(X)}](Z)^{1/2},\]
and
\begin{align*}
|\hat{II}_s^m(Z)|,|\bar{II}_s^m| 
    &\lesssim \sqrt{s}M[|\nabla(\hat{u}_m-u_1)|^2\chi_{\frac{3}{2}B(X)}](Z)^{1/2}
    \\
    &\lesssim \sqrt{s}M[|\nabla\hat{u}_m|^2\chi_{\frac{3}{2}B(X)}](Z)^{1/2}
        + \sqrt{s}M[|\nabla u_1|^2\chi_{\frac{3}{2}B(X)}](Z)^{1/2}.   
\end{align*}

Putting everything together, we get for some \(m=m(s)\)
\begin{align*}
    &\fint_{B(X)}|\hat{F}_m-\tilde{F}_1(Z)| 
    \leq \fint_{B(X)} (\tilde{I}_s^m+\tilde{II}_s^m+\hat{I}_s^m+\hat{II}_s^m+\bar{I}_s^m+\bar{II}_s^m) 
    \\
    &\qquad\lesssim \sqrt{s} 
    + \fint_{B(X)}\sqrt{s} M[|\nabla\hat{u}_m|^2\chi_{\frac{3}{2}B(X)}]^{1/2} 
    + \fint_{B(X)}\sqrt{s} M[|\nabla u_1|^2 \chi_{\frac{3}{2}B(X)}]^{1/2}.
\end{align*}

Since \(\nabla u_1,\nabla \hat{u}_m\in L^{\frac{2r}{r-2}}(\frac{3}{2}B(X))\), 
    by H\"older and the fact that the maximal function \( \| M \|_{L^p \to L^p} < \infty \) for \(p>1\) is bounded we may conclude that

\begin{align*}
    \fint_{B(X)} & \sqrt{s}M[|\nabla u_1|^2\chi_{\frac{3}{2}B(X)}]^{1/2}
    \leq \sqrt{s}\left(\fint_{B(X)} M[|\nabla u_1|^2\chi_{\frac{3}{2}B(X)}]^{r/(r-2)}\right)^{\frac{r-2}{2r}}
    \\
    &\lesssim \sqrt{s}\left(\fint_{\frac{3}{2}B(X)} |\nabla u_1|^{\frac{2r}{r-2}}\right)^{\frac{r-2}{2r}}
    \lesssim \sqrt{s}\left(\fint_{\frac{5}{3}B(X)} |\nabla u_1|^{2}\right)^{1/2}
    \\
    &\lesssim \sqrt{s} \Vert \nabla u_1\Vert_{L^2(2B(X))},
\end{align*}

    and similarly
\[\fint_{B(X)}\sqrt{s} M[|\nabla\hat{u}_m|^2\chi_{\frac{3}{2}B(X)}]^{1/2}\lesssim \sqrt{s}. \]
    
Hence 
\[\fint_{B(X)}|\hat{F}_m-\tilde{F}_1|\lesssim \sqrt{s}.\]
Because the implicit constant in this inequality depends on  $m$ or \(s\)  we conclude that \eqref{eq:F_mgoestotildeF_1} must hold.
Thus
\[\left(\fint_{B(X)}|\tilde{F}_1|^2\right)^{1/2}\lesssim \EPS_0 S_M[u_1](Q),\]
    as desired.

\begin{rem}\label{rem:TildeF}
This involved approximation argument is not shown
    in either \cite{milakis_harmonic_2011} nor \cite{fefferman_theory_1991}.
Instead they argue that \( \tilde{F}_1 \) satisfies the equation
\begin{align}\label{eq:FalseIdentity}
\begin{cases}
    L_0 \tilde{F}_1 = \Div[\EPS \nabla u_1 \chi_{B(X)}], \quad &\text{in } 2B(X),\\
    \tilde{F}_1 = 0, &\text{on } 2B(X).
\end{cases}
\end{align} 
There are two problems with this.
The first one is that it is not clear if the weak derivative \( \nabla \tilde{F}_1 \) 
    even exists in \( L_{\Loc}^1(2B(X)) \) due to the low regularity of the Green's function.
The second issue is that  even if we could claim that \( \tilde{F}_1 \in W_{0}^{1,2}(2B(X)) \),
    the Green's function property \eqref{eq:DefiningPropertyofGreensfct} only holds for \( \varphi \in W_{0}^{1,p}(2B(X)) \)
    with \( p > n \geq 2 \) and so it would not apply for this case. 
\end{rem}

Next we consider bounds for \( \TT{F}_1(Z)  \).
For a large \(r>1\) and \(Z\in B(X)\) we have that
\begin{align*}
    \TT{F}_1(Z) 
    &=  \int_{B(X)} \nabla_Y K(Z,Y) \cdot \EPS(Y) \nabla u_1(Y) dY
    \\
    &\leq \delta(X)^n\left(\fint_{B(X)}|\EPS|^r dY \right)^{1/r} \left( \fint_{B(X)} |\nabla u_1(Y)|^\frac{2r}{r-2} dY \right)^{\frac{r-2}{2r}} 
    \\
    &\hspace{3 em} \cdot \left( \fint_{B(X)} |\nabla_Y K(Z,Y)|^2 dY \right)^{1/2}.
\end{align*}

The first two terms are handled as we did above for \( \tilde{F}_1 \). 
Note that \( L_0K(Z,\cdot)=0 \) in \( 2B(X) \) and \(K(Z,\cdot)\geq 0\),
    so we may use Caccioppoli (\refprop{prop:Caccioppoli}) 
    and Harnack (\refprop{prop:Harnack}) to deduce that
\begin{align*}
    \fint_{B(X)} | \nabla_Y K(Z,Y)|^2 dY 
    &\lesssim \delta(X)^{-1} \left( \fint_{\frac{3}{2}B(X)} |K(Z,Y)|^2 dY \right)^{1/2}
    \\
    &\lesssim \delta(X)^{-1} \sup_{Y \in \frac{3}{2}B(X)} |K(Z,Y)|
    \\
    &\lesssim \delta(X)^{-1} \inf_{Y \in \frac{3}{2}B(X)} |K(Z,Y)|
    \\
    &\lesssim \delta(X)^{-n-1} \int_{\frac{3}{2}B(X)} |K(Z,Y)| dY.
\end{align*}
Bounds in \refprop{prop:GreenBounds} apply on $K$ as it is the sum of two Green's functions. Hence
\[ \int_{\frac{3}{2}B(X)} |K(Z,Y)| \lesssim \int_{\frac{3}{2}B(X)} |Z-Y|^{2-n}dY
    = \int_{0}^{\frac{3}{2}B(X)} t dt 
    \approx \delta(X)^2/2. \]
Combining this with the previous estimate we obtain
\begin{align*}
    |\TT{F}_1(Z) | 
    &\lesssim \delta(X)^{n} \EPS_0 \left( \fint_{B(X)} |\nabla u_1(Y)|^2 dY \right)^{1/2}\delta(X)^{-n+1}
    \\
    &\lesssim \EPS_0 \left( \int_{B(X)} |\nabla u_1(Y)|^2 \delta(Y)^{2-n} dY \right)^{1/2} 
     \leq \EPS_0 S_{\bar{M}}[u_1](Q_0).
\end{align*}

\subsection{The \lq\lq away" term \(F_2\)}

The aim is to consider  a fixed point \(Z\in B(X,\delta(X)/8)\). We integrate over \(Y\in \Omega\setminus B(X)\) and by triangle inequality we therefore must have  \(|Z-Y|\geq\delta(X)/8\) for all such points $Y$.
We would like to obtain a pointwise bound
\[F_2(Z)\lesssim C\EPS_0 M_{\omega_0}S_{\bar{M}}(u_1)(Q).\]
Let \( X^*\in \partial \Omega \)  be a point such that \(|X^*-X|=\delta(X)\). We consider the following decompositions of the boundary and the domain:
\begin{align*}
 &\Delta_j\coloneqq \Delta(X^*, 2^{j-1}\delta(X)), 
    \quad \Omega_j \coloneqq \Omega \cap B(X^*,\delta(X)2^{j-1}),
    \quad R_j\coloneqq \Omega_j \setminus (\Omega_{j-1}\cup B(X)),
    \\
    & A^j\coloneqq A(X^*, 2^{j-1}\delta(X)).
\end{align*}
for \(j=-1,0,1,...,N\) where \(N\) is chosen so that \(2^{14}R_0\leq 2^{N-1}\delta(X)<2^{15}R_0\).
Let
\[ \begin{cases}
    F_2^0(Z) \coloneqq \displaystyle\int_{\Omega_0} \EPS(Y) \nabla_{Y} G_0(Z,Y) \cdot \nabla u_1(Y) dY,
    \\
    F_2^j(Z) \coloneqq \displaystyle\int_{R_j} \EPS(Y) \nabla_{Y} G_0(Z,Y) \cdot \nabla u_1(Y) dY. 
\end{cases} \]
This decomposes $F_2$  into the following terms:
\begin{align*}
    |F_2(Z)| &= \left|\int_{\Omega\setminus B(X)} \EPS(Y) \nabla_{Y} G_0(Z,Y) \cdot \nabla u_1(Y) dY\right|
    \\
    &  \leq \left|\int_{\Omega_0} \EPS(Y) \nabla_{Y} G_0(Z,Y) \cdot \nabla u_1(Y) dY\right| 
    \\
    &\qquad +\sum_{j=1}^N\left|\int_{R_j} \EPS(Y) \nabla_{Y} G_0(Z,Y) \cdot \nabla u_1(Y) dY\right|
    \\
    &\qquad + \int_{(\partial\Omega,4R_0)\setminus (B(X)\cup B(X^*,2^{15}R_0))} |\EPS(Y) ||\nabla_{Y} G_0(Z,Y)|| \nabla u_1(Y)| dY
    \\
    &=|F_2^0(Z)| + \sum_{j=1}^N|F_2^j(Z)|+J.
\end{align*}

Starting with estimating $F_2^0(Z)$ we have that
\begin{align*}
    |F_2^0(Z)|&\leq \int_{\Omega_0\cap (\partial\Omega,4R_0)}|\EPS(Y)||\nabla_Y G_0(Z,Y)||\nabla u_1(Y)|dY =
    \\
    &\lim_{\EPS\to 0}\int_{(\Omega_0\cap (\partial\Omega,4R_0))\setminus (\partial\Omega,\EPS)}|\EPS(Y)||\nabla_Y G_0(Z,Y)||\nabla u_1(Y)|dY.
\end{align*}

Since we can cover \((\partial\Omega,4R_0)\setminus (\partial\Omega,\EPS)\) by the decomposition introduced in Subsection \ref{subsection:dyadic decomposition properties}, by \eqref{eq:Prelimk_0leqkleqk_EPS}, we can write
\begin{align}
    &\int_{(\Omega_0\cap (\partial\Omega,4R_0))\setminus (\partial\Omega,\EPS)}|\EPS(Y)||\nabla_Y G_0(Z,Y)||\nabla u_1(Y)|dY\nonumber
    \\
    &\qquad\leq\sum_{\substack{Q_\alpha^k\subset \partial\Omega \\ k_0\leq k\leq k_{\EPS}}}\int_{I_\alpha^k\cap ((\Omega_0\cap (\partial\Omega,4R_0))\setminus (\partial\Omega,\EPS))}|\EPS(Y)||\nabla_Y G_0(Z,Y)||\nabla u_1(Y)| dY\nonumber
    \\
    &\qquad\leq\sum_{\substack{Q_\alpha^k\subset 3\Delta_0 \\ k_0\leq k\leq k_{\EPS}}}
        \int_{I_\alpha^k}|\EPS(Y)||\nabla_Y G_0(Z,Y)||\nabla u_1(Y)| dY\nonumber
    \\
    &\qquad\leq\sum_{\substack{Q_\alpha^k\subset 3\Delta_0 \\ k_0\leq k\leq k_{\EPS}}}
        \mathrm{diam}(Q_\alpha^k)^n\left(\fint_{I_\alpha^k}|\EPS(Y)|^r dY\right)^{1/r}\left(\fint_{I_\alpha^k}|\nabla_Y G_0(Z,Y)|^2dY\right)^{1/2}\nonumber
        \\[-2 ex]
        & \hspace{11 em} \cdot \left(\fint_{I_\alpha^k}|\nabla u_1(Y)|^{\frac{2r}{r-2}} dY\right)^{\frac{r-2}{2r}}\label{eq:StoppingTimeArgument_FirstStep}
\end{align}

Using the ball covering that we have introduced in subsection \ref{subsection:dyadic decomposition properties} 
    and its properties \eqref{eq:PrelimSameSizeOfIalphaB(Z)} 
    and \eqref{eq:PrelimB(Xi)SubsetB(Z)} together with \refprop{prop:GradRevHol} we obtain
\begin{align}
   \left(\fint_{I_\alpha^k}|\nabla u_1|^{\frac{2r}{r-2}} \right)^{\frac{r-2}{2r}}&\leq \sum_{i=1}^N\left(\fint_{B(X_i,\lambda 8^{-k-3})}|\nabla u_1|^\frac{r-2}{2r} \right)^{\frac{r-2}{2r}} \nonumber
    \\
    &\lesssim\sum_{i=1}^N \left(\fint_{B(X_i,2\lambda 8^{-k-3})}|\nabla u_1|^2 \right)^{1/2}\nonumber
    \\
    &\lesssim \left(\fint_{\hat{I}_\alpha^k}|\nabla u_1|^2 \right)^{1/2}.\label{eq:StoppingTimeArgument_RevHoelderGRadu}
\end{align}
We estimate the Green's function using Caccioppoli's inequality
\begin{align*}
    \left(\fint_{I_\alpha^k}|\nabla_Y G_0(Z,Y)|^2dY\right)^{1/2}&\lesssim \mathrm{diam}(Q_\alpha^k)^{-1}\left(\fint_{\hat{I}_\alpha^k}|G_0(Z,Y)|^2dY\right)^{1/2}
    \\
    &\lesssim \mathrm{diam}(Q_\alpha^k)^{-1}\left(\fint_{\hat{I}_\alpha^k}\frac{|G_0(Z,Y)|^2}{|G_0(Y)|^2}|G_0(Y)|^2dY\right)^{1/2}
    \\
    &\lesssim \mathrm{diam}(Q_\alpha^k)^{-1}\left(\inf_{Y\in \hat{I}_\alpha^k} \frac{|G_0(Z,Y)|^2}{|G_0(Y)|^2}\right)\left(\fint_{\hat{I}_\alpha^k}|G_0(Y)|^2dY\right)^{1/2}.
\end{align*}

For the last term we further use the comparison principle and the doubling property of the elliptic measure:
\begin{align*}
\mathrm{diam}(Q_\alpha^k)^{-1}\left(\fint_{\hat{I}_\alpha^k}|G_0(Y)|^2dY\right)^{1/2}&\approx \mathrm{diam}(Q_\alpha^k)^{-1}\left(\fint_{\hat{I}_\alpha^k}\frac{\omega_0(Q_\alpha^k)^2}{\mathrm{diam}(Q_\alpha^k)^{2n-4}}dY\right)^{1/2}
\\
&\lesssim \omega_0(Q_\alpha^k)\mathrm{diam}(Q_\alpha^k)^{-n+1}.
\end{align*}

Since we can cover \(5\Delta_0\) with \(N\) balls \(B(Q_i,\delta(X)/4)\) such that \(|Q_i-Q_j|<\delta(X)/4, Q_i\in 5\Delta_0\) we see that \(\Omega_0\cap (\partial\Omega,\delta(X)/8)\subset\bigcup_i B(Q_i,\delta(X)/4)\). Note that \(N\) here is independent of \(X\) and \(\delta(X)\). Let \(\tilde{A}_i=A(Q_i, \delta(X)/4)\). By the comparison principle  for \(Y\in B(Q_i,\delta(X)/4)\) we have that
\[\frac{|G_0(Z,Y)|}{|G_0(Y)|}\approx \frac{|G_0(Z,\tilde{A}_i)|}{|G_0(\tilde{A}_i)|}.\]
By the Harnack's inequality for all \(Y\in \Omega_0\setminus (\partial\Omega,\delta(X)/16)\)
\[\frac{|G_0(Z,Y)|}{|G_0(Y)|}\approx \frac{|G_0(Z,A_0)|}{|G_0(A_0)|}.\]
Since \(\tilde{A}_i\in \Omega_0\setminus (\partial\Omega,\delta(X)/16)\) also have the same estimates for all \(Y\in \Omega_0\cap (\partial\Omega,\delta(X)/16)\), that is
\[\frac{|G_0(Z,Y)|}{|G_0(Y)|}\approx\frac{|G_0(Z,\tilde{A}_i)|}{|G_0(\tilde{A}_i)|}\approx \frac{|G_0(Z,A_0)|}{|G_0(A_0)|}.\]
Hence we may use the comparison principle for the Green's function to obtain
\begin{align}
    \frac{|G_0(Z,A_0)|}{|G_0(A_0)|}
    \lesssim \frac{\omega^Z_0(\Delta(   X^*, \delta(X)/2))}{\omega_0(\Delta(X^*, \delta(X)/2))}
    \lesssim \frac{1}{\omega_0(\Delta_0)}\label{eq:GreensfctFractionboundedbyomega(Delta0)}.
\end{align}
After we put all pieces together we finally have for the gradient of $G_0$:
\begin{align}
    \left(\fint_{I_\alpha^k}|\nabla_Y G_0(Z,Y)|^2dY\right)^{1/2}
    \lesssim \frac{\omega_0(Q_\alpha^k)\mathrm{diam}(Q_\alpha^k)^{-n+1}}{\omega_0(\Delta_0)}.\label{eq:StoppingTimeArgument_CacciopolliOnGreens}
\end{align}

Next, we consider the term of \eqref{eq:StoppingTimeArgument_FirstStep} containing $\varepsilon$ function. By \refprop{prop:EPSleqEPS_0} we have
\begin{align*}
    \omega_0(Q_\alpha^k)\left(\fint_{I_\alpha^k}|\EPS|^{r} dY\right)^{1/r}&\leq \omega_0(Q_\alpha^k)^{1/2}\left(\int_{\hat{I}^k_\alpha}\frac{G_0(Z)\beta_r(Z)^2}{\delta(Z)^2}dZ\right)^{1/2},
\end{align*}
    and hence \eqref{eq:StoppingTimeArgument_FirstStep} can be further estimated by
\begin{align}\nonumber
    &\sum_{\substack{Q_\alpha^k\subset 3\Delta_0 \\ k_0\leq k\leq k_{\EPS}}}\mathrm{diam}(Q_\alpha^k)^n\left(\fint_{I_\alpha^k}|\EPS(Y)|^r dY\right)^{1/r}\left(\fint_{I_\alpha^k}|\nabla_Y G_0(Z,Y)|^2dY\right)^{1/2}
        \\[-2 ex]\nonumber
        & \hspace{8 em} \cdot \left(\fint_{I_\alpha^k}|\nabla u_1(Y)|^{\frac{2r}{r-2}} dY\right)^{\frac{r-2}{2r}}
    \\[1 ex]\label{e4.16}
    &\lesssim \frac{1}{\omega_0(\Delta_0)}\sum_{\substack{Q_\alpha^k\subset 3\Delta_0 \\ k_0\leq k\leq k_{\EPS}}} \omega_0(Q_\alpha^k)^{1/2}\left(\int_{\hat{I}^k_\alpha}\frac{G_0(Z)\beta_r(Z)^2}{\delta(Z)^2}dZ\right)^{1/2}\left(\int_{\hat{I}_\alpha^k}|\nabla u_1|^2\delta^{2-n} \right)^{1/2}.
\end{align}

For the purposes of the stopping time argument below we define
\[T u_1 (Z)=|\nabla u_1(Z)|^2\delta(Z)^{2-n}\]
and the super-level sets
\[O_j=\{P\in 3\Delta_0; T_\varepsilon u_1(P)=\left(\int_{(\Gamma_M(P)\setminus B_{\varepsilon}(0))\cap(\partial\Omega,4R_0)}T u_1(Z)dZ\right)^{1/2}>2^j\}.\]
We say a dyadic boundary cube \(Q_\alpha^k,\, k_{R_0}\leq k\leq k_\EPS\) belongs to \(J_j\), if
\[\omega_0(O_j\cap Q_\alpha^k)\geq \frac{1}{2}\omega_o(Q_\alpha^k)\qquad\textrm{and}\qquad \omega_0(O_{j+1}\cap Q_\alpha^k)< \frac{1}{2}\omega_o(Q_\alpha^k),\]
and belongs to \(J_\infty\), if
\[\omega_0(Q_\alpha^k\cap\{T_\varepsilon u_1=0\})\geq \frac{1}{2}\omega_o(Q_\alpha^k).\]
Furthermore, let \(M_{\omega_0}\) be the uncentered Hardy-Littlewood maximal function and let
\[\tilde{O}_j=\{M_{\omega_0}(\chi_{O_j})>1/2\}\] 
and observe that
for \(Z\in Q_\alpha^k\in J_j\)
\[M_{\omega_0}(\chi_{O_j})(Z)\geq \frac{\omega_0(Q_\alpha^k\cap O_j)}{\omega_0(Q_\alpha^k)}\geq \frac{1}{2}\]
and hence \(Q_\alpha^k\subset \tilde{O}_j\). Thus, we also have
\[\omega_0(Q_\alpha^k\cap\tilde{O}_j\setminus O_{j+1})=\omega_0(Q_\alpha^k\setminus O_{j+1})\geq \frac{1}{2}\omega_o(Q_\alpha^k).\]
The weak \(L^1\) boundedness of the maximal function therefore implies that
\[\omega_0(\tilde{O}_j\setminus O_{j+1})\leq \omega_0(\tilde{O}_j)=\omega_0(\{M_{\omega_0}(\chi_{O_j})>1/2\})\lesssim \Vert \chi_{O_j}\Vert_{L^1(d\omega_0)}=\omega_0(O_j).\]

We use this to further estimate \eqref{e4.16}. Applying the above decomposition and the Cauchy-Schwarz inequality we get
\begin{align*}
    &\sum_{\substack{Q_\alpha^k\subset 3\Delta_0 \\ k_0\leq k\leq k_{\EPS}}} \omega_0(Q_\alpha^k)^{1/2}\left(\int_{\hat{I}^k_\alpha}\frac{G_0(Z)\beta_r(Z)^2}{\delta(Z)^2}dZ\right)^{1/2}\left(\int_{\hat{I}_\alpha^k}|\nabla u_1|^2\delta^{2-n} dY\right)^{1/2}
    \\
    &\lesssim \frac{1}{\omega_0(\Delta_0)}\sum_{j}\left(\sum_{\substack{Q_\alpha^k\in J_j \\ k_0\leq k\leq k_{\EPS}}}\int_{\hat{I}^k_\alpha}\frac{G_0(Z)\beta_r(Z)^2}{\delta(Z)^2}dZ\right)^{1/2}
        \\
        & \hspace{6 em} \cdot \left(\sum_{\substack{Q_\alpha^k\in J_j \\ k_0\leq k\leq k_{\EPS}}} \omega_0(Q_\alpha^k)\int_{\hat{I}_\alpha^k}|\nabla u_1|^2\delta^{2-n} dY\right)^{1/2}.
\end{align*}

Since for every two cubes \(Q_\alpha^k,Q_\beta^l,\, l\leq k\) either contain each other, i.e., \(Q_\beta^l\subset Q_\alpha^{k}\) or are disjoint \(Q_\beta^l\cap Q_\alpha^{k}=\emptyset\), there is a disjoint collection of cubes in \(J_j\) such that their union covers all the other cubes. We call them the top cubes. We observe that for any such top cube \(Q_\alpha^k\) and its subcube \(Q_\beta^l\subset Q_\alpha^k\) we have \(\hat{I}_\beta^l\subset T(\Delta(Z_\alpha^k, (C_0+16\lambda)8^{-k}))\) and the overlap of these Carleson regions of different top cubes \(Q_\alpha^k\) is finite. We also know that the overlap of the \(\hat{I}_\beta^l\) is finite. Hence

\begin{align*}
    \sum_{Q_\alpha^k\in J_j}\int_{\hat{I}^k_\alpha}\frac{G_0(Z)\beta_r(Z)^2}{\delta(Z)^2}dZ&=\sum_{\substack{Q_\alpha^k\in J_j\\\textrm{disj. top cubes}}}\sum_{Q_\beta^l\in J_j, Q_\beta^l\subset Q_\alpha^k} \int_{\hat{I}^l_\beta}\frac{G_0(Z)\beta_r(Z)^2}{\delta(Z)^2}dZ
    \\
    &\lesssim \sum_{\substack{Q_\alpha^k\in J_j\\\textrm{disj. top cubes}}} \int_{T(\Delta(Z_\alpha^k, (C_0+16\lambda)8^{-k}))}\frac{G_0(Z)\beta_r(Z)^2}{\delta(Z)^2}dZ
    \\
    &\lesssim \sum_{\substack{Q_\alpha^k\in J_j\\\textrm{disj. top cubes}}} \EPS_0^2 \omega_0(Q_\alpha^{k})\leq 2\EPS_0^2\omega_0(O_j). 
\end{align*}
Here we have used \refprop{prop:EPSleqEPS_0} in the penultimate step and the property of cubes in \(J_j\) in the last step. Denote \(S_M^{\varepsilon}(Q)=\Gamma_M(Q)\setminus B_{2\varepsilon(Q)}\cap (\partial\Omega,4R_0)\). Then
\begin{align*}
    \sum_{Q_\alpha^k\in J_j} & \omega_0(Q_\alpha^k)\int_{\hat{I}_\alpha^k}Tu_1(Z)dZ
    =\sum_{\substack{Q_\alpha^k\in J_j\\\textrm{disj. top cubes}}}\sum_{Q_\beta^l\in J_j, Q_\beta^l\subset Q_\alpha^k} \omega_0(Q_\beta^l)\int_{\hat{I}_\beta^l}Tu_1(Z)dZ
  \end{align*}
\begin{align*}
    &\lesssim\sum_{\substack{Q_\alpha^k\in J_j\\\textrm{disj. top cubes}}}\sum_{Q_\beta^l\in J_j, Q_\beta^l\subset Q_\alpha^k} \omega_0((\tilde{O}_j\setminus O_{j+1})\cap Q_\beta^l)\int_{\hat{I}_\beta^l}Tu_1(Z)dZ
    \\
    &\lesssim\sum_{\substack{Q_\alpha^k\in J_j\\\textrm{disj. top cubes}}}\sum_{Q_\beta^l\in J_j, Q_\beta^l\subset Q_\alpha^k} \int_{(\tilde{O}_j\setminus O_{j+1})\cap Q_\beta^l}\int_{\hat{I}_\beta^l}Tu_1(Z)dZd\omega_0(P)
\\    &\lesssim\sum_{\substack{Q_\alpha^k\in J_j\\\textrm{disj. top cubes}}}\sum_{Q_\beta^l\in J_j, Q_\beta^l\subset Q_\alpha^k} \int_{(\tilde{O}_j\setminus O_{j+1})\cap Q_\alpha^k}\int_{\hat{I}_\beta^l}Tu_1(Z)dZd\omega_0(P)
    \\
    &\lesssim\sum_{\substack{Q_\alpha^k\in J_j\\\textrm{disj. top cubes}}} \int_{(\tilde{O}_j\setminus O_{j+1})\cap Q_\alpha^k}\sum_{Q_\beta^l\in J_j, Q_\beta^l\subset Q_\alpha^k}\int_{\hat{I}_\beta^l}Tu_1(Z)dZd\omega_0(P)
    \\
    &\lesssim\sum_{\substack{Q_\alpha^k\in J_j\\\textrm{disj. top cubes}}} \int_{(\tilde{O}_j\setminus O_{j+1})\cap Q_\alpha^k}\int_{S_M^\varepsilon(P)}Tu_1(Z)dZd\omega_0(P)
    \\
    &\leq \int_{(\tilde{O}_j\setminus O_{j+1})}\int_{S_M^\varepsilon(P)}Tu_1(Z)dZd\omega_0(P)
    \\
    &\leq \int_{(\tilde{O}_j\setminus O_{j+1})}T_\varepsilon u_1(P)^2d\omega_0(P).
\end{align*}

We put everything together to finally get the following estimate for  \eqref{eq:StoppingTimeArgument_FirstStep}:
\begin{align*}
    \int_{\Omega_0\setminus (\partial\Omega,\EPS)} &|\EPS||G_0||\nabla F||\nabla u_1|dY
    \lesssim \frac{1}{\omega_0(\Delta_0)}\sum_j \EPS_0 \omega_0(O_j)^{1/2} \left(\int_{(\tilde{O}_j\setminus O_{j+1})}T_\varepsilon u_1^2d\omega_0 \right)^{1/2}
    \\
    &\leq \EPS_0\frac{1}{\omega_0(\Delta_0)}\sum_j 2^{j+1}\omega_0(O_j)^{1/2}\omega_0(\tilde{O}_j\setminus O_{j+1})^{1/2}
    \\
    &\leq \EPS_0\frac{1}{\omega_0(\Delta_0)}\sum_j 2^{j+1}\omega_0(O_j)
    \\
    &\leq \EPS_0\frac{1}{\omega_0(\Delta_0)}\int_{3\Delta_0} T_\varepsilon u_1 d\omega_0
    \\
    &= \EPS_0\frac{1}{\omega_0(\Delta_0)}\int_{3\Delta_0} \left(\int_{(\Gamma_M(P)\setminus B_\varepsilon(P))\cap (\partial\Omega,4R_0)}|\nabla u_1(Z)|^2\delta(Z)^{2-n}dY\right)^{1/2} d\omega_0(P).
\end{align*}

Taking \(\varepsilon\to 0\) finally yields
\begin{align*}
    F_2^0(Z)=\int_{\Omega_0}|\EPS||\nabla G_0||\nabla u_1|dY
    &\lesssim \EPS_0\frac{1}{\omega_0(\Delta_0)}\int_{3\Delta_0} S_M u_1 \omega_0 \leq M_{\omega_0}[S_M(u)](Q_0).
\end{align*}

Our next objective are the terms $F_2^j$ for \(j\geq 1\). We split the region $R_j$ and write it as a union of two subregions:
\[R_j=(R_j\cap (\partial\Omega,2^{j-6}\delta(X)))\cup (R_j\setminus (\partial\Omega,2^{j-6}\delta(X)))\coloneqq V_j \cup W_j.\]

For \(Y\in R_j\), we observe that \(|A_{-1}-Y|\geq \delta(X)/4\) since \(A_{-1}\in\overline{\Omega_{-1}}\). Now, by the Harnack's inequality  \(G_0(Z,Y)\approx G_0(A_{-1},Y)\) where the implicit constants in the Harnack's inequality are independent of the points \(Z\) and \(X\). Using the boundary H\"older continuity of the nonnegative solution  \(G_0(\cdot, Y)\) in \(\Omega_{j-2}\) in \refprop{prop:BoundaryHoelderContinuity} we get that

\begin{align}
G_0(Z,Y)&\approx G_0(A_{-1},Y)\leq \sup_{\tilde{Z}\in T(\Omega_{-1})} G_0(\tilde{Z},Y)
\lesssim \left(\frac{2^{-2}\delta(X)}{2^{j-3}\delta(X)}\right)^\beta G_0(A_{j-2}, Y)\nonumber
\\
&\lesssim 2^{-j\alpha}G_0(A_{j-2}, Y).\label{eq:2alphaestimateinF_2^j}
\end{align}

\hfill\\

Assume now that \(Y\in V_j\). We proceed similarly to the above case for \(\Omega_0\).
We have a bound  analogous to \eqref{eq:StoppingTimeArgument_FirstStep}, namely that
\begin{align*}
    &\int_{V_j\setminus (\partial\Omega,\EPS)}|\EPS(Y)||\nabla_Y G_0(Z,Y)||\nabla u_1(Y)|dY
    \\
    &\qquad\leq\sum_{\substack{Q_\alpha^k\subset \frac{3}{2}\Delta_j\setminus \frac{3}{2}\Delta_{j-2} \\ k_0\leq k\leq k_{\EPS}}}\mathrm{diam}(Q_\alpha^k)^n\left(\fint_{I_\alpha^k}|\EPS(Y)|^r dY\right)^{1/r}\left(\fint_{I_\alpha^k}|\nabla_Y G_0(Z,Y)|^2dY\right)^{1/2}
        \\[-2 ex]
        & \hspace{14 em}\cdot\left(\fint_{I_\alpha^k}|\nabla u_1(Y)|^{\frac{2r}{r-2}} dY\right)^{\frac{r-2}{2r}}
\end{align*}

Instead of \eqref{eq:StoppingTimeArgument_CacciopolliOnGreens} we have a similar argument. By Caccioppoli's inequality we have
\begin{align*}
    \left(\fint_{I_\alpha^k}|\nabla_Y G_0(Z,Y)|^2dY\right)^{1/2}&\lesssim \mathrm{diam}(Q_\alpha^k)^{-1}\left(\fint_{\hat{I}_\alpha^k}|G_0(Z,Y)|^2dY\right)^{1/2}
    \\
    &\lesssim \mathrm{diam}(Q_\alpha^k)^{-1}\left(\fint_{\hat{I}_\alpha^k}\frac{|G_0(Z,Y)|^2}{|G_0(Y)|^2}|G_0(Y)|^2dY\right)^{1/2}.
\end{align*}

We can cover \(\frac{3}{2}\Delta_j\setminus \frac{3}{2}\Delta_{j-2}\) with at most \(N\) balls \(B(Q_i,2^{j-5}\delta(X))\) such that \(|Q_i-Q_j|<2^{j-5}\delta(X),\, Q_i\in \frac{3}{2}\Delta_j\setminus \frac{3}{2}\Delta_{j-2}\) and \(I_\alpha^k\subset\bigcup_i B(Q_i,2^{j-5}\delta(X))\) when \(Q_\alpha^k\subset \frac{3}{2}\Delta_j\setminus \frac{3}{2}\Delta_{j-2}\). 
Note that \(N\) is again independent of \(X\) and \(\delta(X)\). Let \(\tilde{A}_i=A(Q_i, 2^{j-5}\delta(X))\) and since \(\mathrm{dist}(I_\alpha^k,A_{j-2})\geq 2^{j-3}-2^{-3}-2^{j-5}\geq 2^{j-5}\delta(X)\) the comparison principle applies and gives us for \(Y\in B(Q_i,2^{j-5}\delta(X))\):

\[\frac{|G_0(A_{j-2},Y)|}{|G_0(Y)|}\approx \frac{|G_0(A_{j-2},\tilde{A}_i)|}{|G_0(\tilde{A}_i)|}.\]

Since \(\delta(\tilde{A}_i)=2^{j-5}\delta(X)\) we can use Harnack to obtain 
\[\frac{|G_0(A_{j-2},\tilde{A}_i)|}{|G_0(\tilde{A}_i)|}\approx \frac{|G_0(A_{j-2},A_j)|}{|G_0(A_j)|}.\]
Hence by \refprop{prop:GreenToOmega} we have that
\begin{align}
    \frac{|G_0(A_{j-2},A_j)|}{|G_0(A_j)|}
    \lesssim \frac{\omega^{A_{j}}_0(\Delta_{j-2})}{\omega_0(\Delta_j)}
    \lesssim \frac{1}{\omega_0(\Delta_j)}.
\end{align}

Combined with the above estimate with \eqref{eq:2alphaestimateinF_2^j} we therefore have
\[\frac{G_0(Z,Y)}{G_0(Y)}\approx 2^{-j\beta}\frac{G_0(A_{j-2},Y)}{G_0(Y)}\lesssim 2^{-j\beta}\frac{1}{\omega_0(\Delta_j)},\]
and hence
\[\left(\fint_{I_\alpha^k}|\nabla_Y G_0(Z,Y)|^2dY\right)^{1/2}\lesssim \frac{1}{\omega_0(\Delta_j)}2^{-j\beta}\mathrm{diam}(Q_\alpha^k)^{-1}\left(\fint_{\hat{I}_\alpha^k}|G_0(Y)|^2dY\right)^{1/2}.\]

We again apply the stopping time argument but this time with the sets \(O_j=\{P\in \frac{3}{2}\Delta_j\setminus\frac{3}{2}\Delta_{j-2}; T_\EPS u_1(P)>2^j\}\).  This  leads to  the estimate
\begin{align*}
    &\int_{V_j\setminus (\partial\Omega,\EPS)}|\EPS(Y)||\nabla_Y G_0(Z,Y)||\nabla u_1(Y)|dY
    \\
    &\qquad \lesssim 2^{-j\alpha}\EPS_0\frac{1}{\omega_0(\Delta_j)}\int_{\frac{3}{2}\Delta_j\setminus\frac{3}{2}\Delta_{j-2}}
    \left(\int_{D_{M,\EPS,R_0}}|\nabla u_1(Z)|^2\delta(Z)^{2-n}dZ\right)^{1/2}d\omega_0(P),
\end{align*}
    where \( D_{M,\EPS,R_0} = (\Gamma_M(P)\setminus B_\varepsilon(P))\cap (\partial\Omega,4R_0) \).
Again, taking \(\varepsilon\to 0\) yields:
\begin{align*}
    \int_{V_j}|\EPS||\nabla_Y G_0||\nabla u_1|dY
    &\lesssim \EPS_0\frac{1}{\omega_0(\Delta_j)}\int_{\frac{3}{2}\Delta_j\setminus\frac{3}{2}\Delta_{j-2}} S_M u_1(P)d\omega_0(P) \\[1 ex]
    &\leq M_{\omega_0}[S_M(u)](Q_0).
\end{align*}


This takes care of the subregion $V_j$. When \(Y\in W_j\) we cover \(W_j\) by at most \(N \) balls \(B_{jl}\coloneqq B(X^j_l, 2^{j-9}\delta(X))\) with \(X_l^j\in W_j\). Again, \(N\) is independent of \(j\). Using the comparison principle for the Green's function, the doubling of the elliptic measure and the fact that \(G_0(A_{j-2})\approx G_0(Y)\) using Harnack we have
\begin{align*}
    G_0(A_{j-2}, Y)
    &=\frac{G_0(A_{j-2}, Y)}{G_0(A_{j-2})}G_0(Y)
    =\frac{G_0^*(Y, A_{j-2})}{G_0(A_{j-2})}G_0(Y)
    \\
    &\lesssim \frac{{\omega^*}^Y_0(\Delta(X^*, 2^{j-3}\delta(X)))}{\omega_0(\Delta(X^*, 2^{j-3}\delta(X)))}G_0(Y)
    \lesssim \frac{1}{\omega_0(\Delta_j)}G_0(Y).
\end{align*}
Since \eqref{eq:2alphaestimateinF_2^j} still applies we therefore have \(G_0(Z,Y)\lesssim 2^{-j\alpha}\frac{1}{\omega_0(\Delta_j)}G_0(Y)\). By \refprop{prop:BoundaryHarnack} we have \(G_0(Y)\lesssim G_0(A_{j+1})\) and therefore

\begin{align*}\left(\fint_{B_{jl}}|\nabla_Y G_0(Z,Y)|^2dY\right)^{1/2}
&\lesssim \frac{2^{-j\beta}2^{-j+7}\delta(X)}{\omega_0(\Delta_j)}\left(\fint_{B_{jl}}|G_0(Y)|^2dY\right)^{1/2}
\\
&\lesssim \frac{2^{-j\beta}}{\omega_0(\Delta_j)}2^{-j}\delta(X)G_0(A_{j+1}).\end{align*}

For every \(Y\in B_{jl}\) we have estimates \(\delta(Y)\geq (2^{j-6}-2^{j-8})\delta(X)\geq 2^{j-7}\delta(X)\) and \(|Y-X^*|\leq (2^{j}+2^{j-8})\delta(X)\leq 2^{j+1}\delta(X)\) therefore have for \(\bar{M}=2^8\)
\[B_{jl}\subset \tilde{B}_{jl}\subset \Gamma_{\bar{M}}(X^*)\subset \Gamma_{\bar{M}}(Q_0),\]
where \(\tilde{B}_{jl}=B(X_l^j,2^{j-8}\delta(X))\) is an enlarged ball.
Since the finite ball covers $(B_{jl})$ and $\tilde{B}_{jl}$ can be chosen to have finite overlap. This implies an estimate similar to \eqref{eq:StoppingTimeArgument_FirstStep}, namely that
\begin{align*}
    &\int_{W_j}|\EPS||\nabla_Y G_0||\nabla u_1|dY
    \leq \sum_l\int_{B_{jl}}|\EPS||\nabla_Y G_0||\nabla u_1|dY
    \\
    &\leq \sum_l (2^{j-7}\delta(X))^n\left(\fint_{B_{jl}}|\EPS(Y)|^r dY\right)^{1/r}\left(\fint_{B_{jl}}|\nabla_Y G_0(Z,Y)|^2dY\right)^{1/2}
        \\
        & \hspace{8 em} \cdot\left(\fint_{B_{jl}}|\nabla u_1(Y)|^{\frac{2r}{r-2}} dY\right)^{\frac{r-2}{2r}}
\\    &\lesssim \sum_l (2^{j}\delta(X))^n\left(\fint_{B_{jl}}|\EPS(Y)|^r dY\right)^{1/r}\left(\fint_{B_{jl}}|\nabla_Y G_0(Z,Y)|^2dY\right)^{1/2}
        \\
        &\hspace{7 em} \cdot \left(\fint_{B_{jl}}|\nabla u_1(Y)|^{\frac{2r}{r-2}} dY\right)^{\frac{r-2}{2r}}.
\end{align*}

By \refprop{prop:GradRevHol} we get a statement analogous to \eqref{eq:StoppingTimeArgument_RevHoelderGRadu}: 
$$\left(\fint_{B_{jl}}|\nabla u_1(Y)|^{\frac{2r}{r-2}} dY\right)^{\frac{r-2}{2r}}\lesssim \left(\fint_{\tilde{B}_{jl}}|\nabla u_1(Y)|^2 dY\right)^{1/2}.$$

Next we consider term containing the function $\varepsilon$. We see that \( |B_{jl}|\approx \delta(Y)^n\approx (2^{j}\delta(X))^n\) and \(B_{jl}\subset B(Y,\delta(Y)/2)\) for \(Y\in B_{jl}\). Hence by the Harnack  \(G_0(Y)\approx G_0(A_{j+1})\) and therefore

\begin{align*}
    \left(\fint_{B_{jl}}|\EPS(Y)|^r dY\right)^{1/r}&
    \lesssim \fint_{B_{jl}} \left(\fint_{B_{jl}}|\EPS(Y)|^r dY\right)^{1/r}dZ
    \\
    &\lesssim \fint_{B_{jl}} \left(\fint_{B(Z,\delta(Z)/2)}|\EPS(Y)|^r dY\right)^{1/r}dZ
    \\
    &\lesssim \left(\frac{1}{(2^{j}\delta(X))^{n-2}G_0(A_{j+1})}\int_{B_{jl}} \frac{\beta_r(Y)^2}{\delta(Y)^2}G_0(Y)dY\right)^{1/2}.
\end{align*}
and therefore
\begin{align*}
    &\sum_l (2^{j}\delta(X))^n\left(\fint_{B_{jl}}|\EPS(Y)|^r dY\right)^{1/r}\left(\fint_{B_{jl}}|\nabla_Y G_0(Z,Y)|^2dY\right)^{1/2}
        \\
        & \hspace{7 em} \cdot\left(\fint_{B_{jl}}|\nabla u_1(Y)|^{\frac{2r}{r-2}} dY\right)^{\frac{r-2}{2r}}
    \\
    &\lesssim \sum_l \frac{2^{-j\beta}}{\omega_0(\Delta_j)}(2^{j}\delta(X))^{-n/2}G_0(A_{j+1})^{1/2}\left(\int_{B_{jl}} \frac{\beta_r(Y)^2}{\delta(Y)^2}G_0(Y)dY\right)^{1/2}
        \\
        & \hspace{7 em} \cdot \left(\fint_{\tilde{B}_{jl}}|\nabla u_1(Y)|^2 dY\right)^{1/2}.
\end{align*}
Next, by the comparison principle, \refprop{prop:GreenToOmega}, Cauchy-Schwarz inequality, and the assumptions of \refthm{thm:theorem2.3} we have
\begin{align*}
    &\sum_l \frac{2^{-j\beta}}{\omega_0(\Delta_j)}(2^{j}\delta(X))^{n/2}G_0(A_{j+1})^{1/2}\left(\int_{B_{jl}} \frac{\beta_r(Y)^2}{\delta(Y)^2}G_0(Y)dY\right)^{1/2}
        \\
        & \hspace{15 em} \cdot \left(\fint_{\tilde{B}_{jl}}|\nabla u_1(Y)|^2 dY\right)^{1/2}
    \\
    &\lesssim \sum_l \frac{2^{-j\beta}}{\omega_0(\Delta_j)}\left((2^{j}\delta(X))^{n-2}G_0(A_{j+1})\right)^{1/2}\left(\int_{B_{jl}} \frac{\beta_r(Y)^2}{\delta(Y)^2}G_0(Y)dY\right)^{1/2}
        \\
        & \hspace{17 em} \cdot\left(\int_{\tilde{B}_{jl}}|\nabla u_1(Y)|^2\delta(Y)^{2-n} dY\right)^{1/2}
\\    &\lesssim \sum_l 2^{-j\beta}\left(\frac{1}{\omega_0(\Delta_{j+1})}\int_{B_{jl}} \frac{\beta_r(Y)^2}{\delta(Y)^2}G_0(Y)dY\right)^{1/2}
        \\
        & \hspace{4 em} \cdot\left(\int_{\tilde{B}_{jl}}|\nabla u_1(Y)|^2\delta(Y)^{2-n} dY\right)^{1/2}
    \\
    &\lesssim 2^{-j\beta}\left(\sum_l\frac{1}{\omega_0(\Delta_{j+1})}\int_{B_{jl}} \frac{\beta_r(Y)^2}{\delta(Y)^2}G_0(Y)dY\right)^{1/2}
        \\
        & \hspace{3 em} \cdot\left(\sum_l\int_{\tilde{B}_{jl}}|\nabla u_1(Y)|^2\delta(Y)^{2-n} dY\right)^{1/2}
\\    &\lesssim 2^{-j\beta}\left(\frac{1}{\omega_0(\Delta_{j+1})}\int_{\Omega_{j+1}} \frac{\beta_r(Y)^2}{\delta(Y)^2}G_0(Y)dY\right)^{1/2}
        \\
        & \hspace{3 em} \cdot\left(\int_{\Omega_{j+1}\setminus (\partial\Omega,2^{j-7}\delta(X))}|\nabla u_1(Y)|^2\delta(Y)^{2-n} dY\right)^{1/2}
    \\
    &\lesssim 2^{-j\beta}\EPS_0 S_{\bar{M}}(u_1)(Q_0).
\end{align*}

Combining together the estimates for the subregions  \(V_j\) and \(W_j\) and summing over $j$'s we get
\begin{align*}
    \sum_{j=1}^N |F_2^j(Z)|&\lesssim \sum_{j=1}^N \int_{R_j}|\EPS(Y)||\nabla_Y G(Z,Y)||\nabla u_1(Y)|dY
    \\
    &\lesssim \sum_{j=1}^N 2^{-j\beta}\EPS_0 M_{\omega_0}[S_{\bar{M}}(u_1)](Q_0)\lesssim \EPS_0 M_{\omega_0}[S_{\bar{M}}(u_1)](Q_0).
\end{align*}

We still have one more term $J$ to tackle. 
First, we observe that 
\[(\partial\Omega,4R_0)\setminus (B(X)\cup B(X^*,2^{15}R_0))\subset \bigcup_{\substack{Q_\alpha^k\subset \partial\Omega\setminus \Delta_{2^{14}R_0}\\k_0\leq k}}I_\alpha^k.\]

To see this consider any \(Y\in (\partial\Omega,4R_0)\setminus (B(X)\cup B(X^*,2^{15}R_0))\). Since the collection \(\{I_\alpha^k\}_{\alpha,k}\) covers \((\partial\Omega,4R_0)\) we have that \(Y\in I_\alpha^k\). We know that that there exists \(P_\alpha^k\in Q_\alpha^k\) such that \(8/\lambda \leq |P_\alpha^k-Y|8^k\leq 8\lambda\). For such \(P\in I_\alpha^k\) we have
\begin{align*}
    |P-X^*|&\geq |Y-X^*|-|P-P_\alpha^k|-|P_\alpha^k-Y| \geq 2^{15}R_0- C_08^{-k}-\lambda 8^{-k+1}
    \\
    &\geq 2^{15}R_0-8R_0-8R_0\geq 2^{14}R_0,
\end{align*}
and hence \(Q_\alpha^k\subset \partial\Omega\setminus \Delta_{2^{14}R_0}\). We again start with an estimate analogous to \eqref{eq:StoppingTimeArgument_FirstStep}: 
\begin{align*}
    &\int_{(\partial\Omega,4R_0)\setminus (B(X)\cup B(X^*,2^{15}R_0))}|\EPS(Y)||\nabla_Y G_0(Z,Y)||\nabla u_1(Y)|dY
    \\
    &\quad\leq\sum_{\substack{Q_\alpha^k\subset \partial\Omega\setminus \Delta_{2^{14}R_0} \\ k_0\leq k}}\int_{I_\alpha^k}|\EPS(Y)||\nabla_Y G_0(Z,Y)||\nabla u_1(Y)| dY
    \\
    &\quad\leq\sum_{\substack{Q_\alpha^k\subset \partial\Omega\setminus \Delta_{2^{14}R_0} \\ k_0\leq k}}\mathrm{diam}(Q_\alpha^k)^n\left(\fint_{I_\alpha^k}|\EPS(Y)|^r dY\right)^{1/r}\left(\fint_{I_\alpha^k}|\nabla_Y G_0(Z,Y)|^2dY\right)^{1/2}
        \\[-2 ex]
        & \hspace{13 em} \cdot \left(\fint_{I_\alpha^k}|\nabla u_1(Y)|^{\frac{2r}{r-2}} dY\right)^{\frac{r-2}{2r}}.
\end{align*}

Since \(Y\in I_\alpha^k\) is far away from \(Z\) and \(0\) we can use Harnack's inequality to conclude that \(G_0(Z,Y)\approx G_0(Y)\), where the implicit constants are independent of Y. Again, using the Cacciopolli's inequality,  \refprop{prop:GreenToOmega} and the fact that \(\omega_0\)  is doubling we have as a replacement of \eqref{eq:StoppingTimeArgument_CacciopolliOnGreens}:
\begin{align*}
    \left(\fint_{I_\alpha^k}|\nabla_Y G_0(Z,Y)|^2dY\right)^{1/2}
    &\lesssim \left(\fint_{I_\alpha^k}|\nabla_Y G_0(Y)|^2dY\right)^{1/2}
    \\
    &\lesssim \mathrm{diam}(Q_\alpha^k)^{-1}\left(\fint_{I_\alpha^k}|G_0(Y)|^2dY\right)^{1/2}
    \\
    &\lesssim \frac{\omega_0(3Q_\alpha^k)}{\mathrm{diam}(Q_\alpha^k)^{n-1}}
    \lesssim \frac{\omega_0(Q_\alpha^k)}{\mathrm{diam}(Q_\alpha^k)^{n-1}}.
\end{align*}

Next step is again the stopping time argument analogous to the case \(F_2^0\) with sets \(O_j=\{P\in 
\partial\Omega\setminus \Delta_{2^{14}R_0}; T_\EPS u_1(P)>2^j\}\). This gives us the final estimate of this section:
\begin{align*}
J&=\int_{(\partial\Omega,4R_0)\setminus (B(X)\cup B(X^*,2^{15}R_0))} |\EPS(Y) ||\nabla_{Y} G_0(Z,Y)|| \nabla u_1(Y)| dY
\\
&\lesssim \EPS_0\int_{\partial \Omega\setminus \Delta_{2^{13}R_0}}S_M(u_1)dP
\lesssim \EPS_0\frac{1}{\omega_0(\partial \Omega)}\int_{\partial \Omega}S_M(u_1)dP
\\[1 ex]
&\lesssim \EPS_0 M_{\omega_0}[S_M(u_1)](Q_0).
\end{align*}


\section{Proof of \reflemma{lem:2.10}} \label{S:Lemma2.10Proof}

To prove \reflemma{lem:2.10} we need establish a good \(\lambda\)-inequality. To shorten our notation let
\[ h[F,u_1] \coloneqq 
    N[F]^2 + N[F]\tilde{N}[\delta|\nabla F|] + N[F]S[u_1] + N[\delta|\nabla F|]S[u_1]\]
where the implicit angle is given by \(\alpha=\hat{M}:=8\bar{M}\).

\begin{lemma}\label{lemma:GoodLambdaInequalityLem2.18}
    There exists \(0<\gamma<1\) such that for all \(\lambda>0\) 
    \begin{align*}
        \omega_0 \left(\left\{ S_{\bar{M}}[F]>2\lambda, \; h[F,u_1]\leq (\lambda\gamma)^2 \right\}\right) 
            \lesssim \gamma^2\omega_0 \left(\{S_{\bar{M}}[F]>\lambda\} \right).
    \end{align*}
\end{lemma}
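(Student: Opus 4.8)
The plan is to run a Calderón--Zygmund stopping time argument on $\partial\Omega$, reduce the estimate to a single Christ cube, and on that cube dominate a truncated square function of $F$ by a local energy integral of $|\nabla F|^2$ against the Green function $G_0$, which one then controls by the three products entering $h[F,u_1]$ (plus an $\EPS_0$--Carleson error). First I would set $O_\lambda:=\{S_{\bar M}[F]>\lambda\}$, an open subset of $\partial\Omega$, and decompose it (mod $\omega_0$--null) into the maximal dyadic Christ cubes $\{Q_j\}$ of Section \ref{subsection:dyadic decomposition properties}; the $Q_j$ are pairwise disjoint with $\bigcup_j Q_j=O_\lambda$, and each dyadic parent $\widehat{Q_j}\not\subset O_\lambda$ contains a point $P_j^\ast$ with $S_{\bar M}[F](P_j^\ast)\le\lambda$. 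It then suffices to prove $\omega_0(E_{Q_j})\lesssim\gamma^2\omega_0(Q_j)$ for each $j$, where $E_{Q_j}:=\{S_{\bar M}[F]>2\lambda,\ h[F,u_1]\le(\lambda\gamma)^2\}\cap Q_j$, and sum. Fixing $Q:=Q_j$ with $E_Q\neq\emptyset$ I would pick a \emph{good point} $P_0\in E_Q$, so that each of the three nonnegative summands of $h$ is $\le(\lambda\gamma)^2$ at $P_0$; if $E_Q=\emptyset$ there is nothing to prove.

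Next comes the localisation. Writing $\ell:=\operatorname{diam}(Q)$ and $R_Q$ for a Whitney region above $Q$ of height $\approx\kappa\ell$, a routine cone-splitting argument (choosing $\kappa$ large depending only on $\bar M$ and the cube constants, and using comparability of cones of different aperture, cf. \refprop{prop:SquareFctWithDiffApertures}) gives, for every $P\in Q$, that $S_{\bar M}[F](P)^2\le C\lambda^2+C\,\mathcal S_Q[F](P)^2$, where $\mathcal S_Q[F](P)^2:=\int_{\Gamma_{\bar M}(P)\cap R_Q}|\nabla F|^2\delta^{2-n}$: the part of the cone outside $R_Q$ is absorbed into the cone of $P_j^\ast$, whose square function is $\le\lambda$. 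Hence $E_Q\subset\{\mathcal S_Q[F]>c\lambda\}\cap Q$, and by Chebyshev together with a Fubini identity — using $\omega_0(\Delta(X^*,\delta(X)))\approx\delta(X)^{n-2}G_0(X)$ (the lemma following \refprop{prop:GreenToOmega}), the doubling of $\omega_0$, and Harnack (\refprop{prop:Harnack}) to relocate the pole — one reduces to showing
\[ \int_{R_Q}|\nabla F(X)|^2\,G_0(X)\,dX\ \lesssim\ (\lambda\gamma)^2\,\omega_0(Q). \]

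To prove this local energy bound I would take a cutoff $\Psi$ equal to $1$ on $R_Q$, supported on a slight enlargement with $|\nabla\Psi|\lesssim(\kappa\ell)^{-1}$ and \emph{not} cut off near $\partial\Omega$. Since $A_0\nabla F\cdot\nabla F=A_0^s\nabla F\cdot\nabla F\gtrsim|\nabla F|^2$ and $FG_0\Psi^2\in W_0^{1,2}(\Omega)$ (as $F=0$ on $\partial\Omega$ and $\Psi^2G_0$ is bounded near the relevant part of the boundary), I would test $L_0u_0=0$ against $FG_0\Psi^2$ and use $\int_\Omega A_0\nabla F\cdot\nabla\varphi=\int_\Omega\EPS\nabla u_1\cdot\nabla\varphi$ for $\varphi\in W_0^{1,2}(\Omega)$ (valid because $L_1u_1=0$), arriving at
\[ \int_\Omega|\nabla F|^2G_0\Psi^2\ \lesssim\ \Big|\int_\Omega A_0\nabla F\cdot\nabla(G_0\Psi^2)\,F\Big|+\Big|\int_\Omega\EPS\nabla u_1\cdot\nabla(FG_0\Psi^2)\Big|. \]
In the first term, rewriting $A_0\nabla F\cdot\nabla(G_0\Psi^2)F=\tfrac12A_0\nabla(F^2)\cdot\nabla(G_0\Psi^2)$ and integrating by parts once more: since $\Psi$ vanishes near the pole $0$ the Green-function singularity drops out, the terms near $\partial\Omega$ vanish because $F=0$ there, and what survives is supported where $\delta\approx\kappa\ell$; reconverting by Fubini and using \refprop{prop:Caccioppoli}, \refprop{prop:GradRevHol} and \refprop{prop:CompPrinc} these are bounded by $\int_{C\Delta_Q}\big(N_Q^{\mathrm{loc}}[F]S_Q^{\mathrm{loc}}[F]+\tilde N_Q^{\mathrm{loc}}[u_0]S_Q^{\mathrm{loc}}[u_1]\big)d\omega_0$, where $\mathrm{loc}$ denotes truncation to a fixed dilate of $R_Q$. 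The second term splits as $\int\EPS\nabla u_1\cdot\nabla F\,G_0\Psi^2+\int\EPS\nabla u_1\cdot F\,\nabla(G_0\Psi^2)$; by Young the first summand absorbs $\tfrac12\int|\nabla F|^2G_0\Psi^2$ at the cost of $\int|\EPS|^2|\nabla u_1|^2G_0\Psi^2$, which after $|\nabla u_1|\le|\nabla F|+|\nabla u_0|$, a further absorption, and $\big(\fint|\EPS|^r\big)^{2/r}\lesssim\EPS_0^2$ on Whitney boxes (\refprop{prop:EPSleqEPS_0}) is of the Carleson type handled in \refprop{prop:EPSleqEPS_0}; the remaining piece contributes a further $\int_{C\Delta_Q}\big(N_Q^{\mathrm{loc}}[F]+\tilde N_Q^{\mathrm{loc}}[u_0]\big)S_Q^{\mathrm{loc}}[u_1]\,d\omega_0$. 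Finally I would invoke the aperture $\hat M=8\bar M$: for $P\in C\Delta_Q$ each truncated quantity $N_Q^{\mathrm{loc}}[v](P)$, $\tilde N_Q^{\mathrm{loc}}[v](P)$, $S_Q^{\mathrm{loc}}[v](P)$ is $\lesssim$ the wide-cone value at $P_0$ (all see the same Whitney region, contained in $\Gamma_{\hat M}(P_0)$), so by $\omega_0(C\Delta_Q)\approx\omega_0(Q)$ and the choice of $P_0$ each product integrates to $\lesssim(\lambda\gamma)^2\omega_0(Q)$, while the $\EPS_0$--Carleson term is $\lesssim\EPS_0^2 S_{\hat M}[u_1](P_0)^2\omega_0(Q)\lesssim(\lambda\gamma)^2\omega_0(Q)$ using $\tilde N_{\hat M}[u_0](P_0)S_{\hat M}[u_1](P_0)\le(\lambda\gamma)^2$ and the smallness of $\EPS_0$. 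Choosing $\gamma$ small (and then $\EPS_0$ small) legitimises all absorption steps, yields $\omega_0(E_Q)\lesssim\gamma^2\omega_0(Q)$, and summing over $j$ finishes the proof.

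The main obstacle is the integration-by-parts step with $G_0$: because of its low regularity — exactly the issue flagged in \refrem{rem:GreenFunctionProperty} and \refrem{rem:TildeF} — the identities above are only formal, and making them rigorous will require a mollification/approximation scheme for $A_1$ and $G_0$ in the spirit of \refprop{prop:DefF_(u_0-u_1)} and Subsection \ref{subsection:Lemma2.9TildeF1}; in addition one must bookkeep carefully to verify that every error term produced genuinely matches one of the three products in $h[F,u_1]$ or an $\EPS_0$--Carleson term of the form already controlled by \refprop{prop:EPSleqEPS_0}.
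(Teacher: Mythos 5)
Your overall architecture coincides with the paper's: a decomposition of $\{S_{\bar M}[F]>\lambda\}$, a Dahlberg--Jerison--Kenig type localization so that a truncated square function of $F$ exceeds $c\lambda$ on the good set $E$, Chebyshev plus Fubini together with $\omega_0(\Delta(X^*,\delta(X)))\approx\delta(X)^{n-2}G_0(X)$ to reduce matters to a $G_0$-weighted energy bound, and an integration by parts designed to leave only the three products in $h[F,u_1]$. However, there is a genuine gap in your treatment of the $\EPS$-term. Estimating $\int\EPS\nabla u_1\cdot\nabla F\,G_0\Psi^2$ by Young's inequality produces $\int|\EPS|^2|\nabla u_1|^2G_0\Psi^2$, which after your further splitting is essentially $\EPS_0^2\,S^{\mathrm{loc}}[u_1](P_0)^2\,\omega_0(Q)$ (or, after absorbing the $F$-part, $\EPS_0^2\,S^{\mathrm{loc}}[u_0](P_0)^2\,\omega_0(Q)$). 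This cannot be bounded by $(\lambda\gamma)^2\omega_0(Q)$: the good-set hypothesis only controls the three products $N_{\hat M}[F]S_{\hat M}[F]$, $N_{\hat M}[F]S_{\hat M}[u_1]$, $\tilde N_{\hat M}[u_0]S_{\hat M}[u_1]$ at $P_0$, and your inference that $\EPS_0^2S_{\hat M}[u_1](P_0)^2\lesssim(\lambda\gamma)^2$ follows from $\tilde N_{\hat M}[u_0](P_0)S_{\hat M}[u_1](P_0)\le(\lambda\gamma)^2$ and smallness of $\EPS_0$ is a non sequitur: if $\tilde N_{\hat M}[u_0](P_0)$ and $N_{\hat M}[F](P_0)$ happen to be tiny, $S_{\hat M}[u_1](P_0)$ can be arbitrarily large relative to $\lambda\gamma$, while $\EPS_0$ is a fixed constant independent of $\lambda$ and of the cube. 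Moreover the absorption of the $|\EPS|^2|\nabla F|^2G_0$ piece is itself delicate, since $\EPS$ is unbounded and $\nabla F$ (unlike $\nabla u_0,\nabla u_1$) satisfies no reverse H\"older inequality. The paper's proof is built precisely to avoid ever isolating the square of a square function: in the corresponding term $II_1^{k,l,\alpha}$ it applies H\"older with exponents $(r,2,\tfrac{2r}{r-2})$ on each Whitney box and then the ``almost Caccioppoli'' inequality for $F$ (Lemma \ref{lemma:CaccioppoliForF}) to convert the $\nabla F$ factor into $\mathrm{diam}(Q_\alpha^k)^{-1}\big(N_{\hat M}[F]+\tilde N_{\hat M}[u_0]\big)(Q)$, so that every contribution is literally one of the three products in $h$; no absorption, and in fact no smallness of $\EPS_0$, is used in this lemma.

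A second, structural difference concerns the integration by parts against $G_0$, which you yourself flag as only formal. The paper does not fix this by mollification but by the partition of unity $\eta_\alpha^{k,l}$ subordinate to the interior Whitney boxes $\tilde I_\alpha^k$ of the sawtooth: there $G_0\eta_\alpha^{k,l}$ and $FG_0\eta_\alpha^{k,l}$ genuinely lie in $W_0^{1,2}(\tilde I_\alpha^k)$, so all identities are local, no boundary terms arise, and no a priori integrability of $|\nabla F|^2G_0^2$ or $F^2|\nabla G_0|^2$ up to $\partial\Omega$ is needed, whereas your global test function $FG_0\Psi^2$ (with $\Psi$ not cut off at the boundary) would require exactly such unavailable a priori information, plus a justification of the claim that the pole is avoided for large cubes. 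The same box-by-box structure is also what allows the unbounded coefficient $A_0$ to be handled in the first term, via the mean-zero normalization \eqref{prop:TheAvarageIs0} of the antisymmetric part together with $L^r$ averages; your sketch does not address this point. As written the argument therefore does not close, and repairing it essentially forces you back to the paper's box-by-box H\"older/Caccioppoli scheme.
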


Also, because \(\omega_0\in A_\infty(d\sigma) \) 
    a similar good \(\lambda\)-inequality holds for \( \sigma \) as well:

\begin{cor}\label{cor:GoodLambdaForomega_0}
    There exists \(0<\eta<1, C>0\) and \(0<\gamma<1\) such that for all \(\lambda>0\) 
    \begin{align*}
        \sigma \left(\left\{ S_{\bar{M}}[F]>2\lambda, \; h[F,u_1]\leq (\lambda\gamma)^2 \right\}\right)
            \lesssim \gamma^\eta \sigma \left(\{S_{\bar{M}}[F]>\lambda\} \right).
    \end{align*}
\end{cor}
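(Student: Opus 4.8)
The plan is to obtain Corollary \ref{cor:GoodLambdaForomega_0} from \reflemma{lemma:GoodLambdaInequalityLem2.18} by combining it with the self-improving comparison property of the Muckenhoupt class $A_\infty$. Since $\omega_0$ and $\sigma$ are both doubling and $\omega_0 \in A_\infty(d\sigma)$, this relation is symmetric, so $\sigma \in A_\infty(\omega_0)$ as well; in particular (see e.g.\ \cite{grafakos_modern_2009}) there are constants $C_0 \geq 1$ and $\theta \in (0,1)$, depending only on $n$, the Ahlfors regularity constant and $[\omega_0]_{A_\infty}$, such that for every boundary ball $\Delta \subset \partial\Omega$ and every Borel set $E \subset \Delta$,
\[
    \frac{\sigma(E)}{\sigma(\Delta)} \leq C_0 \left( \frac{\omega_0(E)}{\omega_0(\Delta)} \right)^{\theta}.
\]
I will take $\eta := \min\{2\theta,\tfrac{1}{2}\} \in (0,1)$; since the $\gamma$ furnished by \reflemma{lemma:GoodLambdaInequalityLem2.18} satisfies $0<\gamma<1$ and $t\mapsto\gamma^t$ is decreasing, we have $\gamma^{2\theta} \leq \gamma^{\eta}$, and this $\eta$ will be the exponent in the conclusion.

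The crucial point is that \reflemma{lemma:GoodLambdaInequalityLem2.18} is really a local statement. Write $O_\lambda := \{S_{\bar{M}}[F] > \lambda\}$, an open subset of $\partial\Omega$, and $E_\lambda := \{S_{\bar{M}}[F] > 2\lambda,\ h[F,u_1] \leq (\lambda\gamma)^2\}$. Its proof runs through a decomposition of $O_\lambda$ into a family $\{\Delta_i\}$ of boundary balls (equivalently, maximal Christ cubes) of bounded overlap with $\bigcup_i \Delta_i = O_\lambda$, on each of which one proves, with constant independent of $i$, $\lambda$ and $\gamma$,
\[
    \omega_0\big(E_\lambda \cap \Delta_i\big) \lesssim \gamma^2\, \omega_0(\Delta_i),
\]
and then sums. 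First I would isolate this localized inequality. Applying the $A_\infty$ comparison with $\Delta = \Delta_i$ and $E = E_\lambda \cap \Delta_i$ gives
\[
    \frac{\sigma(E_\lambda \cap \Delta_i)}{\sigma(\Delta_i)} \leq C_0 \left( \frac{\omega_0(E_\lambda \cap \Delta_i)}{\omega_0(\Delta_i)} \right)^{\theta} \lesssim \gamma^{2\theta} \leq \gamma^{\eta}.
\]
Summing over $i$, using the bounded overlap of $\{\Delta_i\}$, that these balls cover $O_\lambda$, and the Ahlfors regularity of $\sigma$,
\[
    \sigma(E_\lambda) \leq \sum_i \sigma(E_\lambda \cap \Delta_i) \lesssim \gamma^{\eta} \sum_i \sigma(\Delta_i) \lesssim \gamma^{\eta}\, \sigma(O_\lambda),
\]
which is the assertion of Corollary \ref{cor:GoodLambdaForomega_0} (with the constant $C$ there being the product of $C_0$, the overlap constant, and the constant from the lemma).

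The step I expect to be the main obstacle is justifying the localization, i.e.\ verifying that the proof of \reflemma{lemma:GoodLambdaInequalityLem2.18} genuinely produces the estimate ball-by-ball rather than only after summation. This matters because a purely global bound $\omega_0(E_\lambda) \lesssim \gamma^2 \omega_0(O_\lambda)$ could \emph{not} be transferred to $\sigma$ — the $A_\infty$ comparison is inherently local. Fortunately, in the good-$\lambda$ scheme for square functions the localization is built in: on each Whitney ball $\Delta_i$ one fixes a nearby point $P_i \in \partial\Omega \setminus O_\lambda$ (so $S_{\bar{M}}[F](P_i) \leq \lambda$), truncates the cone integrals defining $S_{\bar{M}}[F]$ at the Whitney scale of $\Delta_i$, and controls $E_\lambda \cap \Delta_i$ by a weak-type estimate against $h[F,u_1]$, all of it taking place inside a fixed dilate of $\Delta_i$. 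The remaining bookkeeping — tracking the overlap constant and the doubling/Ahlfors constants when passing from $\sum_i \sigma(\Delta_i)$ back to $\sigma(O_\lambda)$ — is then routine.
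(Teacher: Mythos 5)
Your proposal is correct and follows essentially the same route as the paper: the paper also takes a Whitney decomposition of $\{S_{\bar M}[F]>\lambda\}$, uses the fact that the proof of \reflemma{lemma:GoodLambdaInequalityLem2.18} actually yields the localized bound $\omega_0(E_j)\lesssim\gamma^2\omega_0(\Delta_j)$ on each Whitney cube, and then transfers this to $\sigma$ cube-by-cube via the comparison $\sigma(E)/\sigma(\Delta)\lesssim(\omega_0(E)/\omega_0(\Delta))^{1/q}$ coming from $\omega_0\in A_q(d\sigma)$ (your $A_\infty$ exponent $\theta$ playing the role of $1/q$), before summing. The only differences are cosmetic choices of the exponent $\eta$.
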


\begin{proof}
Consider \( q \) for which \(\omega_0\in A_q(d\sigma)\). Then
\[ \frac{\sigma(E)}{\sigma(\Delta)} 
    \lesssim \bigg( \frac{\omega_0(E)}{\omega_0(\Delta)} \bigg)^{1/q}, 
    \quad E \subset \Delta \subset \partial\Omega, \quad \Delta \text{ cube}. \]
Next take a Whitney decomposition of \(\{S_{\bar{M}}[F]>\lambda\} = \bigcup_j \Delta_j \),
    where \( \Delta_j \subset \partial\Omega \) are cubes, and set 
\[ E_j :=\Delta_j\cap\left\{ S_{\bar{M}}[F]>2\lambda, \; h[F,u_1]\leq (\lambda\gamma)^2 \right\}. \]
    
Then 
\begin{align*}
    \sigma(E_j ) 
    &=\sigma(\Delta_j)\frac{\sigma(E_j )}{\sigma(\Delta_j)}
    \lesssim \sigma(\Delta_j)\bigg(\frac{\omega_0(E_j )}{\omega_0(\Delta_j)}\bigg)^{1/q}
    \lesssim \sigma(\Delta_j)\bigg(\frac{\gamma\omega_0(\Delta_j)}{\omega_0(\Delta_j)}\bigg)^{1/q}
    \\
    &\lesssim \gamma^{1/q} \sigma(\Delta_j).    
\end{align*}
This proves our corollary.
\end{proof}

\reflemma{lem:2.10} is a consequence of the following lemma.

\begin{lemma}\label{lemma:2.10/2.16}
\begin{align}\label{eq:Lemma2.10}
    \int_{\partial\Omega}S_{\bar{M}}[F]^2d\omega_0\lesssim     
        \int_{\partial\Omega}f^2d\omega_0+\int_{\partial\Omega}N_{\alpha}[F]^2d\omega_0.
\end{align}
Moreover if \(\omega_0\in B_p(d\sigma)\) we have
\begin{align}\label{eq:Lemma2.16}
    \int_{\partial\Omega}S_{\bar{M}}[F]^qd\sigma\lesssim \int_{\partial\Omega}f^qd\sigma+\int_{\partial\Omega}N_{\alpha}[F]^qd\sigma,
\end{align}
    
\end{lemma}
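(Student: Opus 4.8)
The plan is to deduce both inequalities from the good-$\lambda$ estimates \reflemma{lemma:GoodLambdaInequalityLem2.18} (for $d\omega_0$) and \refcor{cor:GoodLambdaForomega_0} (for $d\sigma$) by the usual distribution-function argument, and then to control the error term $\int h[F,u_1]$ by absorbing every square-function contribution of $F$ into the left-hand side. It is convenient to treat both cases at once: write $\mu=\omega_0$, $\kappa=2$, $\tau=2$ for \eqref{eq:Lemma2.10}, and $\mu=\sigma$, $\kappa=q$, $\tau=\eta$ for \eqref{eq:Lemma2.16}, so that in either case
\[
  \mu\!\left(\{S_{\bar M}[F]>2\lambda,\ h[F,u_1]\le(\lambda\gamma)^2\}\right)
  \;\lesssim\; \gamma^{\tau}\,\mu\!\left(\{S_{\bar M}[F]>\lambda\}\right),\qquad \lambda>0.
\]
To make the absorption below legitimate I would first replace $S_{\bar M}$ by the truncated square function $S^{\EPS}_{\bar M}$, where $\Gamma_{\bar M}(Q)$ is intersected with $\{\delta>\EPS\}$; then $S^{\EPS}_{\bar M}[F](Q)^2\le \EPS^{2-n}\|\nabla F\|_{L^2(\Omega)}^2$ uniformly in $Q$ and $\mu(\partial\Omega)<\infty$, so $\int(S^{\EPS}_{\bar M}[F])^{\kappa}\,d\mu<\infty$. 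The good-$\lambda$ inequality holds with the same constants for $S^{\EPS}_{\bar M}$ (its proof is local and unaffected by the truncation), and we let $\EPS\to0$ with monotone convergence at the end.

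Integrating the good-$\lambda$ inequality against $\kappa\lambda^{\kappa-1}\,d\lambda$ after the splitting $\{S^{\EPS}_{\bar M}[F]>2\lambda\}\subset\{S^{\EPS}_{\bar M}[F]>2\lambda,\ h\le(\lambda\gamma)^2\}\cup\{h^{1/2}>\lambda\gamma\}$ gives
\[
  \int_{\partial\Omega}(S^{\EPS}_{\bar M}[F])^{\kappa}\,d\mu
  \;\le\; C\gamma^{\tau}\int_{\partial\Omega}(S^{\EPS}_{\bar M}[F])^{\kappa}\,d\mu
  \;+\; C\gamma^{-\kappa}\int_{\partial\Omega}h[F,u_1]^{\kappa/2}\,d\mu ,
\]
with $C$ depending only on $\kappa$ and the constants in the good-$\lambda$ inequality. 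Choosing $\gamma$ small we absorb the first term, and after $\EPS\to0$ obtain $\int S_{\bar M}[F]^{\kappa}\,d\mu\lesssim \int h[F,u_1]^{\kappa/2}\,d\mu$. Next I would expand $h$ into its three products, write $u_1=F+u_0$ so that $S_{\hat M}[u_1]\le S_{\hat M}[F]+S_{\hat M}[u_0]$, and apply Young's inequality $ab\le\theta a^2+C_\theta b^2$ to each product with $S_{\hat M}[F]$ in the role of $a$, which yields
\[
  \int_{\partial\Omega}h^{\kappa/2}\,d\mu
  \;\lesssim\; \theta\int_{\partial\Omega}(S_{\hat M}[F])^{\kappa}\,d\mu
  + C_\theta\!\int_{\partial\Omega}(N_{\hat M}[F])^{\kappa}\,d\mu
  + C_\theta\!\int_{\partial\Omega}(\tilde N_{\hat M}[u_0])^{\kappa}\,d\mu
  + \int_{\partial\Omega}(S_{\hat M}[u_0])^{\kappa}\,d\mu .
\]
By \refprop{prop:SquareFctWithDiffApertures} the $L^{\kappa}(d\mu)$-norms of square functions of apertures $\hat M$ and $\bar M$ are comparable, so feeding this back and taking $\theta$ small absorbs the $S_{\hat M}[F]$-term a second time.

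We are then left with $\int S_{\bar M}[F]^{\kappa}\,d\mu\lesssim \int(N_{\hat M}[F])^{\kappa}\,d\mu+\int(\tilde N_{\hat M}[u_0])^{\kappa}\,d\mu+\int(S_{\hat M}[u_0])^{\kappa}\,d\mu$. For the first term, \reflemma{lemma:NFleqtildeNF} gives $N_{\hat M}[F]\lesssim\tilde N_{\hat M}[F]+\tilde N_{\hat M}[u_0]$, and \reflemma{lemma:NontanMaxFctWithDiffConesComparable} together with \eqref{eq:NtildeBoundedByN} and the aperture-independence of nontangential maximal functions for doubling measures yields $\|\tilde N_{\hat M}[F]\|_{L^{\kappa}(d\mu)}\lesssim\|N_\alpha[F]\|_{L^{\kappa}(d\mu)}$. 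The two $u_0$-terms are both $\lesssim\int f^{\kappa}\,d\mu$: for $\mu=\omega_0$ because $N_{\cdot}[u_0]\lesssim M_{\omega_0}[f]$ with $M_{\omega_0}$ bounded on $L^2(d\omega_0)$ and because $\|S_{\cdot}[u_0]\|_{L^2(d\omega_0)}\lesssim\|f\|_{L^2(d\omega_0)}$ (the elliptic-measure square-function estimate already used in the proof of \refthm{thm:theorem2.3}); for $\mu=\sigma$ the analogues $\|N_{\cdot}[u_0]\|_{L^{q}(d\sigma)}\lesssim\|f\|_{L^{q}(d\sigma)}$ and $\|S_{\cdot}[u_0]\|_{L^{q}(d\sigma)}\lesssim\|f\|_{L^{q}(d\sigma)}$ hold in the relevant range of $q$ thanks to $\omega_0\in B_p(d\sigma)$ (equivalently, solvability of the $L^{q}$ Dirichlet problem for $L_0$) and the standard square-function estimate for operators whose elliptic measure lies in $A_\infty(d\sigma)$. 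Collecting everything gives $\int S_{\bar M}[F]^{\kappa}\,d\mu\lesssim\int f^{\kappa}\,d\mu+\int N_\alpha[F]^{\kappa}\,d\mu$, i.e. \eqref{eq:Lemma2.10} when $\kappa=2$ and \eqref{eq:Lemma2.16} when $\kappa=q$.

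The main obstacle is the accounting in the absorption: every appearance of $S[F]$ — the one hidden inside $S[u_1]=S[F+u_0]$ and the ones created by the aperture changes $S_{\bar M}\leftrightarrow S_{\hat M}$ included — must carry a small constant so it can be transferred to the left-hand side, which is what forces the order (integrate the good-$\lambda$ inequality, then apply Young's inequality to $h$, then absorb a second time via \refprop{prop:SquareFctWithDiffApertures}). The only other delicate point is the a priori finiteness needed to run these absorptions, which is handled by the truncated square function $S^{\EPS}_{\bar M}$; the rest is the standard good-$\lambda$ argument of Dahlberg--Jerison--Kenig type.
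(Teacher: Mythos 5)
Your argument is correct and follows essentially the same route as the paper: integrate the good-$\lambda$ inequality of \reflemma{lemma:GoodLambdaInequalityLem2.18}/\refcor{cor:GoodLambdaForomega_0} against $\lambda^{\kappa-1}d\lambda$, split $h[F,u_1]$ and absorb the $S[F]$-contributions via Young's inequality and \refprop{prop:SquareFctWithDiffApertures}, then control $N_{\hat M}[F]$ through \reflemma{lemma:NFleqtildeNF} and \reflemma{lemma:NontanMaxFctWithDiffConesComparable} and the $u_0$-terms by the $L^\kappa$ estimates coming from $\omega_0\in B_p(d\sigma)$ (resp.\ $B_2(d\omega_0)$). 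Your truncation $S^{\EPS}_{\bar M}$ is a sensible way to justify the absorptions (a point the paper leaves implicit), though to be fully rigorous you should keep the truncation through the second absorption as well rather than letting $\EPS\to 0$ after the first one.
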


\begin{proof}
First we note that due to \reflemma{lemma:CaccioppoliForF} we have \(\tilde{N}(\delta|\nabla F|)\lesssim N(F)\) with slightly larger aperture \(\alpha\).

Now we take \(\mu\in\{\sigma,\omega_0\}\) since the proof works analogously for both measures.
The good \(\lambda\)-inequality of \reflemma{lemma:GoodLambdaInequalityLem2.18} 
    or \refcor{cor:GoodLambdaForomega_0} implies that
\begin{align*}
    \int_{\partial\Omega}S_{\bar{M}}[F]^qd\mu&=q2^{q-1}\int_0^\infty \lambda^{q-1}\mu(\{S_{\bar{M}}[F]>2\lambda\})d\lambda
    \\
    &\lesssim \int_0^\infty \lambda^{q-1}\mu\left(\left\{ S_{\bar{M}}[F]>2\lambda, \; h[F,u_1]\leq (\lambda\gamma)^2 \right\}\right)d\lambda
    \\
    &\qquad+\int_0^\infty \lambda^{q-1}\mu(\{N_{\hat{M}}[F]^2> (\lambda\gamma)^2\})d\lambda
    \\
    &\qquad+\int_0^\infty \lambda^{q-1}\mu(\{N_{\hat{M}}[F]S_{\hat{M}}[u_1]> (\lambda \gamma)^2\})d\lambda
    \\
    &\qquad+\int_0^\infty \lambda^{q-1}\mu(\{\tilde{N}_{\hat{M}}[F]\tilde{N}_{\hat{M}}[\delta|\nabla F|]> (\lambda\gamma)^2\})d\lambda
    \\
    &\qquad+\int_0^\infty \lambda^{q-1}\mu(\{\tilde{N}_{\hat{M}}[\delta|\nabla F|]S_{\hat{M}}[u_1]> (\lambda\gamma)^2\})d\lambda
    \\
    &\leq  \gamma^\eta\int_{\partial\Omega}S_{\hat{M}}[F]^q d\mu+C_\gamma\bigg( \int_{\partial\Omega}(S_{\hat{M}}[u_1]N_{\hat{M}}[F])^{q/2}d\mu +\int_{\partial\Omega}N_{\hat{M}}[F]^qd\mu \bigg),
\end{align*}
    where in the last step we used \reflemma{lemma:NontanMaxFctWithDiffConesComparable}.
Because \(\Vert S_{\bar{M}}[F]\Vert_{L^q(d\mu)}\approx\Vert S_{\hat{M}}[F]\Vert_{L^q(d\mu)}\) 
    thanks to \refprop{prop:SquareFctWithDiffApertures}, we 
    choose \(\gamma\) sufficiently small  so that the first term of the last line can be absorbed by the lefthand side. 
Next,
\[ \int_{\partial\Omega}(S_{\hat{M}}[F]N_{\hat{M}}[F])^{q/2}d\mu
    \leq \rho\int_{\partial\Omega}S_{\bar{M}}[F]^qd\mu
        +C_\rho \int_{\partial\Omega}N_{\hat{M}}[F]^q d\mu. \]
Hence again for a sufficiently small \(\rho\) the square function term can be hidden on the lefthand side. 
We treat the other terms similarly, 
    using the fact that \( S_{\hat{M}}[u_1] \lesssim S_{\hat{M}}[F] + S_{\hat{M}}[u_0] \),
    and then apply \eqref{eq:NFLeqTNF} to obtain
\begin{align*}
    \int_{\partial\Omega}S_{\bar{M}}[F]^qd\mu
    &\lesssim \int_{\partial\Omega}\tilde{N}_{\hat{M}}[u_0]^qd\mu
        +\int_{\partial\Omega}N_{\hat{M}}[F]^qd\mu
        +\int_{\partial\Omega}S_{\hat{M}}[u_0]^qd\mu.
\end{align*}
Finally, note that \(\omega_0\in B_p(\mu)\) which implies 
\[ \Vert S_{\hat{M}}[u_0]\Vert_{L^q(d\mu)} \approx \Vert \tilde{N}_{\hat{M}}[u_0]\Vert_{L^q(d\mu)}
    \lesssim \Vert f\Vert_{L^q(d\mu)},\]
   therefore with the help of \reflemma{lemma:NontanMaxFctWithDiffConesComparable} we have
\[ \int_{\partial\Omega}S_{\hat{M}}[F]^qd\mu\lesssim     
    \int_{\partial\Omega}f^qd\mu+\int_{\partial\Omega}N_{\hat{M}}[F]^qd\mu\lesssim     
    \int_{\partial\Omega}f^qd\mu+\int_{\partial\Omega}N_{\alpha}[F]^qd\mu. \]
This means that \eqref{eq:Lemma2.16} holds.
Since \(\omega_0\in B_2(\omega_0)\) we also get \eqref{eq:Lemma2.10}.
\end{proof}

It remains to establish \reflemma{lemma:GoodLambdaInequalityLem2.18}.

\subsection{Proof of \reflemma{lemma:GoodLambdaInequalityLem2.18}}
Throughout the whole proof we assume aperture \(\alpha\) implicitly of corresponding cones unless stated otherwise.
\\

Let \(\Delta\) be a Whitney cube of \(\{S[F]>\lambda\}\), i.e. \(l(\Delta)\approx\mathrm{dist}(\Delta,\{S[F]\leq\lambda\})\), and define 
\[E :=\{X\in \Delta; S[F]>2\lambda, h[F,u_1]\leq \eta\lambda\}.\]
We can cover \(\{S[F]>\lambda\}\) by Whitney cubes whose overlap is finite, whence it is enough to establish \(\omega_0(E)\lesssim \gamma^2\omega_0(\Delta)\).

From Lemma 1 in \cite{dahlberg_area_1984} we have that for every \(\tau>0\) there exists a \(\gamma>0\) 
    such that for the truncated square function
\[S_{\tau l(\Delta)}[F]^2(X)\coloneqq \int_{\Gamma^{\tau r}(X)}|\nabla F(Y)|^2\delta(Y)^{2-n}dY>\frac{\lambda^2}{4}\]
    holds for all points \(X\in E\), where \(\Gamma^{\tau r}(X)\coloneqq \Gamma(X)\cap Q(X,\tau r)\).
We set the sawtooth regions \(\tilde{\Omega}_i=\bigcup_{X\in E}\Gamma_{(i+1)\alpha/2}^{(i+1)\tau l(\Delta)/2}(X)\) for \(i=0,1,2,3,4,5\)
    and choose  a smooth cut-off \(\eta\) with the following properties
\begin{enumerate}
    \item \(\eta\in C_0^\infty(\tilde{\Omega}_3)\),
    \item \(0\leq \eta\leq 1\),
    \item \(\eta\equiv 1\) on \(\tilde{\Omega}_2\) and \(\eta\equiv 0\) outside of  \(\tilde{\Omega}_3\),
    \item \(|\nabla \eta(Y) |\lesssim \frac{1}{\delta(Y)}\) for all \(Y\in \tilde{\Omega}_3\).
\end{enumerate}

We note that \(\tilde{\Omega}_i\) are nested sawtooth regions over the set \(E\).

\smallskip

Next, we need a partition of \(\tilde{\Omega}_3\setminus\tilde{\Omega}_2\). 
Therefore, more generally, 
    let us define 
    \(\tilde{\mathcal{D}}_{i,j}:=\{I_\alpha^k; I_\alpha^k\cap \tilde{\Omega}_j\setminus \tilde{\Omega}_i\neq \emptyset \}\) 
    as the set of Whitney cubes that cover the area between two sawtooth regions 
    \(\tilde{\Omega}_j\) and \(\tilde{\Omega}_i\) where \(j>i\).
Since the chosen aperture was large enough, we can assume that for all \(I_\alpha^k\in \tilde{\mathcal{D}}_{i,j}\) we have \(\tilde{I}_\alpha^k\subset\tilde{\Omega}_{j+1}\setminus \tilde{\Omega}_{i-1}\). 
We claim the existence of a family \((\eta^{k}_\alpha)_{k,\alpha}\) with the following properties
\begin{enumerate}
    \item \(\eta_\alpha^{k}\in C_0^\infty(\tilde{I}_\alpha^k)\)
    \item \(0\leq \eta^{k}_\alpha\leq 1\) 
    \item \(\eta^{k}_\alpha\equiv 1\) on \(I_\alpha^k\) and \(\eta^{k}_\alpha\equiv 0\) outside of \(\tilde{I}_\alpha^k\)
    \item \(\Vert\nabla \eta^{k}_\alpha\Vert_{L^\infty}\approx\frac{1}{\mathrm{diam}(Q_\alpha^k)}\)
    \item \[\sum_{I_\alpha^k\in \tilde{\mathcal{D}}_{i,j}}\eta_\alpha^{k}\equiv 1\qquad \textrm{on } \tilde{\Omega}_j\setminus\tilde{\Omega}_i
        \qquad \textrm{and }\sum_{I_\alpha^k\in \tilde{\mathcal{D}}_{i,j}}\eta_\alpha^{k}\equiv 0 
        \qquad\textrm{on }\Omega\setminus(\tilde{\Omega}_{j+1}\setminus\tilde{\Omega}_{i-1}). \]
\end{enumerate}

Furthermore, we define 
\begin{align*}
    &B_1:=\tilde{\Omega}_4\setminus\tilde{\Omega}_1\cap \{Y\in \Omega;\delta(Y)<\tau l(\Delta)\}, 
    \\
    &B_2:=\tilde{\Omega}_4\setminus\tilde{\Omega}_1\cap \{Y\in \Omega;\delta(Y)\geq\tau l(\Delta)\}.
\end{align*}

Let \(G_0\) be the Green's function of the adjoint PDE with pole in \(0\) as before. We start with
\begin{align*}
   \omega_0(E)&\lesssim \frac{1}{\lambda^2}\int_E S_{\tau l(\Delta)}[F]^2d\omega_0
    =\frac{1}{\lambda^2}\int_E\int_{\tilde{\Omega}_1}|\nabla F|^2\delta^{-n+1}\chi_{\Gamma^{\tau l(\Delta)}(X)}dYd\omega_0(X)
    \\
    &=\frac{1}{\lambda^2}\int_{\tilde{\Omega}_1}|\nabla F|^2\delta^{-n+1}\Big(\int_E\chi_{\Gamma^{\tau l(\Delta)}(X)}d\omega_0(X)\Big)dY
    \\
    &\lesssim \frac{1}{\lambda^2}\int_{\tilde{\Omega}_1}|\nabla F|^2\delta^{-n+1}\omega_0(\Delta(X, \delta(X)))dY
    \\
    &\lesssim \frac{1}{\lambda^2}\int_{\tilde{\Omega}_1}|\nabla F|^2G_0dY.
\end{align*}

In the last line we used \refprop{prop:GreenToOmega}. Furthermore, we have using the ellipticity condition and cutoff $\eta$:

\begin{align*}
    \frac{1}{\lambda^2}\int_{\tilde{\Omega}_1}|\nabla F|^2G_0 dY
    &\lesssim \frac{1}{\lambda^2}\int_{\Omega} A_0\nabla F\nabla F G_0\eta^2 dY.
\end{align*}

Since \(G_0\eta, FG_0\eta\in W^{1,2}_0(\Omega)\) we have
\begin{align*}
    \int_{\Omega}A_0\nabla F \cdot \nabla F (G_0\eta^2) 
    =&\int_{\Omega}\Div(A_0\nabla(F^2)) G_0\eta^2-F\Div(A_0\nabla u_0) G_0\eta^2
    \\
        &\qquad+F\Div(A_1\nabla u_1) G_0\eta^2+\Div(\EPS\nabla u_1)FG_0\eta^2 
    \\
    =&\int_{\Omega}\Div(A_0\nabla(F^2)) G_0\eta^2
        +\int_{\Omega}\Div(\EPS\nabla u_1)FG_0\eta^2
    \\
    =&\int_{\Omega}A_0\nabla(F^2) \nabla(G_0\eta^2)
        +\int_{\Omega}\EPS\nabla u_1 \nabla(FG_0\eta^2)
    \\
    :=I+II.
\end{align*}

Set \(\Psi:=\eta-\sum_{I_\alpha^k\in \tilde{\mathcal{D}}_{2,3}}\eta_\alpha^{k}\) which satisfies \(\Psi=1\) on \(\tilde{\Omega}_1\) and \(\Psi=0\) on \(\Omega\setminus\tilde{\Omega}_4\). Note also that \(|\nabla (\eta\Psi)|\lesssim \frac{\chi_{\tilde{\Omega}_4\setminus\tilde{\Omega}_1}}{\delta}\).
For \(I\) we have by \refprop{Prop:ContantAntiSymmMatrix}
\begin{align*}
    I&= \sum_{I_\alpha^k\in \tilde{\mathcal{D}}_{2,3}}\int_{\Omega}A_0 \nabla(F^2) \nabla (G_0 \eta \eta_\alpha^{k}) + \int_{\Omega}A_0\nabla(F^2) \nabla (G_0 \eta \Psi)
    \\
    &= \sum_{I_\alpha^k\in \tilde{\mathcal{D}}_{2,3}}\int_{\Omega}(A_0-(A_0)_{I_\alpha^k})\nabla(F^2) \nabla (G_0 \eta \eta_\alpha^{k}) + \int_{\Omega}A_0\nabla(F^2) \nabla (G_0 \eta \Psi)
    &=I_1+I_2.
\end{align*}

Further, we use that \(F^2\eta\Psi\in W_0^{1,2}(\Omega\setminus B(0,R_0/2))\) and that \(G_0\) is a solution to the adjoint \(L_0^*\) away from the pole to get
\begin{align*}
    I_2&=\int_{\Omega}A_0\nabla(F^2) \nabla G_0 \eta \Psi + A_0\nabla(F^2) G_0 \nabla(\eta\Psi)
    \\
    &=\int_{\Omega}A_0 \nabla(F^2\eta\Psi) \nabla G_0 -A_0  \nabla (\eta\Psi) \nabla G_0 F^2 + A_0\nabla(F^2) G_0 \nabla(\eta \Psi)
    \\
    &=\int_{\Omega}  -A_0  \nabla (\eta\Psi) \nabla G_0 F^2 + A_0\nabla(F^2) G_0 \nabla(\eta\Psi).
\end{align*}
Since both integrands are supported on \(\tilde{\Omega}_4\setminus \tilde{\Omega}_1\), we can insert the partition of unity given by the \(\eta^k_\alpha\). We obtain with \refprop{Prop:ContantAntiSymmMatrix}

\begin{align*}
    &\sum_{I_\alpha^k\in \tilde{\mathcal{D}}_{1,4}}\int_{\Omega}  -A_0 \nabla (\eta\Psi \eta_\alpha^{k}) \nabla G_0 F^2 + A_0\nabla(F^2) G_0 \nabla(\eta\Psi\eta_\alpha^{k})
    \\
    &=\sum_{I_\alpha^k\in \tilde{\mathcal{D}}_{1,4}}\int_{\Omega}  A_0 \nabla F^2 \nabla (G_0\eta\Psi \eta_\alpha^{k})
    \\
    &=\sum_{I_\alpha^k\in \tilde{\mathcal{D}}_{1,4}}\int_{\Omega}  (A_0 - (A_0)_{\tilde{I}_\alpha^k}) \nabla F^2 \nabla (G_0\eta\Psi \eta_\alpha^{k})
    \\
    &=\sum_{I_\alpha^k\in \tilde{\mathcal{D}}_{1,4}}\int_{\Omega}(A_0 - (A_0)_{\tilde{I}_\alpha^k})\nabla(F^2) \nabla G_0 \eta \Psi \eta_\alpha^{k} + (A_0 - (A_0)_{\tilde{I}_\alpha^k})\nabla(F^2) G_0 \nabla(\eta\Psi \eta_\alpha^{k})
    \\
    &=\sum_{I_\alpha^k\in \tilde{\mathcal{D}}_{1,4}}\int_{\Omega}(A_0 - (A_0)_{\tilde{I}_\alpha^k})  \nabla(F^2\eta\Psi\eta_\alpha^{k}) \nabla G_0 -(A_0 - (A_0)_{\tilde{I}_\alpha^k}) \nabla (\eta\Psi\eta_\alpha^{k})\nabla G_0 F^2
    \\
    &\qquad\qquad\qquad+ (A_0 - (A_0)_{\tilde{I}_\alpha^k})\nabla(F^2) G_0 \nabla(\eta \Psi\eta_\alpha^{k})
\end{align*}
Since \(F^2\eta\Psi\eta_\alpha^k\in W_0^{1,2}(\Omega\setminus B(0,R_0/2))\), we further obtain
\begin{align*}
    &\sum_{I_\alpha^k\in \tilde{\mathcal{D}}_{1,4}}\int_{\Omega}A_0^* \nabla G_0 \nabla(F^2\eta\Psi\eta_\alpha^{k}) -(A_0 - (A_0)_{\tilde{I}_\alpha^k}) \nabla (\eta\Psi\eta_\alpha^{k}) \nabla G_0 F^2
    \\
    &\qquad\qquad\qquad+ (A_0 - (A_0)_{\tilde{I}_\alpha^k})\nabla(F^2) G_0 \nabla(\eta \Psi\eta_\alpha^{k})
    \\
    &=\sum_{I_\alpha^k\in \tilde{\mathcal{D}}_{1,4}}\int_{\Omega}  -(A_0 - (A_0)_{\tilde{I}_\alpha^k}) \nabla (\eta\Psi\eta_\alpha^{k}) \nabla G_0 F^2 + (A_0 - (A_0)_{\tilde{I}_\alpha^k})\nabla(F^2) G_0 \nabla(\eta\Psi\eta_\alpha^{k})
    \\
    &=: I_{21} + I_{22}.
\end{align*}
We can easily see that the penultimate line in the previous calculation equals to \(I_1\) with the only exception that the cutoff function consists of \(\eta\Psi\eta_\alpha^{k}\) for \(I_\alpha^{k}\in\tilde{\mathcal{D}}_{1,4}\) instead of \(\eta\eta_\alpha^{k}\) for \(I_\alpha^{k}\in\tilde{\mathcal{D}}_{2,3}\). However, these term can be dealt with completely analogously, and we are only going to show the calculations for \(I_{21}\) and \(I_{22}\).

\smallskip


Before we are able to calculate \(I_{21}\) and \(I_{22}\), we need to introduce new quasi-annuli. We define
\[\Ann_i:=\{Y\in\partial\Omega; c2^{i-1}\leq\mathrm{dist}(Y,E)\leq c2^{i}\}\qquad\textrm{ for all }{i\leq N},\]
where \(N\in \mathbb{Z}\) is chosen such that \(5\tau l(\Delta)\leq c2^N\leq 6\tau l(\Delta)\).
Next, we cover \(\Ann_i\) by \(N_i\) boundary cubes \((\Delta_m^i)_{1\leq m\leq N_i}\), which have diameter comparable to \(c2^{i}\).
Since \(\tau<\frac{1}{6}\), we have that \(E\cup\bigcup_{i=-\infty}^N \Ann_i\subset 3\Delta\). Next, we observe that if a Whitney cube \(I^i_\alpha \in \mathcal{D}(3\Delta):=\{I_\beta^l; Q_\beta^l\subset 3\Delta\}\) with \(l(I)\approx c2^i\) and \(Y\in I_\alpha^i\cap B_1\neq \emptyset\) then there exists a vertex of a cone \(P\in E\) with
\[\alpha c 2^{i-1}\leq |Y-P|\leq 2\alpha c2^i.\]
Due to the definition of \(\Ann_i\), we know that there exists a \(\Delta_i^m\) such that \(\mathrm{dist}(P,\Delta^m_i)\approx 2^i\). Hence combined, we get that
\begin{align}c2^{i-2}\leq\mathrm{dist}(\Delta_i^m,I^i_\alpha)\approx c2^{i+2}.\label{eq:DistanceDelta^mtoI^+}\end{align}

\smallskip

Hence we showed that for every Whitney cube \(I_\alpha^i\in \mathcal{D}(3\Delta)\) that is part of the covering of \(B_1\), there exists \(m=m(I)\leq N_i\) with \(\mathrm{dist}(\Delta_i^m,I^i_\alpha)\approx c2^i\). We call this boundary cube the \textit{corresponding} boundary cube. 

\smallskip
Conversely, for every such \(\Delta_i^m\) there is at most a finite number \(\tilde{N}\) of Whitney cubes \(I^i_\alpha\) of above form that satisfy (\ref{eq:DistanceDelta^mtoI^+}). The constant \(\tilde{N}\) only depends on the dimension of the ambient space and the constants coming from the uniform domain. To see this, we note that the volume of \(\{Y\in \mathbb{R}^{n};\mathrm{dist}(\Delta_i^m,Y)\lesssim 2^i\}\) is comparable to \(2^{i(n+2)}\), and hence can only be covered by at most \(\tilde{N}\) disjoint Whitney cubes \(I_\alpha^i\) that have volume comparable to \(2^{i(n+2)}\).

\smallskip

In total we have found that for every \(I_\alpha^i\), where \(I_\alpha^i\in \mathcal{D}(3\Delta)\) 
    with \(l(I_\alpha^i)=c2^i\) and \(I_\alpha^i\cap B_1\neq \emptyset\), 
    there exists a corresponding \(\Delta_i^m\) so that \eqref{eq:DistanceDelta^mtoI^+} holds, 
    but there cannot be more than \(\tilde{N}\) of such \(I_\alpha^k\) that correspond to the same \(\Delta_i^m\).
This observation allows the following calculation for the Green's function evaluated at \(X\in I_\alpha^i\) and its corresponding cube \(\Delta_i^m\) that satisfies \eqref{eq:DistanceDelta^mtoI^+}. We have 
\begin{align*}
    &\mathrm{dist}(X^*, \Delta_i^m)
    \lesssim |X^*-X| + |X-Q| +\mathrm{dist}(Q, \Delta_i^m)\lesssim 2^i,
\end{align*}
and hence with the doubling property of the parabolic measure and the Comparison Principle (\refprop{prop:CompPrinc}) and \refprop{prop:GreenToOmega} for all \((Y,s)\in \Delta_i^m\)
\begin{align}
    \frac{G_0(X)}{\delta(X)}&\approx \delta(X)^{-n-1}\omega_0(\Delta(X^*,\delta(X)))\nonumber
    \\
    &\approx \delta(X)^{-n-1}\omega_0(\Delta(Y,3\delta(X)))\nonumber
    \\
    &=\fint_{\Delta(Y,3\delta(X))} k(z)dz\leq M[k\chi_{3\Delta}](Y).\label{eq:G^*overdeltalesssimM[k]}
\end{align}

Furthermore, above observation can be continued to observe that for a boundary Whitney cube \(Q_\alpha^k\in \Ann_i\) there is only a finite number of cubes in \(I_\beta^l\in \tilde{\mathcal{D}}_{1,4}\) such that \(Q_\beta^l\subset Q_\alpha^k\), and hence we obtain  
$\sum_{I_\alpha^k\in \tilde{\mathcal{D}}_{1,4}}\chi_{Q_\alpha^k}\lesssim \chi_{3\Delta}$.
Now, we can estimate \(I_1\) and \(I_2\):
\begin{align*}
    I_{21}
    &=\sum_{I_\alpha^k\in\tilde{\mathcal{D}}_{1,4}}\int_{\tilde{I}_\alpha^k}(A_0-(A_0^a)_{\tilde{I}_\alpha^k}) \nabla (\eta \Psi\eta_\alpha^{k}) F^2 \nabla G_0
    \\
    &\lesssim\sum_{I_\alpha^k\in\tilde{\mathcal{D}}_{1,4}}\sup_{\tilde{I}_\alpha^{k}}F^2\int_{\tilde{I}_\alpha^k}|(A_0-(A_0^a)_{\tilde{I}_\alpha^k}) \nabla (\eta \Psi\eta_\alpha^{k})\nabla G_0|
    \\
    &\lesssim \sum_{I^k_\alpha\in \tilde{\mathcal{D}}_{1,4}} \inf_{Q\in Q_\alpha^k} N(F)^2(Q)\Big(\int_{\tilde{I}_\alpha^k} |A_0-(A_0^a)_{\tilde{I}_\alpha^k}|^r\Big)^{1/r}\Big(\int_{\tilde{I}_\alpha^k}|\nabla G_0|^{2+\EPS}\Big)^{1/(2+\EPS)}
    \\
    &\hspace{80mm}\cdot\Big(\int_{\tilde{I}_\alpha^k}|\nabla (\eta\Psi\eta_\alpha^{k})|^2\Big)^{1/2}
    \\
    &\lesssim \sum_{I^k_\alpha\in \tilde{\mathcal{D}}_{1,4}} \inf_{Q\in Q_\alpha^k} N(F)^2(Q)\Big(\int_{\tilde{I}_\alpha^k}|\nabla G_0|^2\Big)^{1/2}\Big(\int_{\tilde{I}_\alpha^k}|\nabla (\eta\Psi\eta_\alpha^{k})|^2\Big)^{1/2}
    \\
    &\lesssim \sum_{I^k_\alpha\in \tilde{\mathcal{D}}_{1,4}} \inf_{Q\in Q_\alpha^k} N(F)^2(Q)\Big(\fint_{\hat{I}_\alpha^k}\frac{| G_0|^2}{\delta^2}\Big)^{1/2}(c2^k)^{n-1}
    \\
    &\lesssim \sum_{I^k_\alpha\in \tilde{\mathcal{D}}_{1,4}} \inf_{Q\in Q_\alpha^k}N(F)^2(Q)\inf_{Q\in Q_\alpha^k}M[k](Q)|Q_\alpha^k|
    \\
    &\lesssim \int_{3\Delta} N(F)^2(Q) M[k](Q) \lesssim (\lambda\gamma)^2\int_{3\Delta} M[k](Q) d\sigma.
\end{align*}

Similarly, we get
\begin{align*}
    I_{22}
    &\lesssim \sum_{I_\alpha^k\in\tilde{\mathcal{D}}_{1,4}} 
        \inf_{Q\in Q_\alpha^k} N(F)(Q) \inf_{Q\in Q_\alpha^k}M[k](Q)\Big(\fint_{I_\alpha^k}|\nabla F|^2\delta^2\Big)^{1/2}
    \\
    &\lesssim \sum_{I_\alpha^k\in\tilde{\mathcal{D}}_{1,4}} 
        \inf_{Q\in Q_\alpha^k}N(F)(Q)\inf_{Q\in Q_\alpha^k}M[k](Q)\inf_{Q\in Q_\alpha^k}\tilde{N}(|\delta\nabla F|)(Q)|Q_\alpha^k|
    \\
    &\lesssim \int_{3\Delta} N(F)(Q)\tilde{N}(|\delta \nabla F|)(Q) M[k](Q)\lesssim (\lambda\gamma)^2\int_{3\Delta} M[k](Q) d\sigma.
\end{align*}

Next we have for \(II\)
\begin{align*}
    II&=\int_\Omega \EPS\nabla u_1 \nabla F G_0 \eta^2 + \int_\Omega \EPS\nabla u_1  \nabla( G_0 \eta^2) F
=II_1+II_2.
\end{align*}

For the first term, we obtain
\begin{align*}
    II_1&=\int_E \int_{\Gamma_\alpha(P)}\delta^{1-n}\EPS\nabla u_1 \nabla F G_0 \eta^2 dX d\sigma(P)
    \\
    &=\int_E \Big(\sum_{I_\alpha^k\in \mathcal{D}(3\Delta), I_\alpha^k\cap \Gamma(P)\neq \emptyset} \int_{I_\alpha^k}\delta^{1-n}\EPS\nabla u_1 \nabla F G_0 \eta^2 dX \Big)d\sigma(P)
    \\
    &\lesssim\int_E \Big(\sum_{I_\alpha^k\in \mathcal{D}(3\Delta), I_\alpha^k\cap \Gamma(P)\neq \emptyset} \Big(\fint_{I_\alpha^k}|\EPS|^r\Big)^{1/r}\Big(\fint_{I_\alpha^k} |\nabla u_1|^{2+\EPS}\Big)^{1/(2+\EPS)} 
    \\
    &\hspace{50mm}\cdot\Big(\fint_{I_\alpha^k}|\nabla F|^2\Big)^{1/2} l(Q_\alpha^k) \sup_{I_\alpha^k} G_0  \Big)d\sigma(P)
\end{align*}

We can observe that by definition of \(\beta_r\) we have \[\Big(\fint_{I_\alpha^k}|\EPS|^r\Big)^{1/r}\lesssim \Big(\fint_{B(X,\delta(X/2)}|\EPS|^r\Big)^{1/r}=\beta_r(X)\qquad \textrm{for all } X\in I_\alpha^k.\]
Hence we can continue with this observation, reverse H\"{o}lder inequality, 
    and \eqref{eq:G^*overdeltalesssimM[k]} to obtain

\begin{align*}
    II_1&\lesssim\int_E M[k\chi_{3\Delta}]^{1/2}(P) \Bigg(\sum_{I_\alpha^k\in \mathcal{D}(3\Delta), I_\alpha^k\cap \Gamma(P)\neq \emptyset} \Big(\int_{I_\alpha^k} \frac{\beta_r^2G_0}{\delta^{n+1}}\Big)^{1/2}
    \\
    &\hspace{40mm}\cdot\Big(\int_{I_\alpha^k} |\nabla u_1|^2\delta^{2-n}\Big)^{1/2} \Big(\fint_{I_\alpha^k}\delta^2|\nabla F|^2\Big)^{1/2}  \Bigg)d\sigma(P)
    \\
    &=\int_E M[k\chi_{3\Delta}]^{1/2}(P)\tilde{N}[\delta|\nabla F|](P)
    \\
    &\hspace{15mm}\cdot\Bigg(\sum_{I_\alpha^k\in \mathcal{D}(3\Delta), I_\alpha^k\cap \Gamma(P)\neq \emptyset} \Big(\int_{I_\alpha^k} \frac{\beta_r^2G_0}{\delta^{n+1}}\Big)^{1/2}\Big(\int_{I_\alpha^k} |\nabla u_1|^2\delta^{2-n}\Big)^{1/2}  \Bigg)d\sigma(P)
\end{align*}
\begin{align*}    
    &=\int_E M[k\chi_{3\Delta}]^{1/2}(P)\tilde{N}[\delta|\nabla F|](P) S[u_1](P)\Big(\int_{\Gamma^{2l(\Delta)}_{2\alpha}(P)} \frac{\beta_r^2G_0}{\delta^{n+1}}\Big)^{1/2}d\sigma(P)
    \\
    &\lesssim \lambda^2\gamma^2\int_E M[k\chi_{3\Delta}]d\sigma + \lambda^2\gamma^2\int_E\int_{\Gamma^{2l(\Delta)}_{2\alpha}(P)} \frac{\beta_r^2G_0}{\delta^{n+1}}dX d\sigma(P)
    \\
    &\lesssim \lambda^2\gamma^2\int_E M[k\chi_{3\Delta}]d\sigma + \lambda^2\gamma^2\int_{T(3\Delta)} \frac{\beta_r^2G_0}{\delta^{2}}dx 
    \\
    &\lesssim \lambda^2\gamma^2\int_E M[k\chi_{3\Delta}]d\sigma + \lambda^2\gamma^2\omega_0(\Delta),
\end{align*}
where the last line follows from the assumption of \refthm{thm:theorem2.3}.

Similarly, we can estimate for \(II_2\)
\begin{align*}
    II_2&=\int_E N(F)(P)
        \Bigg(\sum_{I_\alpha^k\in \mathcal{D}(3\Delta), I_\alpha^k\cap \Gamma(P)\neq \emptyset} \int_{I_\alpha^k}\delta^{1-n}\EPS\nabla u_1 \nabla (G_0 \eta^2) dX \Bigg)d\sigma(P)
    \\
    &\lesssim\int_E N(F)(P)
        \Bigg(\sum_{I_\alpha^k\in \mathcal{D}(3\Delta), I_\alpha^k\cap \Gamma(P)\neq \emptyset} l(Q_\alpha^k)\Big(\fint_{I_\alpha^k}|\EPS|^r\Big)^{1/r}\Big(\fint_{I_\alpha^k} |\nabla u_1|^{2+\EPS}\Big)^{1/(2+\EPS)} 
    \\
    &\hspace{70mm}\cdot\Big(\fint_{I_\alpha^k}|\nabla G_0|^2 + |\frac{G_0}{\delta}|^2\Big)^{1/2} \Bigg)d\sigma(P)
    \\
    &\lesssim\int_E M[k\chi_{3\Delta}]^{1/2}(P)N(F)(P) 
    \\
    &\hspace{20mm}\cdot\Bigg(\sum_{I_\alpha^k\in \mathcal{D}(3\Delta), I_\alpha^k\cap \Gamma(P)\neq \emptyset} 
    \Big(\int_{I_\alpha^k} \frac{\beta_r^2G_0}{\delta^{n+1}}\Big)^{1/2}\Big(\int_{I_\alpha^k} |\nabla u_1|^2\delta^{2-n}\Big)^{1/2} \Bigg) d\sigma(P)
    \\
    &\lesssim\int_E M[k\chi_{3\Delta}]^{1/2}(P)N[F](P) S[u_1](P)\Big(\int_{\Gamma^{2l(\Delta)}_{2\alpha}(P)} \frac{\beta_r^2G_0}{\delta^{n+1}}\Big)^{1/2}d\sigma(P)
    \\
    &\lesssim \lambda^2\gamma^2\int_E M[k\chi_{3\Delta}]d\sigma + \lambda^2\gamma^2\int_E\int_{\Gamma^{2l(\Delta)}_{2\alpha}(P)} \frac{\beta_r^2G_0}{\delta^{n+1}}d\sigma(P)
    \\
    &\lesssim \lambda^2\gamma^2\int_E M[k\chi_{3\Delta}]d\sigma + \lambda^2\gamma^2\int_{T(5\Delta)} \frac{\beta_r^2G_0}{\delta^{2}}dx 
    \\
    &\lesssim \lambda^2\gamma^2\int_E M[k\chi_{3\Delta}]d\sigma + \lambda^2\gamma^2\omega_0(\Delta).
\end{align*}

Since \(\omega_0\in B_p(d\sigma)\) for some \(p>1\) (as solvability of the Dirichlet problem for $L_0$ is assumed for some $p'>1$), we have using \refdef{DefRH}, boundedness of the maximal function and doubling of $\Omega$:
\begin{align*}
    \int_{3\Delta} M[k\chi_{3\Delta}] d\sigma&\lesssim \sigma(\Delta)\Big(\fint_{3\Delta} M[k\chi_{3\Delta}]^p d\sigma\Big)^{1/p}
    \\
    &\lesssim \sigma(\Delta)\Big(\fint_{3\Delta} k^p d\sigma\Big)^{1/p}
    \\
    &\lesssim \sigma(\Delta)\fint_{3\Delta} k d\sigma = \omega_0(3\Delta)\lesssim \omega_0(\Delta),
\end{align*}
which finishes the proof of the good \(\lambda\)-inequality.


\section{Proof of \refthm{thm:NormSmall}} \label{S:SmallNormProof}

Most of the work to prove \refthm{thm:NormSmall} is already done.
Recall that we want to show that \(\omega_1\in B_p(d\sigma)\) which is equivalent to 
\[\Vert\tilde{N}_\alpha(u_1)\Vert_{L^q(\partial\Omega,d\sigma)}
    \lesssim \Vert f\Vert_{L^q(\partial\Omega,d\sigma)},\qquad\mbox{for }
    \quad \frac{1}{q} + \frac{1}{p} = 1. \]
We assume that \(\omega_0 \in B_p(d\sigma) \) which is equivalent to \( \sigma \in A_{q}(d\omega) \). 
Using this, \reflemma{lem:2.9} and \reflemma{lemma:2.10/2.16} imply:
\begin{align*}
    \int_{\partial\Omega} \tilde{N}_{\alpha}[F]^q d\sigma
    &\lesssim \int_{\partial\Omega} \EPS_0^q M_{\omega_0}[S_{\bar{M}}u_1]^q d\sigma
    \\
    &\lesssim \EPS_0^q \int_{\partial\Omega}  S_{\bar{M}}[u_1]^q d\sigma
\end{align*}
\begin{align*}
    & \lesssim \EPS_0^q\int_{\partial\Omega}  S_{\bar{M}}[F]^q d\sigma 
        + \EPS_0^q \int_{\partial\Omega} S_{\bar{M}}[u_0]^q d\sigma 
\\    &\lesssim \EPS_0^q \int_{\partial\Omega} S_{\bar{M}}[F]^q d\sigma 
        + \int_{\partial\Omega} f^q d\sigma 
    \\
    &\lesssim \EPS_0^q\int_{\partial\Omega} \Tilde{N}_{\alpha}[F]^q  d\sigma +     
        \int_{\partial\Omega}\tilde{N}_{\alpha}[u_0]^qd\sigma +  \int_{\partial\Omega} f^q d\sigma.
    \\
    &\lesssim \EPS_0^q\int_{\partial\Omega} \Tilde{N}_{\alpha}[F]^q  d\sigma  
        + \int_{\partial\Omega} f^q d\sigma.
\end{align*}
By \reflemma{lemma:NontanMaxFctWithDiffConesComparable}, and with \( \EPS_0 \) sufficiently small, 
    we can hide the first term of the righthand side by moving it to the lefthand side. Hence
\[ \| \tilde{N}_{\alpha}[F] \|_{L^q(d\sigma)} \lesssim \| f \|_{L^q(d\sigma)}. \]

Moreover we also have \( \| \tilde{N}_{\alpha}[u_0] \|_{L^q(d\sigma)} \lesssim \| f \|_{L^q(d\sigma)} \) because \(\omega_0\in B_p(d\sigma)\).
Thus 
\begin{align*}
    \int_{\partial\Omega} \tilde{N}_{\alpha}[u_1]^q d\sigma 
    &\lesssim \int_{\partial\Omega} \tilde{N}_{\alpha}[F]^q d\sigma + \int_{\partial\Omega} \tilde{N}_{\alpha}[u_0]^q d\sigma
    \lesssim \int_{\partial\Omega} \tilde{N}_{\alpha}[F]^q d\sigma +  \int_{\partial\Omega} f^q d\sigma
    \\
    &\lesssim \int_{\partial\Omega} f^q d\sigma. 
\end{align*}
  From this \( \omega_1 \in B_p(d\sigma) \). \qed\\

\section{Operators with coefficients satisfying Carleson condition} \label{S:Application}

In this section  \( \Omega \) will be a bounded Lipschitz domain with Lipschitz constant \( K \). 
We consider the operator \(L=\mathrm{div}(A\nabla\cdot)\),
    where \( A \) is \( \lambda_0 \)-elliptic with \( \| A^a \|_{\BMO(\Omega)} \leq \Lambda_0 \) and recall that
\[ \alpha_r(Z) \coloneqq \left( \fint_{B(Z,\delta(Z)/2)} |A - (A)_{B(Z,\delta(Z)/2)}|^r\right)^{1/r}. \]

The aim of this section is to prove \refthm{thm:ApplicationBigNorm} and \refthm{thm:ApplicationSmallNorm}.
But first we prove a similar slightly weaker result:

\begin{thm}\label{thm:ApplicationClassic}
Let \( \Omega \) be a bounded Lipschitz domain with Lipschitz constant \( K \) and suppose that \( \hat{A} \) is elliptic with BMO antisymmetric part.
Moreover suppose that the weak derivative of coefficients exists and consider
\[ \hat{\alpha}^{\eta}(Z) \coloneqq \delta(Z) \sup_{X\in B(Z,\eta \delta(Z))}|\nabla \hat{A}(X)|, \quad 0<\eta<1/2. \]
\begin{itemize}
    \item [(i)] If 
        \[ \| \hat{\alpha}^{\eta}(Z)^2\delta(Z)^{-1}dZ \|_{\CO} < \infty \]
    then the \( L^p \) Dirichlet problem is solvable for \emph{some} \( 1 < p < \infty \),\\
	     
	\item[(ii)] For each \( 1 < p < \infty \), there exists an \( \EPS = \EPS(p) > 0 \), such that if
	\[ \| \hat{\alpha}^{\eta}(Z)^2 \delta(Z)^{-1}dZ \|_{\CO} < \EPS \WORD{and} K < \EPS, \] 
	   then the \( L^p \) Dirichlet problem is solvable.
\end{itemize}
\end{thm}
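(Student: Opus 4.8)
The plan is to deduce \refthm{thm:ApplicationClassic} from the perturbation theorems \refthm{thm:NormBig} and \refthm{thm:NormSmall}, comparing $\hat L\coloneqq\Div(\hat A\nabla)$ with its symmetric model $L_0\coloneqq\Div(\hat A^s\nabla)$, where $\hat A^s=\tfrac{1}{2}(\hat A+\hat A^T)$. First note that, since \eqref{eq:elliptic} involves only the symmetric part, $L_0$ is a \emph{bounded}, elliptic, symmetric operator; since $|\nabla\hat A^s|\le|\nabla\hat A|$ pointwise, the Carleson measure $\hat\alpha^\eta(Z)^2\delta(Z)^{-1}dZ$ attached to $\hat A^s$ is dominated by the one attached to $\hat A$, hence in particular of \emph{small} norm in case (ii) (together with $K<\EPS$). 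A bounded Lipschitz domain is a bounded CAD, so the classical results of Kenig--Pipher \cite{kenig_dirichlet_2001} and Dindo\v{s}--Petermichl--Pipher \cite{dindos_lp_2007} apply to $L_0$: in case (i) they give $\omega_0\in A_\infty(d\sigma)$ (equivalently, solvability of the $L^{p_0}$ Dirichlet problem for $L_0$ for some $p_0>1$), and in case (ii), for the prescribed $p$, they give $\omega_0\in B_{p'}(d\sigma)$ with $[\omega_0]_{B_{p'}}$ bounded in terms of $p$ and the smallness parameters.

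Next I would verify the hypotheses of the perturbation theorems for the pair $(L_0,\hat L)$. Both operators satisfy \eqref{eq:elliptic} and \eqref{eq:A^ainBMO} ($L_0$ trivially, being symmetric). The discrepancy $\hat A-\hat A^s=\hat A^a$ is \emph{purely antisymmetric}, and the key point is that everywhere in Sections~\ref{S:Lemma2.9Proof}--\ref{S:Lemma2.10Proof} the discrepancy matrix enters only through expressions of the form $\int_E(\cdot)\,\nabla u\cdot\nabla v$, so by \eqref{prop:TheAvarageIs0} one may subtract the componentwise average of $\hat A^a$ over each such region $E$ — exactly the reduction "$(A_0^a)_{\tilde I_\alpha^k}=0$" already used in the good-$\lambda$ argument. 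Hence the quantity $\beta_r$ that actually governs the estimates is $\bigl(\fint_{B(Z,\delta(Z)/2)}|\hat A^a-(\hat A^a)_{B(Z,\delta(Z)/2)}|^r\bigr)^{1/r}$, which is no larger than $\Osc_{B(Z,\delta(Z)/2)}\hat A^a$. Since finiteness of $\hat\alpha^\eta$ forces $\nabla\hat A\in L^\infty_{\Loc}(\Omega)$, this oscillation is at most $\delta(Z)\sup_{B(Z,\delta(Z)/2)}|\nabla\hat A^a|$, and covering $B(Z,\delta(Z)/2)$ by finitely many balls $B(Z_i,\eta\delta(Z_i))$ (legitimate because $\delta$ varies by a bounded factor on it) gives $\beta_r(Z)\lesssim\sup_i\hat\alpha^\eta(Z_i)\approx\hat\alpha^\eta(Z)$. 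Therefore $\|\beta_r^2\delta^{-1}dZ\|_{\CO}\lesssim\|\hat\alpha^\eta(Z)^2\delta(Z)^{-1}dZ\|_{\CO}$, which is finite in case (i) and $\le C\EPS$ in case (ii). Then \refthm{thm:NormBig} gives, in case (i), solvability of the $L^q$ Dirichlet problem for $\hat L$ for some $q\ge p_0$; in case (ii), after choosing $\EPS=\EPS(p)$ small enough that $C\EPS$ is below both the threshold $\gamma(n,p,[\omega_0]_{B_{p'}},\lambda_0,\Lambda_0)$ of \refthm{thm:NormSmall} and the threshold of \cite{dindos_lp_2007} governing $[\omega_0]_{B_{p'}}$, \refthm{thm:NormSmall} gives solvability of the $L^p$ Dirichlet problem for $\hat L$ for the prescribed $p$.

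The main obstacle is the claim in the second paragraph that the (possibly unbounded) antisymmetric discrepancy enters the perturbation machinery only through its local oscillation. Making this rigorous requires inspecting every occurrence of $\beta_r$ (equivalently of the discrepancy $\EPS=A_0-A_1$) in Sections~\ref{S:Lemma2.9Proof}--\ref{S:Lemma2.10Proof} — in particular in \refprop{prop:EPSleqEPS_0}, in the bounds for $\tilde F_1$, $\TT{F}_1$ and $F_2$, and in the terms $I^{k,l,\alpha}$, $II^{k,l,\alpha}$ of the good-$\lambda$ proof — and checking in each case that the matrix is paired with a gradient against which a constant antisymmetric matrix integrates to zero, so that $\beta_r$ may be replaced by its oscillation-adjusted version. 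If some estimate genuinely needs the full magnitude (for instance a pointwise $L^r$ bound not sitting inside an integration by parts), the fallback is to truncate $\hat A^a$ componentwise at level $N$ before comparing: this preserves ellipticity, preserves the $\BMO$ bound (truncation is a Lipschitz contraction, acting oddly so antisymmetry is kept), and preserves the Carleson condition since $|\nabla\,\mathrm{trunc}_N\hat A^a|\le|\nabla\hat A^a|$ a.e., all uniformly in $N$; one then runs the argument for the bounded operator $\hat L_N=\Div((\hat A^s+\mathrm{trunc}_N\hat A^a)\nabla)$ and lets $N\to\infty$, recovering the estimate for $\hat L$ from the uniform nontangential bounds together with the uniform interior estimates of \refprop{prop:DiGNMestimate}--\refprop{prop:Harnack} and Fatou's lemma.
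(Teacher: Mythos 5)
Your route is genuinely different from the paper's, and it has a gap that I do not think can be repaired by inspection. The paper does not perturb at all here: part (ii) is quoted from \cite[Theorem 2.2]{dindos_lp_2007} (whose proof survives the BMO antisymmetric part), and part (i) is obtained by verifying the Carleson condition \eqref{eq:BMODirichletProblem} via the local square-function bound of \cite[Cor 5.2]{dindos_regularity_2018} and the maximum principle, and then invoking \refthm{thm:BMODirichletProblem}. Your plan instead compares $\hat L$ with $L_0=\Div(\hat A^s\nabla)$ and tries to apply \refthm{thm:NormBig} and \refthm{thm:NormSmall} with discrepancy $\EPS=\hat A^a$. But those theorems require Carleson control of $\beta_r(Z)=\bigl(\fint_{B(Z,\delta(Z)/2)}|\EPS|^r\bigr)^{1/r}$, i.e.\ of the \emph{magnitude} of the discrepancy, and the hypothesis $\|\hat\alpha^\eta(Z)^2\delta(Z)^{-1}dZ\|_{\CO}<\infty$ only controls $\delta\,|\nabla\hat A|$, hence the local oscillation of $\hat A^a$, not its size. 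A BMO antisymmetric part can be genuinely unbounded while $\delta(Z)|\nabla\hat A|^2$ is a (even small) Carleson measure, in which case $\beta_r^2/\delta$ is not a Carleson measure at all, so the hypotheses of the perturbation theorems simply fail for the pair $(\hat A^s,\hat A)$.

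Your proposed fix — that the discrepancy enters the perturbation machinery only through expressions where a constant antisymmetric matrix can be subtracted via \eqref{prop:TheAvarageIs0} — is exactly the step that breaks down. The identity behind \eqref{prop:TheAvarageIs0} needs one factor in $W^{1,2}_0$ of the region of integration; in Sections \ref{S:Lemma2.9Proof}--\ref{S:Lemma2.10Proof} the discrepancy is localized by sharp cut-offs to $B(X)$, $I_\alpha^k$, $R_j$, $B_{jl}$, on whose boundaries neither $G_0(Z,\cdot)$ nor $u_1$ vanishes, and then $\EPS$ is immediately placed under absolute values and H\"older (e.g.\ in \refprop{prop:EPSleqEPS_0}, in the bound for $F_2$, in $\TT{F}_1$, and in $II^{k,l,\alpha}$), so the antisymmetric cancellation is no longer available; establishing an ``oscillation-only'' version of \refthm{thm:NormBig}/\refthm{thm:NormSmall} would be a new theorem, not a routine check. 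The truncation fallback does not close the gap either: applying \cite{kenig_dirichlet_2001,dindos_lp_2007} to $\hat L_N$ yields constants depending on $\|\mathrm{trunc}_N\hat A^a\|_{L^\infty}\approx N$ (removing that dependence is precisely what needs proof, and is what the paper's direct argument supplies), while applying the perturbation theorems to $(\hat A^s,\hat A^s+\mathrm{trunc}_N\hat A^a)$ again requires Carleson smallness of the truncated discrepancy's magnitude, which grows with $N$; so no uniform-in-$N$ nontangential estimate is available to pass to the limit. The first paragraph of your argument (solvability for the bounded symmetric operator $\hat A^s$) is fine, but the passage from $\hat A^s$ to $\hat A$ cannot be run through the paper's perturbation theorems under the stated hypotheses.
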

Statement (ii) follows from \cite[Theorem 2.2]{dindos_lp_2007};
    though it is stated there for bounded matrices it holds in our case as well.
To prove (i) we apply the following theorem 
    (see \cite[Theorem 1.3]{dindos_bmo_2017} or \cite[Theorem 4.1]{kenig_square_2014}):
\begin{thm}\label{thm:BMODirichletProblem}
	If 
	\begin{align}\label{eq:BMODirichletProblem}
	\| |\nabla u|^2 \delta(X) dX \|_{\CO} 
	    \lesssim \|f\|_{L^\infty(\partial\Omega)}^2, \quad \forall f \in C(\partial \Omega), 
	\end{align}
	then \( \omega \in A_{\infty}(d\sigma) \).
\end{thm}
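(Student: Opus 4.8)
The statement is the classical implication ``$L^\infty$-Carleson solvability $\Rightarrow\omega\in A_\infty(d\sigma)$'', and I would follow the argument of \cite[Theorem 1.3]{dindos_bmo_2017} and \cite[Theorem 4.1]{kenig_square_2014}, checking that it goes through for operators with BMO antisymmetric part. The plan is to prove the single-threshold \emph{weak-$A_\infty$} bound: there is $\delta_0\in(0,1)$, depending only on $n,\lambda_0,\Lambda_0$ and the implicit constant $C_1$ in \eqref{eq:BMODirichletProblem}, so that for every surface ball $\Delta_0=\Delta(Q_0,r_0)$ and every Borel $E\subset\Delta_0$ one has $\omega^{X_0}(E)>\tfrac12\omega^{X_0}(\Delta_0)\Rightarrow\sigma(E)>\delta_0\sigma(\Delta_0)$, where $X_0:=A(Q_0,r_0)$; by a routine Calder\'on--Zygmund bootstrap together with the doubling of $\sigma$ (Ahlfors regularity) and of $\omega$ (\refprop{prop:DoublingPropertyOfomega}), this upgrades to the power-type estimate defining $A_\infty(d\sigma)$. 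So fix such $\Delta_0,E$; replacing $E$ by a slightly larger open set with $\sigma(\partial E\cap\partial\Omega)=0$ and choosing $f_k\in C(\partial\Omega)$ with $0\le f_k\uparrow\chi_E$, the solutions $u_k$ increase to $u(X):=\omega^X(E)$, which is a bounded ($0\le u\le1$) weak solution, vanishes continuously on the open set $\partial\Omega\setminus\overline E$ (since $f_k\equiv0$ on $\partial\Omega\setminus E$; \refprop{prop:BoundaryHolder}, \refprop{prop:BoundaryHoelderContinuity}), has $u(X_0)=\omega^{X_0}(E)$, and, by \eqref{eq:BMODirichletProblem} applied to each $u_k$ with $\|f_k\|_{L^\infty}\le1$ and passing to the limit via interior estimates (\refprop{prop:Caccioppoli}), satisfies $\int_{T(\Delta)}|\nabla u|^2\delta(X)\,dX\le C_1\sigma(\Delta)$ for \emph{every} surface ball $\Delta$. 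Recalling $\omega^{X_0}(\Delta_0)\approx1$ (the corkscrew point carries a definite fraction of elliptic measure, by \refprop{prop:GreenToOmega} and \refprop{prop:BoundaryHarnack}), the goal becomes: $u(X_0)>\tfrac14$ forces $\sigma(E)\gtrsim\sigma(\Delta_0)$.

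For this I would run a corona/stopping-time decomposition on a dyadic lattice $\{Q_\alpha^k\}$ of $\partial\Omega$ below $\Delta_0$. Writing $X_Q:=A_Q$ for a corkscrew point of a dyadic $Q$ and $\hat Q$ for its dyadic parent, let $\mathcal S$ be the collection of maximal dyadic $Q\subseteq\Delta_0$ at which $u(X_Q)$ first drops below $\tfrac18$ (the top $\Delta_0$ lies above $\mathcal S$ since $u(X_0)>\tfrac14$). Two observations: first, on the ``good'' set $\Delta_0\setminus\bigcup_{\mathcal S}Q$ one has $u(X_Q)\ge\tfrac18$ for all dyadic $Q\ni x$, so by continuity of $u$ up to $\partial\Omega\setminus\overline E$ such $x$ cannot lie in $\partial\Omega\setminus\overline E$; hence $\Delta_0\setminus\bigcup_{\mathcal S}Q\subseteq\overline E$ up to $\sigma$-null, i.e.\ $\sigma(E)\gtrsim\sigma(\Delta_0)-\sum_{\mathcal S}\sigma(Q)$. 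Second, passing from $\hat Q$, at which $u\approx\tfrac14$, to $Q$, at which $u<\tfrac18$, forces $u$ to oscillate by $\gtrsim1$ over the $O(1)$ Whitney boxes joining $X_{\hat Q}$ to $X_Q$, so that, by a Poincar\'e inequality on Whitney boxes combined with \refprop{prop:DiGNMestimate} and \refprop{prop:Caccioppoli}, $\int_{W_Q}|\nabla u|^2\delta\,dX\gtrsim\sigma(Q)$ for an associated bounded-overlap family of Whitney boxes $W_Q$; summing over the disjoint $Q\in\mathcal S$ gives $\sum_{\mathcal S}\sigma(Q)\lesssim\int_{T(C\Delta_0)}|\nabla u|^2\delta\lesssim C_1\sigma(\Delta_0)$. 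Since this only bounds, rather than beats, $\sigma(\Delta_0)$, the actual argument of \cite{dindos_bmo_2017,kenig_square_2014} refines this into a summation \emph{over the whole corona}: one stops alternately where $u$ becomes small and where it returns to being large, uses the comparison principle (\refprop{prop:CompPrinc}) and the doubling of $\omega$ to transfer the Whitney-box square-function cost onto the $\omega$-masses of the stopping cubes, and extracts a geometric series showing that the $\omega^{X_0}$-mass carried by the region where $u$ eventually becomes small is a definite fraction of $\omega^{X_0}(\Delta_0)$; dually, the region where $u$ stays large --- which contains $E$ up to $\omega$-null and is $\sigma$-a.e.\ contained in $\overline E$ --- has $\sigma$-measure $\gtrsim_{C_1}\sigma(\Delta_0)$, yielding $\sigma(E)\gtrsim_{C_1}\sigma(\Delta_0)$.

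The routine ingredients are the approximation of $\chi_E$ and the limiting procedure in \eqref{eq:BMODirichletProblem}, the normalisation $\omega^{X_0}(\Delta_0)\approx1$, and the bootstrap from one weak-$A_\infty$ threshold to $A_\infty$. The main obstacle --- the real content of the proof --- is twofold: (i) the Dahlberg-type oscillation lemma (cf.\ Lemma 1 of \cite{dahlberg_area_1984}) that converts the continuous Carleson estimate $\int_{T(\Delta)}|\nabla u|^2\delta\lesssim\sigma(\Delta)$ into a genuine \emph{packing} bound for the stopping cubes, with the correct accounting of which Whitney boxes are charged to which cube; and (ii) the corona summation itself, which must turn that packing into the multiplicative control $\omega^{X_0}(E)\le\tfrac12\omega^{X_0}(\Delta_0)$ without ever presupposing $\omega\ll\sigma$, relying only on the maximum principle, the comparison principle \refprop{prop:CompPrinc}, and the doubling of $\omega$. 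All the PDE tools invoked --- interior and boundary H\"older continuity, Caccioppoli's inequality, the comparison principle, and the doubling of $\omega$ (cf.\ \refprop{prop:DiGNMestimate}, \refprop{prop:Caccioppoli}, \refprop{prop:CompPrinc} and \refprop{prop:GreenToOmega}) --- are available for our class of operators, so the argument of \cite{dindos_bmo_2017,kenig_square_2014} transfers with only cosmetic changes.
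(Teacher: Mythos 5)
A preliminary remark on what you are being compared against: the paper does not prove Theorem \ref{thm:BMODirichletProblem} at all — it is imported as a black box from \cite[Theorem 1.3]{dindos_bmo_2017} and \cite[Theorem 4.1]{kenig_square_2014} and then applied. Moreover, the proofs in those references run on a different engine than the one you sketch: rather than performing a stopping-time/corona decomposition of the single solution \(u(X)=\omega^X(E)\), they use the ``good \(\EPS\)-cover'' of Kenig--Koch--Pipher--Toro. Roughly, if \(\omega^{X_0}(E)\) is tiny compared with \(\omega^{X_0}(\Delta_0)\), then \(E\) admits a nested cover \(O_1\supset\dots\supset O_N\supset E\) of depth \(N\approx\log\big(\omega^{X_0}(\Delta_0)/\omega^{X_0}(E)\big)\); boundary data built by alternating on the \(O_k\) forces the corresponding solution to oscillate by a fixed amount at \(N\) separated scales in the cone over every point of \(E\), so \(S[u]\gtrsim\sqrt N\) on \(E\); integrating the Carleson hypothesis \eqref{eq:BMODirichletProblem} over \(E\) (using Ahlfors regularity to pass from the Carleson bound to \(\int_E S^2\,d\sigma\lesssim\sigma(\Delta_0)\)) gives \(\sigma(E)\lesssim\sigma(\Delta_0)/N\), which is the quantitative dichotomy equivalent to \(A_\infty\) for doubling measures. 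Your reduction steps (approximation of \(\chi_E\), passage of \eqref{eq:BMODirichletProblem} to the limit, the normalisation \(\omega^{X_0}(\Delta_0)\approx1\), the single-threshold weak-\(A_\infty\) statement plus doubling bootstrap, and the observation that all PDE inputs survive for BMO antisymmetric part) are fine; it is the core mechanism that differs and, as written, does not close.

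The concrete gap is in your packing step. If \(Q\) is a maximal dyadic cube with \(u(X_Q)<\tfrac18\), its parent only satisfies \(u(X_{\hat Q})\ge\tfrac18\); the drop across the \(O(1)\) Whitney boxes joining \(X_{\hat Q}\) to \(X_Q\) can be arbitrarily small, so no bound of the form \(\int_{W_Q}|\nabla u|^2\delta\,dX\gtrsim\sigma(Q)\) follows from a single threshold crossing. The fixed-size oscillation you do control is from \(u(X_0)>\tfrac14\) down to \(u(X_Q)<\tfrac18\), but that is spread along the whole Harnack chain from the top of \(T(\Delta_0)\), and those chains overlap uncontrollably near the top as \(Q\) ranges over the stopping family — exactly the accounting issue you yourself flag in (i). You then concede that even the (unjustified) bound \(\sum_{\mathcal S}\sigma(Q)\lesssim\sigma(\Delta_0)\) would not suffice, and defer the real argument — alternating stopping times, transfer of the square-function cost to \(\omega\)-masses via the comparison principle, and the geometric-series corona summation — to the references, asserting it ``transfers with only cosmetic changes.'' Those two deferred items are the entire content of the theorem, and the cited proofs actually achieve them by the good-\(\EPS\)-cover construction rather than by the corona on \(\omega^X(E)\) you describe. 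So, as a standalone proof the proposal has a genuine gap; in the context of this paper the correct (and intended) move is simply to invoke \cite{kenig_square_2014,dindos_bmo_2017} and check that their ingredients (Bourgain-type nondegeneracy, boundary H\"older continuity, Harnack, doubling of \(\omega\), comparison principle) hold for operators with BMO antisymmetric part, which is what the paper does.
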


It remains to show \eqref{eq:BMODirichletProblem}.
Let \( \Delta \subset \partial \Omega \) be a boundary ball with \( \Diam \Delta \leq \gamma \),
    where \( \gamma \) is taken small enough that \( T(\Delta) \) lies above a Lipschitz graph.
Note that by  \cite[Cor 5.2]{dindos_regularity_2018} we have
\[ \int_{T(\Delta)} |\nabla u(X)|^2\delta(X) dX \lesssim \int_{2\Delta} (|f|^2 + N_{\alpha}(u)^2)d\sigma. \]

(Again although \( \hat{A} \) is assumed to be bounded in \cite{dindos_regularity_2018} 
    this assumption is not necessary this Corollary to hold as it only uses ellipticity and boundedness of the symmetric part of the matrix).
Thus  \( f \in C(\partial \Omega) \) and 
	the maximum principle imply
\[ \int_{T(\Delta)} |\nabla u(X)|^2\delta(X) dX 
    \lesssim \|f\|_{L^\infty}^2 \sigma(2\Delta) 
    \lesssim \|f\|_{L^\infty}^2 \sigma(\Delta). \]

Dividing both sides by \( \sigma(\Delta) \) and taking supremum over \( \Delta \)
    yields \eqref{eq:BMODirichletProblem}.\qed

\subsection{Proofs of \refthm{thm:ApplicationBigNorm} and \refthm{thm:ApplicationSmallNorm}}
    
In order to prove \refthm{thm:ApplicationBigNorm} and \refthm{thm:ApplicationSmallNorm} we have the following strategy.

First we construct a matrix \( \hat{A} \) from \( A \). The objective is to improve regularity of coefficients in order to 
use \refthm{thm:ApplicationClassic} for \( \hat{A}\). We then deduce solvability for the original matrix $A$ by applying  our perturbation results.\\

To begin with, let \( B(X) = B(X,\delta(X)/2) \)
	and set 
\[  \hat{B}(X) \coloneqq B(X,\tfrac{2}{5} \tfrac{\delta(X)}{2}), \]
	so that 
\[ \bigcup_{X\in \hat{B}(Z)} \hat{B}(X) \subset B(Z). \]

In order to apply \refthm{thm:ApplicationClassic}  our matrix 
    needs to be differentiable.
Thus we define \( \hat{A} \) from \( A \) using a mollification procedure.

Consider \( \phi \in C_c^\infty(\frac{1}{5}\mathbb{B}^n) \) to be nonnegative with \( \int_{\R^n} \phi = 1 \),
    and \( \phi_t(X) = t^{-n}\phi(X/t) \). Let $\delta(X)$ be a smooth version of the distance function and
\begin{align} \hat{A}(X) \coloneqq (\phi_{\delta(X)} * A)(X). \label{eq:DefofhatA}\end{align}

Clearly \( \hat{A}(X) \) is differentiable with
\begin{align}\label{eq:HatAderivative}
    \nabla \hat{A}(X)=\int_\Omega (A(Y)-b) \nabla_X \phi_{\delta(X)}(X-Y)dY, \quad\mbox{for any } b \in \mathbb{R}^{n\times n}.     
\end{align}

If we can show that
\begin{align}\label{eq:HatAisCarleson}
    \| \hat{\alpha}(Z)^2 \delta(Z)^{-1} dZ \|_{\CO} \lesssim \| \alpha_r(Z)^2 \delta(Z)^{-1} dZ \|_{\CO},
\end{align}
holds 
    for \(\hat{\alpha}:=\hat{\alpha}^{\frac{2}{5}}\) clearly \refthm{thm:ApplicationClassic} implies that:
    
\begin{lemma}\label{lemma:AhatIsCarleson}
	Let \(\Omega\) be a bounded Lipschitz domain with Lipschitz constant \(K>0\) 
	Let \(\alpha_r\) be defined like in \eqref{eq:defofalpha_r} 
	    and let \(\omega\) be the elliptic measure of the operator \(L=\mathrm{div}(\hat{A}\nabla\cdot)\).
	Then there exists \(1<r=r(n,\lambda_0,\Lambda_0)<\infty\) such that
\begin{itemize}
    \item[(i)] If 
	\[  \| \alpha_r(Z)^2 \delta(Z)^{-1} dZ \|_{\CO} < \infty \] 
	     then \(\omega\in A_\infty(\sigma)\), i.e. the \( L^p \) Dirichlet problem \emph{for} \( \hat{A} \) 
	     is solvable for \emph{some} \( 1 < p < \infty \).\\
	     
	\item[(ii)] For every \( 1 < p < \infty \) there exists an \( \EPS = \EPS(p) > 0 \), such that if
	\[ \| \alpha_r(Z)^2 \delta(Z)^{-1} dZ \|_{\CO} < \EPS \WORD{and} K < \EPS, \] 
	    then \(\omega\in B_p(\sigma)\), i.e., the \( L^p \) Dirichlet problem \emph{for} \( \hat{A} \) is solvable.
\end{itemize}
\end{lemma}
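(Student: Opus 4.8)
The plan is to reduce everything to the pointwise Carleson comparison \eqref{eq:HatAisCarleson} and then invoke \refthm{thm:ApplicationClassic} for the mollified matrix $\hat A$. First I would check that $\hat A$ is admissible for \refthm{thm:ApplicationClassic} with $\eta=\tfrac25$: it is $C^\infty$ by construction, so its weak gradient is the classical one; its symmetric part $\hat A^s=\phi_{\delta(\cdot)}*A^s$ is a weighted average of values of $A^s$ and hence still satisfies \eqref{eq:elliptic} with the same $\lambda_0$; and its antisymmetric part $\hat A^a=\phi_{\delta(\cdot)}*A^a$ has $\|\hat A^a\|_{\BMO(\Omega)}\lesssim\|A^a\|_{\BMO(\Omega)}\le\Lambda_0$, since mollification at the Whitney scale is bounded on $\BMO(\Omega)$ and all averages defining $\hat A$ are taken over balls $B(X,\delta(X)/5)\subset\Omega$, so no boundary difficulty arises. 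Granting \eqref{eq:HatAisCarleson}, part (i) is immediate from \refthm{thm:ApplicationClassic}(i), and for part (ii) one takes $\EPS(p)=\min\{\EPS_0(p)/C,\EPS_0(p)\}$, where $C$ is the implicit constant in \eqref{eq:HatAisCarleson} and $\EPS_0(p)$ the threshold of \refthm{thm:ApplicationClassic}(ii): then $\|\alpha_r(Z)^2\delta(Z)^{-1}dZ\|_{\CO}<\EPS(p)$ together with $K<\EPS(p)$ force $\|\hat\alpha(Z)^2\delta(Z)^{-1}dZ\|_{\CO}<\EPS_0(p)$. Any $r\ge1$ suffices below; the specific value $r=r(n,\lambda_0,\Lambda_0)$ is the one dictated later, when the perturbation theorems are used to transfer solvability from $\hat A$ back to $A$.

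To prove the pointwise half of \eqref{eq:HatAisCarleson} I would argue as follows. Fix $X\in\Omega$ and use \eqref{eq:HatAderivative} with the constant $b=(A)_{B(X,\delta(X)/2)}$. Since $\phi\in C_c^\infty(\tfrac15\mathbb{B}^n)$, the integrand is supported in $\{|X-Y|<\delta(X)/5\}\subset B(X,\delta(X)/2)$; differentiating $\phi_{\delta(X)}(X-Y)$ by the chain rule in both the translation and the dilation variables, and using $|\nabla\delta|\lesssim1$ for the smoothed distance function, gives $|\nabla_X\phi_{\delta(X)}(X-Y)|\lesssim\delta(X)^{-n-1}$. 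Hence
\[ |\nabla\hat A(X)|\lesssim\delta(X)^{-n-1}\int_{B(X,\delta(X)/2)}\bigl|A(Y)-(A)_{B(X,\delta(X)/2)}\bigr|\,dY\lesssim\delta(X)^{-1}\Bigl(\fint_{B(X,\delta(X)/2)}|A-(A)_{B(X,\delta(X)/2)}|^r\Bigr)^{1/r}=\delta(X)^{-1}\alpha_r(X), \]
the middle step being Jensen's inequality. As $\delta(X)\approx\delta(Z)$ for $X\in B(Z,\tfrac25\delta(Z))$, it follows that $\hat\alpha(Z)=\delta(Z)\sup_{X\in B(Z,\frac25\delta(Z))}|\nabla\hat A(X)|\lesssim\sup_{X\in B(Z,\frac25\delta(Z))}\alpha_r(X)$.

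The remaining step is to upgrade this to the Carleson-norm bound. For $X\in B(Z,\tfrac25\delta(Z))$ the ball $B(X,\delta(X)/2)$ is contained in $cB(Z):=B(Z,c\delta(Z))\cap\Omega$ for an absolute $c$, on which $\delta\approx\delta(Z)$ and $|cB(Z)|\approx\delta(Z)^n$; comparing averages gives $\alpha_r(X)\lesssim\bigl(\fint_{cB(Z)}|A-(A)_{cB(Z)}|^r\bigr)^{1/r}=:\tilde\alpha_r(Z)$, so $\hat\alpha(Z)^2\delta(Z)^{-1}\lesssim\tilde\alpha_r(Z)^2\delta(Z)^{-1}$ pointwise. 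One then invokes the standard fact that the Carleson norm of an oscillation measure of this form is, up to a multiplicative constant, independent of the dilation factor in its defining ball: covering $cB(Z)$ by boundedly many Whitney balls and telescoping the associated averages along $O(1)$-length Harnack chains bounds $\tilde\alpha_r(Z)^r$ by $\sum_{i=1}^N\alpha_r(W_i)^r$ with $\delta(W_i)\approx\delta(Z)$ and $|W_i-Z|\lesssim\delta(Z)$, and a Fubini argument over Carleson regions then yields $\|\tilde\alpha_r(Z)^2\delta(Z)^{-1}dZ\|_{\CO}\lesssim\|\alpha_r(Z)^2\delta(Z)^{-1}dZ\|_{\CO}$. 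Combining the displays gives \eqref{eq:HatAisCarleson}, and with \refthm{thm:ApplicationClassic} the lemma follows. I expect the main obstacle to be purely bookkeeping: keeping the nested radii ($\tfrac15$, $\tfrac25$, $\tfrac12$, $c$) consistent so that every convolution and every comparison ball stays comfortably inside $\Omega$, together with clean statements of the dilation-factor independence of the Carleson norm and of the $\BMO$ bound for $\hat A^a$ — none of which is deep, but all of which must be done carefully.
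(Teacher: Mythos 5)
Your overall route is the paper's: mollify at Whitney scale to get $\hat A$, verify it is elliptic with $\BMO$ antisymmetric part, prove \eqref{eq:HatAisCarleson}, and quote \refthm{thm:ApplicationClassic}. The divergence — and the gap — is in how you prove \eqref{eq:HatAisCarleson}. By taking $b=(A)_{B(X,\delta(X)/2)}$ in \eqref{eq:HatAderivative} you only get $\hat\alpha(Z)\lesssim\sup_{X\in B(Z,\frac25\delta(Z))}\alpha_r(X)$, which is \emph{not} pointwise controlled by $\alpha_r(Z)$ (the balls $B(X,\delta(X)/2)$ and $B(Z,\delta(Z)/2)$ are comparable but neither contains the other), so you are forced into a change-of-scale comparison of Carleson norms. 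Your sketch of that step is flawed: since $X$ can lie near the outer edge of $B(Z,\tfrac25\delta(Z))$, the smallest admissible dilation is $c=\tfrac{11}{10}>1$, and on $B(Z,c\delta(Z))\cap\Omega$ one does \emph{not} have $\delta\approx\delta(Z)$ — that set reaches all the way to $\partial\Omega$, so it cannot be covered by boundedly many Whitney balls of scale $\delta(Z)$ and the $O(1)$-length telescoping bound $\tilde\alpha_r(Z)^r\lesssim\sum_{i=1}^N\alpha_r(W_i)^r$ fails as stated. The comparison region must instead be $\bigcup_{X\in B(Z,\frac25\delta(Z))}B(X,\delta(X)/2)$, on which $\delta\geq\tfrac3{10}\delta(Z)$; and even then the concluding ``Fubini argument'' needs the standard extra device of dominating $\alpha_r(W_i(Z))$ by $\inf$ (hence an average) of an oscillation over a small ball around $W_i(Z)$ via nested balls of comparable measure, so that the $Z$-dependence of the chain centers can be decoupled before integrating over the Carleson region. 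As written, this key step would not go through, although it is repairable with a page of careful chaining.

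The paper avoids all of this with one choice: take $b=(A)_{B(Z)}$, $B(Z)=B(Z,\delta(Z)/2)$, in \eqref{eq:HatAderivative}. Because the radii are set up so that $\bigcup_{X\in\hat B(Z)}\hat B(X)\subset B(Z)$, one gets directly, for every $X$ in the sup, $|\nabla\hat A(X)|\lesssim\delta(Z)^{-n-1}\int_{B(Z)}|A-(A)_{B(Z)}|\lesssim\delta(Z)^{-1}\alpha_r(Z)$, i.e.\ the pointwise bound \eqref{eq:AlphaHattoAlpha}, from which \eqref{eq:HatAisCarleson} is immediate — no scale-change lemma at all. You should adopt that choice (or carry out the repaired chaining argument in full). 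Separately, your assertion that variable-scale mollification is bounded on $\BMO(\Omega)$ is true but not free: because the scale $\delta(X)$ varies with the point, this needs an argument (the paper gives a two-case proof using Lemma 2.1 of \cite{jones_extension_1980}); it must be supplied since the $\BMO$ hypothesis on $\hat A^a$ is needed both to apply \refthm{thm:ApplicationClassic} here and for the perturbation step afterwards.
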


To prove \eqref{eq:HatAisCarleson} it suffices to show that
\begin{align}\label{eq:AlphaHattoAlpha}
    \hat{\alpha}(Z) \lesssim \alpha_r(Z).    
\end{align}

Take \( b = (A)_{B(Z)} \) in \eqref{eq:HatAderivative}. Then
\[ |\nabla \hat{A}(X)| 
    \leq  \int_{\hat{B}(X)} |A(Y)-(A)_{B(Z)}| |\nabla\phi_{\delta(X)}(X-Y)|dY. \]
Let us estimate the gradient term inside the integral. 
\begin{align*}
    \delta(X)^{n+1} |\nabla_X \phi_{\delta(X)}|
    &\lesssim |\nabla \delta(X)| \phi\left(\frac{X-Y}{\delta(X)}\right) 
        + \bigg|\nabla \phi\left(\frac{X-Y}{\delta(X)}\right)\bigg| 
            \bigg(  |\nabla \delta(X)| \frac{|X-Y|}{\delta(X)} + 1 \bigg) 
    \\
    &\leq \| \phi \|_{L^\infty} + 2\| \nabla \phi \|_{L^\infty} \lesssim 1,  
\end{align*}
    since \( |X-Y| \leq \delta(X) \) and \( |\nabla \delta | \leq 1 \).
It follows that
\[ |\nabla_X \phi_{\delta(X)}| \lesssim \delta(X)^{-(n+1)}. \]

This implies that for any \(X\in \hat{B}(Z)\) we have that
\begin{align*}
    |\nabla \hat{A}(X)| &\lesssim  \frac{1}{\delta(X)^{n+1}} \int_{\hat{B}(X)}|A(Y)-(A)_{B(Z)}|dY
    \\
    &\leq \frac{1}{\delta(X)^{n+1}} \int_{B(Z)}|A(Y)-(A)_{B(Z)}|dY
    \\
    &\approx \frac{1}{\delta(Z)} \fint_{B(Z)}|A(Y)-(A)_{B(Z)}| dY
    \\
    &\leq \frac{1}{\delta(Z)} \left(\fint_{B(Z)}|A(Y)-(A)_{B(Z)}|^rdY\right)^{1/r}
    = \frac{1}{\delta(Z)}\alpha_r(Z).
\end{align*}

From this
\[ \hat{\alpha}(Z) = \delta(Z) \sup_{X \in \hat{B}(Z)} |\nabla \hat{A}|  \lesssim \alpha_r(Z), \]
    as desired.\\
    
It remains to apply our two perturbation results  for \( A_0 = \hat{A} \) and \( A_1 = A \).
Clearly \( \hat{A} \) is \( \lambda_0 \)-elliptic.
Moreover we can see that \(\Vert \hat{A}\Vert_{BMO(\Omega)}\lesssim \Lambda_0\). 

To see this we distinguish two cases. 
First, consider a ball \(B\subset \Omega\) such that  \(B\not\subset\hat{B}(X)\) is true for all $X\in\Omega$. 
Then we can find a cover with balls \((\hat{B}(X_i))_{i}\) such that the balls \(\hat{B}(X_i)\) have finite overlap, 
    and \(|\bigcup_i \hat{B}(X_i)|\lesssim |B|\).
The constants in the last inequality are independent of \(B\). By Lemma 2.1 of \cite{jones_extension_1980} we know that \(|(A)_{\hat{B}(X)}-(A)_{B(Z)}|\lesssim \Lambda_0\) for all \(X\in \hat{B}(Z)\). Hence

\begin{align*}
    \fint_B &|\hat{A}-(A)_B|dX\leq \fint_B|\hat{A}-A|dX+\fint_B|A-(A)_B|dX
    \\
    &\leq \fint_B\left|\int_{\hat{B}(X)}(A(Y)-A(X))\frac{\phi\left(\frac{X-Y}{\delta(X)}\right)}{\delta(X)^n}dY\right|dX+\Lambda_0
    \\
    &\leq \fint_B\fint_{\hat{B}(X)}|A(Y)-(A)_{\hat{B}(X)}|dY +|A(X) - (A)_{\hat{B}(X)}|dX+\Lambda_0
    \\
    &\leq \fint_B |A(X)-(A)_{\hat{B}(X)}|dX + 2\Lambda_0
    \\
    &\lesssim \frac{1}{|B|}\sum_i\int_{\hat{B}(X_i)}|A(X)-(A)_{B(X_i)}|+|(A)_{B(X_i)}-(A)_{\hat{B}(X)}|dX+2\Lambda_0
    \\
    &\lesssim \frac{1}{|B|}\sum_i|B(X_i)|\fint_{B(X_i)}|A(X)-(A)_{B(X_i)}|+\Lambda_0 dX+2\Lambda_0
    \\
    &\lesssim\lambda_0\left(2+\frac{1}{|B|}\sum_i|B(X_i)|\right)\lesssim\Lambda_0.
\end{align*}
The second case is if \(B\subset \hat{B}(X_1)\) for some \(X_1\in \Omega\). Then we have
\begin{align*}
    \fint_B |\hat{A}-(A)_{B(X_1)}|dX&\leq \fint_B\left|\int_{\hat{B}(X)}(A(Y)-(A)_{B(X_1)})\frac{\phi\left(\frac{X-Y}{\delta(X)}\right)}{\delta(X)^n}dY\right|dX
    \\
    &\leq \fint_B\fint_{\hat{B}(X)}|A(Y)-(A)_{B(X_1)}|dYdX
    \\
    &\lesssim \fint_B\fint_{B(X_1)}|A(Y)-(A)_{B(X_1)}|dYdX\lesssim\Lambda_0.
\end{align*}

Thus we can conclude that
\[\inf_{M\in \mathbb{R}^{n\times n} \textrm{ constant}}\fint_B|\hat{A}(X)-M|dX\lesssim \Lambda_0, \]
    which implies \(\Vert \hat{A}\Vert_{BMO(\Omega)}\lesssim\Lambda_0\).
It follows that we indeed may apply our perturbation results.    
    
Let
\[\beta_r(Z) \coloneqq \left(\fint_{B(Z)} |\hat{A}(Y)-A(Y)|^rdY\right)^{1/r}.\]

Our next objective is to show that
\begin{align}\label{eq:ApplicationPerturbation}
    \| \beta_r(Z)^2 \delta^{-1}(Z) dZ \|_{\CO} \lesssim \| \alpha_r(Z)^2 \delta(Z)^{-1} dZ \|_{\CO}. 
\end{align}
Assume for the moment this is indeed true. Then \reflemma{lemma:AhatIsCarleson} and \refthm{thm:NormBig}
imply \refthm{thm:ApplicationBigNorm}. Similarly, \reflemma{lemma:AhatIsCarleson}  and \refthm{thm:NormSmall} imply  \refthm{thm:ApplicationSmallNorm}. Thus if we establish \eqref{eq:ApplicationPerturbation} we are done.

We start by observing that the following estimate holds:
\begin{align*}
    \left(\fint_{B(Z)}|\hat{A}-A|^r\right)^{1/r}
    &\leq \left(\fint_{B(Z)}|\hat{A}-(A)_{B(Z)}|^r\right)^{1/r}+\left(\fint_{B(Z)}|A-(A)_{B(Z)}|^r\right)^{1/r}
    \\
    &= \left(\fint_{B(Z)}|\hat{A}-(A)_{B(Z)}|^r\right)^{1/r} + \alpha_r(Z).
\end{align*}

The last term already has the required form. For the first term we see that
\begin{align*}
    \left(\fint_{B(Z)}|\hat{A}-(A)_{B(Z)}|^r\right)^{1/r} 
    &\leq \bigg(\fint_{B(Z)} \left| 
        \int_{\hat{B}(X)} |A(Y)-(A)_{B(Z)}| \phi_{\delta(X)}(X-Y) dY \right|^r dX\bigg)^{1/r}
    \\
    &\leq \| \phi \|_{L^\infty} \bigg(\fint_{B(Z)} \left| 
        \delta(X)^{-n} \int_{\hat{B}(X)} |A(Y)-(A)_{B(Z)}|  dY \right|^r dX \bigg)^{1/r}
    \\
    &\lesssim  \bigg(\fint_{B(Z)} \left| 
        \fint_{\hat{B}(X)} |A(Y)-(A)_{B(Z)}| dY \right|^r dX\bigg)^{1/r}
    \\
    &\lesssim \bigg(\fint_{B(Z)} |A(Y)-(A)_{B(Z)}|^r dX\bigg)^{1/r} = \alpha_r(Z).
\end{align*}
\qed

\bibliographystyle{alpha}

\end{document}